\renewcommand\paragraph{\@startsection{paragraph}{4}{\z@}%
                                    {1ex \@plus1ex \@minus.2ex}%
                                    {-1em}%
                                    {\normalfont\normalsize\bfseries}}
\providecommand{\sref}[1]{Section~\ref{#1}}
\providecommand{\appref}[1]{Appendix~\ref{#1}}
\providecommand{\fref}[1]{Figure~\ref{#1}}
\providecommand{\thref}[1]{Theorem~\ref{#1}}
\providecommand{\defref}[1]{Definition~\ref{#1}}
\providecommand{\lemref}[1]{Lemma~\ref{#1}}
\providecommand{\assumpref}[1]{Assumption~\ref{#1}}
\providecommand{\factref}[1]{Fact~\ref{#1}}
\providecommand{\propref}[1]{Proposition~\ref{#1}}
\providecommand{\corref}[1]{Corollary~\ref{#1}}
\providecommand{\clref}[1]{Claim~\ref{#1}}
\providecommand{\R}{\ensuremath{\mathbb{R}}}
\providecommand{\C}{\ensuremath{\mathbb{C}}}
\providecommand{\N}{\ensuremath{\mathbb{N}}}
\newcommand{\ortho}{\mathbb{O}} 
\renewcommand{\vec}[1]{\ensuremath{\boldsymbol{#1}}}
\providecommand{\mat}[1]{\ensuremath{\boldsymbol{#1}}}
\providecommand{\mF}{\mat{F}}
\providecommand{\mI}{\mat{I}}
\providecommand{\mP}{\mat{P}} 
\providecommand{\mQ}{\mat{Q}} 
\providecommand{\mS}{\mat{S}} 
\providecommand{\mU}{\mat{U}} 
\providecommand{\mW}{\mat{W}}
\providecommand{\mPsi}{\mat{\Psi}}
\providecommand{\mPsi}{\mat{\Psi}}
\providecommand{\mO}{\mat{O}}
\providecommand{\mY}{\mat{Y}}
\providecommand{\va}{\vec{a}}
\providecommand{\vr}{\vec{r}}
\providecommand{\vu}{\vec{u}} 
\providecommand{\vw}{\vec{w}}
\providecommand{\vx}{\vec{x}} 
\providecommand{\vz}{\vec{z}}
\providecommand{\vzero}{\vec{0}}
\providecommand{\vv}{\vec{v}}
\newcommand{\ip}[2]{\left\langle {#1}, {#2} \right\rangle} 
\DeclareMathOperator{\diag}{diag}
\DeclareMathOperator{\Tr}{Tr}
\DeclareMathOperator{\op}{op}
\newcommand*{\tran}{^{\mkern-1.5mu\mathsf{T}}}
\newcommand{\explain}[2]{\overset{\text{\tiny{#1}}}{#2}} 
\providecommand{\bydef}{\explain{def}{=}}
\DeclareMathOperator*{\argmin}{\arg\min}
\newcommand*\diff{\mathop{}\!\mathrm{d}}
\newcommand{\unif}[1]{\mathsf{Unif}(#1)} 
\newcommand{\E}{\mathbb{E}} 
\renewcommand{\P}{\mathbb{P}} 
\newcommand{\Var}{\mathrm{Var}}
\newcommand{\Cov}{\mathrm{Cov}}
\newcommand{\gauss}[2]{\mathcal{N}\left( #1,#2 \right)} 
\newcommand{\pc}{\overset{\P}{\longrightarrow}} 
\newcommand{\wc}{\overset{W_2}{\longrightarrow}}
\newcommand{\dc}{\overset{d}{\longrightarrow}}
\renewcommand{\tilde}{\widetilde}
\renewcommand{\hat}{\widehat}
\DeclareFontFamily{U}{mathx}{\hyphenchar\font45}
\DeclareFontShape{U}{mathx}{m}{n}{
      <5> <6> <7> <8> <9> <10>
      <10.95> <12> <14.4> <17.28> <20.74> <24.88>
      mathx10
      }{}
\DeclareSymbolFont{mathx}{U}{mathx}{m}{n}
\DeclareMathAccent{\widecheck}{0}{mathx}{"71}
\DeclareMathAccent{\wideparen}{0}{mathx}{"75}
\renewcommand{\dim}{N} 
\newcommand{\prior}{\pi} 
\newcommand{\bdot}{\bullet} 
\providecommand{\mmse}{\mathsf{mmse}_{\pi}}
\providecommand{\dmmse}{\mathsf{dmmse}_{\pi}}
\newcommand{\iter}[2]{{#1}_{#2}} 
\newcommand{\serv}[1]{\mathsf{#1}} 
\newcommand{\mfunc}{\Psi}
\newcommand{\fnonlin}{f}
\newcommand{\gnonlin}{g}
\newcommand{\hnonlin}{h}
\newcommand{\proc}{H}
\DeclareMathOperator*{\plim}{\mathrm{plim}}
\newcommand{\degree}{D}
\newcommand{\Fnonlin}{F}
\DeclareMathOperator*{\plimsup}{{\mathrm{\plim \sup}}}
\DeclareMathOperator*{\pliminf}{{\plim \inf}}
\newcommand{\Gnonlin}{G}
\newcommand{\aux}{a}
\newcommand{\auxdim}{k}
\newcommand{\dfnew}[1]{\bm\bar{#1}}
\newcommand{\bmmse}{\mathrm{MMSE}}
\newcommand{\bdmmse}{\mathrm{DMMSE}}
\newcommand{\bdnsr}{\varphi}
\newcommand{\dfbdnsr}{\dfnew{\bdnsr}}
\newcommand{\optlin}{v_{\mathrm{opt}}}
\newcommand{\effsnr}{\omega_{\mathrm{eff}}}
\newcommand{\scrM}{\mathscr{M}}
\newcommand{\scrm}{\mathscr{m}}
\newcommand{\optlintd}[1]{\dup{\iter{v}{#1}}{\degree}}
\newcommand{\effsnrt}[1]{\iter{\omega}{#1}}
\newcommand{\effsnrtd}[1]{\dup{\iter{\omega}{#1}}{\degree}}
\newcommand{\dmmset}[1]{\iter{\mathsf{d}}{#1}}
\newcommand{\dmmsetd}[1]{\dup{\iter{\mathsf{d}}{#1}}{\degree}}
\newcommand{\rhotd}[1]{\dup{\iter{\mathsf{\rho}}{#1}}{\degree}}
\newcommand{\rhot}[1]{{\iter{\mathsf{\rho}}{#1}}}
\newcommand{\scrL}[1]{\mathscr{L}_{#1}}
\newcommand{\polynom}[1]{\mathcal{P}_{#1}}
\providecommand{\clmref}[1]{Claim~\ref{#1}}
\newcommand*{\I}{\imath}
\newcommand*{\hlb}{\mathscr{H}}
\newcommand*{\stl}{\mathscr{S}}
\newcommand{\omd}{\Psi_{\star}}
\newcommand{\omdd}[1]{{\Psi}^{(#1)}_{\star}}
\newcommand{\dup}[2]{#1^{(#2)}}
\newcommand{\snrit}{\omega_{\mathrm{IT}}}
\newcommand{\rhoit}{\rho_{\mathrm{IT}}}
\newcommand{\BE}{\begin{equation}}
\newcommand{\EE}{\end{equation}}
\newcommand{\BS}{\begin{subequations}}
\newcommand{\ES}{\end{subequations}}
\newcommand{\UT}{\mathsf{T}}   
\providecommand{\sfLambda}{\mathsf{\Lambda}}
\theoremstyle{plain}
\newtheorem{theorem}{Theorem}
\newtheorem{proposition}{Proposition}
\newtheorem{lemma}{Lemma}
\newtheorem{corollary}{Corollary}
\newtheorem{conjecture}{Conjecture}
\theoremstyle{definition}
\newtheorem{claim}{Claim}
\newtheorem{definition}{Definition}
\newtheorem{assumption}{Assumption}
\theoremstyle{remark}
\newtheorem{remark}{Remark}
\newtheorem{fact}{Fact}
\begin{document}

\title{Optimality of Approximate Message Passing Algorithms for Spiked Matrix Models with Rotationally Invariant Noise}
\author{Rishabh Dudeja\thanks{Department of Statistics, University of Wisconsin–Madison. Email: \tt{rdudeja@wisc.edu}} \and Songbin Liu\thanks{Academy of Mathematics and Systems Science, Chinese Academy of Sciences. Email: \tt{liusongbin@lsec.cc.ac.cn}} \and Junjie Ma\thanks{Academy of Mathematics and Systems Science, Chinese Academy of Sciences. Email: \tt{majunjie@lsec.cc.ac.cn} } \footnote{The authors are alphabetically ordered.}}


\maketitle

\begin{abstract} We study the problem of estimating a rank-one signal matrix from a noisy observed matrix corrupted by additive rotationally invariant noise. We develop a new class of approximate message passing algorithms for this problem and provide a simple and concise characterization of their dynamics in the high-dimensional limit. At each iteration, these algorithms leverage prior knowledge about the noise structure by applying a non-linear matrix denoiser to the eigenvalues of the observed matrix, and utilize prior information regarding the signal structure by applying a non-linear iterate denoiser to the previous iterates generated by the algorithm. We derive the optimal choices for both the matrix and iterate denoisers and demonstrate that the resulting algorithm achieves the lowest possible asymptotic estimation error among a broad class of iterative algorithms under a fixed iteration budget.
\end{abstract}


\tableofcontents
\section{Introduction}
We consider the problem of estimating a symmetric rank-one matrix from a noisy $\dim \times \dim$ observed matrix $\mY$ generated from the \emph{spiked matrix model} \citep{johnstone2001distribution}:
\begin{align} \label{eq:model}
    \mY  & = \frac{\theta}{\dim}   \vx_\star \vx_\star \tran + \mW,
    \end{align}
where $\bm{x}_\star\in\mathbb{R}^N$ is the $\dim$-dimensional unknown signal of interest, $\theta\ge0$ is the signal-to-noise ratio (SNR) parameter, and $\bm{W}$ is a symmetric noise matrix. 
This model and its variants have been used to study a broad range of statistical inference problems, including sparse PCA \citep{deshpande2014information,johnstone2009consistency,zou2006sparse}, community detection \citep{deshpande2017asymptotic,abbe2018community}, and group synchronization \citep{singer2011angular,perry2018message,fan2021tap,celentano2023local}.
\paragraph*{Wigner Noise Model} The most well-studied variant of the spiked model is the \emph{Spiked Wigner Model} \citep{deshpande2014information,perry2018optimality,el2018estimation,el2020fundamental,barbier2019adaptive,montanari2021estimation}, which assumes that the noise matrix has i.i.d. Gaussian entries. A rich line of work in high-dimensional statistics and random matrix theory has studied the problem from various perspectives.
\begin{itemize}
\item \emph{Design and Analysis of Estimators.} A natural estimator for the signal is the PCA estimator or the leading eigenvector of the observed matrix. A line of work \citep{baik2005phase,baik2006eigenvalues,peche2006largest,feral2007largest,paul2007asymptotics,benaych2011eigenvalues,knowles2013isotropic} initiated by \citet{baik2005phase} has obtained a sharp asymptotic characterization of the performance of this estimator in the high-dimensional limit and uncovered surprising properties of this estimator. One way to improve the performance of PCA is to exploit prior structural information about the signal (\emph{e.g.}, sparsity). Approximate message passing (AMP) algorithms are a popular class of computationally efficient iterative algorithms designed to exploit such structural information. These algorithms were first discovered in the context of compressed sensing \citep{donoho2009message,kabashima2003cdma,bayati2011dynamics,bolthausen2014iterative} and have been adapted for the spiked Wigner model \cite{rangan2012iterative,parker2014bilinear,kabashima2016phase,montanari2021estimation,li2022non,li2023approximate}. A particularly attractive feature of these algorithms is that their performance in the high-dimensional limit is characterized by a simple deterministic recursion known as \textit{state evolution}, which makes it possible to assess their information-theoretic optimality (or sub-optimality).
\item \emph{Information-theoretic Limits.} Under the assumption that the signal is drawn from a known prior distribution, the optimal estimator is the Bayes estimator $\E[\vx_{\star} | \mY]$. Using the powerful cavity method from statistical physics, \citet{lesieur2015mmse} derived a conjecture for the Bayes risk (or the asymptotic performance of the Bayes estimator) for the Spiked Wigner model. This conjecture has been proved rigorously in increasing generality in a series of influential works \citep{deshpande2014information,dia2016mutual,krzakala2016mutual,lelarge2017fundamental,miolane2017fundamental,el2018estimation,barbier2019adaptive,behne2022fundamental}. This formula for asymptotic Bayes risk provides a fundamental information-theoretic limit on the performance of any estimator for the problem. 
\item \emph{Computational Limits.} In general, computing the Bayes-optimal estimator is computationally intractable in high dimensions. However, a suitably designed AMP algorithm called Bayes-optimal AMP can attain the Bayes risk for the problem under {sufficiently high signal-to-noise ratios. For lower signal-to-noise ratios, the Bayes-optimal AMP algorithm fails to do so}, and no computationally efficient algorithm is known to achieve the Bayes risk \citep{lesieur2015phase,krzakala2016mutual,dia2016mutual,miolane2017fundamental}. This has led to a popular conjecture that the Bayes-optimal AMP algorithm is the optimal polynomial time algorithm for this problem \citep{lesieur2015phase,miolane2017fundamental}. \citet{celentano2020estimation}, \citet{montanari2022statistically}, and \citet{montanari2022equivalence} (see also the earlier work of \citet{schramm2022computational}) have provided evidence for this conjecture by showing that the Bayes-optimal AMP algorithm achieves the lowest possible estimation error among a broad class of iterative algorithms and low-degree polynomial estimators.  \looseness=-1 
\end{itemize}
Collectively,  these works have enriched our understanding of the fundamental trade-offs between statistical optimality and computational efficiency in high-dimensional statistics.
\paragraph*{Rotationally Invariant Noise Model} In this paper, we study a natural generalization of the i.i.d. Gaussian noise model known as the rotationally invariant noise model where one posits that the eigenvectors of the noise matrix are given by a uniformly random orthogonal matrix independent of the eigenvalues. The rotationally invariant noise model is intended to model noise matrices with strong statistical dependence whose eigenvectors are generic \citep{fan2022approximate}. Several works (\emph{see e.g.,} \citep{donoho2009observed,oymak2014case,li2023spectrum}) have observed that this can be a good model in some applications. \citet{benaych2011eigenvalues} analyzed the performance of PCA (or spectral estimators) for this noise model. A line of work \citep{opper2016theory,fan2022approximate,mondelli2021pca,zhong2021approximate} initiated by \citet{opper2016theory} and \citet{fan2022approximate} has developed AMP algorithms for this problem, which can improve the performance of PCA by exploiting signal structure. Unlike AMP algorithms for i.i.d. Gaussian noise, the dynamics of these algorithms are characterized by a significantly more intricate state evolution, suggesting that analyzing the rotationally invariant noise model requires new ideas beyond the i.i.d. Gaussian case. Our current understanding of the fundamental information-theoretic and computational limits for the rotationally invariant noise model is extremely limited. Recent work by \citet{barbier2023fundamental} studies the spiked matrix model with rotationally invariant noise under the assumption that the noise matrix is drawn from the \emph{trace ensemble}. Under this model, the density of $\mW$ is given by:
\[
p(\mW)  \propto \exp\left( - \frac{\dim}{2}\sum_{i=1}^\dim V(\lambda_i(\mW)) \right), \text{ where $\left(\lambda_{i}(\mW)\right)_{i\in[N]}$ denote the eigenvalues of $\mW$,}
\]
and the \emph{potential function} $V:\R \mapsto \R$ is a functional parameter for the noise model. This assumption ensures that the problem has a well-defined likelihood, enabling the study of information-theoretic limits. Moreover, an appropriate choice of $V$ can capture a wide range of noise eigenvalue spectrums in different applications.
\citet{barbier2023fundamental} make progress towards understanding the information-theoretic limits of the problem and provide important insights regarding the structure of the optimal computationally efficient algorithms.
\begin{itemize}
\item \emph{Information-theoretic Limits.} \citet{barbier2023fundamental} develop a general (although non-rigorous) recipe based on the replica method to derive conjectured formulas for Bayes risk under the assumption that the potential $V$ is a polynomial function, providing explicit conjectured formulas for the asymptotic Bayes risk when $V$ is a quartic (degree four) or a sestic (degree six) polynomial. As the degree of $V$ grows, the derivation and the resulting formulas become increasingly complex and a general conjecture for the Bayes risk is unavailable.  \looseness=-1 
\item \emph{Optimal Computationally Efficient Algorithms.} Surprisingly, \citet{barbier2023fundamental} demonstrate that a natural generalization of the Bayes-optimal AMP algorithm (this is the optimal iterative algorithm for i.i.d. Gaussian noise and is conjectured to be the optimal polynomial-time algorithm) to the rotationally invariant noise model is sub-optimal. The authors develop AMP algorithms that achieve improved performance by applying a non-linear matrix denoiser to the eigenvalues of the observed matrix $\mY$ and characterize their state evolution. This is in sharp contrast to the i.i.d. Gaussian noise model, where such a matrix denoising step is unnecessary. Based on non-rigorous statistical physics techniques \citep{opper2001adaptive}, the authors provide a procedure to derive good matrix denoisers for the problem. The authors propose matrix denoisers with explicit formulas when $V$ is a quartic or sestic polynomial (a general formula is not available). However, the state evolution of the resulting AMP algorithm is quite complicated; hence, the resulting algorithm's optimality (or sub-optimality) properties are not understood.
\end{itemize}
\paragraph*{Our Contributions} We take inspiration from the insights of \citet{barbier2023fundamental} and study the spiked matrix model with rotationally invariant noise from an algorithmic point of view. Our results are not restricted to the trace ensemble with a  polynomial potential $V$. Rather, they apply to all rotationally invariant noise matrices that meet mild regularity conditions. Our main contributions are:  \looseness=-1 
 \begin{itemize}
     \item We develop a new class of AMP algorithms for this problem and provide a state evolution result that characterizes their dynamics in the high-dimensional limit (\thref{thm:SE} in \sref{sec:main-results-OAMP}). At each iteration, these algorithms exploit the noise structure by applying a non-linear matrix denoiser to the eigenvalues of the observed matrix and the signal structure by applying a non-linear iterate denoiser to the previous iterates. These algorithms can be viewed as natural analogs of orthogonal \citep{ma2017orthogonal} or vector AMP algorithms \citep{rangan2019vector} developed for compressed sensing.  \looseness=-1 
     \item A key feature of the AMP algorithms proposed in this work is that their state evolution is significantly simpler than that of existing AMP algorithms for this problem. Consequently, we are able to exploit our result to derive the optimal choices for the matrix and iterate denoisers (\sref{sec:OAMP_opt}). Interestingly, we find that the matrix denoisers that optimize the performance of the AMP algorithm are closely related to the eigenvalue shrinkage estimators discovered by \citet{bun2016rotational} in a separate line of work on denoising high-rank and unstructured signal matrices corrupted with rotationally invariant noise \citep{ledoit2011eigenvectors,ledoit2012nonlinear,bun2016rotational,lolas2021shrinkage,pourkamali2023rectangular,semerjian2024matrix}. \looseness=-1 
    \item Building on the techniques developed by \citet{celentano2020estimation} and \citet{montanari2022statistically} (in the context of i.i.d. Gaussian noise), we show that the AMP algorithm with the optimal choices for the matrix and iterate denoisers achieves the smallest possible asymptotic estimation error among a broad class of iterative algorithms under a fixed iteration budget (\thref{thm:optimality} in \sref{sec:optimality}). This suggests that this algorithm might be the natural candidate for the optimal polynomial-time estimator for this problem. 
    \item Finally, our results also suggest a general and concise conjecture for a set of fixed point equations which characterize the Bayes risk (\sref{sec:IT-optimality}) for the problem, which we hope can be proved rigorously in the future. 
 \end{itemize} 
\paragraph*{Related Concurrent Work} {Independent work by \citet{barbier2024information}, which appeared shortly after our arXiv submission, also studies this problem from a different and complementary perspective. Using the replica method, the authors derive conjectures for the asymptotic Bayes risk and mutual information for this problem, generalizing their prior work \citep{barbier2023fundamental}. The fixed point equations characterizing the Bayes risk coincide with those derived in our paper, providing further evidence supporting the conjecture. The authors also derive the Thouless-Anderson-Palmer (TAP) equations for this problem. These are high-dimensional non-linear fixed point equations that characterize the Bayes estimator $\E[\vx_{\star} | \mY]$. Using the TAP equations, \citet{barbier2024information} propose a natural algorithm to compute the Bayes estimator. While a state evolution result for this algorithm is not available, based on simulations, the authors conjecture that this algorithm achieves the Bayes-optimal performance whenever it is computationally feasible to do so.}  
\paragraph*{Organization} This paper is organized as follows. We start with some preliminary results in Section \ref{Sec:pre}. The main results of this paper are presented in Section \ref{Sec:main-results}. 
Section \ref{Sec:proof_idea} highlights the key ideas behind our results. Numerical experiments are presented in Section \ref{Sec:simulations}. The complete proofs of our main results and additional numerical results not presented in the main paper are provided in the appendix.
\paragraph*{Notation} We conclude the introduction by defining the notations used in this paper. 
\begin{description}[font=\normalfont\emph,leftmargin=0cm,itemsep=1ex]
\item [\textit{Some common sets:}] The sets $\N, \R, \C$ represent the set of positive integers, real numbers, and complex numbers, respectively. For $\dim \in \N$, $[\dim]$ is the set $\{1, 2, 3, \dotsc, \dim\}$ and $\ortho(\dim)$ denotes the set of $\dim \times \dim$ orthogonal matrices. 
\item[\textit{Linear Algebra:}] For vectors $u,v \in \R^{k}$,  $\|u\|$ is the $\ell_2$ norm of $u$, $\ip{u}{v} = \sum_{i=1}^k u_i v_i$ denotes the standard inner product on $\R^k$, and $\diag(u)$ represents the $k \times k$ diagonal matrix constructed by placing the entries of $u$ along the diagonal. For a matrix $M \in \R^{k \times k}$, $\Tr[M], \|M\|_{\op}, \|M\|$ represent the trace, operator (spectral) norm, and Frobenius norm of $M$ respectively. If $M$ is symmetric, we denote the sorted eigenvalues of $M$ by $\lambda_{1}(M) \geq \dotsb \geq \lambda_{k}(M)$. We use $1_k$ to denote the vector $(1, 1, \dotsc, 1)$ in $\R^k$,  $0_k$ to denote the zero vector $(0, 0, \dotsc, 0)$ in $\R^k$,  $I_{k}$ denotes the $k \times k$ identity matrix, and $e_1, e_2, \dotsc, e_{k}$ to denote the standard basis vectors in $\R^k$. When the dimension is clear from the context, we will abbreviate $1_k, 0_k, I_k$ as $1, 0, I$.  We use the bold-face font for vectors and matrices whose dimensions diverge as $\dim$ (the dimension of the signal vector) grows to $\infty$. For example, the signal $\vx_\star \in \R^{\dim}$, and the noise matrix $\mW \in \R^{\dim \times \dim}$ are bold-faced.
\item[\textit{Probability:}] We use $\E[\cdot], \Var[\cdot], \Cov[\cdot]$ to denote expectations, variances, and covariances of random variables. The 
Gaussian distribution on $\R^{k}$ with mean vector $\mu \in \R^k$ and covariance matrix $\Sigma \in \R^{k \times k}$ is denoted by $\gauss{{\mu}}{\Sigma}$.  For a finite set $A$, $\unif{A}$ represents the uniform distribution on $A$. We will use $\unif{\ortho(\dim)}$ to denote the Haar measure on the orthogonal group $\ortho(\dim)$. For any $x \in \R$, the probability measure $\delta_x$ on $\R$ denotes the point mass at $x$. We use $\pc$ and $\dc$ to denote convergence in probability and distribution, respectively. For a sequence of real-valued random variables $(Y_{\dim})_{\dim \in \N}$, we say that $\plim Y_{\dim} = y$ if $Y_{\dim} \pc y$, $\plimsup Y_{\dim} \leq y$ if for any $\epsilon > 0$, $\lim_{\dim \rightarrow \infty} \P( Y_{\dim} \geq y + \epsilon) = 0$, and $\pliminf Y_{\dim} \geq y$ if for any $\epsilon > 0$, $\lim_{\dim \rightarrow \infty} \P( Y_{\dim} \leq y - \epsilon) = 0$.    
\end{description}

\section{Preliminaries}\label{Sec:pre}
We begin by introducing our assumptions along with some concepts that play an important role in this paper. 
\subsection{Convergence and Asymptotic Equivalence of High-Dimensional Vectors} We will rely on the following notions of convergence and equivalence of high-dimensional vectors.
\begin{definition} \label{def:W2} Let $(\vv_1, \dotsc, \vv_{\ell})$ be a collection of random vectors in $\R^\dim$. We say that the empirical distribution of the entries of the vectors $(\vv_1, \dotsc, \vv_{\ell})$ converges to random variables $(\serv{V}_1, \dotsc, \serv{V}_{\ell})$ as $\dim \rightarrow \infty$ if, for any test function $h: \R^\ell \mapsto \R$ which satisfies: 
\begin{align} \label{eq:PL2-testfunc}
|h(v) - h(v^\prime)|  \leq L \|v - v^\prime\| \cdot (1 + \|v\| + \|v^\prime\|) \quad \forall \; v,v^\prime \; \in \; \R^{\ell},
\end{align}
for some $L < \infty$, we have:
\begin{align*}
\frac{1}{\dim} \sum_{i=1}^\dim h({v}_{1}[{i}], \dotsc, {v}_{\ell}[i]) \pc \E[h(\serv{V}_1, \dotsc, \serv{V}_{\ell})] \text{ as } \dim \rightarrow \infty.
\end{align*}
We denote convergence in this sense using the notation: $({\vv}_{1}, \dotsc, {\vv}_{\ell}) \wc (\serv{V}_1, \dotsc, \serv{V}_{\ell})$. We refer the reader to \citep{bayati2011dynamics,feng2022unifying,fan2022approximate} for more information on this type of convergence. We say that two $\dim$-dimensional random vectors $\vu$ and $\vv$ are asymptotically equivalent if:
\begin{align*}
\frac{\|\vu - \vv\|^2}{\dim} \pc 0 \quad \text{ as } \quad  \dim \rightarrow \infty.
\end{align*}
We denote equivalence in this sense using the notation: $\vu  \explain{$\dim \rightarrow \infty$}{\simeq} \vv$.
\end{definition}
\subsection{Signal and Noise Models} Recall that our goal is to recover the signal vector $\vx_{\star} \in \R^{\dim}$ from the noisy observation $\mY = (\theta/\dim) \cdot \vx_{\star} \vx_{\star}^\top + \mW$. We will allow our estimators the flexibility to exploit any side information $\va \in \R^{\dim \times k}$ available regarding the signal. We posit the following assumptions on the signal $\vx_{\star}$, the side information $\va$, and the noise $\mW$.
\begin{assumption}[Signal and Noise Models] \label{assump:signal-noise} The signal vector and side information satisfy $(\vx_{\star}; \va) \wc (\serv{X}_{\star}; \serv{A})$  for some limiting random variables $(\serv{X}_{\star}, \serv{A})$ with joint distribution $\prior$ which satisfies  $\E[\serv{X}_{\star}^2] = 1$ and $\E[\|\serv{A}\|^2] < \infty$. We model the noise matrix $\mW$ as a random matrix sampled independently of $(\vx_{\star} ; \va)$, with eigen-decomposition:
\begin{align*}
\mW & = \mU \cdot \diag( \lambda_1(\mW), \dotsc, \lambda_{\dim}(\mW)) \cdot \mU^\top,
\end{align*}
where the matrix of eigenvectors $\mU \sim \unif{\ortho(\dim)}$ is a Haar-distributed random orthogonal matrix and the eigenvalues $ \lambda_1(\mW), \dotsc, \lambda_{\dim}(\mW)$ are deterministic. We assume that $\|\mW\|_{\op}$ is bounded by a $\dim$-independent constant $C$ and the spectral measure $\mu_{\dim}$ of $\mW$:
\begin{align*}
\mu_{\dim} & \bydef \frac{1}{\dim} \sum_{i=1}^\dim \delta_{\lambda_i(\mW)}
\end{align*}
converges weakly to a compactly supported distribution $\mu$ on $\R$. We require the limiting spectral measure $\mu$ to be absolutely continuous with respect to the Lebesgue measure. Furthermore, the density of $\mu$, which we denote using the same symbol $\mu: \R \mapsto \R$, is assumed to be Holder continuous in the following sense:
\begin{align} \label{eq:holder}
|\mu(\lambda) - \mu(\lambda^\prime)| & \leq L \cdot |\lambda - \lambda^\prime|^{\alpha} \quad \forall \; \lambda, \lambda^\prime \; \in \; \R,
\end{align}
where $L< \infty$ and $\alpha >0$ are some constants.
\end{assumption}

\subsection{Spectral Measure in the Signal Direction} \label{sec:spec-sig} Let $\nu_{\dim}$ denote the spectral measure of the observed matrix $\mY$ in the direction of the signal:
\begin{align} \label{eq:signal-spec}
\nu_{\dim} & \bydef \frac{1}{\dim} \sum_{i=1}^\dim  \ip{\vu_i(\mY)}{\vx_{\star}}^2 \cdot \delta_{\lambda_i(\mY)},
\end{align}
where $\lambda_{1}(\mY), \dotsc, \lambda_{\dim}(\mY)$ denote the eigenvalues of $\mY$ and $\vu_1(\mY), \dotsc, \vu_{\dim}(\mY)$ denote the corresponding eigenvectors. 
The measure $\nu_{\dim}$ and its weak limit $\nu$ play a key role in our analysis. Expressing the state evolution of the proposed AMP algorithm in terms of $\nu$ leads to a concise formula. The following lemma (proved in Section \ref{App:pre-RMT} in the appendix) provides a formula for the density of $\nu$ in terms of the function $\phi: \R \mapsto \R$:
\begin{align} \label{eq:phi}
\phi(\lambda) & \explain{def}{=} {(1- \pi \theta \hlb_{\mu}(\lambda))^2 + \pi^2 \theta^2  \mu^2(\lambda)} \quad \lambda \; \in \; \R.
\end{align}
In the above display, $\hlb_{\mu}:\R \mapsto \R$ denotes the Hilbert transform of $\mu$,  which is defined as the Cauchy principal value of the following singular integral\footnote{Under the Holder continuity requirement \eqref{eq:holder} on $\mu$, the limit in \eqref{Eqn:Hilbert_def_pre} exists and the Hilbert transform $\hlb_{\mu}: \R \mapsto \R$ is also Holder continuous in the sense of \eqref{eq:holder}; see \citep[Section 2.1]{pastur2011eigenvalue} and \citep[Theorem 14.11a]{henrici1993applied}.
}:
\begin{equation}\label{Eqn:Hilbert_def_pre}
\hlb_{\mu}(z) \explain{def}{=}  \lim_{\epsilon \rightarrow 0} \frac{1}{\pi} \int_{|z-\lambda| \geq \epsilon} \frac{\mu(\lambda)}{z- \lambda} \; \diff 
 \lambda \quad \forall \; z \; \in \;  \R.
\end{equation}
The measure $\nu$ and the function $\phi$ will be frequently referenced in this paper. 
\begin{lemma} \label{lem:RMT} We have,
\begin{enumerate}
\item The measure $\nu_{\dim}$ converges weakly in probability to a compactly supported probability measure $\nu$ on $\R$ as $\dim \rightarrow \infty$.
\end{enumerate}
Let $\nu = \nu_{\parallel} + \nu_{\perp}$ denote the Lebesgue decomposition of $\nu$ into the absolutely continuous part $\nu_{\parallel}$ and the singular part $\nu_{\perp}$. Then,
\begin{enumerate}
\setcounter{enumi}{1}
\item For Lebesgue-almost every $\lambda$, $\phi(\lambda) \neq 0$ and for $\nu_{\perp}$-almost every $\lambda$, $\phi(\lambda) = 0$.   
\item The density of the absolutely continuous part of $\nu$ is given by $\mu(\cdot)/\phi(\cdot)$ where $\mu(\cdot)$ denotes the density of $\mu$ and the function $\phi(\cdot)$ is as defined in \eqref{eq:phi}. 
\end{enumerate}
\end{lemma}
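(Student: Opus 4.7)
I would prove the three parts in order: derive an explicit formula for $\stl_{\nu_{\dim}}(z)$ via Sherman--Morrison, pass to the limit using the rotational invariance of $\mW$, and then read off the absolutely continuous density and the location of the singular part via the Stieltjes--Perron inversion formula.

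For Part 1, the defining sum rearranges to $\stl_{\nu_{\dim}}(z) = \frac{1}{\dim}\vx_\star^\top(z\mI - \mY)^{-1}\vx_\star$. Applying Sherman--Morrison to the rank-one perturbation $\mY = \mW + (\theta/\dim)\vx_\star\vx_\star^\top$ and simplifying yields
\begin{equation*}
\stl_{\nu_{\dim}}(z) \;=\; \frac{s_{\dim}(z)}{1 - \theta\,s_{\dim}(z)}, \qquad s_{\dim}(z) \;:=\; \frac{1}{\dim}\vx_\star^\top(z\mI - \mW)^{-1}\vx_\star.
\end{equation*}
By Assumption~\ref{assump:noise}, the rotated vector $\tilde{\vx} := \mU^\top\vx_\star$ is (conditionally on $\vx_\star$ and the eigenvalues of $\mW$) uniform on the sphere of radius $\|\vx_\star\|$, which I would represent as $\tilde{\vx} = \|\vx_\star\|\,g/\|g\|$ with $g \sim \gauss{0}{I_\dim}$. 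Writing
\begin{equation*}
s_{\dim}(z) \;=\; \frac{\|\vx_\star\|^2/\dim}{\|g\|^2/\dim}\,\cdot\,\frac{1}{\dim}\sum_{i=1}^\dim \frac{g_i^2}{z - \lambda_i(\mW)},
\end{equation*}
the prefactor tends to $1$ in probability (by $\|\vx_\star\|^2/\dim \pc 1$ from Assumption~\ref{assump:signal} and the law of large numbers for $\|g\|^2/\dim$), while the quadratic form in $g$ concentrates on $\frac{1}{\dim}\sum_i (z - \lambda_i(\mW))^{-1} \to \stl_\mu(z)$ by independence of the $g_i$ and weak convergence of the spectral measure of $\mW$. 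Hence $s_{\dim}(z) \pc \stl_\mu(z)$ pointwise in $z \in \C\setminus\R$, and tightness of $\{\nu_{\dim}\}$ from the uniform bound on $\|\mY\|_{\op}$ upgrades pointwise Stieltjes transform convergence to weak convergence $\nu_{\dim} \to \nu$ with the claimed limit.

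For Parts 2 and 3, Holder continuity of $\mu$ (Assumption~\ref{assump:noise}) guarantees via Plemelj--Sokhotski that $\stl_\mu(\lambda + i\epsilon) \to \pi\hlb_\mu(\lambda) - i\pi\mu(\lambda)$ as $\epsilon \downarrow 0$, with continuous limit in $\lambda$. Substituting into $\stl_\nu = \stl_\mu/(1 - \theta\stl_\mu)$ and rationalizing the denominator yields
\begin{equation*}
|1 - \theta\stl_\mu(\lambda + i 0^+)|^2 \;=\; \phi(\lambda), \qquad \operatorname{Im}[\stl_\nu(\lambda + i 0^+)] \;=\; -\frac{\pi\mu(\lambda)}{\phi(\lambda)}
\end{equation*}
whenever $\phi(\lambda) \neq 0$. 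The Stieltjes--Perron inversion formula then identifies the density of $\nu_\parallel$ as $\mu(\cdot)/\phi(\cdot)$ (Part 3), and shows that $\nu_\perp$ is concentrated where $\stl_\nu$ develops a non-finite boundary value, i.e.\ where the denominator $1 - \theta\stl_\mu(\lambda + i 0^+)$ vanishes, which is precisely $\{\phi = 0\}$ (the $\nu_\perp$-a.e.\ half of Part 2).

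The most delicate step, which I expect to be the main obstacle, is showing that $\{\phi = 0\}$ has Lebesgue measure zero. Since $\phi$ is a sum of two squares, $\{\phi = 0\} \subseteq \{\mu = 0\} \cap \{\pi\theta\hlb_\mu = 1\}$. On the complement of $\operatorname{supp}(\mu)$, the Stieltjes transform $\stl_\mu$ is real-analytic, hence so is $\hlb_\mu$; combined with the decay $\hlb_\mu(\lambda)\to 0$ as $|\lambda|\to\infty$, the level set $\{\hlb_\mu = 1/(\pi\theta)\}$ is discrete on each connected component of $\R \setminus \operatorname{supp}(\mu)$. Inside $\operatorname{supp}(\mu)$, continuity of the density together with the definition of the topological support preclude $\mu$ from vanishing on any open subinterval, and the Holder continuity quantified in Assumption~\ref{assump:noise} provides the right tool to control the remaining (possibly fractal) zero set. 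Combining these two regimes yields that $\{\phi = 0\}$ is Lebesgue-null, completing Part 2.
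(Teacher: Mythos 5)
Your Part 1 argument matches the paper's: Sherman--Morrison on the rank-one update, concentration of the quadratic form $\frac{1}{\dim}\vx_\star^\top(z\mI-\mW)^{-1}\vx_\star$ to $\stl_\mu(z)$, and then weak convergence from Stieltjes-transform convergence plus a compactness argument (the paper instead checks the $\limsup_{y\to\infty} y|\stl_\nu(iy)|=1$ criterion, but the outcome is the same). Your identification of the boundary value $|1-\theta\stl_\mu(\lambda+i0^+)|^2 = \phi(\lambda)$ and the resulting formula $\Im[\stl_\nu(\lambda+i0^+)] = -\pi\mu(\lambda)/\phi(\lambda)$ also agrees with the paper, and the Stieltjes--Perron inversion then gives Part~3 and the $\nu_\perp$-a.e.\ half of Part~2 just as you say.

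The gap is in the last paragraph: your argument that $\{\phi=0\}$ is Lebesgue-null does not close. You split $\{\phi=0\}\subseteq\{\mu=0\}\cap\{\pi\theta\hlb_\mu=1\}$ into ``off $\operatorname{supp}(\mu)$'' and ``inside $\operatorname{supp}(\mu)$.'' Off the support the real-analyticity argument is fine, but inside the support the appeal to Holder continuity does not ``control the remaining (possibly fractal) zero set'': Holder continuity of a density $\mu$ places no constraint on the Lebesgue measure of $\{\mu=0\}\cap\operatorname{supp}(\mu)$ (for instance, the distance function to a fat Cantor set is Lipschitz and vanishes exactly on a set of positive measure, and one can normalize this into a Holder density). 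Nor have you shown that $\{\pi\theta\hlb_\mu=1\}$ cannot simultaneously cover such a set. So, as written, the argument simply asserts what is to be proved. The clean route, and the one the paper uses, is to invoke the general fact about boundary values of Stieltjes transforms (Fatou's theorem for Herglotz functions; the paper cites \cite[Lemma~2.17]{belinschi2008lebesgue}): for any finite measure $\chi$ on $\R$, the nontangential limit $\lim_{\epsilon\downarrow 0}\Im[\stl_\chi(\lambda+i\epsilon)]$ \emph{exists and is finite for Lebesgue-almost-every} $\lambda$, equals $-\pi\frac{d\chi_\parallel}{d\lambda}(\lambda)$ there, and diverges for $\chi_\perp$-a.e.\ $\lambda$. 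Applying this to $\chi=\nu$ and using your boundary-value computation, the finiteness a.e.\ immediately forces $\phi(\lambda)\neq 0$ Lebesgue-a.e.\ without any regularity analysis of the zero set. I would replace your last paragraph with this single citation; the rest of your proposal then goes through.
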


\begin{remark} {Consider a typical situation when $\mu$ is supported on a single interval $[\lambda_{-}, \lambda_{+}]$ with a positive density on $(\lambda_-, \lambda_+)$. The results of \citet{benaych2011eigenvalues} show that when the SNR $\theta$ exceeds the critical threshold $\theta_c \explain{def}{=} 1/(\pi \hlb_\mu(\lambda_+))$, $\lambda_1(\mY)$, the largest eigenvalue of $\mY$,  separates from the support of $\mu$ and converges to a limit $\lambda_c \in (\lambda_+, \infty)$ which is the unique root of the equation (in $\lambda$):
\begin{align} \label{eq:outlier}
    \hlb_{\mu}(\lambda) = \frac{1}{\pi \theta} \; \explain{\eqref{eq:phi}}{\Leftrightarrow} \; \phi(\lambda) = 0 \quad \text{on the domain } \lambda \in (-\infty,\lambda_-] \cup [\lambda_+, \infty).
\end{align}
Moreover, the corresponding eigenvector satisfies:
\begin{align*}
    \frac{\ip{\vu_1(\mY)}{\vx_{\star}}^2}{\dim} \pc  -\frac{1}{\theta^2 \pi \hlb_\mu^\prime(\lambda_c)}.
\end{align*}
In light of the definition of $\nu_\dim$ \eqref{eq:signal-spec},  we expect that $\nu$ should have a point mass at $\lambda_c$ with weight $-\frac{1}{\theta^2 \pi \hlb_\mu^\prime(\lambda_c)}$. This is consistent with \lemref{lem:RMT}, which shows that the singular part of $\nu$ concentrates on the set $\{\lambda \in \R: \phi(\lambda) = 0\}$ (cf. \eqref{eq:outlier}), and hence consists of a point mass at $\lambda_c$.}
\end{remark}

\subsection{Gaussian Channels} We will also use some basic notions regarding Gaussian channels, introduced in the definition below.
\begin{definition}[Gaussian Channel]\label{def:gauss-channel} A Gaussian channel is  a collection of real-valued random variables $(\serv{X}_\star, \serv{X}_1, \dotsc, \serv{X}_t; \serv{A})$ where the signal $\serv{X}_{\star}$ and the side information $\serv{A}$ are drawn from the prior $\pi$ from  \assumpref{assump:signal-noise}. The observations $\serv{X}_1, \dotsc, \serv{X}_t$ are given by: $\serv{X}_i =  \alpha_i \serv{X}_{\star} + \serv{Z}_i \quad \forall \; i \in [t]$, where $\alpha_1, \dotsc, \alpha_t$ are real numbers and $(\serv{Z}_1, \dotsc, \serv{Z}_t) \sim \gauss{0}{\Sigma}$ are zero mean and jointly Gaussian random variables, which are independent of $(\serv{X}_{\star}, \serv{A})$. 
We introduce some important notions related to Gaussian channels.
\begin{description}[font=\normalfont\emph,leftmargin=0cm,itemsep=1ex]
\item[\textit{MMSE and MMSE Estimator.}] The minimum mean squared error (MMSE) for estimating the signal $\serv{X}_{\star}$ based on the observations $(\serv{X}_1, \dotsc, \serv{X}_t)$ and the side information $\serv{A}$, denoted by $\bmmse(\serv{X}_\star | \serv{X}_1, \dotsc, \serv{X}_t; \serv{A})$, is defined as:
\begin{align} \label{eq:MMSE-def}
\bmmse(\serv{X}_\star | \serv{X}_1, \dotsc, \serv{X}_t; \serv{A}) \explain{def}{=}  \min_{f \in L^2(\serv{X}_1, \dotsc, \serv{X}_t; \serv{A})} \E[\{\serv{X}_\star - f(\serv{X}_1, \dotsc, \serv{X}_t; \serv{A})\}^2],
\end{align}
where the minimum is over $L^2(\serv{X}_1, \dotsc, \serv{X}_t; \serv{A})$, which denotes the set of all measurable functions $f: \R^{t+\auxdim} \mapsto \R$ satisfying $\E[f^2(\serv{X}_1, \dotsc, \serv{X}_t; \serv{A})] < \infty$. The function $f \in L^2(\serv{X}_1, \dotsc, \serv{X}_t; \serv{A})$ that  minimizes the RHS in \eqref{eq:MMSE-def} is called the MMSE estimator for $\serv{X}_{\star}$. 
\item[\textit{DMMSE and DMMSE Estimator.}] The divergence-free minimum mean squared error (DMMSE) for estimating the signal $\serv{X}_{\star}$ based on  $(\serv{X}_1, \dotsc, \serv{X}_t; \serv{A})$, denoted by $\bdmmse(\serv{X}_\star | \serv{X}_1, \dotsc, \serv{X}_t; \serv{A})$, is defined as:
\begin{align} 
&\bdmmse(\serv{X}_\star | \serv{X}_1, \dotsc, \serv{X}_t; \serv{A}) \explain{def}{=}  \label{eq:DMMSE-def} \\ &\min_{f \in L^2(\serv{X}_{1:t}; \serv{A})} \E[\{\serv{X}_\star - f(\serv{X}_1, \dotsc, \serv{X}_t; \serv{A})\}^2] \quad \text{subject to } \E[\serv{Z}_i f(\serv{X}_{1:t}; \serv{A})] = 0, \; \forall \; i \; \in \; [t], \nonumber
\end{align}
where $\serv{X}_{1:t}$ is a shorthand for the collection of random variables $(\serv{X}_1,\ldots,\serv{X}_t)$. The constraints $\E[\serv{Z}_i f(\serv{X}_1, \dotsc, \serv{X}_t; \serv{A})] =0,\; \forall \; i \in [t]$ in \eqref{eq:DMMSE-def} require the estimator to be uncorrelated with the noise and are called \emph{divergence-free} constraints. The function $f$ which minimizes the RHS in \eqref{eq:DMMSE-def} is called the DMMSE estimator for $\serv{X}_{\star}$.
\item [\textit{Scalar Gaussian Channels.}] A important role is played by \emph{scalar} Gaussian channels, which refers to a  collection of real-valued random variables $(\serv{X}_\star, \serv{X}; \serv{A})$ generated as follows:
\begin{align*}
(\serv{X}_{\star}; \serv{A}) \sim \pi, \quad \serv{X}| \serv{X}_{\star}; \serv{A} \sim \gauss{\sqrt{\omega} \cdot \serv{X}_{\star}}{1-\omega},
\end{align*}
where $\omega \in [0,1]$ is called the signal-to-noise ratio (SNR) of the channel. Notice that a scalar Gaussian channel is a Gaussian channel with a single observation $\serv{X}$ normalized to satisfy $\E[\serv{X}^2] = 1$. For a scalar Gaussian channel $(\serv{X}_\star, \serv{X}; \serv{A})$, we define several important functions which will play a key role in this paper. 
\begin{description}[font=\normalfont\emph,leftmargin=0cm,itemsep=0ex]
\item[\textit{MMSE Function and MMSE Estimator for Scalar Gaussian Channels.}] The function $\mmse: [0,1] \mapsto [0,1]$ represents MMSE of a scalar Gaussian channel as a function of the SNR $\omega$:
\begin{align*}
\mmse(\omega) \explain{def}{=} \bmmse(\serv{X}_\star | \serv{X}, \serv{A}).
\end{align*}The function $\bdnsr(\cdot | \omega) : \R \times \R^{\auxdim} \mapsto \R$ denotes the MMSE estimator for the scalar Gaussian channel at SNR $\omega$:
\begin{align} \label{eq:mmse-scalar}
    \bdnsr(x; \aux | \omega) \explain{def}{=} \E[\serv{X}_{\star} | \serv{X} = x, \serv{A} = a] \quad \forall \; x \in \R, \; \aux \in \R^{\auxdim}.
\end{align}
\item[\textit{DMMSE Function and DMMSE Estimator for Scalar Gaussian Channels.}] The function $\dmmse: [0,1] \mapsto [0,1]$ represents DMMSE of a scalar Gaussian channel as a function of the SNR $\omega$:
\begin{align*}
\dmmse(\omega) \explain{def}{=} \bdmmse(\serv{X}_\star | \serv{X}, \serv{A}).
\end{align*}
\item The function $\dfbdnsr(\cdot | \omega) : \R \times \R^{\auxdim} \mapsto \R$ denotes the DMMSE estimator for the scalar Gaussian channel at SNR $\omega$. \citet{ma2017orthogonal} have shown that the DMMSE estimator of a scalar Gaussian channel at SNR $\omega$ is given by (see Lemma \ref{lem:scalarization-mmse} in the appendix for a self-contained proof):
\begin{align} \label{eq:dmmse-scalar}
\dfbdnsr (x; \aux| \omega) &\explain{def}{=}  \begin{cases} 
 \left(1 - \frac{\sqrt{\omega}}{\sqrt{1-\omega}} \E[\serv{Z}\bdnsr({\serv{X};\serv{A} | \omega)]}   \right)^{-1} \left(\bdnsr(x;a | \omega) - \frac{\E[\serv{Z}\bdnsr({\serv{X};\serv{A}} | \omega)]}{\sqrt{1-\omega}} x \right), &  \omega < 1, \\ x, & \omega = 1. \end{cases}
\end{align} 
\end{description}
\end{description}
\end{definition}
Finally, we will impose the following regularity condition in our analysis.
\begin{assumption}\label{assump:regularity} For any $\omega \in [0,1]$, the MMSE estimator $\bdnsr( \cdot | \omega) : \R \times \R^{\auxdim} \mapsto \R$ for the scalar Gaussian channel $(\serv{X}_{\star}; \serv{A}) \sim \pi, \quad \serv{X} | \serv{A}, \serv{X}_{\star} \sim  \gauss{\sqrt{\omega}  \cdot \serv{X}_{\star}}{1-\omega}$ is continuously differentiable and Lipschitz\footnote{In the absence of any side information, a sufficient condition for \assumpref{assump:regularity} to hold is that the signal random variable $\serv{X}_{\star}$  is compactly supported (see \citep[Remark 2.3]{montanari2021estimation} and \citep[Lemma 3.8]{feng2022unifying}).}.
\end{assumption}


\section{Main results}\label{Sec:main-results}
We now present the main results obtained in this paper.
\subsection{Orthogonal Approximate Message Passing Algorithms} \label{sec:main-results-OAMP}
We introduce a class of iterative methods to estimate the signal $\vx_{\star}$ using observation $\mY$ generated from the spiked matrix model \eqref{eq:model}, called \emph{Orthogonal Approximate Message Passing} (OAMP) algorithms.
\begin{definition}[OAMP algorithms]\label{Def:OAMP_main} An OAMP algorithm generates iterates $\vx_1, \vx_2, \dotsc$ in $\R^{\dim}$ according to the update rule:
\BS\label{Eqn:OAMP}
\begin{align} 
\vx_t&=\Psi_t(\bm{Y}) \cdot f_t(\vx_1, \dotsc, \vx_{t-1};\va) \quad \forall \;  t\in\mathbb{N},
\end{align}
where $\mY \in \R^{\dim \times \dim}$ is the observed noisy matrix and  $\va\in\mathbb{R}^{\dim \times k}$ denotes the side information available for estimating $\vx_\star$. The estimate of $\vx_\star$ at iteration $t$ is obtained by applying a post-processing function $\psi_t$ to the iterates $\vx_1, \dotsc, \vx_t$ and the side-information $\va$:
\BE
\hat{\vx}_t=\psi_t\left(\vx_{1}, \dotsc, \vx_t;\va \right).
\EE
\ES
In the above equations, 
for each $t \in \N$, the matrix denoiser $\Psi_t:\mathbb{R}\mapsto\mathbb{R}$ acts on the matrix $\mY$ in the following way: if $\mY = \mO \diag(\lambda_1, \dotsc, \lambda_{\dim}) \mO^\top $ is the eigen-decomposition of $\mY$, then, $\mfunc_t(\mY) = \mO \diag(\mfunc_t(\lambda_1),  \dotsc, \mfunc_t(\lambda_\dim)) \mO^\top$. Likewise, the iterate denoisers $\fnonlin_t: \R^{t-1} \times \R^{\auxdim} \mapsto \R$ and the post-processing function function $\psi_t:\mathbb{R}^{t} \times \R^{\auxdim} \mapsto\mathbb{R}$ act entry-wise on the $\dim$ components of its vector inputs. {We also require that the matrix denoisers $(\mfunc_t)_{t \in \N}$ and iterate denoisers $(\fnonlin_t)_{t \in \N}$ satisfy certain constraints, which we will introduce in equations \eqref{eq:trace-free} and \eqref{eq:div-free-constraint} below, after defining the notion of state evolution random variables associated with an OAMP algorithm.}
\begin{description}[font=\normalfont\emph,leftmargin=0cm,itemsep=0ex]
\item[\textit{State Evolution Random Variables.}] Each OAMP algorithm is associated with a collection of {state evolution random variables} $(\serv{X}_{\star}, (\serv{X}_{t})_{t \in \N}; \serv{A})$, which describes the joint asymptotic behavior of the signal $\vx_{\star}$, the iterates $(\vx_t)_{t \in \N}$, and the side information $\va$. The distribution of these random variables is given by:
\begin{subequations} \label{Eqn:SE_variables_def}
\begin{align}
(\serv{X}_\star, \serv{A}) \sim \pi, \quad \serv{X}_{t} = \beta_t \serv{X}_\star + \serv{Z}_{t} \quad \forall \; t \; \in \; \N,
\end{align}
where $(\beta_t)_{t \in \N}$ is defined via the recursion:
\begin{align} 
\beta_t & \explain{def}{=} \E[\serv{X}_\star \fnonlin_t(\serv{X}_1, \dotsc, \serv{X}_{t-1}; \serv{A})] \cdot \E_{\serv{\Lambda}_{\nu} \sim \nu}\big[\mfunc_t(\serv{\Lambda}_\nu)],
\end{align}
and $(\serv{Z}_{t})_{t \in \N}$ are zero mean jointly Gaussian random variables, independent of $(\serv{X}_\star;\serv{A})$, whose covariance matrix is given by the recursion:
\begin{eqnarray}
\E[\serv{Z}_{s}\serv{Z}_{t}]  &=& \E[\serv{X}_\star {\serv{F}}_{s}] \E[\serv{X}_\star {\serv{F}}_{t}] \cdot \Cov_{\substack{}{\serv{\Lambda}_\nu} \sim \nu}[{\Psi}_s(\serv{\Lambda}_\nu),{\Psi}_t(\serv{\Lambda}_\nu)] \\
&&\nonumber+ (\E[{\serv{F}}_{s} {\serv{F}}_{t}] - \E[\serv{X}_\star {\serv{F}}_{s}]\E[\serv{X}_\star {\serv{F}}_{t}] )  \cdot \Cov_{\substack{}{\serv{\Lambda}} \sim \mu}[{\Psi}_s(\serv{\Lambda}),{\Psi}_t(\serv{\Lambda})].
\end{eqnarray}
\end{subequations}
In the above display, $\serv{F}_s \explain{def}{=} \fnonlin_s(\serv{X}_1, \dotsc, \serv{X}_{s-1}; \serv{A})$ and $\serv{F}_t \explain{def}{=} \fnonlin_t(\serv{X}_1, \dotsc, \serv{X}_{t-1}; \serv{A})$.
\item[\textit{Requirements on Matrix Denoisers.}] The matrix denoisers $(\mfunc_t)_{t \in \N}$ used in the OAMP algorithm should be continuous functions which do not change with $\dim$, and are required to satisfy the \emph{trace-free constraint}:
\begin{align} \label{eq:trace-free} \E[\mfunc_t(\serv{\Lambda})] = 0, \quad \serv{\Lambda} \sim \mu.
\end{align}
 \item [\textit{Requirements on Iterate Denoisers and Post-processing Functions.}] For each $t \in \N$, the iterate denoiser $\fnonlin_t: \R^{t-1} \times \R^{\auxdim} \mapsto \R$ and the post-processing function $\psi_t:\mathbb{R}^{t} \times \R^{\auxdim} \mapsto\mathbb{R}$ used in the OAMP algorithm should be continuously differentiable and Lipschitz functions which do not change with $\dim$. In addition, the iterate denoisers $(\fnonlin_t)_{t \in \N}$ are required to satisfy the \emph{divergence-free} constraint:
    \begin{align} \label{eq:div-free-constraint}
    \E[\partial_{s} \fnonlin_t(\serv{X}_1, \dotsc, \serv{X}_{t-1}; \serv{A})] & = 0 \quad \forall \; s \; \in \; [t-1], \; t \; \in \; \N,
    \end{align}
where $\partial_{s} \fnonlin_t$ denotes the partial derivative of $ \fnonlin_t(x_1, \dotsc, x_s, \dotsc, x_{t-1}; \aux)$ with respect to $x_s$.    
\end{description}
\end{definition}
Our first main result is the following theorem, which provides a characterization of the dynamics of an OAMP algorithm, in the high-dimensional limit, in terms of the associated state evolution random variables.
\begin{theorem}[State evolution of OAMP] \label{thm:SE} Consider a general OAMP algorithm of the form \eqref{Eqn:OAMP} {that satisfies the requirements stated in \defref{Def:OAMP_main}}, and let $\{\serv{X}_{\star}, (\serv{X}_t)_{t \in \N}, \serv{A}\}$ be the associated state evolution random variables. Then for any $t \in \N$,
\begin{align} \label{eq:oamp-asymptotics}
\left(\vx_\star, \vx_1, \vx_2, \dotsc, \vx_t; \bm{a}\right) \wc (\serv{X}_\star, \serv{X}_{1}, \dotsc, \serv{X}_{t}; \serv{A}).
\end{align}
\end{theorem}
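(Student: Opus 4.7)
I would prove the theorem by induction on the iteration index $t$. The backbone of the argument is a Haar-conditioning trick: because each eigenvector $\vu_i \bydef \vu_i(\mY)$ admits a decomposition into a component aligned with $\vx_\star$ (whose strength is governed by the signal-direction spectral measure $\nu$ of \lemref{lem:RMT}) and a component that is Haar-distributed on $\vx_\star^\perp$, the action of $\mfunc_t(\mY)$ on any previously constructed vector can be analyzed by conditioning on a finite-rank family of alignment statistics and exploiting rotational invariance for the remaining randomness. This is the analogue, in the rotationally invariant setting, of Bolthausen's Gaussian conditioning technique used in the classical AMP analysis.

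\textbf{Base case $(t=1)$.} Decompose $\mY=\sum_i \lambda_i \vu_i \vu_i^\top$ and write $\vu_i = \zeta_i \vx_\star/\|\vx_\star\| + \vp_i$ with $\vp_i \in \vx_\star^\perp$. By \lemref{lem:RMT}, the signal-weighted measure $\sum_i \zeta_i^2 \delta_{\lambda_i}$ converges weakly to $\nu$ while $\dim^{-1}\sum_i \delta_{\lambda_i}$ converges to $\mu$; moreover, conditional on the eigenvalues and alignments $(\lambda_i,\zeta_i)_i$, the normalized $\vp_i$'s are jointly Haar on $\vx_\star^\perp$. Plugging this decomposition into
\[
\vx_1 = \mfunc_1(\mY)\fnonlin_1(\va) = \sum_i \mfunc_1(\lambda_i)\,\vu_i\,\ip{\vu_i}{\fnonlin_1(\va)},
\]
the signal-aligned contribution produces $\beta_1 \vx_\star + o(1)$ in the $W_2$ sense with $\beta_1 = \E[\serv{X}_\star \fnonlin_1(\serv{A})]\cdot\E_{\serv{\Lambda}_\nu\sim\nu}[\mfunc_1(\serv{\Lambda}_\nu)]$, while the $\vp_i$-contribution—after the trace-free constraint \eqref{eq:trace-free} cancels the diagonal self-interaction—behaves as a centered Gaussian with the variance prescribed by \eqref{Eqn:SE_variables_def}.

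\textbf{Inductive step.} Assume the claim up through $t-1$ and set $\vec{F}_t \bydef \fnonlin_t(\vx_1,\dotsc,\vx_{t-1};\va)$; Lipschitz continuity of $\fnonlin_t$ together with the inductive hypothesis yields $(\vx_\star,\vec{F}_t;\va)\wc(\serv{X}_\star,\serv{F}_t;\serv{A})$. Condition on the $\sigma$-algebra generated by the spectrum of $\mY$ and by the alignment vectors $(\ip{\vu_i}{\vx_\star},\ip{\vu_i}{\vx_1},\dotsc,\ip{\vu_i}{\vx_{t-1}})_i$; under this conditioning, each $\vu_i$ remains Haar on the orthogonal complement of $\Span(\vx_\star,\vx_1,\dotsc,\vx_{t-1})$ intersected with the appropriate alignment slice. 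Decompose $\vec{F}_t$ along this span and its orthogonal complement. The span part, combined with the divergence-free constraint \eqref{eq:div-free-constraint} via a Stein/Gaussian-integration-by-parts identity, delivers only the $\beta_t \vx_\star$ term—the coefficients on $\vx_1,\dotsc,\vx_{t-1}$ vanish asymptotically. The perpendicular part produces a centered Gaussian whose covariance splits exactly into the $\Cov_\nu$ piece (from the $\zeta_i$-weighting along $\vx_\star$ in the bilinear forms $\ip{\vu_i}{\vec{F}_t}$) plus the $\Cov_\mu$ piece (from the orthogonal Haar components), matching \eqref{Eqn:SE_variables_def}. The trace-free constraint \eqref{eq:trace-free} once more eliminates the would-be Onsager-type correction from the diagonal of $\mfunc_t(\mY)$.

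\textbf{Main obstacle.} The most delicate technical point is propagating the conditional Haar structure across iterations: after $t$ steps the eigenvectors $\vu_i$ are constrained by the jointly random family $(\ip{\vu_i}{\vx_s})_{s<t,\,i\in[\dim]}$, whose joint distribution is itself only characterized recursively, so the induction must carry a strong enough joint statement to re-apply the Haar conditioning lemma at each step. Accompanying this are the quantitative estimates: concentration of bilinear forms $\ip{\vp_i}{\vec{F}_t}$ against polynomial approximations of $\mfunc_t$, and exploitation of the H\"older regularity of $\mu$ (hence of the density $\mu/\phi$ of $\nu_\parallel$ appearing in \lemref{lem:RMT}) to control the resulting error terms. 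Getting the split between the $\nu$- and $\mu$-covariance components exactly right in \eqref{Eqn:SE_variables_def} ultimately hinges on the sharp weighting $\sum_i\zeta_i^2\delta_{\lambda_i}\to\nu$ supplied by \lemref{lem:RMT}, and verifying this weighting uniformly against all test functions needed in the induction is where the heaviest lifting will occur.
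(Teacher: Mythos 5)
Your proposal takes a genuinely different route from the paper's. You pursue a direct Bolthausen-style conditioning on the eigenvectors of $\mY$: exploit that, conditionally on $\vx_\star$, the law of $\mY$ is invariant under rotations fixing $\vx_\star$, decompose each $\vu_i$ into its $\vx_\star$-alignment $\zeta_i$ (with $\sum_i\zeta_i^2\delta_{\lambda_i(\mY)}\to\nu$ by \lemref{lem:RMT}) plus a Haar residual in $\vx_\star^\perp$, and iterate the conditioning. The paper instead uses a reduction: after passing to polynomial matrix denoisers (\lemref{lem:poly-approx}), it shows (\lemref{lem:aux1}) that $\Psi_t(\mY)\vx_\star$ is asymptotically equivalent to $\tilde\Psi_t(\mW)\vx_\star$ for a recursively defined transformed polynomial $\tilde\Psi_t$, while $\Psi_t(\mY)\vv$ is asymptotically equivalent to $\Psi_t(\mW)\vv$ for $\vv$ asymptotically orthogonal to $\{\mW^i\vx_\star\}_i$; this converts the $\mY$-driven recursion into an auxiliary recursion driven by the cleanly rotationally invariant $\mW$, to which an existing state-evolution theorem is then applied (\lemref{lem:aux-OAMP-dyn}). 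The crucial device that makes this tractable is the $\nu$-trick (\lemref{lem:nu-trick}), which gives $\Tr[\tilde\Psi_s(\mW)\tilde\Psi_t(\mW)]/\dim\pc\E[\Psi_s(\serv{\Lambda}_\nu)\Psi_t(\serv{\Lambda}_\nu)]$ without ever needing a closed form for $\tilde\Psi_t$; this is exactly the ``$\sum_i\zeta_i^2\delta_{\lambda_i}\to\nu$'' weighting you flag, routed through the auxiliary algorithm. Your decomposition of $\fnonlin_t$ onto $\Span(\vx_\star,\vx_{<t})$ and its complement, with divergence-freeness collapsing the span coefficients onto $\vx_\star$ alone, is correct and is essentially the paper's orthogonal decomposition $\fnonlin_t=\alpha_t\vx_\star+\fnonlin_t^\perp$.

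The gap is in the inductive conditioning step. You assert that after conditioning on $(\ip{\vu_i}{\vx_\star},\ip{\vu_i}{\vx_1},\dots,\ip{\vu_i}{\vx_{t-1}})_i$ each $\vu_i$ remains Haar on the orthogonal complement of $\Span(\vx_\star,\vx_1,\dots,\vx_{t-1})$. This is the hard step, and it is left as an assertion. First, the residual Haar structure is preserved only when one conditions along directions adapted to the conditioning filtration: since $\vx_s=\Psi_s(\mY)\fnonlin_s(\vx_{<s};\va)$, the correct directions to condition on are the $\mathcal{F}_{s-1}$-measurable vectors $\fnonlin_s(\vx_{<s};\va)$ rather than $\vx_s$, and the persistence of a joint Haar law on a shrinking Stiefel manifold under this adapted conditioning is a theorem that must be proved, not a definition (the $\vu_i$ are a constrained orthonormal family, not independent spheres). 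Second and more substantively, $\Psi_t(\mY)$ is not a function of a rotationally invariant matrix; its eigenvectors are biased toward $\vx_\star$. Your conditioning handles the dynamics living in $\vx_\star^\perp$ via the Haar residual, but the component of $\fnonlin_t$ along $\vx_\star$ interacts with $\Psi_t(\mY)$ through the $\nu$-weighted spectrum, so the induction must carry the $\nu$-bias and the Haar residual simultaneously through every step. Your sketch contains no analogue of \lemref{lem:aux1} — the fact that $\Psi_t(\mY)\vx_\star$ behaves as a \emph{different} polynomial $\tilde\Psi_t$ of $\mW$ acting on $\vx_\star$, while $\Psi_t(\mY)\fnonlin_t^\perp$ behaves as $\Psi_t(\mW)\fnonlin_t^\perp$ — which is exactly the device that decouples the two regimes and lets the paper invoke the known $\mW$-driven machinery. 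Without that separation, the recursive entanglement between the $\nu$-bias and the residual Haar randomness is precisely where, as you yourself anticipate, the argument would stall; the proposal does not supply the lemma that carries it through.
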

We present a proof sketch of this result highlighting the key ideas in Section \ref{Sec:SE-heuristics}, and defer the complete proof to Appendix \ref{Sec:proof_SE}.

\begin{remark}[Designing divergence-free denoisers]One can construct iterate denoisers that satisfy the divergence-free constraint \eqref{eq:div-free-constraint} by designing them adaptively: once divergence-free iterate denoisers $\fnonlin_{1}, \dotsc, \fnonlin_t$ have been specified, any candidate denoiser $\tilde{f}_{t+1}:\R^t \times \R^{\auxdim} \mapsto \R$ can be corrected to obtain a divergence-free denoiser: $$\fnonlin_{t+1}(x;\aux) \explain{def}{=} \tilde{\fnonlin}_{t+1}(x;\aux) - \sum_{i=1}^t \E[\partial_i\tilde{\fnonlin}_{t+1}(\serv{X}_{1}, \dotsc, \serv{X}_t; \serv{A})] \cdot x_i \quad \forall \; x \in \R^t, \aux \in \R^k,$$ which satisfies the divergence-free constraint \eqref{eq:div-free-constraint} by construction. Here, $(\serv{X}_{\star}, \serv{X}_1, \dotsc, \serv{X}_t; \serv{A})$ are the state evolution random variables associated with the first $t$ iterations of the algorithm.
\end{remark}
\begin{remark}[Connections to AMP Algorithms for Compressed Sensing]\label{Rem:compare_CS}
The OAMP algorithm introduced in \defref{Def:OAMP_main} can be viewed as a natural analog of the orthogonal AMP (OAMP) \citep{ma2017orthogonal} or vector AMP (VAMP) algorithm \citep{rangan2019vector} developed for compressed sensing (or regularized linear regression). The key feature of these algorithms, which is shared by the algorithm in Definition 1, is the use of trace-free matrix denoisers (cf. \eqref{eq:trace-free}) and divergence-free iterate denoisers (cf. \eqref{eq:div-free-constraint}). These features significantly simplify the algorithm's state evolution. However, unlike the OAMP algorithm introduced in this work, the compressed sensing algorithms compute a matrix-vector multiplication involving a rotationally invariant matrix at each iteration. In contrast, only the observed matrix $\mY$ is available in the spiked matrix model, not the rotationally invariant noise matrix $\mW$. Although $\mY$ is a rank-1 perturbation of $\mW$, $\Psi_t(\mY)$, the matrix used by the OAMP algorithm at iteration $t$, cannot be expressed as a simple perturbation of $\mW$ for a general matrix denoiser $\mfunc_t$. This complicates the analysis of these algorithms. We refer the reader to \citet[Remark 3.3]{fan2022approximate} for related discussions. \looseness=-1\end{remark}

\subsection{The Optimal OAMP Algorithm}\label{sec:OAMP_opt}
\thref{thm:SE} provides a characterization of the asymptotic mean squared error (MSE) of the estimator $\hat{\vx}_t$ computed by a general OAMP algorithm \eqref{Eqn:OAMP} in terms of the associated state evolution random variables:
\begin{align*}
\plim_{\dim \rightarrow \infty} \frac{\|\vx_{\star} - \hat{\vx}_t\|^2}{\dim} & = \E |\serv{X}_{\star} - \psi_t(\serv{X}_1, \dotsc, \serv{X}_t ; \serv{A}) |^2.
\end{align*}
The limiting value of the MSE depends implicitly on the functions $(\Psi_t, \fnonlin_t, \psi_t)_{t\in\N}$ used in the OAMP algorithm since these functions determine the joint distribution of $(\serv{X}_{\star}, (\serv{X}_t)_{t \in \N}; \serv{A})$. As our second contribution, we use the above characterization to derive the optimal choice for these functions which minimizes the MSE. We call the resulting algorithm the \emph{optimal OAMP algorithm}, and introduce it below.  
\paragraph*{The Optimal OAMP Algorithm} We first introduce the optimal OAMP algorithm in some simple corner cases, and then consider the typical case. 
\begin{description}[font=\normalfont\emph,leftmargin=0cm,itemsep=0ex]
\item [\emph{Corner Cases.}] If $\mmse(0) = \E \Var[\serv{X}_{\star} | \serv{A}] = 0$, then it is possible to reconstruct the signal perfectly from the side information alone. Hence, the optimal OAMP algorithm outputs the estimator:
    \begin{subequations} \label{eq:optimal-OAMP}
    \begin{align}
    \hat{\vx}_t & = \bdnsr(\va|0) \quad \forall \; t \in \N,
    \end{align}
    where $\bdnsr(\cdot|0)$ is the MMSE estimator for the Gaussian channel $(\serv{X}_{\star} ; \serv{A})$ (which operates at SNR $\omega = 0$). In this case, the optimal OAMP algorithm achieves zero asymptotic mean squared error:
    \begin{align*}
    \plim_{\dim \rightarrow \infty} \frac{\|\vx_{\star} - \hat{\vx}_t\|^2}{\dim} & \explain{Thm. \ref{thm:SE}}{=} \E |\serv{X}_{\star} - \bdnsr(\serv{A}|0)|^2 \explain{\eqref{eq:mmse-scalar}}{=}  \E |\serv{X}_{\star} - \E[\serv{X}_{\star} | \serv{A}]|^2 = \E \Var[\serv{X}_{\star} | \serv{A}] = 0 \quad \forall \; t\; \in \; \N.
    \end{align*}
 In the other extreme, if $\mmse(0) = \E \Var[\serv{X}_{\star} | \serv{A}] =  1$ (recall $\E[\serv{X}_{\star}^2] = 1$ from \assumpref{assump:signal-noise}), the optimal OAMP algorithm returns the trivial estimator:
    \begin{align}
    \hat{\vx}_t & = \bm{0} \quad \forall \; t \in \N.
    \end{align}
    In this case, the asymptotic MSE of the optimal OAMP algorithm is:
    \begin{align*}
    \plim_{\dim \rightarrow \infty} \frac{\|\vx_{\star} - \hat{\vx}_t\|^2}{\dim} &= \plim_{\dim \rightarrow \infty} \frac{\|\vx_{\star}\|^2}{\dim} \explain{}{=} \E \serv{X}_{\star}^2 = 1  \quad \text{ by \assumpref{assump:signal-noise}}.
    \end{align*}
    \item[\emph{Typical Case.}] In the typical situation when $\mmse(0) = \E \Var[\serv{X}_{\star} | \serv{A}] \in (0,1)$, the optimal OAMP algorithm generates a sequence of iterates $\vx_1, \vx_2, \dotsc$ using the update rule:
    \begin{align}
    {\vx}_{t} & = \frac{1}{\sqrt{{\omega}_{t}}}\left( 1+  \frac{1}{{\rho}_{t}}  \right) \cdot \omd(\mY; {\rho}_{t}) \cdot \dfbdnsr({\vx}_{t-1}; \va | {\omega}_{t-1}).\label{Eqn:psi_bar_normalization}
    \end{align}
    The estimator returned by the optimal OAMP algorithm at iteration $t$ is:
    \begin{align}
    \hat{\vx}_t \explain{def}{=} \bdnsr(\vx_t; \va | \omega_t).
    \end{align}
    In the above equations,
    \begin{itemize} 
    \item The matrix denoiser $\omd$ used by the optimal OAMP algorithm is given by:
    \begin{align} \label{eq:optimal-OAMP-denoiser}
     \omd(\lambda; \rho) & = 1 - \left( \E \left[ \frac{ \phi(\serv{\Lambda})}{ \phi(\serv{\Lambda}) + \rho} \right] \right)^{-1} \cdot \frac{\phi(\lambda)}{\phi(\lambda)+\rho} \quad \forall \; \lambda \; \in \; \R, \; \rho \; \in \;  (0,\infty),
    \end{align}
    where $\serv{\Lambda} \sim \mu$ and the function $\phi: \R \mapsto \R$ was introduced in \eqref{eq:phi}.
    \item ${\omega}_{t}$ and ${\rho}_{t}$ are computed using the recursion:
    \begin{align}  \label{eq:rho-omega}
    {\rho}_{t} = \frac{1}{\dmmse({\omega}_{t-1})} - 1, \quad {\omega}_{t}  = 1 -  \left( \E \left[ \frac{\phi(\serv{\Lambda})}{\phi(\serv{\Lambda}) + {\rho}_{t}} \right] \right)^{-1} \cdot \E \left[\frac{1}{\phi(\serv{\Lambda})+{\rho}_{t}} \right]
    \end{align}
    initialized with $\omega_0 \explain{def}{=} 0$.
    \item $\dmmse(\omega)$ denotes the DMMSE for a scalar Gaussian channel with SNR $\omega$, and $\bdnsr(\cdot | \omega), \dfbdnsr(\cdot | \omega)$ denote the MMSE and DMMSE estimators (\defref{def:gauss-channel}).
    \end{itemize}   
    \end{subequations}    
\end{description}

\begin{remark}
The additional scaling factor $1/\sqrt{\omega_t}\cdot(1+1/\rho_t)$ in \eqref{Eqn:psi_bar_normalization} is introduced to normalize the iterates so that $\|\vx_{t}\|^2/\dim \pc 1$.
\end{remark}
The following result characterizes the asymptotic MSE of the optimal OAMP algorithm in this case. Its proof can be found in Appendix \ref{App:proof_OAMP_opt_SE}.

\begin{proposition}\label{prop:optimal-OAMP-SE} Assume that $\mmse(0) = \E\Var[\serv{X}_{\star} | \serv{A}] \in (0,1)$. Let $(\serv{X}_{\star}, (\serv{X}_t)_{t \in \N}; \serv{A})$ denote the state evolution random variables associated with the optimal OAMP algorithm \eqref{eq:optimal-OAMP}. Then,
\begin{enumerate}
\item For each $t \in \N$, $\omega_t \in [0,1)$ and $\rho_t \in (0,\infty)$. Moreover, the state evolution random variables $(\serv{X}_{\star}, \serv{X}_t ; \serv{A})$ form a scalar Gaussian channel with SNR $\omega_t$ and, the asymptotic MSE of the estimator $\hat{\vx}_t$ returned by the optimal OAMP algorithm in \eqref{eq:optimal-OAMP} is given by:
\begin{align*}
\plim_{\dim \rightarrow \infty} \frac{\|\hat{\vx}_t - \vx_{\star} \|^2}{\dim} & = \mmse(\omega_t).
\end{align*}
\item The sequences $(\omega_t)_{t \in \N}$ and $(\rho_t)_{t\in \N}$ from \eqref{eq:rho-omega} are non-decreasing and converge to limit points $\omega_{\ast} \in [0,1)$ and $\rho_{\ast} \in (0,\infty)$ as $t \rightarrow \infty$. The limit points $(\omega_{\ast}, \rho_{\ast})$ solve the following fixed point equation in $(\omega,\rho)$: 
\begin{align} \label{eq:OAMP-fp}
\omega = 1- \left( \E \left[ \frac{\phi(\serv{\Lambda})}{\phi(\serv{\Lambda}) + {\rho}} \right] \right)^{-1} \cdot {\mathbb{E}\left[\frac{1}{\rho+\phi(\sfLambda)}\right]}, \quad \rho=\frac{1}{\dmmse(\omega)}-1, \quad \serv{\Lambda} \sim \mu.
\end{align}
Consequently as $t \rightarrow \infty$, the asymptotic MSE of $\hat{\vx}_t$ converges to: 
\begin{align*}
\lim_{t \rightarrow \infty} \plim_{\dim \rightarrow \infty} \frac{\|\hat{\vx}_t - \vx_{\star} \|^2}{\dim} & = \mmse(\omega_{\ast}).
\end{align*}
\end{enumerate}
\end{proposition}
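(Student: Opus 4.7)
The plan is to prove Proposition 1 by applying Theorem 1 to the optimal OAMP instance at each step $t$ and then analyzing the scalar recursion \eqref{eq:rho-omega} directly. I would proceed by induction on $t$, with hypothesis that $\omega_{t-1} \in [0,1)$ and that $(\serv{X}_\star, \serv{X}_{t-1}; \serv{A})$ is a scalar Gaussian channel at SNR $\omega_{t-1}$; the base case $\omega_0 = 0$ is immediate. Before invoking Theorem 1, the admissibility of the optimal choices must be verified: the iterate denoiser $\dfbdnsr(\cdot|\omega_{t-1})$ is divergence-free by the defining property of the DMMSE estimator (an orthogonality argument via Lagrange multipliers gives in particular $\E[\serv{X}_\star\dfbdnsr] = \E[\dfbdnsr^2] = 1-\dmmse(\omega_{t-1})$), while the matrix denoiser is trace-free by direct substitution: $\E_\mu[\omd(\serv{\Lambda};\rho_t)] = 1 - (\E_\mu[\phi(\serv{\Lambda})/(\phi(\serv{\Lambda})+\rho_t)])^{-1}\E_\mu[\phi(\serv{\Lambda})/(\phi(\serv{\Lambda})+\rho_t)] = 0$, so $\Psi_t$ satisfies \eqref{eq:trace-free}.

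The core of the induction step is to show the SE parameters collapse to $\beta_t = \sqrt{\omega_t}$ and $\E[\serv{Z}_t^2] = 1-\omega_t$, giving the scalar channel structure. For $\beta_t$, the DMMSE identity above together with $\rho_t = 1/\dmmse(\omega_{t-1}) - 1$ yields $\E[\serv{X}_\star \serv{F}_t] = \rho_t/(1+\rho_t)$, while Lemma 1 (density of $\nu_\parallel$ is $\mu/\phi$, with $\phi \equiv 0$ on the support of $\nu_\perp$) yields $\E_\nu[\omd(\serv{\Lambda}_\nu;\rho_t)] = 1 - (\E_\mu[\phi/(\phi+\rho_t)])^{-1}\E_\mu[1/(\phi+\rho_t)] = \omega_t$; combining with the normalization $(1+1/\rho_t)/\sqrt{\omega_t}$ in $\Psi_t$ gives $\beta_t = \sqrt{\omega_t}$. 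The variance computation is similar but more delicate: expanding $\Var_\mu[\omd]$ and $\Var_\nu[\omd]$ via Lemma 1 and the explicit formula for $\omd$, then applying the algebraic identity $(c-1)/\rho = c\,\E_\mu[1/(\phi+\rho)]$ — which follows from $1/c = \E_\mu[\phi/(\phi+\rho)] = 1 - \rho\E_\mu[1/(\phi+\rho)]$ with $c = (\E_\mu[\phi/(\phi+\rho)])^{-1}$ — the two terms of Theorem 1's variance formula (weighted respectively by $\Cov_\nu$ and $\Cov_\mu$ of $\Psi_t$) collapse to exactly $1-\omega_t$. This certifies that $(\serv{X}_\star, \serv{X}_t; \serv{A})$ is a scalar channel at SNR $\omega_t$, so $\hat{\vx}_t = \bdnsr(\vx_t;\va|\omega_t)$ is its MMSE estimator and Theorem 1 delivers asymptotic MSE $\mmse(\omega_t)$.

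For the second claim, monotonicity reduces to two ingredients. First, $\omega \mapsto 1/\dmmse(\omega) - 1$ is non-decreasing because $\dmmse$ is a non-increasing function of SNR. Second, the map $\rho \mapsto \omega(\rho) = 1 - A(\rho)/B(\rho)$ with $A(\rho) = \E_\mu[1/(\phi+\rho)]$ and $B(\rho) = 1 - \rho A(\rho)$ is non-decreasing, since direct differentiation gives $\frac{d}{d\rho}(A/B) = (A^2 - \E_\mu[1/(\phi+\rho)^2])/B^2 = -\Var_\mu[1/(\phi+\rho)]/B^2 \le 0$. Starting from $\omega_0 = 0$, induction shows both sequences are non-decreasing; boundedness $\omega_t \in [0,1)$ is inherited from claim (1), so $(\omega_t)$ converges to some $\omega_\ast$. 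Continuity of the recursion maps, justified by dominated convergence using uniform boundedness of $1/(\phi+\rho)$ and $\phi/(\phi+\rho)$ for $\rho$ bounded away from $0$, implies $(\omega_\ast, \rho_\ast)$ solves \eqref{eq:OAMP-fp}; the strict inequality $\omega_\ast < 1$ follows because $\dmmse > 0$ on $[0,1)$ ensures $\rho_\ast < \infty$. Continuity of $\mmse$ then yields the asymptotic MSE limit.

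The main obstacle is the algebraic collapse of the $\E[\serv{Z}_t^2]$ computation to $1-\omega_t$: without exactly the form \eqref{eq:optimal-OAMP-denoiser} of $\omd(\cdot;\rho_t)$ and the normalizing constant $(1+1/\rho_t)/\sqrt{\omega_t}$, Theorem 1's variance formula does not simplify to a scalar-channel variance, and the iterates would in general fail to marginally form scalar channels. Carrying out this simplification requires a tight interplay between the spectral characterization of $\nu$ via $\mu$ and $\phi$ in Lemma 1 and the algebraic identity linking $c$, $\rho$, and $\E_\mu[1/(\phi+\rho)]$, and is precisely where the careful engineering of $\omd$ and its normalization reveals its purpose.
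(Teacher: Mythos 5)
Your proof follows essentially the same two-step strategy as the paper: an induction establishing the scalar-channel collapse (with $\beta_t = \sqrt{\omega_t}$ and $\sigma_t^2 = 1-\omega_t$) via the DMMSE identities of Lemma~\ref{lem:dmmse-scalar} and the spectral characterization of $\nu$ in Lemma~\ref{lem:RMT}, followed by a monotone-convergence argument for the recursion $\omega_t = \mathscr{F}_1\circ\mathscr{F}_2(\omega_{t-1})$. For the monotonicity of $\mathscr{F}_1$, you observe that after writing $B(\rho) = \E_\mu[\phi/(\phi+\rho)] = 1-\rho A(\rho)$, the numerator of $\mathscr{F}_1'(\rho)$ reduces to $\Var_\mu[1/(\phi+\rho)] \ge 0$; this is algebraically equivalent to the Chebyshev-association-inequality argument used in the paper's Lemma~\ref{lem:scrF-prop}, but cleaner and more elementary — a genuine small improvement.

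There is, however, a real gap in your Claim~(2) argument: the sentence asserting $\omega_\ast < 1$ is circular. You write that $\dmmse > 0$ on $[0,1)$ ensures $\rho_\ast < \infty$, and hence $\omega_\ast < 1$; but $\rho_\ast = \mathscr{F}_2(\omega_\ast) = 1/\dmmse(\omega_\ast) - 1$ is finite only if you already know $\omega_\ast < 1$ (since $\dmmse(1) = 0$). Knowing $\omega_t \in [0,1)$ for all $t$ gives only $\omega_\ast \in [0,1]$, and nothing you write excludes the degenerate scenario $\omega_t \uparrow 1$, $\rho_t \uparrow \infty$. The paper closes this by observing that $\lim_{\rho\to\infty} \mathscr{F}_1(\rho) = 1 - 1/\E_\mu[\phi(\serv{\Lambda})]$, which is strictly less than $1$ because $\phi \ge 0$ is bounded and not identically zero on the support of $\mu$; this uniform upper bound strictly below $1$ caps the non-decreasing sequence $\omega_t$ away from $1$ and makes the fixed-point argument go through. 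You should add this limiting computation (or an equivalent uniform bound $\sup_\rho \mathscr{F}_1(\rho) < 1$) to your Claim~(2). The remainder of your argument, including the algebraic identity $cA = (c-1)/\rho$ and the collapse of Theorem~\ref{thm:SE}'s variance formula to $1-\omega_t$, is correct and matches the paper.
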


\begin{remark} {In the absence of side information, if the signal is drawn from a zero-mean prior, $\mmse(0) = 1$, and the optimal OAMP algorithm in \eqref{eq:optimal-OAMP} returns the trivial estimator $\hat{\vx}_t = \vzero$. Our optimality result (introduced as \thref{thm:optimality} in the following section) shows that no iterative algorithm that runs for a constant ($\dim$-independent) number of iterations can achieve a better performance. An interesting direction for future work is to analyze OAMP algorithms with spectral initialization \citep{venkataramanan2022estimation,mondelli2021pca,zhong2021approximate} or randomly initialized iterative algorithms that run for $T \gtrsim \ln(\dim)$ iterations \citep{rush2018finite,li2022non,li2023approximate} in this situation.}
\end{remark}

\subsection{An Optimality Result}\label{sec:optimality} Next, to discuss the optimality properties of the OAMP algorithm introduced in \eqref{eq:optimal-OAMP}, we introduce the following broad class of iterative algorithms. 
\begin{definition}[Iterative Algorithms]\label{def:itr}An iterative algorithm is any procedure which generates iterates $\vr_1, \vr_2, \dotsc$ in $\R^{\dim}$ according to an update rule of the form:
\begin{subequations} \label{eq:GFOM}
\begin{align} \label{eq:GFOM-algo}
\iter{\vr}{t} & = \mfunc_t(\mY) \cdot \fnonlin_t(\iter{\vr}{1}, \dotsc, \iter{\vr}{t-1}; \va) + \gnonlin_t(\iter{\vr}{1}, \dotsc, \iter{\vr}{t-1}; \va) \quad \forall \; t\; \in \; \N.
\end{align}
At iteration $t$, an estimate $\hat{\vr}_t$ of $\vx_\star$  is obtained by applying a post-processing function $\psi_t$ to the iterates $\vr_1, \dotsc, \vr_t$ and the side-information $\va$:
\begin{align}
\hat{\vr}_t=\psi_t\left(\vr_{1}, \dotsc, \vr_t;\va \right).
\end{align}
For each $t\in \N$, the matrix denoiser $\mfunc_t: \R \mapsto \R$ is required to be continuous and the functions  $\fnonlin_t: \R^{t-1} \times \R^{\auxdim} \mapsto \R$, $\gnonlin_t: \R^{t-1} \times \R^{\auxdim} \mapsto \R$, and $\psi_t: \R^{t} \times \R^{\auxdim} \mapsto \R$ are required to be continuously differentiable and Lipschitz. Furthermore, $(\mfunc_{t})_{t \in \N}$, $(\fnonlin_t)_{t\in \N}$, $(\gnonlin_t)_{t \in \N}$, and $(\psi_t)_{t \in \N}$ do not change with the dimension $\dim$. 
\end{subequations}
\end{definition}
The definition above extends the notion of general first-order methods (GFOMs) introduced by \citet{celentano2020estimation} (for the i.i.d. Gaussian noise model) to include a matrix denoising step at each iteration. The class of iterative algorithms defined above includes many commonly used estimators, such as those computed using gradient descent or power method, and their proximal and projected generalizations. The following theorem (proved in Section \ref{sec:optimality-proof}) shows that the optimal OAMP algorithm achieves the minimum possible estimation error among all algorithms in this class under a given iteration budget.

\begin{theorem}\label{thm:optimality} Let $\hat{\vr}_t$ be the estimator returned by any iterative algorithm of the form \eqref{eq:GFOM} after $t \in \N$ iterations. Let $\hat{\vx}_t$ be the estimator returned by the optimal OAMP algorithm in \eqref{eq:optimal-OAMP} after $t$ iterations. Then,
\begin{align*}
 \pliminf_{\dim \rightarrow \infty} \frac{\|\hat{\vr}_t - \vx_{\star} \|^2}{\dim} & \geq \plim_{\dim \rightarrow \infty} \frac{\|\hat{\vx}_t - \vx_{\star} \|^2}{\dim}.
\end{align*}
\end{theorem}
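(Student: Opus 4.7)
The proof plan follows the reduction strategy of \citet{celentano2020estimation} and \citet{montanari2022statistically}, adapted from i.i.d.\ Gaussian noise to the rotationally invariant setting. The overarching goal is to (i) extend \thref{thm:SE} to a state evolution for the general class \eqref{eq:GFOM}; (ii) show that, asymptotically, the information about $\vx_{\star}$ contained in any $t$-step trajectory of \eqref{eq:GFOM} can be matched by a $t$-step OAMP; and (iii) optimize over OAMPs to identify the update rule \eqref{eq:optimal-OAMP} as the unique maximizer of the effective SNR.

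For (i), I would first establish a state evolution for \eqref{eq:GFOM} in which $(\iter{\vr}{1},\dotsc,\iter{\vr}{t})$ converges jointly in $W_2$ to Gaussian random variables whose means and covariances pick up extra contributions from the traces $\E[\mfunc_s(\serv{\Lambda})]$ (with $\serv{\Lambda}\sim\mu$) and the partial derivatives $\E[\partial_s \fnonlin_s]$ that are otherwise eliminated by the trace-free and divergence-free constraints of \defref{Def:OAMP_main}. Once this is in place, step (ii) consists of iteratively building, for each $s\leq t$, an OAMP iterate $\vx_s$ (with trace-free $\mfunc_s^{\mathrm{OAMP}}$ and divergence-free $\fnonlin_s^{\mathrm{OAMP}}$) that generates the same $L^2$-closed subspace, inside $L^2(\serv{X}_\star, \serv{A})$, as $\iter{\vr}{s}$: the trace and divergence contributions of the GFOM are purely Onsager-type corrections that are measurable functions of $(\iter{\vr}{1},\dotsc,\iter{\vr}{s-1};\va)$, so absorbing them into the next iterate denoiser costs no information. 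A Gram--Schmidt style argument combined with density/approximation then produces continuously differentiable Lipschitz $\fnonlin_s^{\mathrm{OAMP}}$ and continuous $\mfunc_s^{\mathrm{OAMP}}$ as required.

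For (iii), after the reduction to OAMPs the joint state-evolution law \eqref{Eqn:SE_variables_def} renders the Gaussian channel $(\serv{X}_\star, \serv{X}_1, \dotsc, \serv{X}_t; \serv{A})$ explicit, and its $\bmmse$ is monotone non-increasing in the effective SNR of its scalar sufficient statistic $\sum_s c_s \serv{X}_s$. An inductive optimization then shows that this SNR at step $s$ is maximized when $\fnonlin_s$ is the DMMSE estimator of the scalar channel at SNR $\omega_{s-1}$ (maximizing the numerator $(\E[\serv{X}_\star \fnonlin_s])^2$ under a variance constraint) and $\mfunc_s$ is the trace-free denoiser that maximizes the ratio of $(\E[\mfunc_s(\serv{\Lambda}_\nu)])^2$ to $\Var_{\serv{\Lambda}\sim\mu}[\mfunc_s(\serv{\Lambda})]$. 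The latter is a Cauchy--Schwarz problem in $L^2(\mu)$ whose extremum, after rewriting $\E[\mfunc_s(\serv{\Lambda}_\nu)]$ as an integral against $\mu$ using \lemref{lem:RMT}, is exactly $\omd(\cdot; \rho_s)$ of \eqref{eq:optimal-OAMP-denoiser}. This inductively recovers the recursion \eqref{eq:rho-omega} and yields $\mmse(\omega_t)$ as the sharp lower bound on the MSE at iteration $t$.

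I expect the principal obstacle to be step (ii): making rigorous, over the full class \eqref{eq:GFOM}, the assertion that Onsager/trace corrections carry no asymptotic information beyond the existing iterates. Two technical points require care. First, the OAMP iterate denoisers must be $C^1$ and Lipschitz, so the reduction must produce well-behaved $\fnonlin_s^{\mathrm{OAMP}}$ from a possibly rough combination of earlier GFOM iterates; this is handled by mollification together with $W_2$-continuity/approximation for the state-evolution limit. Second, the general GFOM state evolution couples traces $\E[\mfunc_s(\serv{\Lambda})]$ multiplicatively with the iterate denoiser output, so projecting onto trace-free and divergence-free denoisers must be shown to preserve the joint empirical law of $(\vx_\star, \vr_1,\dotsc,\vr_s;\va)$ up to a deterministic affine reparameterization of the iterates. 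Once these points are handled, the Cauchy--Schwarz optimization in step (iii) reduces to a one-variable variational calculation using only \lemref{lem:RMT} and the DMMSE characterization in \defref{def:gauss-channel}.
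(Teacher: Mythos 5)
Your high-level plan---reduce a general iterative algorithm to an OAMP and then optimize over OAMPs---is the right skeleton, and you correctly identify the reduction step as the principal obstacle. However, the way you propose to close that obstacle does not work, and the missing ingredient is precisely the one the paper introduces to overcome it.

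The key failure is in your claim that ``the trace and divergence contributions of the GFOM are purely Onsager-type corrections that are measurable functions of $(\iter{\vr}{1},\dotsc,\iter{\vr}{s-1};\va)$, so absorbing them into the next iterate denoiser costs no information.'' Consider a two-step target algorithm $\vr_1 = \mfunc_1(\mY) f_1(\va)$, $\vr_2 = \mfunc_2(\mY) f_2(\vr_1;\va)$ with $\mfunc_1,\mfunc_2$ trace-free. The OAMP forced to use a divergence-free $\dfnew{f}_2 = f_2 - c\,\mathrm{id}$ (with $c = \E[\partial_1 f_2(\serv{X}_1;\serv{A})]$) produces $\vx_1 = \vr_1$ and $\vx_2 = \mfunc_2(\mY)\dfnew{f}_2(\vx_1;\va)$, from which
\begin{align*}
\vr_2 \;=\; \vx_2 + c\,\mfunc_2(\mY)\,\vx_1 \;=\; \vx_2 + c\,\mfunc_2(\mY)\mfunc_1(\mY)\,f_1(\va).
\end{align*}
The correction $\mfunc_2(\mY)\mfunc_1(\mY) f_1(\va)$ is a \emph{new} matrix--vector product whose matrix denoiser is the pointwise product $\mfunc_2\cdot\mfunc_1$; it is not an entrywise function of $\vx_1,\vx_2,\va$ and hence cannot be absorbed into an OAMP iterate denoiser, no matter how one mollifies or regularizes. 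This is exactly the point at which the Celentano--Montanari argument (valid for i.i.d.\ Gaussian noise, where all matrix--vector multiplies use the \emph{same} matrix $\mW$) breaks down in the rotationally invariant setting with matrix denoisers, and the paper says so explicitly. Your proposed fixes (mollification, Gram--Schmidt, $W_2$-approximation) all operate in $L^2$ of the existing iterates and so cannot produce the missing vector.

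The paper's resolution is the \emph{degree-$D$ lifted OAMP} class (\defref{def:LOAMP}): in each step the algorithm computes $D$ matrix--vector products with the monomial denoisers $\mY^i - \E[\serv{\Lambda}^i]\mI$, $i\in[D]$. Since any continuous matrix denoiser (in particular $\mfunc_2\cdot\mfunc_1$) can be approximated uniformly on the spectrum by a degree-$D$ polynomial, the missing vector above \emph{is} in the span of the lifted iterates, and the reduction in \propref{prop:LOAMP-reduction} goes through in the $D\to\infty$ limit. The optimality proof then proceeds via \propref{prop:greedy-optimality} (greedy minimization of $\scrM_t$ over lifted OAMPs, which is a global optimum thanks to the monotonicity of the function $\scrm$ in \lemref{lem:scrm-monotonic}---your Cauchy--Schwarz argument alone does not justify global optimality of the greedy choice) and \propref{prop:simplification} (the optimal lifted OAMP collapses back to a single matrix--vector product per iteration because the postprocessing depends only on a fixed linear combination $\sum_i v_i \vw_{t,i}$). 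Without some lifting device of this type, your step (ii) cannot be made rigorous, and the theorem does not follow.
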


\subsection{Information-Theoretic v.s. Computational Limits}
\label{sec:IT-optimality}
A natural question is whether the optimal OAMP algorithm introduced in \eqref{eq:optimal-OAMP} achieves the smallest asymptotic estimation error among \emph{all estimators}, not just estimators computable using efficient iterative algorithms. To address this question, we begin by recalling the conjecture of \citet{barbier2023fundamental} regarding the fundamental information-theoretic limits of this problem. 

\paragraph*{Replica conjecture for the Bayes risk}  Under the assumption that the entries of $(\vx_{\star}, \va)$ are drawn i.i.d. from a prior $\pi$, the information-theoretically optimal estimator is the Bayes estimator $\E[\vx_{\star}| \mY, \va]$. \citet{barbier2023fundamental} provide a conjecture for the asymptotic MSE of this estimator (also called the Bayes risk) assuming the noise matrix drawn from the \emph{trace ensemble} \citep{barbier2023fundamental,pastur2011eigenvalue} with density:
\begin{align} \label{eq:trace-ensemble-recall}
p(\mW) & \propto \exp\left( - \frac{\dim}{2}\sum_{i=1}^\dim V(\lambda_i(\mW)) \right) \text{ where $\lambda_{1:\dim}(\mW)$ denote the eigenvalues of $\mW$,}
\end{align}
and the \emph{potential function} $V:\R \mapsto \R$ is a functional parameter for the noise model. The authors develop a recipe to derive conjectured formulas for the asymptotic Bayes risk based on the non-rigorous replica method for polynomial potentials $V$, providing explicit formulas for quartic and sestic polynomials. As the degree of $V$ increases, the complexity of the derivation and resulting formulas also increases, with no general formula available. We state the replica conjecture for the Bayes risk when the potential $V: \R \mapsto \R$ is a quartic polynomial:
\begin{align}\label{Eqn:quartic-main}
V(\lambda)=\frac{\gamma \lambda^2}{2}+\frac{\kappa \lambda^4}{4} \quad \forall \; \lambda \; \in \; \R \  \text{ where } \  \kappa=\kappa(\gamma)= \frac{8-9\gamma +\sqrt{64-144\gamma+108\gamma^2-27\gamma^3}}{27}, 
\end{align}
and $\gamma\in[0,1]$ is a parameter for the noise model. The choice of $\kappa$ in \eqref{Eqn:quartic-main} ensures that the limiting spectral measure ($\mu$) of $\mW$ drawn from the trace ensemble has unit variance \citep{barbier2023fundamental}.
\begin{conjecture}[Replica Conjecture for Bayes risk \cite{barbier2023fundamental}]\label{conj:RS} Suppose that the entries of $(\vx_{\star}, \va)$ are drawn i.i.d. from a prior $\pi$ with $\mmse(0) \in (0,1)$ and the noise matrix $\mW$ is drawn from the trace ensemble \eqref{eq:trace-ensemble-recall} with the quartic potential function $V$ from \eqref{Eqn:quartic-main}. Then,
\begin{align*}
\plim_{\dim \rightarrow \infty} \frac{\| \E[\vx_{\star} | \mY, \va] - \vx_{\star} \|^2}{\dim} & = \mmse(\snrit),
\end{align*}
for some $\snrit \in (0,1)$ and $\rhoit \in (0,\infty)$ which solve the following system of fixed point equations\footnote{When the fixed point equations in \eqref{Eqn:SE_FP_quartic-main} have multiple solutions, the correct fixed point is the one that minimizes a certain free energy function calculated in \citep{barbier2023fundamental}.} (in $\omega,\rho$):
\begin{subequations}\label{Eqn:SE_FP_quartic-main}
\begin{align}
m &= 1-\mmse(\omega),\quad \mathbb{E}[\mathsf{H}] = 1-m, \quad \chi = \mathbb{E}[\sfLambda \mathsf{Q H}], \\ \hat{m}\equiv \frac{\omega}{1-\omega} &= \kappa\theta^2 \left(\frac{m}{1-m}\E[\sfLambda^2\mathsf{H}]+\frac{\chi}{1-m}\E[\sfLambda \mathsf{H}]+\E[\sfLambda^2 \mathsf{QH}] \right)+ \gamma\theta^2 m 
\end{align}
where $m\in\mathbb{R}$, $\chi\in\mathbb{R}$ are two intermediate variables, $\serv{\Lambda} \sim \mu$, and the random variables $\mathsf{Q} = \mathsf{Q}(\sfLambda,m,\chi)$, $\mathsf{H}=\mathsf{H}(\sfLambda,\rho)$ are defined by
\begin{align}
\mathsf{Q} &\bydef \kappa\theta^2 m\sfLambda^2 + \kappa\theta^2\chi\sfLambda
- \frac{\kappa\theta^2}{1-m} \E\left[ m\sfLambda^2\mathsf{H} + \chi\sfLambda \mathsf{H}\right] + \frac{m}{1-m} ,
\\
\mathsf{H} &\bydef \left( \rho + \theta^2a^2(\gamma+2a^2\kappa)^2 + 1 -\theta\left(\gamma\serv{\Lambda}-\theta\kappa\serv{\Lambda}^2+\kappa\serv{\Lambda}^3\right) \right)^{-1}.
\end{align}
\end{subequations}

In the above equations, $(\gamma,\kappa)$ are the parameters for the quartic potential function \eqref{Eqn:quartic-main}, $\theta$ is the SNR for the spiked matrix model \eqref{eq:model}, and $a^2 \explain{def}{=} \big(\sqrt{\gamma^2+12\kappa}-\gamma\big)/(6\kappa)$.
\end{conjecture}
On the other hand, from \propref{prop:optimal-OAMP-SE} that the asymptotic MSE of the estimator $\hat{\vx}_t$ returned by the optimal OAMP algorithm in \eqref{eq:optimal-OAMP} satisfies:
\begin{align*}
\lim_{t \rightarrow \infty} \plim_{\dim \rightarrow \infty} \frac{\|\hat{\vx}_t - \vx_{\star} \|^2}{\dim} & = \mmse(\omega_{\ast}),
\end{align*}
where $(\omega_{\ast}, \rho_{\ast})$ denote the solution of the state evolution fixed point equations (in $\omega \in (0,1), \rho \in (0,\infty)$):
\begin{align} \label{eq:se-fp}
\omega & = \mathscr{F}_1(\rho), \; \rho = \mathscr{F}_2(\omega) \;   \text{ with }  \;   \mathscr{F}_1(\rho)\bydef 1 - \frac{\E_{\substack{}{\serv{\Lambda} \sim \mu}} \left[\frac{1}{\phi(\serv{\Lambda})+{\rho}} \right]}{ \E_{\substack{}{\serv{\Lambda} \sim \mu}} \left[ \frac{\phi(\serv{\Lambda})}{\phi(\serv{\Lambda}) + {\rho}} \right] } , \; \mathscr{F}_2(\omega)\bydef\frac{1}{\dmmse({\omega})} - 1,
\end{align}
found by the recursion $ \rho_{t} = \mathscr{F}_2(\omega_{t-1}), \; \omega_{t} = \mathscr{F}_1(\rho_t)$. The following proposition (proved in Appendix \ref{App:replica}) shows that for the quartic potential, the \emph{replica fixed point equations} \eqref{Eqn:SE_FP_quartic-main} and the \emph{state evolution fixed point equations} \eqref{eq:se-fp} are equivalent. \looseness=-1
\begin{proposition}\label{Lem:SE2_quartic} Assume that the prior $\pi$ satisfies $\mmse(0) \in (0,1)$. Any solution $(\omega,\rho)$ to the state evolution fixed point equations \eqref{eq:se-fp} with $\omega \in (0,1), \rho \in (0,\infty)$ is also a solution to the replica fixed point equations \eqref{Eqn:SE_FP_quartic-main}, and vice-versa. 
\end{proposition}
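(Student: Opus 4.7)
The plan is to use the equilibrium condition for the quartic trace ensemble to identify $\phi(\lambda)+\rho$ with $\mathsf{H}(\lambda,\rho)^{-1}$ on $\mathrm{supp}(\mu)$, and then reduce both fixed-point systems to the same compact relation in the moments $E_{k}\bydef\E[\sfLambda^{k}\mathsf{H}(\sfLambda,\rho)]$ with $\sfLambda\sim\mu$. Concretely, I would first invoke the classical equilibrium characterization for the trace ensemble: on $\mathrm{supp}(\mu)$, the limiting density satisfies $2\pi\hlb_{\mu}(\lambda)=V'(\lambda)$, and for the quartic potential the density admits the closed form $\mu(\lambda)=(2\pi)^{-1}(\gamma+2\kappa a^{2}+\kappa\lambda^{2})\sqrt{4a^{2}-\lambda^{2}}$ with $a^{2}$ determined by $3\kappa a^{4}+\gamma a^{2}=1$ (whose positive root is the one given in \eqref{Eqn:quartic-main}). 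Substituting these into \eqref{eq:phi} and expanding $(1-\pi\theta\hlb_{\mu}(\lambda))^{2}+\pi^{2}\theta^{2}\mu(\lambda)^{2}$, the radical cancels thanks to the support-edge equation, yielding
\begin{align*}
\phi(\lambda)=1+\theta^{2}a^{2}(\gamma+2\kappa a^{2})^{2}+\theta^{2}\kappa\lambda^{2}-\theta(\gamma\lambda+\kappa\lambda^{3}), \quad \lambda\in\mathrm{supp}(\mu),
\end{align*}
which matches $\mathsf{H}(\lambda,\rho)^{-1}-\rho$ read off from \eqref{eq:replica_fp_JY3_4}. Hence $\mathsf{H}(\sfLambda,\rho)(\phi(\sfLambda)+\rho)=1$ almost surely under $\sfLambda\sim\mu$.

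Next I handle the DMMSE equation. From $\phi\mathsf{H}=1-\rho\mathsf{H}$, the first equation in \eqref{eq:se-fp} rearranges to $\E[\mathsf{H}]=(1-\omega)/(1+\rho(1-\omega))$. A standard computation using Stein's lemma and the explicit form of $\dfbdnsr$ in \eqref{eq:dmmse-scalar} gives the scalar-channel identity $1/\mmse(\omega)=\omega/(1-\omega)+1/\dmmse(\omega)$, which combined with $\rho=1/\dmmse(\omega)-1$ implies $(1-\omega)/(1+\rho(1-\omega))=\mmse(\omega)$. Setting $m=1-\mmse(\omega)$, these together recover \eqref{Eqn:SE_FP_quartic_0}--\eqref{Eqn:SE_FP_quartic_1}, and the argument is reversible.

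For the $\hat m$ equation \eqref{Eqn:SE_FP_quartic_3}, I would exploit the cubic polynomial structure of $\mathsf{H}^{-1}$ on $\mathrm{supp}(\mu)$, which yields the moment recursions $BE_{k}+\theta^{2}\kappa E_{k+2}-\theta\gamma E_{k+1}-\theta\kappa E_{k+3}=\E[\sfLambda^{k}]$ for $k\geq 0$ with $B=\rho+1+\theta^{2}a^{2}(\gamma+2\kappa a^{2})^{2}$, together with $\E[\sfLambda]=\E[\sfLambda^{3}]=0$ and $\E[\sfLambda^{2}]=1$ (the latter holding precisely because $\kappa(\gamma)$ is fixed as in \eqref{Eqn:quartic-main}). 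On the SE side, the $k=0$ recursion reduces the second equation in \eqref{eq:se-fp} (equivalently, $\E[\phi\mathsf{H}](1-\omega)=\E[\mathsf{H}]$) to the compact form $\hat m(1-m)=m(1+\rho)-\rho$ after substituting $E_{0}=1-m$. On the replica side, one first checks $\E[\mathsf{Q}\mathsf{H}]=m$ (which holds by construction of the constants in \eqref{eq:replica_fp_JY3_3}), then solves \eqref{Eqn:SE_FP_quartic_2} for $\chi$ as a linear function of the $E_{k}$, substitutes into $\E[\sfLambda^{2}\mathsf{Q}\mathsf{H}]$, and eliminates $E_{3},E_{4}$ using the $k=1$ moment recursion; after cancellation \eqref{Eqn:SE_FP_quartic_3} also collapses to $\hat m(1-m)=m(1+\rho)-\rho$, giving the equivalence in both directions.

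The main obstacle is this last algebraic reduction. The substitution of $\chi$ into \eqref{Eqn:SE_FP_quartic_3} produces nonlinear terms in $\chi^{2}$, $E_{1}^{2}$, $E_{2}^{2}$, and $E_{1}E_{2}$ whose cancellation requires careful bookkeeping using the moment recursions generated by $\mathsf{H}^{-1}=\phi+\rho$. The polynomial structure of $\mathsf{H}^{-1}$ (cubic on $\mathrm{supp}(\mu)$) together with the symmetry and normalization of $\mu$ ensures that all such cross-terms vanish, but verifying this systematically is a somewhat involved calculation.
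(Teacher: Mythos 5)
Your setup is sound and several steps match what the paper does: identifying $\phi$ on $\mathrm{supp}(\mu)$ with a cubic polynomial (the paper's Lemma~\ref{Lem:phi}), the DMMSE identity $1/\mmse(\omega)=\omega/(1-\omega)+1/\dmmse(\omega)$, the consequent equivalence between $\rho=\mathscr{F}_2(\omega)$ and $\E[\mathsf{H}]=1-m$, the reduction of $\omega=\mathscr{F}_1(\rho)$ (given $\E[\mathsf{H}]=1-m$) to $\hat{m}(1-m)=m(1+\rho)-\rho$, and the observation that $\E[\mathsf{QH}]=m$ is automatic once $E_0=1-m$.

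The gap is in the last step, and it is a genuine one: the claim that \eqref{Eqn:SE_FP_quartic_3} collapses to $\hat{m}(1-m)=m(1+\rho)-\rho$ using only the moment recursion $BE_k-\theta\gamma E_{k+1}+\theta^2\kappa E_{k+2}-\theta\kappa E_{k+3}=\E[\sfLambda^k]$ cannot work. The recursion only expresses $E_3,E_4$ in terms of $E_0,E_1,E_2$; after substitution and elimination of $\chi$, \eqref{Eqn:SE_FP_quartic_3} is still a nontrivial equation involving $E_1$ and $E_2$ (there is no reason for these to drop out as a formal algebraic identity). The ingredient you are missing is the specific relation between $\nu$ and $\mu$ coming from $\bm{Y}$ being a rank-one perturbation of $\bm{W}$. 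Concretely, the paper proves (i) $\E_\nu[\mathsf{H}]=\rho^{-1}(1-E_0)$ via the Lebesgue decomposition of $\nu$ from Lemma~\ref{lem:RMT}, and (ii) a second, independent formula for $\E_\nu[\mathsf{H}]$ as an explicit rational function of $E_0,\dotsc,E_4$, obtained in Lemma~\ref{Lem:aux13} by writing $J(\mY)=J(\mW)+\frac{1}{N}\mP\mQ^\UT$ and applying the Sherman--Morrison/Woodbury formula to the resolvent $(c_\ast\mI-J(\bm{Y}))^{-1}$. Equating (i) and (ii) yields precisely the identity that makes the $E_1,E_2$ dependence in \eqref{Eqn:SE_FP_quartic_3} cancel; this identity is a random-matrix fact, not a consequence of the moment recursion for $\mu$ alone. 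So you would need to add the Lemma~\ref{Lem:aux13}-type computation (or some equivalent bridge between expectations under $\nu$ and moments under $\mu$) to close the argument; the polynomial-structure bookkeeping you flag as the ``main obstacle'' at the end is not in fact an obstacle that can be resolved by more careful bookkeeping with the tools you list.
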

We remark that an analogous result holds for the sestic (degree $6$) potential. Since the replica equations for the sestic ensemble in \citet{barbier2023fundamental} are even more involved, we do not provide the details here. We anticipate that the fixed point equations in \eqref{eq:se-fp} are a unified and concise reformulation of the replica fixed point equations and characterize the asymptotic Bayes risk for general potential functions $V$. This reformulation of the replica conjecture for the asymptotic Bayes risk may be more amenable to rigorous proof than the significantly more complicated replica formulas derived using the approach of \citet{barbier2023fundamental}.

\paragraph*{Information-Theoretic Optimality and Sub-Optimality of OAMP}  We can simplify the fixed point equations in \eqref{eq:se-fp} (which are equivalent to the replica fixed point equations \eqref{Eqn:SE_FP_quartic-main}) by eliminating $\rho$ to obtain a single equation $\omega = \mathscr{F}_1(\mathscr{F}_2(\omega))$. In some cases, this equation has a unique solution, implying the optimal OAMP algorithm matches the {conjectured} asymptotic MSE of the Bayes estimator and is {expected to be} information-theoretically optimal ({see \fref{Fig:IT_Alg_gap}(a)). In other cases, the equation may have multiple solutions, as illustrated in \fref{Fig:IT_Alg_gap}(b), where it has two stable fixed points. Here, the optimal OAMP algorithm converges to the inferior fixed point $\omega_{\ast}$, while the Bayes risk corresponds to the superior fixed point $\snrit$. Since $\omega_{\ast} < \snrit$, the optimal OAMP or any iterative algorithm (in the form of \eqref{eq:GFOM-algo}) with a constant ($\dim$-independent) number of iterations fails to achieve the Bayes-optimal MSE. Such scenarios also occur in i.i.d. Gaussian noise models \citep{celentano2020estimation,montanari2022statistically,montanari2022equivalence}, and it is conjectured that no polynomial time algorithm can achieve the information-theoretically optimal MSE in these cases. 

 \begin{figure}[htbp]
\begin{center}
\subfloat[Three-points prior with $\epsilon_1=\frac{1}{3}$ and $\epsilon_2=\frac{1}{5}$.
\label{Fig:OAMP_1FP}]{
\includegraphics[width=0.47\linewidth]{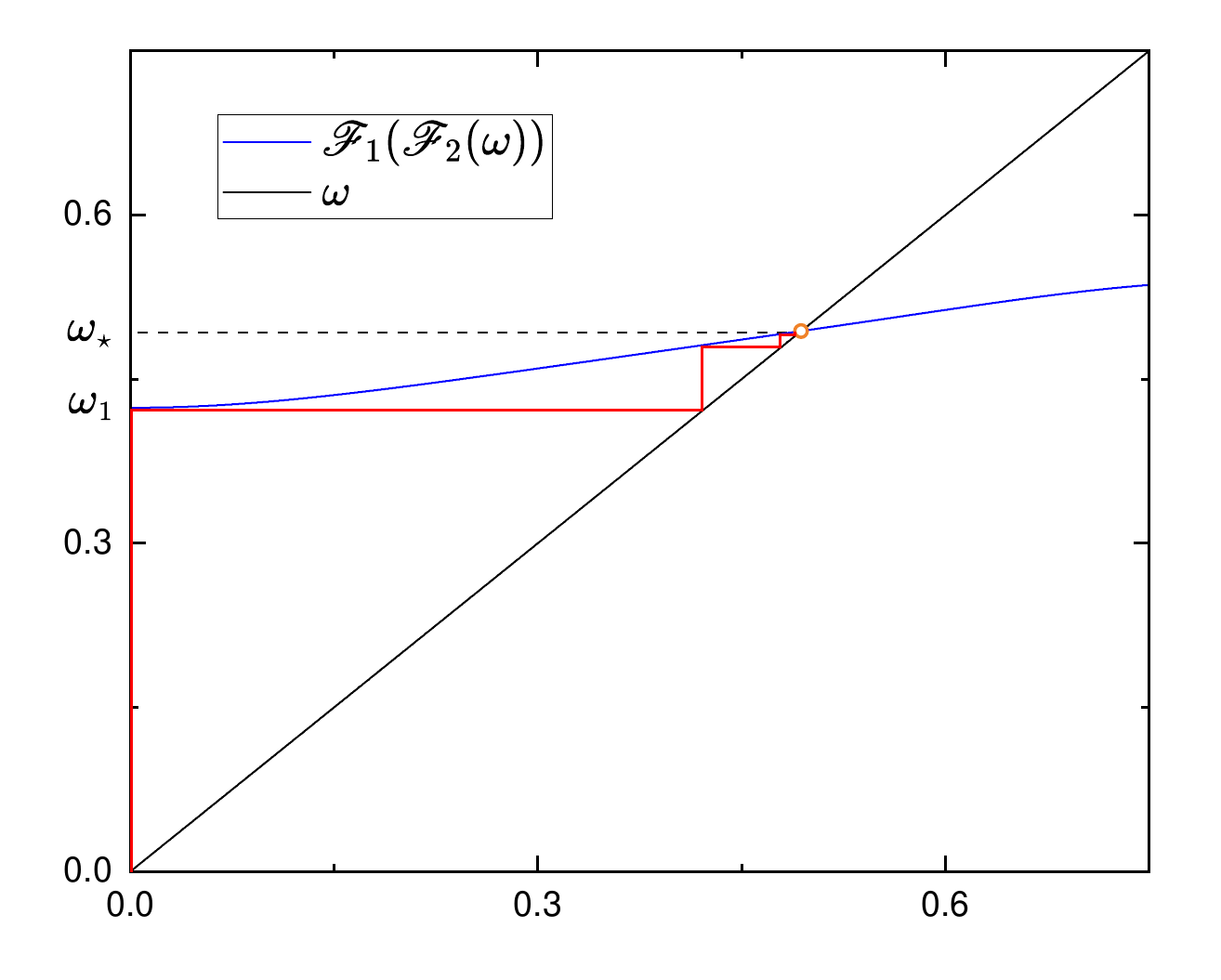}
                }
\subfloat[Three-points prior with $\epsilon_1=\frac{1}{6}$ and $\epsilon_2=\frac{1}{10}$.\label{Fig:OAMP_Multiple_FPs}]{
\includegraphics[width=0.47\linewidth]{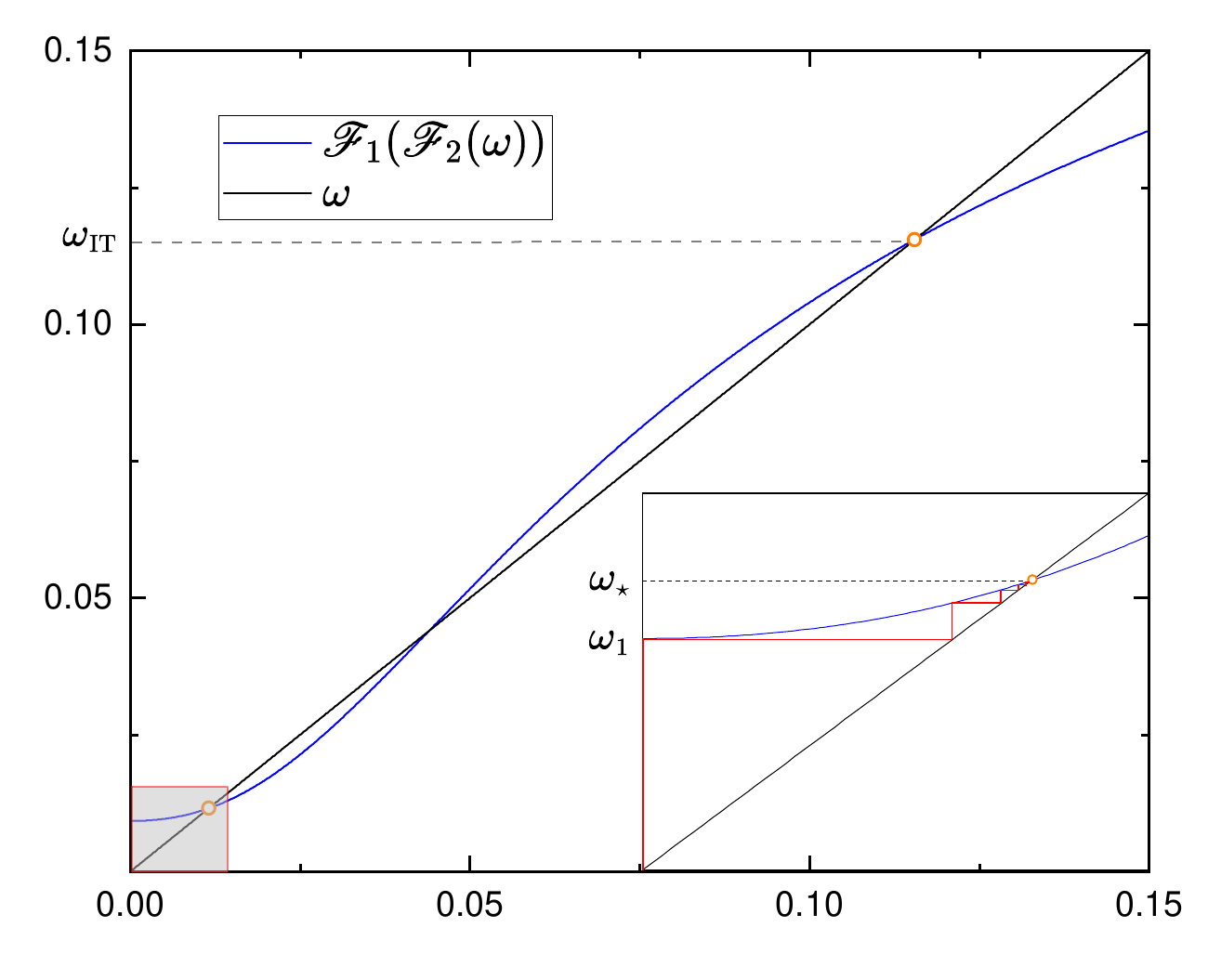} }
\end{center}
\caption{Plot of the fixed point equation $\omega = \mathscr{F}_1(\mathscr{F}_2(\omega))$ for SNR $\theta = 0.46$, quartic noise model \eqref{Eqn:quartic-main} with $\gamma=0$, and 
a three-point signal prior $\serv{X}_{\star} \sim \frac{\epsilon_1^2}{2} \delta_{\frac{1}{\epsilon_1}} 
+ \frac{\epsilon_2^2}{2} \delta_{\frac{1}{\epsilon_2}} 
+ (1-\frac{\epsilon_1^2}{2} - \frac{\epsilon_2^2}{2}  ) \delta_{0}$.}
\label{Fig:IT_Alg_gap}
\end{figure} 
\section{Proof Ideas}\label{Sec:proof_idea}
We now present some important ideas used to obtain our main results. \looseness=-1

\subsection{Heuristic Derivation of State Evolution (\thref{thm:SE})}\label{Sec:SE-heuristics}
We begin by an intuitive derivation of the state evolution result (Theorem \ref{thm:SE}) for OAMP algorithms, highlighting the key ideas involved in the proof. A formal proof of \thref{thm:SE} is presented in Appendix \ref{Sec:proof_SE}. Consider a general OAMP algorithm (\defref{Def:OAMP_main}):
\begin{equation}\label{Eqn:SE_derivation_1}
\bm{x}_t = {\Psi}_t(\bm{Y}) \cdot {f}_t(\bm{x}_1,\ldots,\bm{x}_{t-1};\bm{a}) \quad \forall \; t \; \in \; \N,
\end{equation}
where $({\Psi}_t)_{t\in \N}$ are trace-free (cf. \eqref{eq:trace-free}) and $(f_t)_{t\in \N}$ are divergence-free (cf. \eqref{eq:div-free-constraint}). For ease of exposition, we present the key ideas assuming that the matrix denoisers \emph{$({\Psi}_t)_{t \in \N}$ are polynomials} (the general case follows by a polynomial approximation argument). Let us decompose ${f}_t(\bm{x}_1,\ldots,\bm{x}_{t-1};\bm{a})$ into a component along the direction of $\bm{x}_\star$ and a component perpendicular to it:
\begin{align*}
{f}_t(\bm{x}_{1}, \dotsc, \vx_{t-1};\bm{a})\explain{}{=}\alpha_t \bm{x}_\star + {\bm{f}}_t^\perp \;  \text{ where }  \; \alpha_t \explain{def}{=} \frac{\ip{\bm{x}_\star}{{f}_t(\bm{x}_{<t};\bm{a})}}{\|\bm{x}_\star\|^2}, \; \; {\bm{f}}_t^\perp \explain{def}{=} {f}_t(\bm{x}_{<t};\bm{a}) - \alpha_t \vx_{\star}.
\end{align*}
Using this decomposition, we can write the new iterate $\bm{x}_t$ in \eqref{Eqn:SE_derivation_1} as
\begin{equation}\label{Eqn:term_I_II}
\bm{x}_t = \alpha_t\cdot \Psi_t(\bm{Y})  \bm{x}_\star +{\Psi}_t(\bm{Y}){\bm{f}}_t^\perp.
\end{equation}
As pointed out in Remark \ref{Rem:compare_CS}, the main difficulty in analyzing the algorithm above is that the matrix $\Psi_t(\mY)$ is not rotationally invariant. To address this, we express the update equation \eqref{Eqn:term_I_II} in terms of the rotationally invariant noise matrix $\mW$ by expanding the polynomial ${\Psi}_t(\bm{Y})$ in terms of $\mW$ using the fact that $\mY = (\theta/\dim) \cdot \bm{x}_\star\bm{x}_\star^\UT+\bm{W}$. Although this expansion can initially seem complicated, the key insight that makes it tractable is to show that the following approximations hold:
\begin{equation}\label{Eqn:SE_heuristic_Y2W}
{\Psi}_t(\bm{Y})\bm{x}_\star \explain{$\dim \rightarrow \infty$}{\simeq} \tilde{\Psi}_t(\bm{W})\bm{x}_\star,\quad {\Psi}_t(\bm{Y}){\bm{f}}_t^\perp \explain{$\dim \rightarrow \infty$}{\simeq} {\Psi}_t(\bm{W}){\bm{f}}_t^\perp,
\end{equation}
where $\tilde{\Psi}_t:\mathbb{R}\mapsto\mathbb{R}$ is polynomial obtained by appropriately transforming $\Psi_t$; we refer the reader to Appendix \ref{Sec:proof_SE} for additional details regarding these approximations. The transformed matrix denoiser $\tilde{\Psi}_t$ is determined by ${\Psi}_t$ via a complicated recursion and is not necessarily trace-free. Substituting \eqref{Eqn:SE_heuristic_Y2W} into \eqref{Eqn:term_I_II}, we get
\BS
\begin{eqnarray} 
\nonumber\bm{x}_t & \;  \explain{$\dim \rightarrow \infty$}{\simeq}  \;  &\alpha_t\cdot \tilde{\Psi}_t(\bm{W})\bm{x}_\star + {\Psi}_t(\bm{W}){\bm{f}}_t^\perp\\
&\overset{}{=} &\alpha_t\cdot \frac{\mathrm{Tr}[\tilde{\Psi}_t(\bm{W})]}{N}\cdot\bm{x}_\star + \alpha_t\cdot\bigg(\tilde{\Psi}_t(\bm{W}) -\frac{\mathrm{Tr}[\tilde{\Psi}_t(\bm{W})]}{N}\cdot \bm{I}_N\bigg)\bm{x}_\star+ {\Psi}_t(\bm{W}){\bm{f}}_t^\perp \nonumber\\
&\explain{def}{=}& \underbrace{\alpha_t\cdot \frac{\mathrm{Tr}[\tilde{\Psi}_t(\bm{W})]}{N}\cdot\bm{x}_\star}_{\text{Signal Component}} \; + \; \underbrace{\bm{z}_t}_{\text{Eff. Noise}}, \label{eq:sig+noise-decomp}
\end{eqnarray}
where the effective noise $\vz_t$ is defined as:
\begin{align} 
\bm{z}_t&\bydef
\alpha_t\cdot \hat{\Psi}_t(\bm{W})\bm{x}_\star+ {\Psi}_t(\bm{W}){\bm{f}}_t^\perp \quad \text{where} \quad \hat{\Psi}_t(\bm{W}) \explain{def}{=} \tilde{\Psi}_t(\bm{W}) -\frac{\mathrm{Tr}[\tilde{\Psi}_t(\bm{W})]}{N}\cdot \bm{I}_N. \label{eq:noise-update-rule}
\end{align}
\ES
We analyze the signal and noise components separately. 
\paragraph*{Signal Component} Notice that the signal component in \eqref{eq:sig+noise-decomp} involves the factor $\Tr[ \tilde{\Psi}_t(\bm{W})]/N$ . Computing the limiting value of this factor is challenging because the transformed matrix denoiser $\tilde{\Psi}_t$ does not have a convenient formula and is determined by $\Psi_t$  via a complicated recursion (see Appendix \ref{Sec:proof_SE} for further details). The key idea which yields a concise formula for this factor is to observe:
\begin{align} \label{eq:SE-keyidea}
\frac{\mathrm{Tr}[\tilde{\Psi}_t(\bm{W})]}{N}& \explain{(a)}{\approx} \frac{\bm{x}_\star^\UT \tilde{\Psi}_t(\bm{W})\bm{x}_\star}{N} \explain{(b)}{\approx} \frac{\bm{x}_\star^\UT {\Psi}_t(\bm{Y})\bm{x}_\star}{N} \explain{(c)}{=} \int_{\R} \Psi_t(\lambda) \; \nu_{\dim}(\diff \lambda) \explain{(d)}{\approx} \E_{\serv{\Lambda}_{\nu} \sim \nu}[\Psi_t(\serv{\Lambda}_{\nu})],
\end{align}
where the approximation in (a) follows from standard concentration results for quadratic forms of rotationally invariant matrices (see Fact \ref{fact:qf} in Appendix \ref{appendix:misc}), (b) follows from the defining property of $\tilde{\Psi}_t(\bm{W})$ in \eqref{Eqn:SE_heuristic_Y2W}, (c) follows by recalling the definition of the spectral measure in the signal direction ($\nu_{\dim}$) from \eqref{eq:signal-spec}, and (d) follows from the weak convergence of the compactly supported measure $\nu_{\dim}$ to $\nu$ (\lemref{lem:RMT}, item (1)).
\paragraph*{Noise Component} The update rule \eqref{eq:noise-update-rule} for the effective noise $\vz_t$ is written in a form for which existing state evolution results apply \citep{ma2017orthogonal,rangan2019vector,takeuchi2019rigorous,fan2022approximate,dudeja2023universality,wang2022universality,dudeja2022spectral}. Indeed, $\hat{\Psi}_t(\bm{W})$ and ${\Psi}_t(\bm{W})$ are trace-free functions of the rotationally-invariant matrix $\bm{W}$; and ${\bm{f}}_t^\perp$ is a divergence-free function of $\{\bm{x}_{i}\}_{i<t}$. By appealing to existing results on the dynamics of AMP algorithms driven by rotationally invariant matrices, we show that the effective noise component $\bm{z}_t$ converges to a centered Gaussian random variable $\mathsf{Z}_t$. Moreover, the covariance of $\mathsf{Z}_t$ and $\mathsf{Z}_s$ can be heuristically calculated based on a convenient property of OAMP algorithms. Specifically, for ${G}_1(\bm{W}), {G}_2 (\bm{W}) \in\big\{\hat{\Psi}_t(\bm{W}),{\Psi}_t(\bm{W}),{\Psi}_s(\bm{W})\big\}$ and $\vv_1, \vv_2 \in\{\bm{x}_\star,{\bm{f}}_s^\perp,{\bm{f}}_t^\perp\}$, we have
\begin{equation}
\frac{\vv_1^\UT{G}_1(\bm{W}){G}_2(\bm{W})\vv_2}{N} \approx \frac{\mathrm{Tr}[{G}_1(\bm{W}){G}_2(\bm{W})]}{N}\cdot\frac{\ip{\vv_1}{\vv_2}}{N}.
\end{equation}
The intuition for the above property is that the random vectors $\vv_1, \vv_2$ behave as if they are independent of the noise matrix $\bm{W}$, and the divergence-free and trace-free requirements imposed on OAMP algorithms are crucial for the validity of the above approximation. Using the above property, we immediately have that
\begin{align} \label{Eqn:SE_heuristic_cov}
\E[\serv{Z}_s \serv{Z}_t] &\approx \frac{\ip{\vz_s}{\vz_t}}{N}=\frac{\ip{\alpha_t\hat{\Psi}_t(\bm{W})\bm{x}_\star+ {\Psi}_t(\bm{W}){\bm{f}}_t^\perp}{\alpha_s\hat{\Psi}_s(\bm{W})\bm{x}_\star+ {\Psi}_s(\bm{W}){\bm{f}}_s^\perp}}{N}\\
&\approx\alpha_s\alpha_t\cdot  \frac{\mathrm{Tr}[\hat{\Psi}_t(\bm{W})\hat{\Psi}_s(\bm{W})]}{N} \cdot \frac{\|\bm{x}_\star\|^2}{N}+\frac{\mathrm{Tr}[{\Psi}_t(\bm{W}){\Psi}_s(\bm{W})]}{N}\cdot \frac{\ip{\bm{f}_t^\perp}{\bm{f}_s^\perp}}{N}, \nonumber 
\end{align}
where the cross terms vanish due to the orthogonality of $\bm{x}_\star$ and $\bm{f}_t^\perp$ (and $\bm{f}_s^\perp$). The limiting value of $\mathrm{Tr}[\hat{\Psi}_t(\bm{W}),\hat{\Psi}_s(\bm{W})]/{\dim}$ can be expressed in terms of $\nu$ (the spectral measure in the signal direction) using the argument from \eqref{eq:SE-keyidea}. Replacing the various normalized inner products by their limiting values eventually leads to the claimed state evolution  \eqref{Eqn:SE_variables_def}. 

\subsection{Derivation of the Optimal OAMP Algorithm}\label{Sec:proof-idea2}
Next, we present an intuitive derivation of the optimal OAMP algorithm introduced in \eqref{eq:optimal-OAMP}. While this derivation does not show that this algorithm attains the smallest estimation error among all iterative algorithms (as claimed in Theorem \ref{thm:optimality}), it provides a simple and natural approach to derive the matrix denoisers and the iterate denoisers used by the algorithm. 
For simplicity, we consider the class of simple memory-free OAMP algorithms in which the iterate denoiser $f_t$ only depends on the most recent iterate (cf.~\eqref{Eqn:OAMP}):
\begin{align}\label{Eqn:OAMP_memory_free}
\vx_t&=\Psi_t(\bm{Y}) \cdot  f_t(\vx_{t-1};\va),\quad \forall \;  t\in\mathbb{N},
\end{align} 
We will use a natural greedy heuristic to design the matrix and iterate denoisers. Specifically, we derive a good choice of $\Psi_t$ and $f_t$, assuming we have already specified the iterate and matrix denoisers for the first $t-1$ iterations. For any candidate $\Psi_t$ and $\fnonlin_t$, the distribution of the state evolution random variable $\serv{X}_t$ associated with the $\vx_t$ is given by (see \defref{Def:OAMP_main}):
\begin{subequations} \label{Eqn:SE_memoryfree_def}
\begin{align} \label{Eqn:AWGN}
(\serv{X}_\star, \serv{A}) \sim \pi, \quad  \serv{X}_{t} = \beta_t \serv{X}_\star + \serv{Z}_{t},
\end{align}
where $\serv{Z}_{t}$ is a centered Gaussian random variable  independent of $(\serv{X}_\star, \serv{A})$, and
\begin{align*} 
\beta_t & = \E[\serv{X}_\star \fnonlin_t(\serv{X}_{t-1}; \serv{A})] \cdot \mathbb{E}_{\serv{\Lambda}_\nu\sim\nu}\big[{\Psi}_t(\serv{\Lambda}_\nu)], \quad \alpha_t = \E[\serv{X}_\star \fnonlin_t(\serv{X}_{t-1}; \serv{A})],\\
\E[\serv{Z}_{t}^2] & =  \alpha_t^2 \cdot  \Var_{\serv{\Lambda}_\nu\sim\nu}[{\Psi}_t(\serv{\Lambda}_\nu)] + \left(\E[\fnonlin_t(\serv{X}_{t-1}; \serv{A})^2] -\alpha_t^2 \right) \cdot    \E_{\serv{\Lambda}\sim\mu}[{\Psi}_t^2(\serv{\Lambda})].
\end{align*}
\end{subequations}
A natural design principle is to choose $\fnonlin_t, \mfunc_t$ to maximize the SNR $\omega_t$ of the Gaussian channel $(\serv{X}_{\star}, \serv{X}_t = \beta_t \serv{X}_{\star} + \serv{Z}_t; \serv{A})$, which is given by the squared cosine similarity between $\mathsf{X}_t, \mathsf{X}_\star$:
\begin{align}\label{Eqn:OAMP_opt_omega_expression}
\omega_t &\explain{def}{=}\frac{\left(\mathbb{E}\left[\serv{X}_\star\serv{X}_t\right]\right)^2}{\mathbb{E}[\serv{X}_\star^2]\cdot\mathbb{E}[\serv{X}_t^2]}= \frac{\beta_t^2}{\beta_t^2 + \E[\serv{Z}_t^2]} = \frac{\left(\mathbb{E}\big[{\Psi}_t(\sfLambda_\nu)]\right)^2}{\mathbb{E}\big[{\Psi}_t^2(\sfLambda_\nu)] +\frac{\delta_t}{1-\delta_t}\cdot \mathbb{E}[{\Psi}_t^2(\sfLambda)] },
\end{align}
where $\delta_t\bydef 1-\left( \E[\serv{X}_\star \fnonlin_t(\serv{X}_{t-1}; \serv{A})] \right)^2/\E[\fnonlin_t(\serv{X}_{t-1}; \serv{A})^2]$. We consider the two maximization problems over $\Psi_t$ and $f_t$ one-by-one.
\paragraph*{Optimal choice of $\Psi_t$} Choosing $\mfunc_t$ to maximize $\omega_t$ is equivalent to solving the following optimization problem:
\begin{equation}\label{eq:orig-max}
\underset{\Psi}{\max}\ \frac{\left(\mathbb{E}\big[{\Psi}(\sfLambda_\nu)]\right)^2}{\mathbb{E}\big[{\Psi}^2(\sfLambda_\nu)] +\rho^{-1}\cdot \mathbb{E}[{\Psi}^2(\sfLambda)]  }  \quad \text{subject to}\quad \mathbb{E}\left[{\Psi}(\sfLambda)\right]=0.
\end{equation}
with the parameter $\rho\in(0,\infty)$ fixed at $\rho = \delta_t^{-1} - 1$ (notice $\delta_t$ does not depend on $\mfunc_t$). We transform the optimization problem in \eqref{eq:orig-max} into a simple quadratic optimization problem:
\BS\label{Eqn:OAMP_opt_F1_opt}
\begin{align}
&\underset{\Psi}{\min}\ 1-\frac{\left(\mathbb{E}\big[{\Psi}(\sfLambda_\nu)]\right)^2}{\mathbb{E}\big[{\Psi}^2(\sfLambda_\nu)] +\rho^{-1}\cdot \mathbb{E}[{\Psi}^2(\sfLambda)]  }  \quad \text{subject to}\quad \mathbb{E}\left[{\Psi}(\sfLambda)\right]=0 \label{Eqn:local_opt_Psi_MSE_a}\\
&\explain{(a)}{=} \underset{\Psi}{\min}\min_{c\in\mathbb{R}}\ \mathbb{E}\left[\left(1-c\Psi(\sfLambda_\nu)\right)^2\right]+\rho^{-1}\cdot \mathbb{E}[c^2{\Psi}^2(\sfLambda)] \quad \text{subject to}\quad \mathbb{E}\left[c{\Psi}(\sfLambda)\right]=0\\
&\explain{(b)}{=}  \underset{\Psi}{\min}\ \mathbb{E}\left[\left(1-\Psi(\sfLambda_\nu)\right)^2\right]+\rho^{-1}\cdot \mathbb{E}[{\Psi}^2(\sfLambda)] \quad \text{subject to}\quad \mathbb{E}\left[{\Psi}(\sfLambda)\right]=0,\label{Eqn:local_opt_Psi_MSE}
\end{align}
\ES
where step (a) can be verified by solving the inner quadratic minimization problem over $c$ explicitly, and step (b) is due to a change of variable $\Psi\mapsto c\Psi$; note that $c\Psi$ also satisfies the trace-free constraint. Note that any minimizer of  \eqref{Eqn:local_opt_Psi_MSE} is also a maximizer of \eqref{eq:orig-max} since the objective function of \eqref{eq:orig-max} is invariant to a rescaling of $\Psi$. To solve this optimization problem, we consider the Lebesgue decomposition $\nu = \nu_{\parallel} + \nu_{\perp}$, where $\nu_{\perp}$ is the singular part and $\nu_{\parallel}$ is the absolutely continuous part. Recall the definition of $\phi$ from \eqref{eq:phi} and let $S$ denote the set:
\begin{align} \label{eq:S-set-def-main}
    S \explain{def}{=} \{\lambda \in \R : \phi(\lambda) \neq 0 \} \quad \text{ where } \quad \phi(\lambda) \explain{def}{=} {(1- \pi \theta \hlb_{\mu}(\lambda))^2 + \pi^2 \theta^2  \mu^2(\lambda)}.
\end{align}
Notice that $\mu(S^c) = 0$. \lemref{lem:RMT} (items (2-3)) shows that $\nu_{\perp}(S) = 0$ and the density of $\nu_{\parallel}$ with respect to the Lebesgue measure is given by $\mu(\cdot)/\phi(\cdot)$ where $\mu(\cdot)$ is the density of $\mu$ (recall \assumpref{assump:signal-noise}). Hence, the Lagrangian of the minimization problem \eqref{Eqn:local_opt_Psi_MSE} is given by:
\begin{align*}
&\E[|\mfunc(\serv{\Lambda}_{\nu})-1|^2] + \frac{1}{\rho} \cdot \E[\mfunc^2(\serv{\Lambda})]  - 2\gamma \E[\mfunc(\serv{\Lambda})] \\
& \explain{}{=} \int_{\R} |\mfunc({\lambda})-1|^2 \; \nu(\diff \lambda) + \int_{\R}  \left( \frac{\mfunc^2({\lambda})}{\rho} - 2 \gamma \mfunc(\lambda) \right) \mu(\lambda) \diff \lambda  \\  
& \explain{}{=}  \int_{S^c} (\mfunc(\lambda) - 1)^2 \; \nu_{\perp}(\diff \lambda) + \int_{S} \left( (\mfunc(\lambda) - 1)^2 \cdot \frac{1}{\phi(\lambda)}  + \frac{\mfunc^2(\lambda)}{\rho} - 2\gamma \mfunc(\lambda) \right)\; \mu(\diff \lambda),
\end{align*}
where $\gamma \in \R$ is the Lagrange multiplier. The Lagrangian is minimized by minimizing the integrand pointwise, leading to the following formula for the minimizer $\mfunc_{\gamma}$:
\begin{align*}
\mfunc_{\gamma}(\lambda) & \explain{def}{=} \frac{\gamma + \frac{1}{\phi(\lambda)} }{\frac{1}{\phi(\lambda)}  + \frac{1}{\rho}} = 1+ \frac{(\rho\gamma - 1) \cdot \phi(\lambda)}{\rho + \phi(\lambda)} \quad \forall \; \lambda \; \in \;  S \text{ and} \quad \mfunc_{\gamma}(\lambda) = 1 \quad \forall \; \lambda \; \in \;  S^c.
\end{align*}
The solution of the constrained minimization problem \eqref{Eqn:local_opt_Psi_MSE} is obtained by choosing $\gamma$ so that the constraint $\E_{\serv{\Lambda} \sim \mu}[\mfunc_{\gamma}(\serv{\Lambda})] = 0$ is fulfilled. Hence, the optimizer of \eqref{Eqn:local_opt_Psi_MSE} and \eqref{eq:orig-max} is:
\begin{align} \label{eq:opt-denoiser-recall-main}
\mfunc(\lambda) & =  1 - \left( \E_{\serv{\Lambda} \sim \mu} \left[ \frac{ \phi(\serv{\Lambda})}{ \phi(\serv{\Lambda}) + \rho} \right] \right)^{-1} \cdot \frac{\phi(\lambda)}{\phi(\lambda)+\rho} \explain{def}{=} \omd(\lambda; \rho) \quad \forall \; \lambda \; \in \; \R.
\end{align}
This is precisely the matrix denoiser used by the optimal OAMP algorithm in \eqref{eq:optimal-OAMP}.
\paragraph*{Optimal choice of $f_t$} Notice from \eqref{Eqn:OAMP_opt_omega_expression} that the iterate denoiser $\fnonlin_t$ which maximizes the SNR $\omega_t$  can be derived by minimizing $\delta_t$ with respect to $\fnonlin_t$, leading to the minimization problem:
\begin{equation}
\underset{f}{\min}\  1-\frac{\left( \E[\serv{X}_\star f(\serv{X}_{t-1};\serv{A})] \right)^2}{ \E[f^2(\serv{X}_{t-1};\serv{A})] } \quad \text{subject to}\quad \mathbb{E}[f'(\serv{X}_{t-1};\serv{A})]=0.
\end{equation}
In the above definition, $\serv{X}_{t-1}=\beta_{t-1}\serv{X}_\star+\sigma_{t-1}\serv{Z}$ is the state evolution random variable for $\vx_{t-1}$, $\serv{Z}\sim\mathcal{N}(0,1)$ is independent of $(\serv{X}_{\star},\serv{A})\sim\pi$, and $\beta_{t-1}\in\mathbb{R}$ and $\sigma_{t-1}>0$ are fixed parameters (determined by the matrix and iterate denoisers used in the first $t-1$ iterations). Repeating the arguments used in  \eqref{Eqn:OAMP_opt_F1_opt}, we obtain:
\BS\label{Eqn:OAMP_opt_F2_opt}
\begin{align}
& \min_f\ 1-\frac{\left( \E[\serv{X}_\star f(\serv{X}_{t-1};\serv{A})] \right)^2}{ \E[f^2(\serv{X}_{t-1};\serv{A})] } \quad \text{subject to}\quad \mathbb{E}[f'(\serv{X}_{t-1};\serv{A})]=0\\
&= \min_f\, \min_{c\in\mathbb{R}}\ \mathbb{E}\left[\left(\serv{X}_\star-cf(\serv{X}_{t-1};\serv{A})\right)^2\right] \quad \text{subject to}\quad \mathbb{E}[cf'(\serv{X}_{t-1};\serv{A})]=0\\
&= \min_f\ \mathbb{E}\left[\left(\serv{X}_\star-f(\serv{X}_{t-1};\serv{A})\right)^2\right] \quad \text{subject to}\quad \mathbb{E}[f'(\serv{X}_{t-1};\serv{A})]=0\label{Eqn:OAMP_opt_F2_opt_c}
\end{align}
\ES
Recalling the definition of the DMMSE estimator from \defref{def:gauss-channel} (see also Lemma \ref{lem:scalarization-mmse} in Appendix \ref{appendix:gauss-channel}), the minimizer of \eqref{Eqn:OAMP_opt_F2_opt_c} is:
\begin{equation} \label{eq:opt-denoiser-sol}
f_t(x;a)=\dfnew{\varphi}\left( (\beta_{t-1}^2+\sigma_{t-1}^2)^{-\frac{1}{2}}\cdot {x}{};a \big| \omega_{t-1}\right)\quad\text{where}\quad \omega_{t-1}\bydef \frac{\beta_{t-1}^2}{\beta_{t-1}^2+\sigma_{t-1}^2}.
\end{equation}

\paragraph*{Optimal OAMP algorithm} Using the matrix and iterate denoisers derived above, 
we obtain the following memory-free OAMP algorithm (cf.~\eqref{Eqn:OAMP_memory_free}):
\begin{equation}\label{Eqn:OAMP_opt_derivation}
\bm{x}_t = c_t\cdot{\Psi}_\star(\bm{Y};\rho_t) \cdot \dfnew{\varphi}_t(\bm{x}_{t-1};\bm{a}|\omega_{t-1}),
\end{equation}
where we defined $c_t\bydef 1/\sqrt{\beta_{t}^2+\sigma_t^2}$ and  made a slight change of variable by absorbing the normalization factor (acting on the $x$ input of $\dfbdnsr$) in \eqref{eq:opt-denoiser-sol} into the definition of $\vx_t$. This scaling parameter $c_t$ ensures state evolution random variable $\serv{X}_t$ corresponding to $\bm{x}_t$ satisfies $\E[\serv{X}_t^2] = 1$. The resulting algorithm is precisely the optimal OAMP algorithm introduced in \eqref{eq:optimal-OAMP}. The recursions for $\omega_t, \rho_t$ and the alternative formula $c_t=1/\sqrt{w_t}\cdot(1+1/\rho_t)$ is derived by specializing the general state evolution equations in \eqref{Eqn:SE_memoryfree_def} to the optimal OAMP algorithm; see the proof of \propref{prop:optimal-OAMP-SE} in Appendix \ref{App:proof_OAMP_opt_SE} for details.

{

\subsection{Proof of the Optimality Result (\thref{thm:optimality})}\label{sec:optimality-proof}

We now present the proof of \thref{thm:optimality}. We will introduce the key ideas involved in the form of some intermediate results, whose proofs are deferred to \appref{appendix:optimality-proof}.

\paragraph*{Implementing Iterative Algorithms using OAMP Algorithms} Our proof builds on the techniques introduced  by \citet{celentano2020estimation} and \citet{montanari2022statistically}, who show that for the i.i.d. Gaussian noise model, any estimator computed using $t \in \N$ iterations of an iterative algorithm can also be computed using $t$ iterations of a suitably designed AMP algorithm. Consequently, we can restrict the search space for the optimal $t$-iteration iterative algorithm to $t$-iteration AMP algorithms, which are easier to analyze. Based on these results, it is natural to hope that any estimator computed using $t$ iterations of a general iterative algorithm (as defined in Definition~\ref{def:itr}) can be computed using $t$ iterations of a suitably designed OAMP algorithm (as defined in Definition~\ref{Def:OAMP_main}). Unfortunately, this strategy breaks down in our case because we require that the iterate denoisers used in the OAMP algorithm satisfy the divergence-free requirement (\ref{eq:div-free-constraint}) of Definition~\ref{Def:OAMP_main}, which is crucial to ensure that the OAMP algorithms have tractable state evolutions. We illustrate the difficulty caused by imposing the divergence-free requirement  and motivate our proof strategy by trying to construct an OAMP algorithm to implement the first two iterations of a simple iterative algorithm:
\begin{align} \label{eq:itr-problem}
    \vr_1 & = \mfunc_1(\mY) \cdot \fnonlin_1(\va), \quad \vr_2 = \mfunc_2(\mY) \fnonlin_2(\vr_1 ; \va),
\end{align} 
which we refer to as the \emph{target algorithm}. Our goal is to design an OAMP algorithm which can reconstruct the iterates of the target algorithm. For simplicity, we assume that the matrix denoisers used by the target algorithm satisfy the trace-free requirement $\E_{\serv{\Lambda} \sim \mu}[\mfunc_1(\serv{\Lambda})] = \E_{\serv{\Lambda} \sim \mu}[\mfunc_2(\serv{\Lambda})]  = 0$ (requirement (\ref{eq:trace-free}) in Definition~\ref{Def:OAMP_main}). Since the divergence-free constraint (\ref{eq:div-free-constraint}) is inactive for the first iteration, we can construct a valid OAMP algorithm which simply copies the first iteration of the target algorithm $\vx_1 = \mfunc_1(\mY) \fnonlin_1(\va)$. Let $(\serv{X}_{\star}, \serv{X}_1; \serv{A})$ be the state evolution random variables corresponding to the first iteration of this OAMP algorithm. Next, we consider the second iteration of the target algorithm \eqref{eq:itr-problem}. Notice that the denoiser $\fnonlin_2$ might not satisfy the divergence-free requirement $\E[\partial_1 \fnonlin_2(\serv{X}_1; \serv{A})] = 0$ imposed in the definition of OAMP algorithms (Definition~\ref{Def:OAMP_main}). To ensure this requirement, we design the second iteration of the OAMP algorithm as:
\begin{align*}
\iter{\vx}{2} & = \mfunc_2(\mY)  \cdot \dfnew{\fnonlin}_2(\iter{\vx}{1}; \va)\ \text{ where } \   \dfnew{\fnonlin}_2(x; \aux) \explain{def}{=} {\fnonlin}_2 \left(x; \aux \right) - \E[\partial_1 {\fnonlin}_2(\serv{X}_1; \serv{A})] \cdot x \quad \forall \; x  \in  \R,  \aux \in  \R^{\auxdim}.
\end{align*}
By construction, $\dfnew{f}_2$ satisfies the divergence-free constraint $\E[\partial_1 \dfnew{\fnonlin}_2(\serv{X}_1; \serv{A})] = 0$. We can express the second iterate of the target algorithm \eqref{eq:itr-problem} as:
\begin{align} \label{eq:itr-2-stuck}
\vr_2 & = \vx_2  + \E[\partial_1 {\fnonlin}_2(\serv{X}_1; \serv{A})] \cdot \mfunc_2(\mY)  \vx_1 \explain{(a)}{=} \vx_2  + \E[\partial_1 {\fnonlin}_2(\serv{X}_1; \serv{A})] \cdot \underbrace{\mfunc_2(\mY) \mfunc_1(\mY)  \fnonlin_1(\va)}_{\text{uncomputed}} ,
\end{align}
where in step (a) we recalled that $\vx_1 = \mfunc_1(\mY) \cdot \fnonlin_1(\va)$. In particular, we are unable to reconstruct the second iterate of the target algorithm in \eqref{eq:itr-problem} using the first two iterations of the OAMP algorithm we have designed since the matrix-vector product $\mfunc_2(\mY) \mfunc_1(\mY) \cdot \fnonlin_1(\va)$ in \eqref{eq:itr-2-stuck} has not been computed by the OAMP algorithm in the sense that it cannot be reconstructed from the OAMP iterates $\vx_1, \vx_2$.

\paragraph*{Lifted OAMP Algorithms} To address the issue highlighted above, we introduce a more powerful class of algorithms called \emph{lifted OAMP algorithms}, which are parameterized by a degree parameter $\degree \in \N$, which modulates the computational power of these algorithms. A degree-$\degree$ OAMP algorithm can compute $\degree$ matrix-vector products per iteration. For instance, in the first iteration, the algorithm computes the matrix-vector products (${\serv{\Lambda}\sim \mu}$):
\begin{align} \label{eq:matvec-prod}
(\mY - \E[\serv{\Lambda}] \cdot \mI_{\dim})  \fnonlin_1(\va), \; (\mY^2 - \E[\serv{\Lambda}^2] \cdot \mI_{\dim})  \fnonlin_1(\va), \;  \dotsc, \; (\mY^{\degree} - \E[\serv{\Lambda}^{\degree}] \cdot \mI_{\dim})  \fnonlin_1(\va). 
\end{align}
Since continuous functions can be approximated arbitrarily well by polynomial functions on compact sets, for any continuous function $\Phi: \R \mapsto \R$, we can approximate the matrix-vector product $\Phi(\mY) \cdot  \fnonlin_1(\va)$ using a linear combination of the matrix-vector products in \eqref{eq:matvec-prod}. In particular, the missing vector-matrix product in \eqref{eq:itr-2-stuck} needed to reconstruct the second iteration of the target algorithm \eqref{eq:itr-problem} can also be approximated using the matrix-vector products \eqref{eq:matvec-prod} computed by the lifted OAMP algorithm. The following definition formally introduces the class of lifted OAMP algorithms. 

\begin{definition}[Degree-$D$ Lifted OAMP Algorithm] \label{def:LOAMP} For any $D \in \N$ (independent of $\dim$), a degree-$D$ lifted OAMP algorithm maintains  a sequence of iterates $(\iter{\vw}{ti})_{t \in \N, i \in [D]}$ indexed by a time index $t \in \N$ and a degree index $i \in [\degree]$. At step $t$, the lifted OAMP computes the iterates $\iter{\vw}{t,1}, \dotsc, \iter{\vw}{t,\degree}$ as follows:
\begin{align} \label{eq:lifted-OAMP}
\iter{\vw}{t,i} & = \left( \mY^i - \E[\serv{\Lambda}^i] \cdot \mI_{\dim} \right) \cdot \fnonlin_{t}\big(\iter{\vw}{1,\bdot}, \dotsc, \iter{\vw}{t-1,\bdot}; \va \big)  \quad \forall \; i \in [\degree], \; t \in \N.
\end{align}
The estimate of $\vx_\star$ at iteration $t$ is obtained by applying a post-processing function $\psi_t$ to the iterates $\vw_{1,\bdot}, \dotsc, \vw_{t,\bdot}$ generated so far and the side-information $\va$:
\begin{align} \label{eq:LOAMP-postprocess} 
{\hat{\vw}_{t}} & = \psi_t(\iter{\vw}{1,\bdot}, \dotsc, \iter{\vw}{t-1,\bdot}; \va),
\end{align}
In the above equations, the notation $\iter{\vw}{t,\bdot}$ is a shorthand for the collection of vectors $\iter{\vw}{t,1}, \iter{\vw}{t,2}, \dotsc, \iter{\vw}{t,\degree}$. Furthermore for each $t \in \N$, the iterate denoiser $\fnonlin_t: \R^{(t-1)\degree} \times \R^{\auxdim} \mapsto \R$ and the postprocessing function $\psi_t:\mathbb{R}^{t\degree} \times \R^{\auxdim} \mapsto\mathbb{R}$ are continuously differentiable, Lipschitz, act entry-wise on their vector inputs, and do not change with $\dim$. Each lifted OAMP algorithm is associated with a collection of \emph{state evolution random variables} $(\serv{X}_{\star}, (\serv{W}_{t,i})_{t \in \N, i \in [\degree]}; \serv{A})$, whose distribution is given by:
\begin{subequations} \label{eq:SE-LOAMP}
\begin{align}
(\serv{X}_\star, \serv{A}) \sim \pi, \quad \serv{W}_{t,i} = \alpha_t \cdot (\E_{{\serv{\Lambda}}_{\nu} \sim \nu}[{\serv{\Lambda}}_{\nu}^i] - \E_{\serv{\Lambda}\sim \mu}[{\serv{\Lambda}}^i]) \cdot \serv{X}_\star + \serv{Z}_{ti} \quad \forall \; t \; \in \; \N, i \; \in \; [\degree],
\end{align}
where $\{\serv{Z}_{ti}\}_{t \in \N, i \in \N}$ are centered Gaussian random variables, sampled independently of $(\serv{X}_\star, \serv{A}$) with covariance:
\begin{align}
\E[\serv{Z}_{si}\serv{Z}_{tj}] & = \alpha_s \alpha_t \cdot \Cov_{\stackrel{}{\serv{\Lambda}_{\nu} \sim \nu}}[\serv{\Lambda}_{\nu}^i, \serv{\Lambda}_{\nu}^j] + (\Sigma_{st} - \alpha_s \alpha_t)  \cdot \Cov_{\stackrel{}{\serv{\Lambda} \sim \mu}}[\serv{\Lambda}_{}^i, \serv{\Lambda}^j].
\end{align}
In the above equations, $(\alpha_t)_{t \in \N}$ and  $(\Sigma_{st})_{s \in \N, t \in \N}$ are given by the recursion:
\begin{align}
\alpha_t & = \E\left[\serv{X}_\star {\fnonlin}_t\left(\serv{W}_{1,\bdot}, \dotsc, \serv{W}_{t-1,\bdot}; \serv{A}\right)\right], \  \Sigma_{st}  = \E\left[{\fnonlin}_s\left(\serv{W}_{1,\bdot}, \dotsc, \serv{W}_{s-1,\bdot}; \serv{A}\right) {\fnonlin}_t\left(\serv{W}_{1,\bdot}, \dotsc, \serv{W}_{t-1,\bdot}; \serv{A}\right)\right].
\end{align}
\end{subequations}
Finally, we require the iterate denoisers $(\fnonlin_t)_{t \in \N}$ to satisfy the \emph{divergence-free} constraint:
    \begin{align}
    \E[\partial_{s,i} \fnonlin_t(\serv{W}_{1,\bdot}, \dotsc, \serv{W}_{t-1,\bdot}; \serv{A})] & = 0 \quad \forall \; s \; \in \; [t-1], \; i \; \in \; [\degree], \; t \; \in \; \N,
    \end{align}
where $\partial_{s,i} \fnonlin_t(w_{1,\bdot},\dotsc, w_{t-1,\bdot}; \aux)$ denotes the partial derivative of $\fnonlin_t$ with respect to $w_{s,i}$.    
\end{definition}
A degree-$\degree$ lifted OAMP algorithm which runs for $t$ iterations can be viewed as an instance of an OAMP algorithm (Definition~\ref{Def:OAMP_main}) which runs for $t \degree$ iterations. Hence, the following result as an immediate corollary of Theorem~\ref{thm:SE}.

\begin{corollary}[State evolution of lifted OAMP algorithms] \label{cor:SE-LOAMP} Consider a degree-$\degree$ lifted OAMP algorithm of the form \eqref{eq:lifted-OAMP} and let $(\serv{X}_{\star}, (\serv{W}_{t,i})_{t \in \N, i \in [\degree]} ; \serv{A})$ be the associated state evolution random variables. Then for any $t \in \N$,
\begin{align*}
\left(\vx_\star, \vw_{1,\bdot}, \vw_{2,\bdot}, \dotsc, \vw_{t,\bdot}; \bm{a}\right) \wc (\serv{X}_\star, \serv{W}_{1,\bdot}, \dotsc, \serv{W}_{t,\bdot}; \serv{A}).
\end{align*}
\end{corollary}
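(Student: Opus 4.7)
The plan is to view a degree-$\degree$ lifted OAMP algorithm running for $t$ iterations as an instance of a standard OAMP algorithm running for $t\degree$ iterations (as suggested by the paragraph preceding the corollary), and then invoke \thref{thm:SE} directly. To this end, I would fix a bijection between flat indices $k \in [t\degree]$ and pairs $(s,i)$ with $s \in [t]$, $i \in [\degree]$ via $k = (s-1)\degree + i$. Under this identification I set $\vx_k \bydef \iter{\vw}{s,i}$ and, for each such $k$, I define a standard-OAMP matrix denoiser $\tilde{\mfunc}_k(\lambda) \bydef \lambda^i - \E_{\serv{\Lambda} \sim \mu}[\serv{\Lambda}^i]$ and a standard-OAMP iterate denoiser $\tilde{\fnonlin}_k(x_1,\dotsc,x_{k-1}; \aux)$ that reads only the first $(s-1)\degree$ coordinates of its argument and feeds them, suitably reshaped into $\degree$-tuples, into the lifted-OAMP iterate denoiser $\fnonlin_s$.

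With these definitions in place, the next step is to check that the requirements of \defref{Def:OAMP_main} are met. Continuity of $\tilde{\mfunc}_k$ and its $\dim$-independence are immediate, and the trace-free constraint \eqref{eq:trace-free} holds by construction since $\E_{\serv{\Lambda} \sim \mu}[\tilde{\mfunc}_k(\serv{\Lambda})] = 0$. The iterate denoiser $\tilde{\fnonlin}_k$ inherits continuous differentiability, Lipschitz continuity, entrywise action, and $\dim$-independence directly from $\fnonlin_s$. For the divergence-free requirement \eqref{eq:div-free-constraint}, the partial derivatives of $\tilde{\fnonlin}_k$ with respect to coordinates indexed by $\ell > (s-1)\degree$ vanish identically (since these inputs are ignored), while for $\ell = (s'-1)\degree + j$ with $s' < s$, they equal $\partial_{s',j}\fnonlin_s$, whose expectation is zero by the divergence-free constraint imposed in \defref{def:LOAMP}.

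Once the reduction is validated, applying \thref{thm:SE} to the flattened algorithm yields
\begin{align*}
(\vx_\star, \vx_1, \dotsc, \vx_{t\degree}; \va) \wc (\serv{X}_\star, \serv{X}_1, \dotsc, \serv{X}_{t\degree}; \serv{A}),
\end{align*}
which, translated back through the bijection, is precisely the claim of \corref{cor:SE-LOAMP}, provided the state evolution parameters match those in \eqref{eq:SE-LOAMP}. I would verify this match term by term: the signal coefficient at flat index $k$ is $\beta_k = \E[\serv{X}_\star \tilde{\fnonlin}_k] \cdot \E[\tilde{\mfunc}_k(\serv{\Lambda}_\nu)] = \alpha_s \cdot (\E[\serv{\Lambda}_\nu^i] - \E[\serv{\Lambda}^i])$, identifying $q_i = \E_{\serv{\Lambda}_\nu \sim \nu}[\serv{\Lambda}_\nu^i] - \E_{\serv{\Lambda} \sim \mu}[\serv{\Lambda}^i]$ implicitly used in the corollary statement. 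For the Gaussian noise covariance, substituting $\tilde{\mfunc}_k$ into the covariance formula of \thref{thm:SE} gives covariances of the form $\Cov[\serv{\Lambda}_\nu^i, \serv{\Lambda}_\nu^j]$ and $\Cov[\serv{\Lambda}^i, \serv{\Lambda}^j]$, since constants drop out of the covariance, reproducing \eqref{eq:SE-LOAMP} exactly.

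The main ``obstacle'' is really just bookkeeping: ensuring that the reshaping of $(x_1, \dotsc, x_{(s-1)\degree})$ into the $(s-1)\times\degree$ array expected by $\fnonlin_s$ is handled carefully so that entrywise action, Lipschitz smoothness, and divergence-free properties transfer correctly under the bijection, and that the identification $q_i = \E[\serv{\Lambda}_\nu^i] - \E[\serv{\Lambda}^i]$ (which is only implicitly fixed by the statement of \corref{cor:SE-LOAMP}) is the right reading. Since all of this amounts to rewriting indices, no substantive new analysis is required beyond \thref{thm:SE}.
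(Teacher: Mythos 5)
Your proof is correct and takes exactly the same route the paper intends. The paper states that a degree-$\degree$ lifted OAMP run for $t$ iterations is an instance of a standard OAMP run for $t\degree$ iterations and that the corollary is immediate from \thref{thm:SE}; you have spelled out precisely this embedding via the bijection $k = (s-1)\degree + i$, verified the trace-free and divergence-free requirements, and confirmed that substituting $\tilde{\mfunc}_k(\lambda) = \lambda^i - \E[\serv{\Lambda}^i]$ into the state evolution of \defref{Def:OAMP_main} reproduces the coefficients $\alpha_t q_i$ and the covariance structure in \eqref{eq:SE-LOAMP}. No gaps.
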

The following proposition (proved in \appref{appendix:LOAMP-reduction}), shows that any estimator that is computed by an iterative algorithm (Definition~\ref{def:itr}) can be approximated arbitrarily well by a suitably designed degree-$\degree$ lifted OAMP algorithm in the limit $\degree \rightarrow \infty$. 

\begin{proposition} \label{prop:LOAMP-reduction} Let $\hat{\vr}_t$ be the estimator returned by any iterative algorithm (Definition~\ref{def:itr}) after $t \in \N$ iterations. Then, for each $\degree \in \N$, there is a degree-$\degree$ lifted OAMP algorithm which returns an estimator $\dup{\tilde{\vw}_{t}}{\degree}$ after $t$ iterations, which satisfies:
\begin{align*}
\lim_{\degree \rightarrow \infty} \plimsup_{\dim \rightarrow \infty} \frac{\|\hat{\vr}_t - \dup{\tilde{\vw}_{t}}{\degree} \|^2}{\dim} & = 0.
\end{align*}
\end{proposition}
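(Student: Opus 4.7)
The plan is to prove Proposition~\ref{prop:LOAMP-reduction} in three steps: polynomial approximation of the matrix denoisers of the target iterative algorithm, an inductive construction of a LOAMP algorithm that exactly reproduces the polynomially-approximated algorithm, and an application of Corollary~\ref{cor:SE-LOAMP} to conclude. Since $\|\mY\|_{\op}$ is uniformly bounded in probability (Assumption~\ref{assump:noise} plus the rank-one perturbation), each matrix denoiser $\mfunc_s$ need only be evaluated on a bounded interval $[-C,C]$. By the Weierstrass approximation theorem, for any $\epsilon > 0$, $s \in [t]$, and $D$ sufficiently large there is a polynomial $p_s$ of degree at most $d := \lfloor D/t \rfloor$ with $\sup_{\lambda \in [-C,C]} |\mfunc_s(\lambda) - p_s(\lambda)| \leq \epsilon$. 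Let $\vr_s^{(D)}$ denote the iterates of the modified iterative algorithm in which each $\mfunc_s$ is replaced by $p_s$. Using the Lipschitz property of $\fnonlin_s, \gnonlin_s$ together with the operator-norm bound $\|p_s(\mY) - \mfunc_s(\mY)\|_{\op} \leq \epsilon$, a straightforward induction on $s$ shows that $\|\vr_s^{(D)} - \vr_s\|^2/\dim \pc 0$ as $D \rightarrow \infty$, so that $\|\hat{\vr}_t^{(D)} - \hat{\vr}_t\|^2/\dim \pc 0$ where $\hat{\vr}_t^{(D)} := \psi_t(\vr_1^{(D)}, \dotsc, \vr_t^{(D)}; \va)$.

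Next, I would construct a degree-$D$ LOAMP whose iterates exactly reproduce $\vr_s^{(D)}$. The induction hypothesis is that there exist Lipschitz and continuously differentiable $G_s$ satisfying $\vr_s^{(D)} = G_s(\vw_{1,\bdot}, \dotsc, \vw_{s,\bdot}; \va)$ which depend only on $\vw_{s',j'}$ for $s' \leq s$ and $j' \leq (s-s'+1)d$. For the base case take $\tilde{h}_1(\va) := \fnonlin_1(\va)$ (trivially divergence-free), yielding $\vw_{1,i} = (\mY^i - \E[\serv{\Lambda}^i]\mI_{\dim})\fnonlin_1(\va)$, and expand
\begin{equation*}
\vr_1^{(D)} = \sum_{i=1}^d c_{1,i}\, \vw_{1,i} + \Bigl(c_{1,0} + \sum_{i=1}^d c_{1,i}\, \E[\serv{\Lambda}^i]\Bigr)\fnonlin_1(\va) + \gnonlin_1(\va) =: G_1(\vw_{1,\bdot}; \va).
\end{equation*}
For the inductive step, define the composed denoiser $h_s := \fnonlin_s(G_1, \dotsc, G_{s-1}; \va)$, compute its divergences $\alpha_{s',j'}^{(s)} := \E[\partial_{s',j'} h_s(\serv{W}_{1,\bdot}, \dotsc, \serv{W}_{s-1,\bdot}; \serv{A})]$ with respect to the state-evolution random variables produced by Corollary~\ref{cor:SE-LOAMP} applied to the LOAMP built so far, and set $\tilde{h}_s := h_s - \sum_{s',j'} \alpha_{s',j'}^{(s)} \vw_{s',j'}$, which is divergence-free by construction. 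Then the identities $\mY^i h_s = \vw_{s,i} + \E[\serv{\Lambda}^i]\tilde{h}_s + \sum_{s',j'}\alpha_{s',j'}^{(s)} \mY^i \vw_{s',j'}$ and $\mY^i \vw_{s',j'} = \mY^{i+j'}\tilde{h}_{s'} - \E[\serv{\Lambda}^{j'}]\mY^i \tilde{h}_{s'}$, combined with $\mY^k \tilde{h}_{s'} = \vw_{s',k} + \E[\serv{\Lambda}^k]\tilde{h}_{s'}$ for $k \leq D$, let us unfold $\vr_s^{(D)} = \sum_{i=0}^d c_{s,i}\mY^i h_s + \gnonlin_s(G_1, \dotsc, G_{s-1}; \va)$ into a combination of LOAMP iterates and functions of $\va$. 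The largest index encountered is $i+j' \leq d + (s-s')d \leq td \leq D$, so every required matrix-vector product is indeed a LOAMP iterate, closing the induction.

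With the post-processing function chosen as $\tilde{\psi}_t := \psi_t \circ (G_1, \dotsc, G_t)$, the LOAMP produces $\tilde{\vw}_t^{(D)} = \hat{\vr}_t^{(D)}$ exactly; combined with Step~1, this yields $\lim_{D \rightarrow \infty}\plimsup_{\dim \rightarrow \infty}\|\hat{\vr}_t - \tilde{\vw}_t^{(D)}\|^2/\dim = 0$. The main technical obstacle is a regularity bookkeeping one: I must verify that the denoisers $\tilde{h}_s$ are continuously differentiable and Lipschitz as required by Definition~\ref{def:LOAMP}, and that the divergences $\alpha_{s',j'}^{(s)}$ are well-defined, both of which require invoking Corollary~\ref{cor:SE-LOAMP} at each induction step to characterize the joint distribution of the LOAMP iterates. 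Secondary issues include uniformly controlling the polynomial approximation error across the $t$ iterations via the Lipschitz continuity of $\fnonlin_s, \gnonlin_s$ together with the deterministic bound $\|p_s(\mY)\|_{\op} \leq \sup_{|\lambda|\leq C}|p_s(\lambda)|$, which remains bounded as $D \rightarrow \infty$ even though individual coefficients of $p_s$ may blow up.
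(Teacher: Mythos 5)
Your proposal is correct, but it reverses the order of operations relative to the paper's proof, and this makes it a genuinely different argument. The paper performs divergence removal \emph{first} (Lemma~\ref{lem:intermediate-OAMP}), building an ``intermediate OAMP'' whose iterates $\vv_{t,S}$ are indexed by a time index $t$ and a finite subset $S \subset \N$, with matrix denoisers $\dfnew{\mfunc}_S(\lambda) = \prod_{s\in S}\mfunc_s(\lambda) - \E[\prod_{s\in S}\mfunc_s(\serv{\Lambda})]$; the subset indexing arises because each divergence subtraction introduces products of the original (non-polynomial) matrix denoisers, which must be tracked as new iterates. Only afterward does the paper approximate the $\dfnew{\mfunc}_S$ by polynomials and lift the result into a degree-$\degree$ LOAMP (Lemma~\ref{lem:poly-approx-2}). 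You instead approximate the matrix denoisers $\mfunc_s$ by polynomials $p_s$ of degree $d = \lfloor\degree/t\rfloor$ at the outset, so that products of matrix denoisers become polynomials of controlled degree and can be absorbed directly by the lift $\{\mY^i - \E[\serv{\Lambda}^i]\mI\}_{i\le\degree}$; your inductive bookkeeping $j'\le (s-s'+1)d$ replaces the subset combinatorics, and the bound $td \le \degree$ guarantees every needed power of $\mY$ is an actual LOAMP iterate. What the paper's ordering buys is a clean separation of concerns---the intermediate OAMP is a bona fide OAMP in the sense of Definition~\ref{Def:OAMP_main}, so Theorem~\ref{thm:SE} applies to it directly and the polynomial step is a reuse of Lemma~\ref{lem:poly-approx}; the cost is an exponential blow-up in the number of intermediate iterates ($2^t$-indexed). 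Your ordering avoids the subset indexing entirely at the cost of careful degree accounting and of interleaving the polynomial-approximation error estimate with the divergence-removal recursion; the arithmetic $i+j' \le (s-s'+1)d \le td \le \degree$ does close, and the exactness of the identity $\vr_s^{(\degree)} = G_s(\vw_{\le s,\bdot};\va)$ is purely algebraic, so the step-2 reduction is exact for every $\dim$ as you claim.

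One small clarification worth stating explicitly if you flesh this out: the divergences $\alpha_{s',j'}^{(s)}$ are defined directly from the recursive description of the state-evolution random variables in Definition~\ref{def:LOAMP}, not from Corollary~\ref{cor:SE-LOAMP} (which is an asymptotic statement about the iterates, not needed to \emph{define} $\tilde{h}_s$). You do need Corollary~\ref{cor:SE-LOAMP} nowhere in this proposition---only the exact algebraic identity $\tilde{\vw}_t^{(\degree)} = \hat{\vr}_t^{(\degree)}$ from Step~2 together with the Step~1 approximation bound; the paper similarly does not invoke the state-evolution corollary inside Proposition~\ref{prop:LOAMP-reduction}. Second, for the Step~1 induction you implicitly need a high-probability bound on $\|\vr^{(\degree)}_{s}\|/\sqrt{\dim}$ uniformly in $\degree$ (to control $\|f_s(\vr^{(\degree)}_{<s};\va)\|$); this follows from the Lipschitz bound on $f_s,g_s$, the operator-norm bound $\|p_s(\mY)\|_{\op}\le \sup_{|\lambda|\le C}|\mfunc_s(\lambda)|+1$ for $\degree$ large, and a second induction on $s$, but it should be said rather than left implicit.
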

\paragraph*{The Optimal Lifted OAMP Algorithm} Since \propref{prop:LOAMP-reduction} guarantees that any estimator computed using an iterative algorithm can be approximated by a suitably designed lifted OAMP algorithm, we focus on characterizing the optimal lifted OAMP algorithms. The optimal degree-$\degree$ lifted OAMP algorithm takes the form:
\begin{subequations} \label{eq:optimal-LOAMP}
  \begin{align}
    \iter{\vw}{t,i} & = \left( \mY^i - \E[\serv{\Lambda}^i] \cdot \mI_{\dim} \right) \cdot \fnonlin_{t}^{\star}\big(\iter{\vw}{1,\bdot}, \dotsc, \iter{\vw}{t-1,\bdot}; \va \big)  \quad \forall \; i \in [\degree], \; t \in \N,
\end{align}
and returns the following estimator after $t \in \N$ iterations:
\begin{align}
    \dup{\hat{\vw}_{t}}{\degree} & = \hnonlin_t^{\star}(\iter{\vw}{1,\bdot}, \dotsc, \iter{\vw}{t-1,\bdot}; \va).
\end{align}  
\end{subequations}
The description of the iterate denoisers $(\fnonlin_t^{\star})_{t \in \N}$ and the post-processing functions $(\hnonlin_t^{\star})_{t \in \N}$ used by the optimal lifted OAMP algorithm is recursive. Suppose that the functions $\fnonlin^{\star}_{1}, \dotsc, \fnonlin_{t}^\star$ have been specified. Let $(\serv{X}_\star, \serv{W}_{1,\bdot}, \dotsc,  \serv{W}_{t,\bdot} ; \serv{A})$ denote the state evolution random variables for the first $t$ iterations of the resulting lifted OAMP algorithm. Then:
\begin{enumerate}
\item The post-processing function  $\hnonlin_{t}^\star$ used at iteration $t$ is the MMSE estimator (recall Definition~\ref{def:gauss-channel})  for the Gaussian channel $(\serv{X}_\star, \serv{W}_{1,\bdot}, \dotsc,  \serv{W}_{t,\bdot} ; \serv{A})$.
\item $\fnonlin_{t+1}^\star$, the iterate denoiser used by the lifted OAMP algorithm in the next iteration, is the DMMSE estimator for the Gaussian channel  $(\serv{X}_\star, \serv{W}_{1,\bdot}, \dotsc,  \serv{W}_{t,\bdot} ; \serv{A})$.
\end{enumerate}
The following proposition, shows that the estimator \eqref{eq:optimal-LOAMP} achieves the lowest mean squared error among all estimators that can be computed using $t$ iterations of a degree-$\degree$ lifted OAMP algorithm. 
\begin{proposition} \label{prop:greedy-optimality} Let $\dup{\hat{\vw}_{t}}{\degree}$ be the estimator described in \eqref{eq:optimal-LOAMP}. Let $\dup{\tilde{\vw}_t}{\degree}$ be any other estimator that can be computed using $t$ iterations of some degree-$\degree$ lifted OAMP algorithm. Then, we have:
\begin{align*}
\plim_{\dim \rightarrow \infty} \frac{\|\dup{\hat{\vw}_{t}}{\degree} - \vx_\star\|^2}{\dim} & \leq \plim_{\dim \rightarrow \infty} \frac{\|\dup{\tilde{\vw}_t}{\degree} - \vx_\star\|^2}{\dim}.
\end{align*}
\end{proposition}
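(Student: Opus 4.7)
The plan is to reduce the proposition to a comparison of MMSE values in the Gaussian channels provided by state evolution (\corref{cor:SE-LOAMP}), and then prove this comparison by induction on $t$ using a coupling / data-processing argument, adapting the strategy of \citet{celentano2020estimation} and \citet{montanari2022statistically} to the rotationally invariant setting. By \corref{cor:SE-LOAMP}, the asymptotic MSE of any degree-$\degree$ LOAMP's estimator equals $\E|\serv{X}_\star - \hnonlin_t(\tilde{\serv{W}}_{1,\bdot}, \ldots, \tilde{\serv{W}}_{t,\bdot}; \serv{A})|^2$, which for fixed state evolution is minimized over post-processors by the conditional expectation, yielding $\bmmse(\serv{X}_\star \mid \tilde{\serv{W}}_{1,\bdot}, \ldots, \tilde{\serv{W}}_{t,\bdot}; \serv{A})$. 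Since the optimal LOAMP uses this MMSE estimator as its post-processor by construction, it suffices to establish that for every competing LOAMP's state evolution,
\[
\bmmse(\serv{X}_\star \mid \serv{W}_{1,\bdot}^\star, \ldots, \serv{W}_{t,\bdot}^\star; \serv{A}) \;\leq\; \bmmse(\serv{X}_\star \mid \tilde{\serv{W}}_{1,\bdot}, \ldots, \tilde{\serv{W}}_{t,\bdot}; \serv{A}).
\]

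My inductive claim at step $t$ is the existence of a joint distribution in which the two state evolutions $\serv{W}_{\leq t}^\star$ and $\tilde{\serv{W}}_{\leq t}$ live on the same probability space with the correct marginal laws and $\tilde{\serv{W}}_{\leq t}$ is a measurable function of $(\serv{W}_{\leq t}^\star, \serv{A})$ together with additional Gaussian noise independent of $\serv{X}_\star$; the data-processing inequality then immediately delivers the MMSE comparison above. The base case $t=1$ is a one-shot Cauchy--Schwarz: since $\tilde f_1$ depends only on $\va$, the effective SNR of $\tilde{\serv{W}}_{1,\bdot}$ depends on $\tilde f_1$ only through the ratio $\E[\serv{X}_\star \tilde f_1(\serv{A})]^2 / \E[\tilde f_1(\serv{A})^2]$, which is maximized by $f_1^\star(\serv{A}) = \E[\serv{X}_\star \mid \serv{A}]$ (the DMMSE estimator at SNR $0$), and the coupling is built by adding the requisite $\serv{X}_\star$-independent Gaussian noise along the shared signal direction.

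For the induction step, the new state variable $\tilde{\serv{W}}_{t,\bdot}$ takes the form $\alpha_t\vq\,\serv{X}_\star + \tilde{\serv{Z}}_{t,\bdot}$, where $\vq = (q_1, \ldots, q_\degree)$ is the iteration-independent signal direction and $\tilde{\serv{Z}}_{t,\bdot}$ is jointly Gaussian with all earlier noises according to \eqref{eq:SE-LOAMP}. The main obstacle is showing that the DMMSE-based denoiser $f_t^\star$ produces a noise structure that is extremal in the sense needed to extend the coupling: every competing $\tilde{\serv{W}}_{t,\bdot}$ can be realized, consistently with the inductive joint law of $\serv{W}_{<t}^\star$ and $\tilde{\serv{W}}_{<t}$, as a linear combination of the noise components of $\serv{W}_{\leq t}^\star$ plus additional $\serv{X}_\star$-independent Gaussian noise. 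Because $\vq$ is common to all iterations, this reduces to a positive-semidefinite ordering of the relevant noise cross-covariances built from $\alpha_t$ and $\Sigma_{tt}$. The required ordering is a direct consequence of the variational computation in \eqref{Eqn:OAMP_opt_F2_opt}, which shows that the DMMSE estimator maximizes $\alpha_t^2/(\Sigma_{tt} - \alpha_t^2)$ among divergence-free denoisers. The divergence-free constraint is essential throughout: it isolates each iteration's new information about $\serv{X}_\star$ into a single scalar effective SNR, so that greedy per-step SNR maximization translates into global MMSE optimality. Finally, once the coupling has been constructed up to step $t$, the post-processor $\hnonlin_t^\star$ is the MMSE estimator for the richer sigma-algebra $\sigma(\serv{W}_{\leq t}^\star, \serv{A})$, which (by data processing) dominates the MMSE attainable from $\sigma(\tilde{\serv{W}}_{\leq t}, \serv{A})$, completing the proof.
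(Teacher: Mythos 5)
Your reduction to comparing asymptotic MMSEs in the state-evolution Gaussian channels and your instinct that the divergence-free requirement should collapse the comparison to a scalar effective SNR are both correct. However, your central inductive claim --- that the competing state evolution $\tilde{\serv{W}}_{\leq t,\bdot}$ can always be realized as a measurable function of $(\serv{W}^\star_{\leq t,\bdot}, \serv{A})$ plus $\serv{X}_\star$-independent Gaussian noise --- is left unproved, and it is stronger than what the argument needs. Two concrete difficulties. First, the competing denoiser $\tilde{\fnonlin}_t$ acts on $\tilde{\serv{W}}_{<t,\bdot}$, not on $\serv{W}^\star_{<t,\bdot}$, so the quantities $\tilde{\alpha}_t$ and $\tilde{\Sigma}_{tt}$ are computed under a different state-evolution law than $\alpha_t^\star$ and $\Sigma^\star_{tt}$; the one-step variational computation in \eqref{Eqn:OAMP_opt_F2_opt} shows the DMMSE estimator maximizes the SNR ratio \emph{for a fixed step-$(t-1)$ channel} and does not directly compare across these two different channels. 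Second, the full $t\degree \times t\degree$ noise covariance has off-diagonal blocks of the form $\alpha_s\alpha_t (Q - q q^\top) + (\Sigma_{st} - \alpha_s\alpha_t)\Gamma$, and a ``positive-semidefinite ordering built from $\alpha_t$ and $\Sigma_{tt}$'' does not by itself control these; you would need to exhibit and analyze a consistent coupling map (e.g.\ of Kronecker form $L\otimes I_\degree$ across all earlier iterations), which the sketch does not do.

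The paper closes both gaps with a scalar reduction that you do not invoke. \lemref{lemma:effsnr-LOAMP} shows that the effective SNR of $(\serv{X}_\star, \serv{W}_{\leq t,\bdot}; \serv{A})$ depends on the entire sequence $\fnonlin_{1:t}$ only through the scalar $\kappa_t = \min\{\E[(\serv{X}_\star - g(\serv{W}_{<t,\bdot};\serv{A}))^2] : g \in \mathrm{Span}(\fnonlin_1,\dotsc,\fnonlin_t)\}$, and \lemref{lem:scrm-monotonic} shows the resulting MMSE $\scrm(\kappa_t)$ is non-decreasing in $\kappa_t$. \lemref{lem:greedy-lemma} then establishes that $\min_{\fnonlin_t}\kappa_t = \dmmse\circ\mmse^{-1}(\scrM_{t-1}(\fnonlin_{1:t-1}))$, a non-decreasing function of the previous-step MMSE, and that this minimum is attained by the DMMSE denoiser. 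It is precisely this composition of monotone maps through the single scalar $\scrM_{t-1}$ that lets the greedy choice dominate globally even though the two algorithms' state evolutions diverge at every step. Once this scalar reduction is in hand, a coupling argument becomes unnecessary --- a scalar Gaussian channel with higher SNR trivially simulates one with lower SNR --- so the cleanest way to repair your proposal is to prove this reduction rather than a matrix-valued PSD ordering of the noise covariances.
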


\begin{proof}[Proof Sketch] We outline the main ideas here and defer the complete proof to \appref{appendix:optimal-LOAMP}. Consider a general lifted OAMP algorithm:
\begin{align*}
\iter{\vw}{t,i} & = \left( \mY^i - \E[\serv{\Lambda}^i] \cdot \mI_{\dim} \right) \cdot \fnonlin_{t}\big(\iter{\vw}{1,\bdot}, \dotsc, \iter{\vw}{t-1,\bdot}; \va \big)  \quad \forall \; i \in [\degree], \; t \in \N,
\end{align*}
which returns the estimator $\dup{\hat{\vw}_t}{\degree} = \hnonlin_t(\vw_{1,\bdot}, \dotsc, \vw_{t,\bdot}; \va)$ after $t$ iterations. By Corollary~\ref{cor:SE-LOAMP}, the asymptotic mean squared error for this estimator is $\E|\serv{X}_{\star} - h_t(\serv{W}_{1, \bdot}, \dotsc, \serv{W}_{t,\bdot}; \serv{A})|^2$, where $(\serv{X}_{\star}, \serv{W}_{1, \bdot}, \dotsc, \serv{W}_{t,\bdot}; \serv{A})$ denote the state evolution random variables associated with the lifted OAMP algorithm. Recalling the definition of the MMSE denoiser for a Gaussian channel from Definition~\ref{def:gauss-channel}, we find that the optimal choice for the post-processing function is $\hnonlin_t = \hnonlin_t^{\star}$, where $\hnonlin_t^{\star}$ is the MMSE estimator for the Gaussian channel $(\serv{X}_{\star}, \serv{W}_{1, \bdot}, \dotsc, \serv{W}_{t,\bdot}; \serv{A})$. With this optimal choice of the post-processing function, the limiting MSE of the resulting estimator is $\bmmse(\serv{X}_{\star}| \serv{W}_{1, \bdot}, \dotsc, \serv{W}_{t,\bdot}; \serv{A})$, which depends implicitly on the iterate denoisers $\fnonlin_{1:t}$ used in the lifted OAMP algorithm since $\fnonlin_{1:t}$ determine the joint distribution of the state evolution random variables. To make this dependence explicit, we define $\scrM_t(\fnonlin_1, \dotsc, \fnonlin_t) \explain{def}{=} \bmmse(\serv{X}_{\star}| \serv{W}_{1, \bdot}, \dotsc, \serv{W}_{t,\bdot}; \serv{A})$ for each $t \in \N$. Deriving the optimal choice of the iterate denoisers is equivalent to minimizing the functional  $\scrM_t(\fnonlin_1, \dotsc, \fnonlin_t)$ with respect to $\fnonlin_{1:t}$. The crux of the argument is to show that the iterate denoisers $(\fnonlin_{t}^{\star})_{t \in \N}$ used by the optimal lifted OAMP algorithm \eqref{eq:optimal-LOAMP} correspond to a \emph{greedy approach} to minimizing $\scrM_t$:
\begin{align} \label{eq:greedy-approach-crux}
\fnonlin_1^{\star} \in \argmin_{f_1} \scrM_1(\fnonlin_1), \  \fnonlin_2^{\star} \in \argmin_{f_2} \scrM_2(\fnonlin_1^{\star}, 
 \fnonlin_2), \  \dotsc \quad  \fnonlin_t^{\star} \in \argmin_{f_t} \scrM_t(\fnonlin_{1:t-1}^{\star},\fnonlin_t) .
\end{align}
Moreover, we show that the functional $\scrM_t$ has the property that for any choice of iterate denoisers $\fnonlin_{1:t-1}$:
\begin{align} \label{eq:key-prop}
\min_{\fnonlin_t} \scrM_{t}(\fnonlin_1, \dotsc, \fnonlin_t) & = \mathscr{A}(\scrM_{t-1}(\fnonlin_1, \dotsc, \fnonlin_{t-1})),
\end{align}
for some explicit \emph{non-decreasing} function $\mathscr{A}:[0,1] \mapsto [0,1]$. This property guarantees that the greedy approach \eqref{eq:greedy-approach-crux} finds the global minimizer of $\scrM_t$. Indeed, if we assume as our induction hypothesis that $\scrM_{t-1}(\fnonlin_{1:t-1}^{\star}) \leq \scrM_{t-1}(\fnonlin_{1:t-1})$ for any choice of $\fnonlin_{1:t-1}$, we can conclude that:
\begin{align*}
 \scrM_{t}(\fnonlin_{1}, \dotsc, \fnonlin_{t}) &\geq \min_{\fnonlin_t}  \scrM_{t}(\fnonlin_{1}, \dotsc, \fnonlin_{t})  \explain{\eqref{eq:key-prop}}{=} \mathscr{A}(\scrM_{t-1}(\fnonlin_{1}, \dotsc, \fnonlin_{t-1}))\\
 & \explain{(a)}{\geq} \mathscr{A}(\scrM_{t-1}(\fnonlin_{1}^{\star}, \dotsc, \fnonlin_{t-1}^{\star}))  \explain{\eqref{eq:key-prop}}{=}  \min_{\fnonlin_t}  \scrM_{t}(\fnonlin_{1}^{\star}, \dotsc, \fnonlin_{t-1}^{\star},\fnonlin_t) \explain{\eqref{eq:greedy-approach-crux}}{=}  \scrM_{t}(\fnonlin_{1}^{\star}, \dotsc, \fnonlin_{t}^{\star}),
\end{align*}
where the inequality (a) follows from the induction hypothesis and the monotonicity of $\mathscr{A}$. 
\end{proof}
\paragraph*{Simplifying the optimal lifted OAMP algorithm} Observe that the optimal degree-$\degree$ lifted OAMP algorithm introduced in \eqref{eq:optimal-LOAMP} is not an iterative algorithm in the sense of Definition~\ref{def:itr}. Indeed, an iterative algorithm is allowed a single matrix-vector multiplication per iteration, whereas the optimal lifted OAMP computes $\degree$ matrix-vector products per iteration.  Fortunately, the optimal lifted OAMP algorithm \eqref{eq:optimal-LOAMP} can be simplified to eliminate these extra matrix-vector multiplications. This simplification leads to the following result, which shows that the optimal OAMP algorithm introduced in (\ref{eq:optimal-OAMP}) can match the infinite degree ($\degree \rightarrow \infty$) limit of the performance of the optimal lifted OAMP algorithm in \eqref{eq:optimal-LOAMP}. 
\begin{proposition} \label{prop:simplification} Let ${\hat{\vx}}_t$ be the estimator returned by the optimal OAMP algorithm in (\ref{eq:optimal-OAMP}) after $t$ iterations. Let $\dup{\hat{\vw}_t}{\degree}$ be the estimator returned by optimal degree-$\degree$ lifted OAMP algorithm in \eqref{eq:optimal-LOAMP} after $t$ iterations. Then, for any $t \in \N$:
\begin{align*}
\plim_{\dim \rightarrow \infty} \frac{\|{\hat{\vx}_{t}} - \vx_\star\|^2}{\dim} & = \lim_{\degree \rightarrow \infty} \;  \plim_{\dim \rightarrow \infty} \frac{\|\dup{\hat{\vw}_t}{\degree} - \vx_\star\|^2}{\dim}.
\end{align*}
\end{proposition}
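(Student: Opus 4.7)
The plan is to proceed by induction on $t \in \N$ and characterize the $\degree \to \infty$ limit of the MMSE achieved by the optimal lifted OAMP. By \corref{cor:SE-LOAMP} applied with the optimal iterate and postprocessing functions $(\fnonlin_t^\star, \hnonlin_t^\star)$, the asymptotic MSE of $\dup{\hat{\vw}_t}{\degree}$ equals $\bmmse(\serv{X}_\star | \dup{\serv{W}_{1,\bdot}}{\degree}, \dotsc, \dup{\serv{W}_{t,\bdot}}{\degree}; \serv{A})$. Denote this quantity by $\scrM_t^{(\degree)}$ and the corresponding DMMSE by $\scrM_t^{(\degree),\mathrm{df}}$. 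The inductive hypothesis at step $t-1$ will be
\begin{align*}
\lim_{\degree \to \infty} \scrM_{t-1}^{(\degree)} = \mmse(\omega_{t-1}), \qquad \lim_{\degree \to \infty} \scrM_{t-1}^{(\degree),\mathrm{df}} = \dmmse(\omega_{t-1}),
\end{align*}
with $(\omega_t)_{t \in \N}$ the recursion in \eqref{eq:rho-omega}. The base case $t=0$ is immediate since no iterates have been generated, so both quantities equal $\mmse(0)$, which matches $\omega_0 = 0$.

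For the inductive step I first identify the scaling parameters $\alpha_t^{(\degree)}$ and $\Sigma_{tt}^{(\degree)}$ produced by the optimal (DMMSE) iterate denoiser $\fnonlin_t^\star$. Since $\fnonlin_t^\star$ attains the DMMSE of the channel at step $t-1$ and satisfies the divergence-free constraint, a direct calculation analogous to \lemref{lem:dmmse-scalar} gives $1 - (\alpha_t^{(\degree)})^2/\Sigma_{tt}^{(\degree)} = \scrM_{t-1}^{(\degree),\mathrm{df}} \to \dmmse(\omega_{t-1})$. Consequently the effective normalization parameter $\rho_t^{(\degree)} := \Sigma_{tt}^{(\degree)}/(\Sigma_{tt}^{(\degree)} - (\alpha_t^{(\degree)})^2) - 1$ converges to $1/\dmmse(\omega_{t-1}) - 1$, matching the $\rho_t$ produced by the optimal OAMP recursion.

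The crux of the argument is to characterize $\scrM_t^{(\degree)}$ after the $t$-th iteration. Conditional on $\serv{X}_\star$, the vector $\dup{\serv{W}_{t,\bdot}}{\degree}$ is Gaussian with mean $\alpha_t \vq^{(\degree)} \serv{X}_\star$ lying in the one-dimensional subspace spanned by $\vq^{(\degree)} = (\E[\serv{\Lambda}_\nu^i] - \E[\serv{\Lambda}^i])_{i \in [\degree]}$ (with $\serv{\Lambda}_\nu \sim \nu$, $\serv{\Lambda} \sim \mu$). By Gaussian sufficiency, all useful information about $\serv{X}_\star$ in $\dup{\serv{W}_{t,\bdot}}{\degree}$ is captured by scalar linear statistics $\serv{S}_t^{(\degree)} = \ip{\vc}{\dup{\serv{W}_{t,\bdot}}{\degree}}$, and each such combination corresponds to applying the trace-free polynomial matrix denoiser $\Psi_{\vc}(\lambda) = \sum_{i=1}^{\degree} c_i\,(\lambda^i - \E[\serv{\Lambda}^i])$ to $\mY$ on $\fnonlin_t^\star$. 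As $\degree \to \infty$ the Weierstrass approximation theorem guarantees that such polynomials are dense in the continuous trace-free functions on the support of $\mu$, so the maximization of the SNR of $\serv{S}_t^{(\degree)}$ in the limit coincides with the constrained maximization over continuous trace-free denoisers solved in \sref{Sec:proof-idea2}; by \eqref{eq:opt-denoiser-recall-main} the optimizer is $\omd(\cdot;\rho_t)$ and the optimal SNR is $\omega_t$.

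The main technical obstacle is that the noise processes $(\serv{Z}_{s,\bdot})_{s \le t}$ are correlated across iterations, so extracting a single scalar sufficient statistic at each step and claiming that its SNR simply compounds is not immediately justified. This is resolved using the divergence-free constraint on $\fnonlin_t^\star$: combined with the covariance structure in \eqref{eq:SE-LOAMP}, it forces the cross-covariance between the noise of $\serv{S}_t^{(\degree)}$ and the noise of the previously extracted statistics to vanish in the $\degree \to \infty$ limit. Hence $\serv{S}_t^{(\degree)}$ contributes independent Gaussian information about $\serv{X}_\star$ on top of $(\serv{S}_1^{(\degree)}, \dotsc, \serv{S}_{t-1}^{(\degree)}; \serv{A})$, and standard MMSE-additivity for scalar Gaussian channels with a common signal yields $\scrM_t^{(\degree)} \to \mmse(\omega_t)$ and $\scrM_t^{(\degree),\mathrm{df}} \to \dmmse(\omega_t)$, closing the induction. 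The proposition then follows from $\plim_{\dim \to \infty} \|\hat{\vx}_t - \vx_\star\|^2/\dim = \mmse(\omega_t)$ established in \propref{prop:optimal-OAMP-SE}.
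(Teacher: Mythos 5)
Your proposal has a genuine gap in the step you yourself flag as ``the main technical obstacle.'' You resolve it by asserting that the divergence-free constraint forces the cross-covariance between the noise of $\serv{S}_t^{(\degree)}$ and that of the previously extracted statistics to vanish as $\degree \to \infty$, so that the scalar observations become independent and their SNRs compound by ``MMSE-additivity.'' Neither part is correct. Using the covariance recursion in \eqref{eq:SE-LOAMP} together with the identity $\E[\serv{F}_s^\star \serv{F}_t^\star] = \E[\serv{X}_\star \serv{F}_s^\star]$ (Claim~\ref{claim:banded-cov}, which is what the DMMSE property really buys), the cross-covariance of the effective noises $\ip{c_t}{\serv{Z}_{t,\bdot}}$ and $\ip{c_s}{\serv{Z}_{s,\bdot}}$ is $\alpha_s \alpha_t\, c_t^\top Q c_s + \alpha_s(1-\alpha_t)\, c_t^\top \Gamma c_s$, which is generically nonzero for every finite $\degree$ and has no reason to tend to zero with $\degree$. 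And even in an idealized setting where the per-step noises were independent, the MMSE of the combined Gaussian channel is not governed by a simple ``additivity'' of the per-step SNRs $\omega_s$; you would instead need the effective SNR to accumulate across steps, which does not reproduce the recursion \eqref{eq:rho-omega}.

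What actually closes the induction is a sharper structural fact: because each $\fnonlin_t^\star$ is the DMMSE estimator, the state-evolution covariance $\Sigma_t$ and mean vector $\alpha_t$ satisfy $\Sigma_t e_t = \alpha_t$, so that $\Sigma_t^\dagger \alpha_t = e_t$. Plugging this into the optimal-linear-combination formula (\lemref{lem:scalarization-mmse}, \lemref{lemma:effsnr-LOAMP}) shows that $\optlintd{t}$ places weight only on the final block $\vw_{t,\bdot}$ — the past iterates are not discarded because their noises decorrelate, but because the sufficient statistic at step $t$ already subsumes all signal information they carry. That is the mechanism that reduces the $t\degree$-dimensional Gaussian channel to an effective scalar one with SNR $\effsnrtd{t}$, and it is what lets one write the estimator as a single matrix–vector product with the polynomial denoiser $\omdd{\degree}$ (\lemref{lem:optlin-LOAMP}). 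A second, smaller gap: your Weierstrass invocation does not quite deliver $\effsnrtd{t} \uparrow \omega_t$ because the normalization $\rho_t^{(\degree)}$ itself varies with $\degree$, so the variational objective being optimized changes along the sequence. The paper handles this by proving monotonicity of $\scrL{\degree}(\rho_\degree)$ and convergence $\scrL{\degree}(\rho_\degree) \downarrow \scrL{\infty}(\rho)$ (together with $L^2(\mu+\nu)$ convergence of the minimizers), via a strong-convexity argument in \lemref{lem:large-degree-lim}; a plain density argument alone does not give the required monotone convergence of the fixed-point iterates.
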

\begin{proof}[Proof Sketch] We outline the high-level idea here, a complete proof is given in \appref{App:proof_prop_simplications}. We show that the estimator $\dup{\hat{\vw}_t}{\degree}$  returned by the optimal lifted OAMP algorithm \eqref{eq:optimal-LOAMP} after $t$ iterations depends \emph{only} on a linear combination $\dup{\vx_t}{\degree}  = \sum_{i=1}^{\degree} v_{i} \cdot \vw_{t,i}$ of the iterates $\vw_{t,\bdot}$ generated at the last step of the algorithm. Thanks to this property, the $\dup{\vx_t}{\degree}$ (and hence, the estimator $\hat{\vw}_t$) can be computed using a single (instead of $\degree$) matrix-vector multiplication(s):
\begin{align*}
\dup{\vx_t}{\degree} & = \sum_{i=1}^{\degree} v_i  \dup{\vw_{t,i}}{\degree} \explain{\eqref{eq:optimal-LOAMP}}{=} \bigg( \underbrace{\sum_{i=1}^{\degree} v_i  (\mY^i - \E_{\serv{\Lambda} \sim \mu}[\serv{\Lambda^i}] \cdot \mI_{\dim}) }_{\explain{def}{=} \dup{\Psi_t}{\degree}(\mY)}\bigg)  \fnonlin_t^{\star}(\vw_{<t,\bdot}; \va) = \dup{\Psi_t}{\degree}(\mY)   \fnonlin_t^{\star}(\vw_{<t,\bdot}; \va).
\end{align*}
We argue that as $\degree \rightarrow \infty$ the matrix denoiser $\dup{\Psi_t}{\degree}$ that appears in the above equation converges to the optimal matrix denoiser used by the optimal OAMP algorithm (\ref{eq:optimal-OAMP}) in an $L^2$ sense, and hence the claimed result follows.  
\end{proof}
The optimality result stated as Theorem~\ref{thm:optimality} is an immediate consequence of the above intermediate propositions.
\begin{proof}[Proof of Theorem~\ref{thm:optimality}] Let $\hat{\vx}_t$ be the estimator returned by the optimal OAMP algorithm from (\ref{eq:optimal-OAMP}) and let $\hat{\vr}_t$ be any estimator that can be computed using $t$ iterations of some iterative algorithm. \propref{prop:LOAMP-reduction} guarantees the existence of a sequence of estimators $(\dup{\tilde{\vw}_t}{\degree})_{\degree \in \N}$  that approximate $\hat{\vr}_t$ and can be computed using $t$ iterations of a degree-$\degree$ lifted OAMP algorithm. Finally, let $\dup{\hat{\vw}_t}{\degree}$ be the estimator returned by the optimal degree-$\degree$ lifted OAMP algorithm from \eqref{eq:optimal-LOAMP}. By the reverse triangle inequality, for any $\degree \in \N$, $\|\hat{\vr}_t- \vx_{\star}\| \geq \|\dup{\tilde{\vw}_t}{\degree}- \vx_{\star}\| - \|\dup{\tilde{\vw}_t}{\degree}- \hat{\vr}_t\|$. We let $\dim \rightarrow \infty$ and then $\degree \rightarrow \infty$ to obtain:
\begin{align*}
&\pliminf_{\dim \rightarrow \infty} \frac{\|\hat{\vr}_t- \vx_{\star}\|}{\sqrt{\dim}}  \explain{}{\geq}  \liminf_{\degree \rightarrow \infty} \pliminf_{\dim \rightarrow \infty} \frac{\|\dup{\tilde{\vw}_t}{\degree}- \vx_{\star}\|}{\sqrt{\dim}}  - \limsup_{\degree \rightarrow \infty} \plimsup_{\dim \rightarrow \infty} \frac{\|\dup{\tilde{\vw}_t}{\degree}- \hat{\vr}_t\|}{\sqrt{\dim}} \\ &  \explain{Prop. \ref{prop:LOAMP-reduction}}{=} \; \;   \liminf_{\degree \rightarrow \infty} \plim_{\dim \rightarrow \infty} \frac{\|\dup{\tilde{\vw}_t}{\degree}- \vx_{\star}\|}{\sqrt{\dim}} \; \; \explain{Prop. \ref{prop:greedy-optimality}}{\geq} \; \; \liminf_{\degree \rightarrow \infty} \plim_{\dim \rightarrow \infty} \frac{\|\dup{\hat{\vw}_t}{\degree}- \vx_{\star}\|}{\sqrt{\dim}}  \; \; \explain{Prop. \ref{prop:simplification}}{=}\; \;  \plim_{\dim \rightarrow \infty} \frac{\|{\hat{\vx}_t}- \vx_{\star}\|}{\sqrt{\dim}}.
\end{align*} 
Comparing the first and final expressions in the above display proves the theorem.
\end{proof}
} 

\section{Numerical Experiments}\label{Sec:simulations}

We conclude the paper with some numerical experiments. Appendix \ref{App:numerical_additional} provides additional numerical results. 

Following \citep{ma2017orthogonal,rangan2019vector}, we generate the noise matrix $\mW$ in a structured manner, making it feasible to efficiently simulate the dynamics of iterative algorithms on very high-dimensional matrices ($\dim = 2\times 10^6$). Specifically, we use the noise matrix $\bm{W}=\bm{O}\text{diag}(\lambda_1,\ldots,\lambda_N)\bm{O}^\UT$ with $\mO = \mS_1 \mF \mS_2 \mF^\UT \mS_3$, where $\mS_1,\mS_2,\mS_3$ are independent random signed diagonal matrices, and $\mF$ is the discrete cosine transform matrix. The eigenvalues $\{\lambda_i\}_{i\in[N]}$ are sampled independently from a spectral $\mu$. We remark that known universality results \citep{dudeja2023universality,wang2022universality,dudeja2022universality} guarantee that using this structured noise matrix (instead of random rotationally invariant noise drawn from the quartic model) does not change the asymptotic dynamics of AMP algorithms. {The signal $\vx_{\star}$ is sampled from an i.i.d. two-point prior $\epsilon^2\delta_{1/\epsilon} +
(1-\epsilon^2)\delta_0$, where $0<\epsilon<1$.}

\begin{figure}[htbp]
\begin{center}
\subfloat[Statistical-computational gap exists.
\label{Fig:quartic_u0_vs_snr}]{
\includegraphics[width=0.48\linewidth]{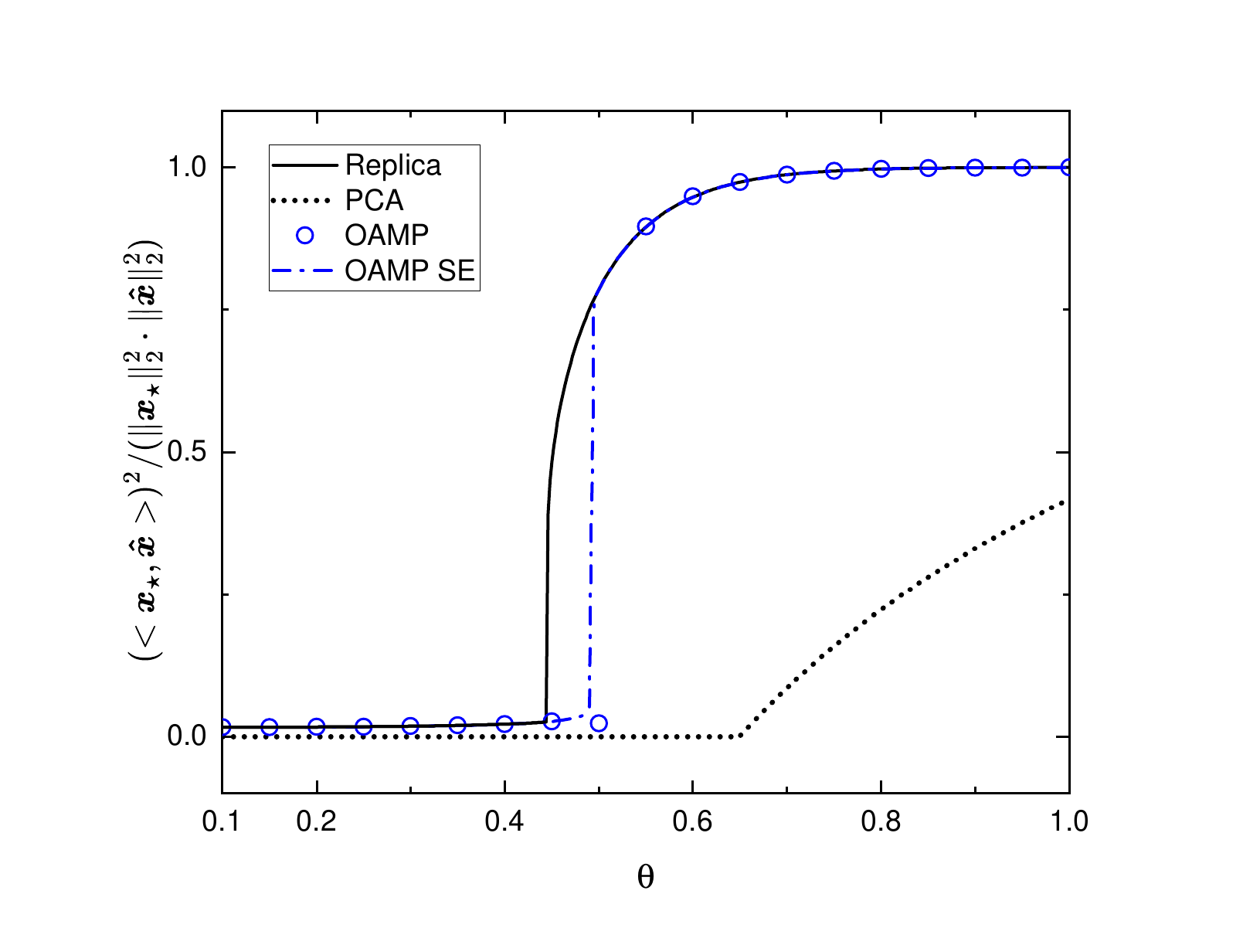} }
\subfloat[No statistical-computational gap.\label{Fig:sestic_vs_snr}]{
\includegraphics[width=0.48\linewidth]{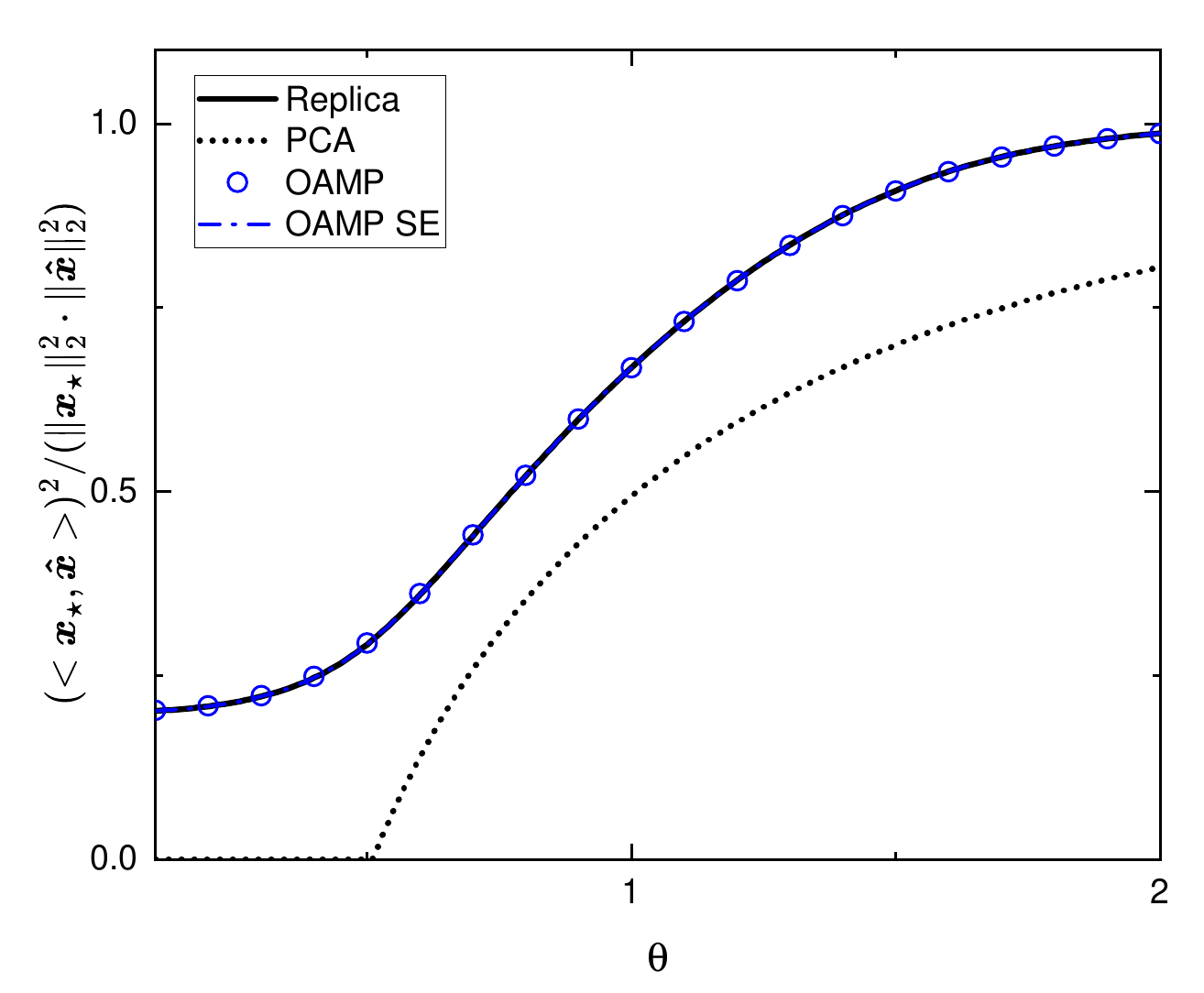} }
\end{center}
\caption{Performance (measured by the normalized overlap with signal) of PCA, the Bayes estimator, and the optimal OAMP algorithm in \eqref{eq:optimal-OAMP}. (a) The spectral $\mu$ corresponds to the quartic noise model in \eqref{Eqn:quartic-main} with parameter $\gamma = 0$. The signal $\vx_{\star}$ is sampled from an i.i.d. two-point prior $\epsilon^2\delta_{1/\epsilon} +
(1-\epsilon^2)\delta_0$, where $\epsilon=1/8$. (b) The spectral $\mu$ corresponds to the purely sestic noise model (see \eqref{Eqn:sestic} in the appendix) and the the signal is sampled from the two-point prior with $\epsilon=1/\sqrt{5}$.}
\label{Fig:Fixed_point_vs_snr}
\end{figure} 

Figs.~\ref{Fig:quartic_u0_vs_snr} and \ref{Fig:sestic_vs_snr} compare the performance of the optimal OAMP algorithm with the state evolution prediction from \thref{thm:SE}. {In our simulations, the OAMP algorithm is initialized by $f_1(\bm{x}_0;\bm{a})=\epsilon\cdot\bm{1}$ (which is the mean of the two-point prior) and no side information is provided.} We also plot the performance of PCA and the Bayes-optimal estimator. In both Fig.~\ref{Fig:quartic_u0_vs_snr} and Fig.~\ref{Fig:sestic_vs_snr}, OAMP achieves much better estimation accuracy than the PCA estimator and matches the Bayes-optimal performance at high SNR. Moreover, the performance of OAMP closely matches its state evolution. Notice that, for Fig.~\ref{Fig:quartic_u0_vs_snr}, there exists a performance gap between the {conjectured} Bayes-optimal performance (as predicted by the replica method) and the performance achieved by optimal OAMP (which has been shown to be optimal among a broad class of iterative algorithms). On the other hand, no such statistical-computational gap exists for Fig.~\ref{Fig:sestic_vs_snr} where the signal is relatively ``dense'' (namely, the fraction of the nonzero components is relatively large). Similar phenomena have been observed in the context of spiked Wigner models \citep{deshpande2014information,montanari2021estimation}.

{The theoretical prediction of the asymptotic performance of OAMP requires the noise matrix to be rotationally-invaraint. The noise matrix used in the experiments for Fig.~\ref{Fig:Fixed_point_vs_snr} is structured but still synthetic. We now consider a more realistic model where the noise matrix is obtained from real datasets (while the signal is still randomly generated). Taking inspiration from \citep{zhong2022empirical},  we generate the noise matrix from the covariance matrix of real datasets in bioinformatics, namely, the 1000 Genomes Project (1000G) \cite{10002015global} and International HapMap
Project (Hapmap3) \cite{international2010integrating}. Both datasets undergo the preprocessing steps, producing a $2054\times 2054$ symmetric matrix for 1000G and a $1397\times 1397$ symmetric matrix for Hapmap3; see Appendix \ref{App:numerical_additional} for details of the data processing procedure.

\begin{figure}[htbp]
\begin{center}
\subfloat[Noise matrix derived from 1000G \cite{10002015global}. \label{Fig:Universality_1000G2}]{
\includegraphics[width=0.45\linewidth]{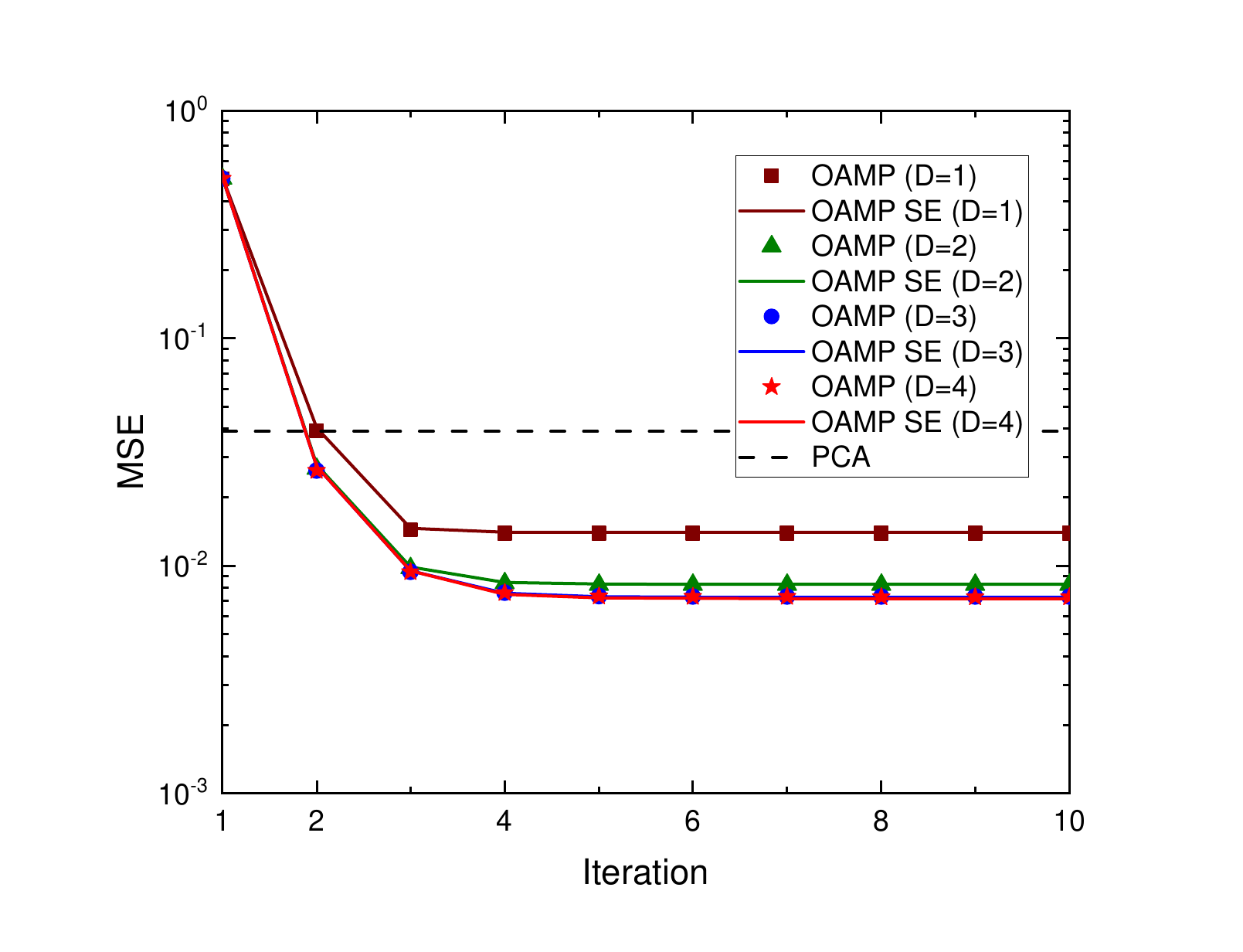}}
\subfloat[Noise matrix derived from Hapmap3 \cite{international2010integrating}. \label{Fig:Universality_Hapmap32}]{
\includegraphics[width=0.45\linewidth]{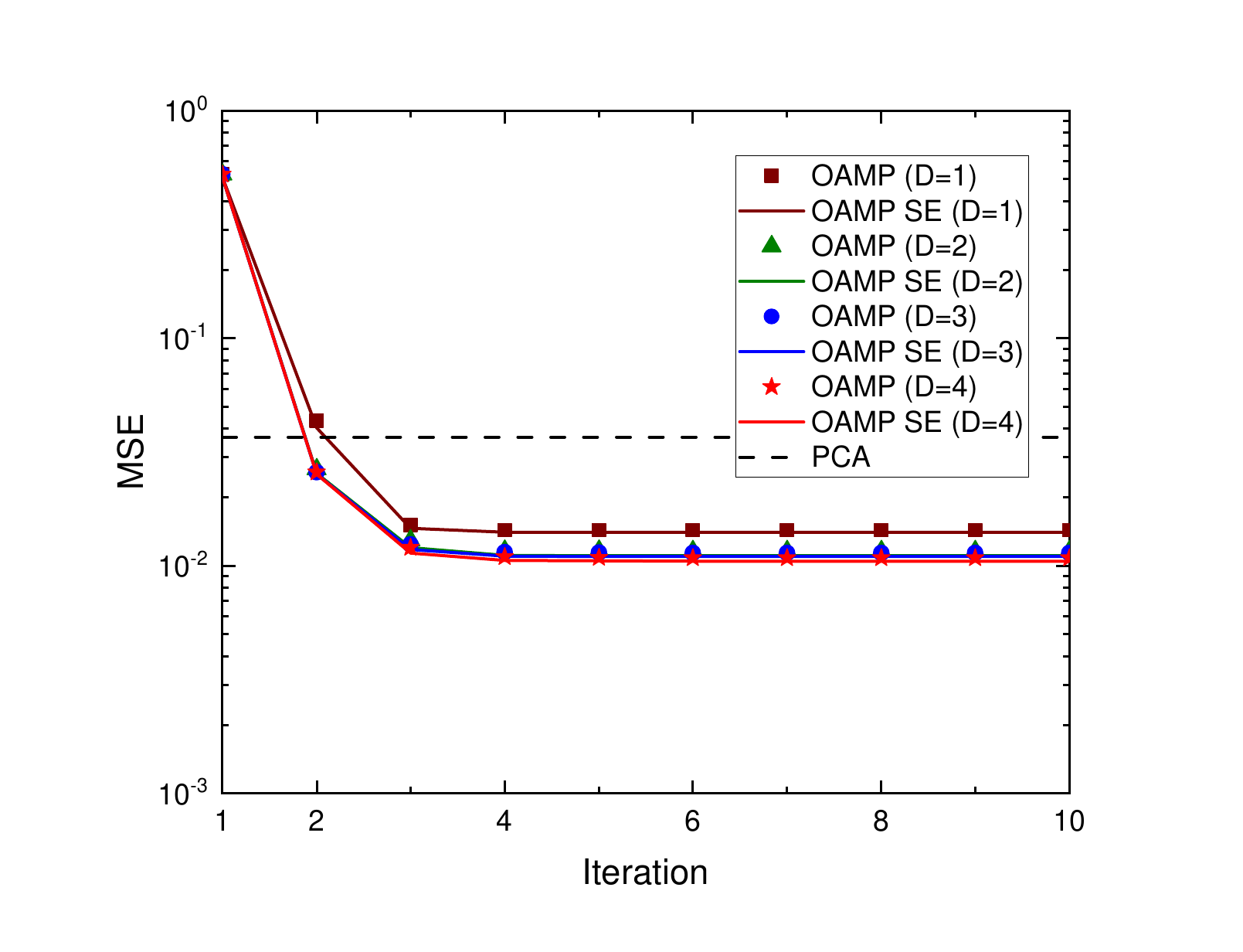} }
\end{center}
\caption{Performance of a data-driven implementation of the optimal degree-$\degree$ lifted OAMP algorithm on noise matrices derived from real datasets and signals randomly drawn from $\mathsf{X}_\star\sim 0.1\mathcal{N}(0,10)+0.9\delta_0$. The results are averaged over 30 random realizations.}
\label{Fig:Universality2}
\end{figure} 

Running the optimal OAMP algorithm requires estimating the spectral measure $\mu$ and the associated optimal matrix denoiser \eqref{eq:optimal-OAMP-denoiser} from the observed data. 
 While feasible, we found it more convenient to design a data-driven implementation of the optimal degree-$\degree$ lifted OAMP algorithm introduced in \eqref{eq:optimal-LOAMP}. Fig.~\ref{Fig:Universality2} also displays the asymptotic MSE predicted by the state evolution result for the corresponding rotationally invariant noise model with a matching spectrum. The performance of the lifted OAMP algorithm closely aligns with the state evolution prediction, suggesting an underlying universality phenomenon.}

\section{Conclusion and Future Work}
{In this paper, we introduced an OAMP algorithm for spiked matrix models with rotationally-invariant noise and demonstrated that it obeys a simple state evolution characterization in the high-dimensional limit. We showed that the optimal OAMP algorithm achieves the best performance among a broad class of iterative algorithms. 

Several promising directions for future research remain. As a first step, our analysis focused on a stylized symmetric rank-one model, and it would be valuable to extend these results to more practical settings, including asymmetric and multi-rank models. As discussed in Remark 1, in the absence of side information, if the signal is drawn from a zero-mean prior, no iterative algorithm that runs for a constant ($\dim$-independent) number of iterations can achieve non-trivial estimation error. An interesting direction for future work is to analyze OAMP algorithms with spectral initialization \citep{venkataramanan2022estimation,mondelli2021pca,zhong2021approximate}, or randomly initialized iterative algorithms that run for a diverging ($\dim$-dependent) number of iterations \citep{rush2018finite,li2022non,li2023approximate}. Finally, since the optimal OAMP algorithm relies on the knowledge of the signal prior and noise spectrum, it would be interesting to develop and analyze a practical procedure to estimate these parameters from the data \citep{zhong2022empirical}.}

\bibliographystyle{plainnat}

\bibliography{refs}

\appendix
\section{Preliminary Results}\label{App:pre}
\subsection{Random Matrix Theory}
This appendix collects some useful random matrix theory results. We present the proof of Lemma~\ref{lem:RMT} and a useful result on the concentration of quadratic forms involving rotationally invariant matrices (\factref{fact:qf}).  
\subsubsection{Proof of Lemma~\ref{lem:RMT}}\label{App:pre-RMT}
\begin{proof}[Proof of Lemma~\ref{lem:RMT}] Before presenting the proof of Lemma~\ref{lem:RMT}, we recall the definition of the Stieltjes transform of a probability measure. For any probability measure (or more generally, finite measure) $\chi$ on $\R$, the Stieltjes Transform $\stl_{\chi}: \C \backslash \R \mapsto \C$ of $\chi$ is defined as:
\begin{align*}
\stl_{\chi}(z) \explain{def}{=}  \int_{\R} \frac{\chi(\diff \lambda)}{z- \lambda} \quad \forall \; z \; \in \;  \C \backslash \R.
\end{align*}
It is well-known that a probability measure is uniquely determined by its Stieltjes transform (see for e.g., \citep[Theorem 2.4.3]{anderson2010introduction}). We prove each of the claims made in Lemma~\ref{lem:RMT} one by one.
\paragraph*{Weak convergence of $\nu_{\dim}$ (Claim (1))} Let $\tilde{\nu}_N$ be a slightly modified version of $\nu_N$:
\[
\tilde{\nu}_N = \frac{1}{N}\sum_{i=1}^N \left(\tilde{\bm{x}}_\star^\UT\bm{u}_i\right)^2\cdot \delta_{\lambda_i(\bm{Y})}
\]
where $\tilde{\bm{x}}_\star\bydef \frac{\sqrt{N}\bm{x}_\star}{\|\bm{x}_\star\|}$. This modification makes $\tilde{\nu}_N$ a probability measure, i.e., $\tilde{\nu}_N(\mathbb{R})=1$. Notice that for any bounded, continuous function $f: \R \mapsto \R$, we have:
\begin{align*}
\int_{\R} f(\lambda) \; \tilde{\nu}_{\dim}(\lambda) & = \frac{\dim}{\|\vx_{\star}\|^2} \cdot \int_{\R} f(\lambda) \; {\nu}_{\dim}(\lambda).
\end{align*}
By Assumption~\ref{assump:signal-noise}, $\|\vx_\star\|^2/N \pc 1$. Hence, it suffices to show that $\tilde{\nu}_N$ converges weakly to a probability measure $\nu$ in probability. To do so, it suffices to verify that the Stieltjes transform $\stl_{\tilde{\nu}_{\dim}}(z)$ of $\tilde{\nu}_{\dim}$ converges pointwise to the Stieltjes transform of a probability measure $\nu$ for any $z \in \C \backslash \R$ (see for e.g., \citep[Theorem 2.4.4]{anderson2010introduction}). To this end, we note that:
\begin{align*}
\stl_{\tilde{\nu}_N}(z) & = \frac{1}{\|\vx_{\star}\|^2} \sum_{i=1}^\dim \frac{\ip{\vu_i(\mY)}{\vx_{\star}}^2}{z- \lambda_i(\mY)} =  \frac{1}{\|\bm{x}_\star\|^2} \cdot \bm{x}_\star^\UT\left(z\bm{I}_N-\bm{Y}\right)^{-1}\bm{x}_\star,
\end{align*}
where $\lambda_{1:\dim}(\mY)$ and $\vu_{1:\dim}(\mY)$ denote the eigenvalues and eigenvectors of $\mY$. Hence,
\begin{align} \label{eq:hlb-nr-dr}
\stl_{\tilde{\nu}_N}(z)&= \frac{1}{\|\bm{x}_\star\|^2} \bm{x}_\star^\UT\left(z\bm{I}_N-\bm{Y}\right)^{-1}\bm{x}_\star =\frac{1}{\|\bm{x}_\star\|^2} \bm{x}_\star^\UT\left(z\bm{I}_N-\frac{\theta\bm{ x}_\star\bm{x}_\star^\UT}{N}-\bm{W}\right)^{-1}\bm{x}_\star\\
& \explain{(a)}{=}\frac{\frac{1}{\|\bm{x}_\star\|^2}\bm{x}_\star^\UT\left(z\bm{I}_N-\bm{W}\right)^{-1}\bm{x}_\star}{1-\frac{\theta}{N}\bm{x}_\star^\UT\left(z\bm{I}_N-\bm{W}\right)^{-1}\bm{x}_\star},
\end{align}
where step (a) uses the Sherman-Morrison formula for the rank-one update to the matrix inverse.  Using standard concentration results regarding quadratic forms of rotationally invariant matrices (stated as \factref{fact:qf} in \appref{appendix:misc} for convenience), we find that:
\begin{align} \label{eq:hlb-nr}
\frac{\bm{x}_\star^\UT\left(z\bm{I}_N-\bm{W}\right)^{-1}\bm{x}_\star}{\|\bm{x}_\star\|^2} & \pc  \int_{\R} \frac{\mu(\diff \lambda)}{z-\lambda} =  \stl_\mu(z)\quad \forall \;  z\in\mathbb{C}\backslash\mathbb{R}.
\end{align}
By Assumption~\ref{assump:signal-noise}, $\|\vx_\star\|^2/N \pc 1$. Hence, 
\begin{align} \label{eq:hlb-dr}
    1-\frac{\theta}{N}\bm{x}_\star^\UT\left(z\bm{I}_N-\bm{W}\right)^{-1}\bm{x}_\star& = 1- \theta \cdot \frac{\|\vx_{\star}\|^2}{\dim} \cdot \frac{\bm{x}_\star^\UT\left(z\bm{I}_N-\bm{W}\right)^{-1}\bm{x}_\star}{\|\vx_{\star}\|^2} \\
    &   \pc 1-\theta \stl_{\mu}(z) \explain{(a)}{\neq} 0 \quad \forall \;  z\in\mathbb{C}\backslash\mathbb{R},
\end{align} 
where the non-equality marked (a) follows by observing that $\Im(\stl_\mu(z))\neq0$ whenever $\mathrm{Im}(z)\neq0$, and so $1-\theta \stl_{\mu}(z) \neq0$. Plugging \eqref{eq:hlb-nr} and \eqref{eq:hlb-dr} into \eqref{eq:hlb-nr-dr}, we obtain:
\begin{equation}\label{Eqn:m_nu_final}
\stl_{\tilde{\nu}_N}(z)\pc \frac{\stl_{\mu}(z)}{1-\theta \stl_{\mu}(z)} \quad\forall \; z\in\mathbb{C}\backslash\mathbb{R}.
\end{equation}
To prove the weak convergence of $\tilde{\nu}_N$ to a \emph{probability measure} $\nu$, we need to show additionally that the RHS of the above equation is the Stieltjes transform of some probability measure. By \citep[Theorem 10, Chapter 3]{mingo2017free}, it is sufficient to verify that:
\begin{align} \label{eq:hlb-prob}
\limsup_{y\to\infty}y\left|\frac{\stl_{\mu}(\I y)}{1-\theta \stl_{\mu}(\I y)}\right|=1.
\end{align}
Since $\mu$ is a probability measure, we have $ y|\stl_\mu (\I y)|\to1$ and $\stl_{\mu}(\I y)\to 0$ as $y\to\infty$ (see \citep[Lemma 3, Chapter 3]{mingo2017free}). Hence, \eqref{eq:hlb-prob} holds. Consequently, the RHS of \eqref{Eqn:m_nu_final} is a Stieltjes transform of a probability measure $\nu$ and $\tilde{\nu}_{\dim}$ and $\nu_{\dim}$ converge weakly to $\nu$ in probability.

\paragraph*{Lebesgue Decomposition of $\nu$ (Claims (2) and (3))} \citet[Lemma 2.17]{belinschi2008lebesgue} has shown that for any probability measure $\chi$ on $\R$ with Lebesgue decomposition $\chi = \chi_{\parallel} + \chi_{\perp}$, the Stieltjes transform of $\nu$ satisfies:
\begin{align}
\lim_{\epsilon \downarrow 0} \Im[\stl_{\chi}(\lambda+\I \epsilon)] &= -\infty \quad \text{for $\chi_{\perp}$-almost every $\lambda \in \R$}, \label{eq:pdf-from-hlb0}\\
\lim_{\epsilon \downarrow 0} \Im[\stl_{\chi}(x+\I \epsilon)] & = - \pi \frac{\diff \chi_{\parallel}}{ \diff \lambda}(\lambda) \quad \text{for Lebesgue-almost every $\lambda \in \R$}. \label{eq:pdf-from-hlb}
\end{align}
In the display above, $\frac{\diff \chi_{\parallel}}{ \diff \lambda}$ denotes the density of $\chi_{\parallel}$ with respect to the Lebesgue measure and the limit on the LHS of \eqref{eq:pdf-from-hlb} is guaranteed to exist and is finite for Lebesgue-almost every $\lambda$. In light of this result, we study the limit:
\begin{align*}
\lim_{\epsilon \downarrow 0} \Im[\stl_{\nu}(\lambda+\I \epsilon)]
\end{align*}
for an arbitrary $\lambda \in \R$. For any $\epsilon > 0$, we have:
\begin{align*}
\Im[\stl_{\nu}(\lambda+\I \epsilon)] & = \Im \left[ \frac{\stl_{\mu}(\lambda + \I \epsilon)}{1-\theta \stl_{\mu}(\lambda+ \I \epsilon)} \right] = \frac{\Im[\stl_{\mu}(\lambda + \I \epsilon)]}{(1-\theta \Re[\stl_{\mu}(\lambda + \I \epsilon)])^2 + \theta^2\Im^2[\stl_{\mu}(\lambda + \I \epsilon)]}.
\end{align*}
Since $\mu$ is absolutely continuous with respect to the Lebesgue measure (Assumption~\ref{assump:signal-noise}), \eqref{eq:pdf-from-hlb} implies:
\begin{align*}
\Im[\stl_{\mu}(\lambda + \I \epsilon)] \rightarrow - \pi \mu(\lambda) \quad \text{ as } \epsilon   \downarrow 0.
\end{align*}
where $\mu(\cdot)$ denotes the density of $\mu$. Moreover, by the Sohotsky-Plemelj formula (see for instance, \citep[Section 2.1]{pastur2011eigenvalue}),
\begin{align*}
 \Re[\stl_{\mu}(\lambda + \I \epsilon)] \rightarrow \pi \hlb_{\mu}(\lambda) \quad \text{ as } \epsilon \downarrow 0.
\end{align*}
Recall that:
\begin{align*}
\phi(\lambda) \bydef  (1-\theta \pi \hlb_{\mu}(\lambda))^2 + \theta^2 \pi^2 \cdot \mu^2(\lambda).
\end{align*}
Hence, 
\begin{align*}
\lim_{\epsilon \downarrow 0} \Im[\stl_{\nu}(\lambda+\I \epsilon)] & = -\infty \quad \text{ is possible only when } \phi(\lambda) = 0, \\
\lim_{\epsilon \downarrow 0} \Im[\stl_{\nu}(\lambda+\I \epsilon)] & = -\frac{\pi \mu(\lambda)}{\phi(\lambda)} \in (-\infty, 0] \quad \text{ iff } \phi(\lambda) \neq 0.
\end{align*}
Combining the above results with \eqref{eq:pdf-from-hlb0} and \eqref{eq:pdf-from-hlb}, we conclude that:
\begin{align*}
\phi(\lambda) &\neq 0 \quad \text{ for Lebesgue-almost every $\lambda \in \R$.}, \\
\phi(\lambda) & = 0 \quad \text{ for $\nu_{\perp}$-almost every $\lambda \in \R$}, \\
\frac{\diff \nu_{\parallel}}{\diff \lambda}(\lambda) &= \frac{\mu(\lambda)}{\phi(\lambda)} \quad \text{ for Lebesgue-almost every $\lambda \in \R$,}
\end{align*}
as claimed. This concludes the proof of the lemma.
\end{proof}

\subsubsection{Concentration of Quadratic Forms} \label{appendix:misc}

\begin{fact}[{\citealt[Proposition 9.3]{benaych2011eigenvalues}}] \label{fact:qf} Let $\mW$ be a $\dim \times \dim$ random noise matrix which satisfies Assumption~\ref{assump:signal-noise} and let $\vu$ be a $\dim$-dimensional random vector which is independent of $\mW$ and satisfies: $\|\vu\|^2/\dim \pc r$ as $\dim \rightarrow \infty$. Then, for any continuous function $f: \R \mapsto \R$,
\begin{align*}
\frac{\vu^\top f(\mW) \vu }{\dim} \pc r \cdot \E[f(\serv{\Lambda})] \quad \serv{\Lambda} \sim \mu,
\end{align*}
where $\mu$ is the limiting spectral distribution of $\mW$.
\end{fact}

\begin{proof}
Recall from Assumption~\ref{assump:signal-noise} that the eigen-decomposition of $\mW$ is given by:
\begin{align*}
\mW & = \mU \cdot \diag( \lambda_1(\mW), \dotsc, \lambda_{\dim}(\mW)) \cdot \mU^\top,
\end{align*}
where the matrix of eigenvectors $\mU \sim \unif{\ortho(\dim)}$ is a Haar-distributed random orthogonal matrix independent of the eigenvalues $ \lambda_1(\mW), \dotsc, \lambda_{\dim}(\mW)$.
Defining  $\bm{v}\explain{def}{=}\mU^\UT\vu/\|\vu\|\sim\text{Unif}(\mathbb{S}^{N-1})$, we find that:
\begin{align*}
    \frac{\vu^\top f(\mW) \vu }{\dim}  &= \frac{\|\vu\|^2}{\dim} \cdot \sum_{i=1}^N  v_i^2 \cdot f(\lambda_i(\mW)).
\end{align*}
Since $\bm{v}\sim\text{Unif}(\mathbb{S}^{N-1})$, by \citep[Proposition 9.3]{benaych2011eigenvalues} we have that:
\begin{align*}
\sum_{i=1}^N  v_i^2 \cdot f(\lambda_i(\mW))  \pc \E[f(\serv{\Lambda})], \quad \serv{\Lambda} \sim \mu.
\end{align*}
The claim follows by combining the above conclusion with the hypothesis $\|\vu\|^2/\dim \pc r$ using Slutsky's theorem. 
\end{proof}

\subsection{Gaussian Channels} \label{appendix:gauss-channel}
We state and prove some important properties of Gaussian channels (Definition~\ref{def:gauss-channel}) in this appendix. 

\paragraph*{The DMMSE Estimator for Scalar Gaussian Channels} The following lemma, due to \citet{ma2017orthogonal} provides a formula for the DMMSE estimator for a scalar Gaussian channel with SNR $\omega$. We provide a proof of this result in \sref{sec:dmmse-scalar} for completeness.  
\begin{lemma}[\citet{ma2017orthogonal}] \label{lem:dmmse-scalar} Let $(\serv{X}_{\star}, \serv{X} ; \serv{A})$ be a scalar Gaussian channel with SNR $\omega \in [0,1]$. Then,
\begin{enumerate}
    \item The function $\dfbdnsr(\cdot | \omega) : \R^{1+\auxdim} \mapsto \R$ ($\forall \; x \in \R, \; \aux \in \R^{\auxdim}$):
\begin{align} \label{eq:dmmse-scalar-recall}
\dfbdnsr (x; \aux| \omega) &\explain{def}{=}  \begin{cases} 
 \left(1 - \frac{\sqrt{\omega}}{\sqrt{1-\omega}} \cdot \E[\serv{Z}\bdnsr({\serv{X};\serv{A} | \omega)]}   \right)^{-1} \cdot \left(\bdnsr(x;a | \omega) - \frac{\E[\serv{Z}\bdnsr({\serv{X};\serv{A}} | \omega)]}{\sqrt{1-\omega}} \cdot x \right) & : \omega < 1 \\ x &: \omega = 1 \end{cases}
\end{align}
is the DMMSE estimator for $(\serv{X}_{\star}, \serv{X}; \serv{A})$.
\item The DMMSE estimator satisfies the identities:
\begin{align*}
\E[\serv{X}_{\star} \dfbdnsr(\serv{X}; \serv{A} | \omega)] & = \E[\bdnsr(\serv{X}; \serv{A} | \omega) \dfbdnsr(\serv{X}; \serv{A} | \omega)] =   \E[\dfbdnsr(\serv{X}; \serv{A} | \omega)^2] = 1 - \dmmse(\omega). 
\end{align*}
\item When $\omega < 1$, the $\mmse$ and $\dmmse$ functions are related by the identity:
\begin{align*}
\frac{1}{\dmmse(\omega)} = \frac{1}{\mmse(\omega)} - \frac{\omega}{1-\omega}.
\end{align*}
\end{enumerate}
\end{lemma}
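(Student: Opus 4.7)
My plan is to handle the three claims in order, using two main analytic tools throughout. It is convenient to use the representation $\serv{X} = \sqrt{\omega}\,\serv{X}_{\star} + \sqrt{1-\omega}\,\serv{Z}$ with $\serv{Z}\sim\gauss{0}{1}$ drawn independently of $(\serv{X}_{\star}, \serv{A})$; this normalization of $\serv{Z}$ is consistent with the formula in \eqref{eq:dmmse-scalar-recall}. A Gaussian integration by parts then yields Stein's identity $\E[\serv{Z}\,g(\serv{X},\serv{A})] = \sqrt{1-\omega}\,\E[\partial_x g(\serv{X},\serv{A})]$ for sufficiently smooth $g$, and the conditional expectation satisfies $\E[\serv{Z}\mid \serv{X},\serv{A}] = (\serv{X} - \sqrt{\omega}\,\bdnsr(\serv{X};\serv{A}\mid\omega))/\sqrt{1-\omega}$.

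For Claim 1, the case $\omega=1$ is immediate since $\serv{X}=\serv{X}_\star$ and $\dfbdnsr(x;\aux\mid 1) = x$ trivially satisfies the divergence-free constraint with zero error. For $\omega<1$, I introduce the Lagrangian
\begin{equation*}
\mathcal{L}(f,\lambda) = \E[(\serv{X}_\star - f(\serv{X},\serv{A}))^2] - 2\lambda\,\E[\serv{Z}\,f(\serv{X},\serv{A})],
\end{equation*}
and derive its stationarity condition by testing against arbitrary $g\in L^2(\serv{X},\serv{A})$. The tower property yields $f^\star = \bdnsr(\serv{X};\serv{A}\mid\omega) + \lambda\,\E[\serv{Z}\mid \serv{X},\serv{A}]$. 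Substituting the explicit form of $\E[\serv{Z}\mid\serv{X},\serv{A}]$ recasts $f^\star$ as an affine combination of $\bdnsr$ and $\serv{X}$; solving for $\lambda$ from the primal feasibility condition $\E[\serv{Z}\,f^\star]=0$ using $\E[\serv{Z}\serv{X}]=\sqrt{1-\omega}$ and Stein's identity, and simplifying, reproduces exactly the formula in \eqref{eq:dmmse-scalar-recall}. Since the objective is strictly convex in $f$ and the constraint is linear, KKT stationarity is sufficient for global optimality, so $f^\star = \dfbdnsr$.

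For Claim 2, I invoke the orthogonality principle for the constrained least-squares problem: for every divergence-free $g\in L^2(\serv{X},\serv{A})$ one has $\E[(\serv{X}_\star - \dfbdnsr)\,g] = 0$. Since $\dfbdnsr$ is itself divergence-free, choosing $g=\dfbdnsr$ yields $\E[\serv{X}_\star\,\dfbdnsr] = \E[\dfbdnsr^2]$. The identity $\E[\bdnsr\,\dfbdnsr] = \E[\serv{X}_\star\,\dfbdnsr]$ then follows from the tower property because $\dfbdnsr$ is $(\serv{X},\serv{A})$-measurable. Finally, expanding $\dmmse(\omega) = \E[(\serv{X}_\star-\dfbdnsr)^2] = 1 - 2\E[\serv{X}_\star\,\dfbdnsr] + \E[\dfbdnsr^2] = 1 - \E[\dfbdnsr^2]$ identifies the common value as $1-\dmmse(\omega)$.

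For Claim 3, the technical core is to identify $\kappa \bydef \E[\serv{Z}\,\bdnsr]/\sqrt{1-\omega} = \E[\partial_x\bdnsr]$ in terms of $\mmse(\omega)$. Differentiating the posterior representation $\bdnsr(x,\aux) = \int x_\star\,p(x_\star\mid x,\aux)\,dx_\star$ under the integral, using the score decomposition $\partial_x\log p(x_\star\mid x,\aux) = (\sqrt{\omega}\,x_\star - x)/(1-\omega) - \partial_x\log p(x\mid\aux)$ together with Tweedie's relation $\partial_x\log p(x\mid\aux) = (\sqrt{\omega}\,\bdnsr(x,\aux) - x)/(1-\omega)$, yields the pointwise identity $\partial_x\bdnsr(x,\aux) = \frac{\sqrt{\omega}}{1-\omega}\,\Var(\serv{X}_\star\mid\serv{X}=x,\serv{A}=\aux)$. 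Taking expectations gives $\kappa = \sqrt{\omega}\,\mmse(\omega)/(1-\omega)$. Combining this with the explicit formula for $\dfbdnsr$, and using $\E[\bdnsr\,\serv{X}_\star]=1-\mmse(\omega)$ together with $\E[\serv{X}_\star\,\serv{X}]=\sqrt{\omega}$, a short calculation gives $\E[\serv{X}_\star\,\dfbdnsr] = 1 - \mmse(\omega)/(1-\sqrt{\omega}\,\kappa)$, hence $\dmmse(\omega) = \mmse(\omega)(1-\omega)/(1-\omega-\omega\,\mmse(\omega))$, which rearranges to the claimed identity $1/\dmmse(\omega) = 1/\mmse(\omega) - \omega/(1-\omega)$. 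The derivation of $\kappa=\sqrt{\omega}\,\mmse(\omega)/(1-\omega)$, a Tweedie-type identity, is the main technical obstacle: while standard in spirit, it requires the careful posterior-density manipulation above.
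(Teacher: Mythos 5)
Your proof is correct in broad strokes, but two points deserve comment.

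First, on Claim 3: you identify the identity $\kappa \equiv \E[\serv{Z}\,\bdnsr]/\sqrt{1-\omega} = \sqrt{\omega}\,\mmse(\omega)/(1-\omega)$ as the "main technical obstacle'' and derive it via Tweedie's relation and a score-function manipulation. That argument is valid, but it is substantially heavier than what the problem requires, and it smuggles in regularity assumptions (differentiating the posterior under the integral) that the paper never needs. The paper obtains the same identity purely algebraically: write $\sqrt{1-\omega}\,\serv{Z} = \serv{X} - \sqrt{\omega}\,\serv{X}_\star$, so that
\begin{align*}
\E[\serv{Z}\,\bdnsr] \;=\; \frac{\E[\serv{X}\,\bdnsr] - \sqrt{\omega}\,\E[\serv{X}_\star\,\bdnsr]}{\sqrt{1-\omega}}
\;=\; \frac{\E[\serv{X}\,\serv{X}_\star] - \sqrt{\omega}\,\bigl(1-\mmse(\omega)\bigr)}{\sqrt{1-\omega}}
\;=\; \frac{\sqrt{\omega}\,\mmse(\omega)}{\sqrt{1-\omega}},
\end{align*}
where the second equality uses only the tower property $\E[\serv{X}\,\bdnsr] = \E[\serv{X}\,\serv{X}_\star] = \sqrt{\omega}$ and $\E[\serv{X}_\star\,\bdnsr] = \E[\bdnsr^2] = 1-\mmse(\omega)$. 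No calculus, no density arguments. The rest of your Claim 3 calculation then goes through and matches the paper's.

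Second, there is a genuine omission. Your Lagrangian derivation of Claim 1 solves the primal-feasibility equation for $\lambda$, and the resulting formula for $\dfbdnsr$ divides by $1 - \sqrt{\omega}\,\kappa$. You never establish that this quantity is nonzero when $\omega < 1$, which is exactly what makes the stated form of $\dfbdnsr$ well-defined. The paper isolates this as a separate claim and proves it by contradiction (if $\beta\sqrt{\omega}=1$ then one can show $\mmse(\omega) = (1-\omega)/\omega \geq 1-\omega$, forcing $\omega=1$). In your framework it would follow once $\kappa = \sqrt{\omega}\,\mmse(\omega)/(1-\omega)$ is known, since then $\sqrt{\omega}\,\kappa = \omega\,\mmse(\omega)/(1-\omega) \leq \omega < 1$ using $\mmse(\omega)\leq 1-\omega$; but you only compute $\kappa$ in Claim 3, after you have already invoked the formula in Claim 1. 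Either reorder (establish the $\kappa$ identity first) or insert a direct proof of $1 - \sqrt{\omega}\,\kappa \neq 0$ before deriving the explicit form of $\dfbdnsr$. Otherwise Claims 1 and 2 as presented are incomplete; your Lagrangian and orthogonality-principle arguments are a legitimate alternative to the paper's completing-the-square decomposition, and once the nondegeneracy is supplied they close.
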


\paragraph*{MMSE and DMMSE Estimators for General Gaussian Channels} Next, we provide formulas for the MMSE and DMMSE estimators for general (multivariate) Gaussian channels. Consider a Gaussian channel $(\serv{X}_{\star}, \serv{X}_1, \dotsc, \serv{X}_t ; \serv{A})$:
\begin{align*}
(\serv{X}_{\star}, \serv{A}) \sim \pi, \quad (Z_1, \dotsc, Z_t) \sim \gauss{0}{\Sigma}, \quad \serv{X}_i = \alpha_i \cdot \serv{X}_{\star} + Z_i \quad \forall\;  i \; \in \; [t],
\end{align*}
where the Gaussian noise $(Z_1, \dotsc, Z_t)$ is sampled independently of the signal and side information $(\serv{X}_{\star}; \serv{A})$. Observe that for any vector $v \in \R^t$ which satisfies:
\begin{align} \label{eq:variance-normalization}
\ip{v}{\alpha}^2 + v^\top \Sigma v & = 1,
\end{align}
$(\serv{X}_{\star}, \ip{v}{\serv{X}_{\leq t}} ; \serv{A})$ forms a scalar Gaussian channel with SNR $\omega = \ip{v}{\alpha}^2$. Among all vectors $v \in \R^t$ which satisfy \eqref{eq:variance-normalization}, the maximum SNR for the corresponding scalar channel is achieved by the vector:
\begin{subequations} \label{eq:scalarization}
    \begin{align} \label{eq:optimal-weights}
\optlin(\serv{X}_{\star}| \serv{X}_1, \dotsc, \serv{X}_t; \serv{A}) \explain{def}{=} \begin{cases} \left( \alpha^\top \Sigma^{\dagger} \alpha + (\alpha^\top \Sigma^{\dagger} \alpha)^2  \right)^{-\frac{1}{2}} \cdot \Sigma^{\dagger} \alpha & \text{ if } \alpha \in \mathrm{Range}(\Sigma), \\
\ip{P_{\perp}[\alpha]}{\alpha}^{-1} \cdot P_{\perp}[\alpha] & \text{ if } \alpha \notin \mathrm{Range}(\Sigma),
\end{cases}
\end{align}
where $\Sigma^{\dagger}$ denotes the pseudo-inverse of $\Sigma$, $\mathrm{Range}(\Sigma)$ is the range space of $\Sigma$, and $P_{\perp}[\cdot]$ denotes the projector on the orthogonal complement of $\mathrm{Range}(\Sigma)$. We will refer to $\optlin(\serv{X}_{\star}| \serv{X}_1, \dotsc, \serv{X}_t; \serv{A})$ as the \emph{optimal linear combination} for the Gaussian channel $(\serv{X}_{\star}, \serv{X}_1, \dotsc, \serv{X}_t ; \serv{A})$. The maximum SNR is given by:
\begin{align} \label{eq:effective-snr}
\effsnr(\serv{X}_{\star}| \serv{X}_1, \dotsc, \serv{X}_t; \serv{A}) &\explain{def}{=} \ip{\alpha}{\optlin(\serv{X}_{\star}| \serv{X}_1, \dotsc, \serv{X}_t; \serv{A})}^2 \\
\nonumber& =  \begin{cases} \left( 1 + \alpha^\top \Sigma^{\dagger} \alpha \right)^{-1} \cdot \alpha^\top \Sigma^{\dagger} \alpha & \text{ if } \alpha \in \mathrm{Range}(\Sigma), \\
1 & \text{ if } \alpha \notin \mathrm{Range}(\Sigma),
\end{cases}
\end{align}
\end{subequations}
which we call the \emph{effective SNR} of the Gaussian channel. The following lemma shows that the MMSE and DMMSE of the Gaussian channel $(\serv{X}_{\star}, \serv{X}_1, \dotsc, \serv{X}_t ; \serv{A})$  is same as the MMSE and DMMSE of the scalar Gaussian channel $(\serv{X}_{\star}, \ip{\serv{X}_{\leq t}}{\optlin}; \serv{A})$, which operates at the SNR $\effsnr$.

\begin{lemma} \label{lem:scalarization-mmse} Let $(\serv{X}_{\star}, \serv{X}_1, \dotsc, \serv{X}_t ; \serv{A})$ be a Gaussian channel. Let $\optlin$ and $\effsnr$ denote the optimal linear combination and effective SNR for the Gaussian channel, as defined in \eqref{eq:scalarization}. Then:
\begin{enumerate}
\item $\bmmse(\serv{X}_{\star} | \serv{X}_1, \dotsc, \serv{X}_t, \serv{A}) = \mmse(\effsnr)$ and the function:
\begin{align*}
f_{\star}(x ; \aux) \explain{def}{=} \bdnsr(\ip{x}{\optlin}; \aux |\effsnr) \quad \forall \; x \in \R^t, \quad \aux \in \R^{\auxdim}, 
\end{align*}
is the MMSE estimator:
\begin{align*}
f_{\star} & \in \arg  \min_{f \in L^2(\serv{X}_1, \dotsc, \serv{X}_t; \serv{A})} \E[\{\serv{X}_\star - f(\serv{X}_1, \dotsc, \serv{X}_t; \serv{A})\}^2].
\end{align*}
\item $\bdmmse(\serv{X}_{\star} | \serv{X}_1, \dotsc, \serv{X}_t, \serv{A}) = \dmmse(\effsnr)$ and the function:
\begin{align*}
\dfnew{f}_{\star}(x ; \aux) \explain{def}{=} \dfbdnsr(\ip{x}{\optlin}; \aux |\effsnr) \quad \forall \; x \in \R^t, \quad \aux \in \R^{\auxdim}, 
\end{align*}
is the DMMSE estimator:
\begin{align*}
\dfnew{f}_{\star} \in &\arg\min_{f \in L^2(\serv{X}_1, \dotsc, \serv{X}_t; \serv{A})} \E[\{\serv{X}_\star - f(\serv{X}_1, \dotsc, \serv{X}_t; \serv{A})\}^2] \\
& \text{subject to } \E[\serv{Z}_i f(\serv{X}_1, \dotsc, \serv{X}_t; \serv{A})] = 0 \; \forall \; i \; \in \; [t].
\end{align*}
\end{enumerate}
\end{lemma}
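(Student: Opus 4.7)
The plan is to reduce the multivariate Gaussian channel $(\serv{X}_\star, \serv{X}_1, \dotsc, \serv{X}_t; \serv{A})$ to the scalar Gaussian channel $(\serv{X}_\star, Y_1; \serv{A})$ defined by $Y_1 \bydef \ip{\optlin}{(\serv{X}_1, \dotsc, \serv{X}_t)}$, and then invoke \lemref{lem:dmmse-scalar}. A direct computation using pseudo-inverse identities, carried out separately in the two cases of \eqref{eq:optimal-weights}, verifies that $\ip{\optlin}{\alpha}^2 = \effsnr$ and $\ip{\optlin}{\alpha}^2 + \optlin^\top \Sigma \optlin = 1$. Writing $Y_1 = \ip{\optlin}{\alpha}\,\serv{X}_\star + \ip{\optlin}{\serv{Z}}$ with $\serv{Z} \bydef (\serv{Z}_1, \dotsc, \serv{Z}_t)$, and noting that $\ip{\optlin}{\serv{Z}} \sim \gauss{0}{\optlin^\top \Sigma \optlin}$ is independent of $(\serv{X}_\star, \serv{A})$, we conclude that $(\serv{X}_\star, Y_1; \serv{A})$ is a scalar Gaussian channel with SNR $\effsnr$.

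For part (1), if $\alpha \notin \mathrm{Range}(\Sigma)$ then $\optlin$ lies in the kernel of $\Sigma$, so $\ip{\optlin}{\serv{Z}} = 0$ almost surely and $Y_1 = \serv{X}_\star$, giving MMSE $=0 = \mmse(1) = \mmse(\effsnr)$. Otherwise, the conditional density of $(\serv{X}_1, \dotsc, \serv{X}_t)$ given $\serv{X}_\star = x$ and $\serv{A}$, restricted to $\mathrm{Range}(\Sigma)$, is proportional to $\exp\bigl(x\cdot \alpha^\top \Sigma^\dagger (\serv{X}_1, \dotsc, \serv{X}_t) - \tfrac{1}{2}x^2 \alpha^\top \Sigma^\dagger \alpha\bigr)$ times factors independent of $x$. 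By the Fisher–Neyman factorization, the linear statistic $\alpha^\top \Sigma^\dagger (\serv{X}_1, \dotsc, \serv{X}_t)$---a nonzero scalar multiple of $Y_1$---is sufficient for $\serv{X}_\star$ given $(\serv{X}_1, \dotsc, \serv{X}_t, \serv{A})$. Therefore $\E[\serv{X}_\star \mid \serv{X}_1, \dotsc, \serv{X}_t, \serv{A}] = \E[\serv{X}_\star \mid Y_1, \serv{A}] = \bdnsr(Y_1; \serv{A} \mid \effsnr) = f_\star(\serv{X}_1, \dotsc, \serv{X}_t; \serv{A})$, with MSE equal to $\mmse(\effsnr)$.

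For part (2), I will characterize the DMMSE estimator as the orthogonal projection, within $L^2(\serv{X}_1, \dotsc, \serv{X}_t, \serv{A})$, of the MMSE estimator $\bdnsr(\serv{X}_1, \dotsc, \serv{X}_t, \serv{A})$ onto the closed subspace $V \bydef \{f : \E[\serv{Z}_i f] = 0\ \forall i \in [t]\}$; its orthogonal complement equals the span of $\zeta_i \bydef \E[\serv{Z}_i \mid \serv{X}_1, \dotsc, \serv{X}_t, \serv{A}] = \serv{X}_i - \alpha_i \bdnsr(\serv{X}_1, \dotsc, \serv{X}_t, \serv{A})$ for $i \in [t]$. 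It thus suffices to show that the candidate $\dfnew{f}_\star = \dfbdnsr(Y_1; \serv{A} \mid \effsnr)$---which by \lemref{lem:dmmse-scalar} has the explicit form $C_1 \bdnsr(Y_1; \serv{A} \mid \effsnr) - C_2 Y_1$ for explicit scalars $C_1, C_2$---satisfies (a) $\dfnew{f}_\star \in V$ and (b) $\bdnsr - \dfnew{f}_\star \in \mathrm{span}\{\zeta_i\}$. Property (a) will follow from the vector Stein identity $\E[\serv{Z}_i g(\serv{Z})] = \sum_j \Sigma_{ij}\,\E[\partial_j g(\serv{Z})]$, which gives $\E[\serv{Z}_i \bdnsr(Y_1; \serv{A} \mid \effsnr)] = (\Sigma\optlin)_i \cdot \E[\partial_1 \bdnsr]$ and $\E[\serv{Z}_i Y_1] = (\Sigma\optlin)_i$, combined with the relation $C_2 = C_1 \cdot \E[\partial_1 \bdnsr]$ which follows from the scalar Stein identity $\E[\serv{Z}\,\bdnsr] = \sqrt{1-\effsnr}\cdot \E[\partial_1 \bdnsr]$ applied inside the formulas of \lemref{lem:dmmse-scalar}. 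Property (b) reduces, after writing $\sum_i c_i \zeta_i = \ip{c}{(\serv{X}_1, \dotsc, \serv{X}_t)} - \ip{c}{\alpha}\,\bdnsr$ and taking $c = C_2 \optlin$, to the algebraic identity $C_1 - 1 = C_2 \ip{\optlin}{\alpha} = C_2 \sqrt{\effsnr}$, which is verified directly from the closed-form expressions for $C_1, C_2$. The MSE of $\dfnew{f}_\star$ then equals $\dmmse(\effsnr)$ by the scalar case applied to $(\serv{X}_\star, Y_1; \serv{A})$.

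The main obstacle will be the algebraic verification of property (b): the precise constants $C_1, C_2$ from \lemref{lem:dmmse-scalar} must be threaded through the geometric decomposition defined by $\{\zeta_i\}$ and shown compatible via the reduction identity $\ip{\optlin}{\alpha} = \sqrt{\effsnr}$. Once this calculation is in place, the remainder of the argument---the Fisher–Neyman factorization for the MMSE and the two-line projection characterization for the DMMSE---is a routine consequence of \lemref{lem:dmmse-scalar} and Stein-type identities.
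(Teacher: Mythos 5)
Your proof is correct but takes a genuinely different route from the paper's, particularly for part (2). For part (1), the paper establishes the conditional independence $\serv{X}_{\leq t} \perp (\serv{X}_\star; \serv{A}) \mid \serv{S}$ via direct Gaussian conditioning (computing the conditional covariance and observing it does not depend on $\serv{X}_\star$), whereas you invoke Fisher--Neyman sufficiency of the linear statistic $\alpha^\top\Sigma^\dagger \serv{X}_{\leq t}$. These are two derivations of the same underlying fact and buy essentially the same thing, though your handling of the density restricted to $\mathrm{Range}(\Sigma)$ requires a little more care to set up than the paper's direct covariance computation. For part (2), the difference is more substantive. The paper uses a sandwich argument: a lower bound $\bdmmse(\serv{X}_\star \mid \serv{X}_{\leq t};\serv{A}) \geq \dmmse(\effsnr)$ via Jensen's inequality (applied to $\E[f\mid \serv{S},\serv{A}]$, after checking that this conditional expectation inherits the scalar divergence-free constraint), and a matching upper bound by plugging in the candidate $\dfbdnsr(\serv{S};\serv{A}\mid\effsnr)$ and verifying it satisfies the multivariate constraints via the linear-Gaussian structure of $\E[\serv{Z}_i\mid \serv{W}]$. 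You instead characterize the DMMSE estimator as the $L^2$-projection of the MMSE estimator onto $V = \{f: \E[\serv{Z}_i f]=0\}$ with $V^\perp = \mathrm{span}\{\zeta_i\}$, and verify the two projection conditions directly through Stein identities and the explicit constants $C_1, C_2$ of \lemref{lem:dmmse-scalar}. Your approach yields the formula and its optimality simultaneously and makes the role of Stein's identity explicit, at the cost of threading the algebraic identity $C_1 - 1 = C_2\sqrt{\effsnr}$ through; the paper's sandwich avoids the constants entirely. One small item to add: your formulas for $C_1, C_2$ are only defined when $\effsnr < 1$, so the corner case $\alpha\notin\mathrm{Range}(\Sigma)$ (where $\effsnr=1$, $Y_1 = \serv{X}_\star$, and $\dfbdnsr(x;\aux\mid 1)=x$) should be dispatched separately in part (2) just as you did in part (1); there the candidate $\serv{X}_\star$ itself trivially satisfies all divergence-free constraints and has zero error.
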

\begin{proof}
The proof is deferred to \sref{sec:scalarization-mmse}.
\end{proof}

\paragraph*{Monotonicity of MMSE and DMMSE} Lastly, we will rely on the following natural monotonicity property of the $\mmse$ function.
\begin{fact}[{\citealt[Proposition 9]{guo2011estimation}}] \label{fact:mmse-monotonic} The function $\mmse:[0,1] \mapsto [0,1]$ is a continuous, non-increasing function. If $\mmse(0) = \E[\Var[\serv{X}_{\star}|\serv{A}]] \neq 0$, $\mmse:[0,1] \mapsto [0,1]$ is strictly decreasing. 
\end{fact}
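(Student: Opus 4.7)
The plan is to establish the two parts of the Fact by coupling the scalar Gaussian channels at different SNRs and then invoking the I-MMSE identity of Guo, Shamai, and Verd\'u.

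For the non-increasing claim, I would fix $0 \le \omega_1 < \omega_2 \le 1$ and construct the two channels jointly on the same probability space. Setting $c = \sqrt{\omega_1/\omega_2}$ and $d = \sqrt{(\omega_2-\omega_1)/\omega_2}$, I draw $(\serv{X}_\star, \serv{A}) \sim \pi$ together with independent standard Gaussians $\serv{Z}_1, \serv{Z}_2$ and define $\serv{X}_{\omega_2} = \sqrt{\omega_2}\, \serv{X}_\star + \sqrt{1-\omega_2}\, \serv{Z}_1$ and $\serv{X}_{\omega_1} = c\, \serv{X}_{\omega_2} + d\, \serv{Z}_2$. A direct variance computation shows that $\serv{X}_{\omega_1}$ has exactly the distribution prescribed by \defref{def:gauss-channel} for SNR $\omega_1$. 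Since $\serv{X}_\star \to (\serv{X}_{\omega_2}, \serv{A}) \to \serv{X}_{\omega_1}$ is a Markov chain by construction, every square-integrable function of $(\serv{X}_{\omega_1}, \serv{A})$ is also a square-integrable function of $(\serv{X}_{\omega_2}, \serv{A})$. Taking infima in the respective MMSE definitions yields $\mmse(\omega_2) \le \mmse(\omega_1)$. The boundary case $\omega_1 = 0$ is handled separately by noting that $\serv{X}_0$ is independent of $\serv{X}_\star$, so that $\mmse(0) = \E\Var[\serv{X}_\star \mid \serv{A}]$ trivially dominates $\mmse(\omega)$ for any $\omega \in [0,1]$.

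For the strictly decreasing claim, I would change variables to the standard SNR parameter $\eta = \omega/(1-\omega)$, under which $\serv{X}_\omega/\sqrt{\omega} = \serv{X}_\star + \serv{Z}/\sqrt{\eta}$ and $\mmse(\omega) = g(\eta)$, where $g(\eta) \bydef \E[\{\serv{X}_\star - \E[\serv{X}_\star \mid \serv{X}_\star + \serv{Z}/\sqrt{\eta}, \serv{A}]\}^2]$ denotes the usual noise-variance-parameterized MMSE function. The I-MMSE identity then gives $\tfrac{d}{d\eta} I(\serv{X}_\star;\, \serv{X}_\star + \serv{Z}/\sqrt{\eta} \mid \serv{A}) = \tfrac{1}{2} g(\eta)$, and the mutual information on the left is strictly concave in $\eta$ whenever the conditional law of $\serv{X}_\star$ given $\serv{A}$ is non-degenerate, which is precisely the hypothesis $\mmse(0) \neq 0$. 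This strict concavity forces $g$, and hence $\mmse$, to be strictly decreasing on the relevant interval.

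The main obstacle is establishing strict (not merely weak) concavity of the Gaussian channel mutual information under the non-degeneracy hypothesis. The cleanest route is to argue that the output distribution $\serv{X}_\star + \serv{Z}/\sqrt{\eta}$ cannot be Gaussian conditionally on $\serv{A}$ at any finite $\eta$: deconvolution against the Gaussian kernel would otherwise force $\serv{X}_\star \mid \serv{A}$ to be a point mass, contradicting $\mmse(0) > 0$. Since the resulting non-Gaussianity of the output translates into strict positivity of the Fisher information gap that appears in the second derivative of $I$, strict concavity follows. Because all of these ingredients are by now standard in the Gaussian channel literature, in practice I would simply invoke \citep[Proposition 9]{guo2011estimation}, which proves both parts directly.
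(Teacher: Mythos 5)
The paper provides no proof of this Fact; it invokes it as known with a direct citation to the Guo et al.\ reference, which is also what you ultimately do in your final sentence. Your attempt to supply the argument anyway is fine in spirit, but the monotonicity step as written has a genuine error. The Markov chain $\serv{X}_\star \to (\serv{X}_{\omega_2}, \serv{A}) \to \serv{X}_{\omega_1}$ does \emph{not} imply that every square-integrable function of $(\serv{X}_{\omega_1}, \serv{A})$ is also a function of $(\serv{X}_{\omega_2}, \serv{A})$: since $\serv{X}_{\omega_1} = c\,\serv{X}_{\omega_2} + d\,\serv{Z}_2$ involves the fresh noise $\serv{Z}_2$, $\serv{X}_{\omega_1}$ is not $\sigma(\serv{X}_{\omega_2},\serv{A})$-measurable whenever $\omega_1<\omega_2$, so $L^2(\serv{X}_{\omega_1},\serv{A}) \not\subset L^2(\serv{X}_{\omega_2},\serv{A})$ and one cannot compare the infima by set inclusion. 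What the coupling actually buys you is a Rao--Blackwell / law-of-total-variance chain:
\[
\mmse(\omega_1) = \E\,\Var[\serv{X}_\star \mid \serv{X}_{\omega_1}, \serv{A}] \;\ge\; \E\,\Var[\serv{X}_\star \mid \serv{X}_{\omega_1}, \serv{X}_{\omega_2}, \serv{A}] \;=\; \E\,\Var[\serv{X}_\star \mid \serv{X}_{\omega_2}, \serv{A}] \;=\; \mmse(\omega_2),
\]
where the inequality is that enlarging the conditioning $\sigma$-algebra cannot increase expected conditional variance, and the middle equality drops $\serv{X}_{\omega_1}$ from the conditioning by the Markov property. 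Equivalently, replace any candidate $f(\serv{X}_{\omega_1},\serv{A})$ by its projection $\E[f(\serv{X}_{\omega_1},\serv{A})\mid \serv{X}_{\omega_2},\serv{A}]$ and apply conditional Jensen; that is a Rao--Blackwellization, not an inclusion of function spaces.

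For strict decrease, your I-MMSE outline points in the right direction but, as you concede, is a sketch: the strict concavity of Gaussian-channel mutual information (equivalently, strict monotonicity of $\mmse$ when the conditional law of $\serv{X}_\star$ given $\serv{A}$ is non-degenerate on a positive-measure set of $\serv{A}$, which is exactly the hypothesis $\mmse(0)>0$) requires a real argument of its own, and deferring that step to \citep[Proposition 9]{guo2011estimation} --- as the paper does for the entire Fact --- is the cleanest option. One small clarification worth keeping in mind: $\mmse(0)>0$ does not mean the conditional law is non-degenerate for almost every $\serv{A}$, only for a positive-measure set; the averaging over $\serv{A}$ still yields strict decrease because the per-$a$ MMSE is non-increasing everywhere and strictly decreasing on that set.
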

The following lemma, whose proof is provided in \sref{sec:dmmse-monotonicity}, shows that the $\dmmse$ function is also non-decreasing.
\begin{lemma}\label{lem:dmmse-monotonicity} The function $\dmmse: [0,1] \mapsto [0,1]$ is non-increasing. Moreover, $\dmmse(\cdot)$ is continuous on the domain $[0,1)$.
\end{lemma}

The remainder of this section is devoted to the proofs of \lemref{lem:dmmse-scalar}, \lemref{lem:scalarization-mmse}, and \lemref{lem:dmmse-monotonicity} introduced above. 

\subsubsection{Proof of \lemref{lem:dmmse-scalar}} \label{sec:dmmse-scalar}
\begin{proof}[Proof of \lemref{lem:dmmse-scalar}]
This result is due to \citet{ma2017orthogonal}, we provide a proof here for completeness. We consider a scalar Gaussian channel at SNR $\omega$:
\begin{align} \label{eq:AWGN-model}
(\serv{X}_{\star}; \serv{A}) \sim \pi, \quad  \serv{Z} \sim \gauss{0}{1}, \quad  \serv{X} = \sqrt{\omega} \cdot \serv{X}_{\star} + \sqrt{1-\omega} \cdot \serv{Z}, 
\end{align}
where the Gaussian noise $\serv{Z}$ is sampled independently of the signal and side information $(\serv{X}_{\star}; \serv{A})$. Recall from Definition~\ref{def:gauss-channel} that:
\begin{align} \label{eq:DMMSE-recall}
\dmmse(\omega)& \explain{def}{=} \bdmmse(\serv{X}_\star | \serv{X}_1, \dotsc, \serv{X}_t; \serv{A}) \\
\nonumber&\explain{def}{=} \min_{f \in L^2(\serv{X}; \serv{A})} \E[\{\serv{X}_\star - f(\serv{X}; \serv{A})\}^2] \quad \text{subject to } \E[\serv{Z} f(\serv{X}; \serv{A})] = 0.
\end{align}
Moreover, the function that achieves the lowest MSE in \eqref{eq:DMMSE-recall} is the DMMSE estimator for the channel. Let $f$ be any function in $L^2(\serv{X}; \serv{A})$ which satisfies the divergence-free requirement $\E[\serv{Z} f(\serv{X}; \serv{A})] = 0$. We can expand the mean squared error (MSE) of $f$ as follows:
\begin{align} \label{eq:bias-variance}
\E[(\serv{X}_{\star} - f(\serv{X}; \serv{A}))^2] & = 1 + \E[ f^2(\serv{X}; \serv{A})] - 2\E[\serv{X}_\star  f(\serv{X}; \serv{A})] \\ & =1 + \E[ f^2(\serv{X}; \serv{A})] - 2\E[\bdnsr(\serv{X}; \serv{A})  f(\serv{X}; \serv{A})], \nonumber
\end{align}
where we used the tower property in the last step. For convenience, we define:
\begin{align} \label{eq:hat-psi-def}
\beta \explain{def}{=} \frac{\E[\serv{Z} \bdnsr(\serv{X};\serv{A} | \omega)]}{\sqrt{1-\omega}}, \quad \hat{\bdnsr}(x;\aux| \omega) \explain{def}{=} \bdnsr(x;\aux| \omega) - \beta x.
\end{align}
Observe that $\hat{\bdnsr}$ satisfies the divergence-free requirement:
\begin{align} \label{eq:hat-psi-div-free}
\E[\serv{Z}\hat{\bdnsr}(\serv{X}; \serv{A}| \omega)] & = 0. 
\end{align}
We consider the cross-term in \eqref{eq:bias-variance}:
\begin{align}
\E[\bdnsr(\serv{X}; \serv{A}| \omega)  f(\serv{X}; \serv{A})] & = \E[\hat{\bdnsr}(\serv{X}; \serv{A}| \omega)  f(\serv{X}; \serv{A})] + \beta \E[\serv{X} f(\serv{X}; \serv{A})] \nonumber\\
& \explain{(a)}{=} \E[\hat{\bdnsr}(\serv{X}; \serv{A}| \omega)  f(\serv{X}; \serv{A})] + \beta \sqrt{\omega} \cdot \E[\serv{X}_{\star} f(\serv{X}; \serv{A})] \nonumber\\
& \explain{(b)}{=}  \E[\hat{\bdnsr}(\serv{X}; \serv{A}| \omega)  f(\serv{X}; \serv{A})] + \beta \sqrt{\omega} \cdot \E[\bdnsr(\serv{X}; \serv{A}| \omega) f(\serv{X}; \serv{A})]. \label{eq:cross-term-simplification}
\end{align}
In the above display step (a) follows from the fact that $\E[\serv{Z} f(\serv{X}; \serv{A})] = 0$ and step (b) uses the tower property. We claim that:
\begin{align} \label{eq:non-degen-claim}
    \omega < 1 \implies \beta {\sqrt{\omega}} \neq 1. 
\end{align}
Assuming the above claim, we can split our analysis into two cases. 
\paragraph*{Case 1: $\omega < 1$} We first prove each of the claims made in the lemma assuming $\omega < 1$.
\begin{description}
    \item[Proof of Claim 1.] Notice that \eqref{eq:non-degen-claim} allows us to rearrange \eqref{eq:cross-term-simplification} to obtain:
    \begin{align} \label{eq:dmmse-est-sub1}
    \E[\bdnsr(\serv{X}; \serv{A}| \omega)  f(\serv{X}; \serv{A})] & = \frac{\E[\hat{\bdnsr}(\serv{X}; \serv{A}| \omega)  f(\serv{X}; \serv{A})]}{1-\beta \sqrt{\omega}} = \E[{\dfbdnsr}(\serv{X}; \serv{A}| \omega)  f(\serv{X}; \serv{A})],
    \end{align}
    where we recalled from the statement of the lemma that $\dfbdnsr$ was defined as: 
    \begin{align}
    \dfbdnsr (x; \aux| \omega) &\explain{def}{=}  
 \left(1 - \frac{\sqrt{\omega}}{\sqrt{1-\omega}} \cdot \E[\serv{Z}\bdnsr({\serv{X};\serv{A} | \omega)]}   \right)^{-1} \cdot \left(\bdnsr(x;a | \omega) - \frac{\E[\serv{Z}\bdnsr({\serv{X};\serv{A}} | \omega)]}{\sqrt{1-\omega}} \cdot x \right) \nonumber   \\ &\explain{\eqref{eq:hat-psi-def}}{=} \frac{\bdnsr(x;a | \omega) - \beta x}{1-\beta \sqrt{\omega}} \explain{\eqref{eq:hat-psi-def}}{=}  \frac{\hat{\bdnsr}(x;a | \omega) }{1-\beta \sqrt{\omega}} .\label{eq:psi-bar-recall}
    \end{align}
    Substituting the formula \eqref{eq:dmmse-est-sub1} in \eqref{eq:bias-variance} and completing the square yields the following MSE decomposition:
    \begin{align} \label{eq:dmmse-est-sub2}
    \E[(\serv{X}_{\star} - f(\serv{X}; \serv{A}))^2] & = 1 - \E[\dfbdnsr(\serv{X}; \serv{A})^2] + \E[\{f(\serv{X}; \serv{A}) - \dfbdnsr(\serv{X};\serv{A})\}^2]
    \end{align}
    Since $\dfbdnsr$ satisfies the divergence-free requirement $\E[\serv{Z}\dfbdnsr(\serv{X};\serv{A})] = 0$ (cf. \eqref{eq:hat-psi-div-free}), the above decomposition shows that $\dfbdnsr$ is the DMMSE estimator. 
    \item[Proof of Claim 2.] Setting $f = \dfbdnsr(\cdot \; ; \cdot | \omega)$ in \eqref{eq:dmmse-est-sub1} and \eqref{eq:dmmse-est-sub2} yields:
    \begin{align*}
    \E[\bdnsr(\serv{X}; \serv{A}| \omega) \dfbdnsr(\serv{X}; \serv{A}| \omega)] = \E[\dfbdnsr^2(\serv{X}; \serv{A}| \omega)] = 1 - \E[(\serv{X}_{\star} - \dfbdnsr(\serv{X}; \serv{A}| \omega))^2] =1 - \dmmse(\omega).
    \end{align*}
    By the Tower property, $\E[\serv{X}_{\star}\dfbdnsr(\serv{X}; \serv{A}| \omega)]= \E[\bdnsr(\serv{X}; \serv{A}| \omega) \dfbdnsr(\serv{X}; \serv{A}| \omega)]$. Hence,
    \begin{align} \label{eq:scalar-dmmse-claim2}
    \E[\serv{X}_{\star}\dfbdnsr(\serv{X}; \serv{A}| \omega)] = \E[\bdnsr(\serv{X}; \serv{A}| \omega) \dfbdnsr(\serv{X}; \serv{A}| \omega)] = \E[\dfbdnsr^2(\serv{X}; \serv{A}| \omega)]  =1 - \dmmse(\omega),
    \end{align}
    which is the second claim made in the lemma. 
    \item[Proof of Claim 3.] Notice from \eqref{eq:psi-bar-recall} that the MMSE and DMMSE estimators are related by:
    \begin{align} \label{eq:mmse-dmmse-denoiser-connection}
    \bdnsr(x; \aux | \omega) = (1-\beta \sqrt{\omega}) \cdot \dfbdnsr(x; \aux | \omega) + \beta x. 
    \end{align}
    Hence, we can relate $\mmse(\omega)$ and $\dmmse(\omega)$ as follows:
    \begin{align}
    \mmse(\omega) = \E[(\serv{X}_{\star} - \bdnsr(\serv{X}; \serv{A} | \omega))^2] &= \E[\serv{X}_{\star}^2] - 2 \E[\serv{X}_{\star}\bdnsr(\serv{X}; \serv{A} | \omega)] + \E[\bdnsr(\serv{X}; \serv{A} | \omega)^2] \nonumber \\
    & \explain{(a)}{=}1 - \E[\serv{X}_{\star}\bdnsr(\serv{X}; \serv{A} | \omega)] \label{eq:mmse-pythagoras} \\
    & \explain{\eqref{eq:mmse-dmmse-denoiser-connection}}{=} 1 - (1-\beta \sqrt{\omega}) \cdot \E[\serv{X}_{\star}\dfbdnsr(\serv{X}; \serv{A} | \omega)] - \beta \E[\serv{X}_{\star} \serv{X}] \nonumber \\
    & = (1-\beta \sqrt{\omega}) \cdot (1- \E[\serv{X}_{\star}\dfbdnsr(\serv{X}; \serv{A} | \omega)]) \nonumber \\
    & \explain{\eqref{eq:scalar-dmmse-claim2}}{=} (1-\beta \sqrt{\omega}) \cdot \dmmse(\omega). \label{eq:sub-beta-expresssion-here}
    \end{align}
    In the above display, step (a) relies on the assumption $\E[\serv{X}_{\star}^2] = 1$ (cf. Assumption~\ref{assump:signal-noise}) and the tower property $\E[\serv{X}_{\star}\bdnsr(\serv{X}; \serv{A} | \omega)] = \E[\bdnsr(\serv{X}; \serv{A} | \omega)^2]$.
    Moreover, notice that:
    \begin{align*}
    \beta &\explain{def}{=} \frac{\E[\serv{Z} \bdnsr(\serv{X};\serv{A} | \omega)]}{\sqrt{1-\omega}}\explain{\eqref{eq:AWGN-model}}{=} \frac{\E[\serv{X} \bdnsr(\serv{X};\serv{A} | \omega)]- \sqrt{\omega}\E[\serv{X}_{\star} \bdnsr(\serv{X};\serv{A} | \omega)]}{{1-\omega}}  \\ &\explain{(a)}{=} \frac{\E[\serv{X} \serv{X}_{\star}]- \sqrt{\omega}\E[\serv{X}_{\star} \bdnsr(\serv{X};\serv{A} | \omega)]}{{1-\omega}} = \frac{\sqrt{\omega}(1-\E[\serv{X}_{\star} \bdnsr(\serv{X};\serv{A} | \omega)])}{1-\omega}  \explain{\eqref{eq:mmse-pythagoras}}{=} \frac{\sqrt{\omega} \cdot \mmse(\omega)}{1-\omega}.
    \end{align*}
    In the above display, step (a) again uses the tower property $\E[\serv{X}_{\star}\bdnsr(\serv{X}; \serv{A} | \omega)] = \E[\bdnsr(\serv{X}; \serv{A} | \omega)^2]$. Finally, we substitute the formula for $\beta$ obtained in the above display in \eqref{eq:sub-beta-expresssion-here} to obtain:
    \begin{align*}
    \frac{1}{\dmmse(\omega)} & = \frac{1}{\mmse(\omega)} - \frac{\omega}{1-\omega}
    \end{align*}
    after rearrangement. This is exactly the third claim made in the lemma. 
\end{description} 
    
\paragraph*{Case 2: $\omega = 1$} When $\omega = 1$, $\serv{X} = \serv{X}_{\star}$. The estimator $f(\serv{X}; \serv{A}) = \serv{X}$ has zero MSE and satisfies the divergence-free requirement $\E[\serv{Z} \serv{X}] = \E[\serv{Z}] \E[\serv{X}_\star] = 0$. Hence, $\dfbdnsr(x;\aux) \explain{def}{=} x$ is the DMMSE estimator. Moreover, the identity (which is the second claim made in the lemma):
\begin{align*}
 \E[\serv{X}_{\star}\dfbdnsr(\serv{X}; \serv{A}| \omega)] = \E[\bdnsr(\serv{X}; \serv{A}| \omega) \dfbdnsr(\serv{X}; \serv{A}| \omega)] = \E[\dfbdnsr^2(\serv{X}; \serv{A}| \omega)]  =1 - \dmmse(\omega)
\end{align*}
holds trivially in this case since $\serv{X}_{\star} = \dfbdnsr(\serv{X}; \serv{A}| \omega) = \serv{X}$, $\dmmse(1) = 0$, and $\E[\serv{X}_{\star}^2] = 1$.

To finish the proof of the lemma, we need to prove the claim in \eqref{eq:non-degen-claim}. 
\paragraph*{Proof of \eqref{eq:non-degen-claim}} We prove the contrapositive of \eqref{eq:non-degen-claim}: $\beta \sqrt{\omega} = 1 \implies \omega = 1$. When $\beta \sqrt{\omega} = 1$, \eqref{eq:cross-term-simplification} shows that for any $f \in L^2(\serv{X}; \serv{A})$ which satisfies the divergence-free requirement $\E[\serv{Z} f(\serv{X}; \serv{A})] = 0$, we have $\E[\hat{\bdnsr}(\serv{X}; \serv{A})  f(\serv{X}; \serv{A})] = 0$. Taking $f = \hat{\bdnsr}$ (cf. \eqref{eq:hat-psi-div-free}), we conclude that $\E[\hat{\bdnsr}^2] = 0$. Recalling the definition of $\hat{\bdnsr}$, we obtain the following formula for the MMSE estimator for the scalar Gaussian channel:
    \begin{align*}
    {\bdnsr}(\serv{X}; \serv{A}) = \beta \serv{X} = \frac{\serv{X}}{\sqrt{\omega}},
    \end{align*}
    which gives us the following formula for the MMSE:
    \begin{align*}
    \bmmse(\serv{X}_{\star} | \serv{X} ; \serv{A}) & = \frac{1-\omega}{\omega}.
    \end{align*}
    Since $\bmmse(\serv{X}_{\star} | \serv{X} ; \serv{A})$ is bounded by the MSE of the linear estimator $\sqrt{\omega} \cdot \serv{X}$ (this is the linear estimator with the lowest MSE), 
    \begin{align*}
    \bmmse(\serv{X}_{\star} | \serv{X} ; \serv{A})  = \frac{1-\omega}{\omega} \leq \E[(\sqrt{\omega} \serv{X} - \serv{X}_{\star})^2] = 1- \omega.
    \end{align*}
    Since $\omega \in [0,1]$, by rearranging the above inequality 
    we conclude that $\omega = 1$.  
\end{proof}

\subsubsection{Proof of \lemref{lem:scalarization-mmse}} \label{sec:scalarization-mmse}
\begin{proof}[Proof of \lemref{lem:scalarization-mmse}]
It will be convenient to define the random vector $\serv{X}_{\leq t} \explain{def}{=} (\serv{X}_1, \dotsc, \serv{X}_t)^\top$ and the random variable $\serv{S}  \explain{def}{=} \ip{\serv{X}_{\leq t}}{\optlin}$. 
Observe that the conditional joint distribution of $(\serv{X}_{\leq t}, \serv{S})$ given $(\serv{X}_{\star}; \serv{A})$ is:
\begin{align*}
\begin{bmatrix}
\serv{X}_{\leq t}\\ \serv{S}
\end{bmatrix} \; \Big| \; (\serv{X}_{\star}; \serv{A}) \sim \gauss{\serv{X}_{\star}\cdot \begin{bmatrix} \alpha \\ \sqrt{\effsnr} \end{bmatrix}}{\begin{bmatrix} \Sigma  &\Sigma \optlin \\  \optlin^\top \Sigma &\, {1-\effsnr} \end{bmatrix}}.
\end{align*}
Using Gaussian conditioning, we find that the conditional distribution of $\serv{X}_{\leq t}$ given $(\serv{S}, \serv{X}_\star; \serv{A})$ is given by:
\begin{align*}
\serv{X}_{\leq t} \; | \; (\serv{S}, \serv{X}_\star; \serv{A}) \sim \gauss{\frac{\serv{ S\alpha}}{\sqrt{\effsnr}} }{\Sigma - \frac{1-\effsnr}{\effsnr} \cdot \alpha \alpha^\top}.
\end{align*}
Since the RHS does not have any dependence on $(\serv{X}_\star; \serv{A})$, this implies that $\serv{X}_{\leq t}$ and $(\serv{X}_{\star}; \serv{A})$ are conditionally independent given $\serv{S}$:
\begin{align} \label{eq:cond-indp}
    \serv{X}_{\leq t} \perp (\serv{X}_{\star}; \serv{A}) \; | \; \serv{S}.
\end{align}
\paragraph*{Analysis of MMSE} We first prove the MMSE formula and identify the MMSE estimator. To do so, we begin by showing the lower bound:
\begin{align*}
\bmmse(\serv{X}_{\star} | \serv{X}_{\leq t}; \serv{A}) = \min_{f \in L^2(\serv{X}_{\leq t}; \serv{A})} \E[\{\serv{X}_\star - f(\serv{X}_{\leq t}; \serv{A})\}^2] \geq \mmse(\effsnr).
\end{align*}
To do so, we consider any $f \in L^2(\serv{X}_{\leq t}; \serv{A})$:
\begin{align*}
\E[\{\serv{X}_\star - f(\serv{X}_{\leq t}; \serv{A})\}^2] & \explain{(a)}{\geq}  \E[\{\serv{X}_\star - \E[f(\serv{X}_{\leq t}; \serv{A})| \serv{S}, \serv{X}_{\star},\serv{A}]\}^2] \\
& \explain{\eqref{eq:cond-indp}}{=} \E[\{\serv{X}_\star - \E[f(\serv{X}_{\leq t}; \serv{A})| \serv{S}, \serv{A}]\}^2] \\
& \explain{(b)}{\geq} \min_{g \in L^2(\serv{S};\serv{A})} \E[\{\serv{X}_\star - g(\serv{S}; \serv{A})\}^2] \\
& = \bmmse(\serv{X}_{\star}|\serv{S}; \serv{A}) \\
& \explain{(c)}{=} \mmse(\effsnr),
\end{align*}
where step (a) used Jensen's inequality, step (b) follows by observing $ \E[f(\serv{X}_{\leq t}; \serv{A})| \serv{S}, \serv{A}] \in L^2(\serv{S};\serv{A})$, and step (c) is justified by noticing that $(\serv{X}_{\star}, \serv{S}; \serv{A})$ forms a scalar Gaussian channel with SNR $\effsnr$. Since $f\in L^2(\serv{X}_{\leq t}; \serv{A})$ was arbitrary, we can minimize the LHS of the above display with respect to $f$, which yields:
\begin{align} \label{eq:mmse-lb}
    \bmmse(\serv{X}_{\star} | \serv{X}_{\leq t}; \serv{A}) \geq \mmse(\effsnr).
\end{align}
To obtain the upper bound, we observe that:
\begin{align} 
\mmse(\effsnr) & \explain{(a)}{=} \E[\{\serv{X}_{\star}- \bdnsr(\serv{S}; \serv{A} | \effsnr)\}^2] \nonumber \\ &\explain{(b)}{\geq} \min_{f \in L^2(\serv{X}_{\leq t}; \serv{A})} \E[\{\serv{X}_\star - f(\serv{X}_{\leq t}; \serv{A})\}^2] \explain{def}{=} \bmmse(\serv{X}_{\star} | \serv{X}_{\leq t}; \serv{A}), \label{eq:mmse-ub}
\end{align}
where the equality in (a) follows by recalling the formula for the MMSE estimator in a scalar Gaussian channel and the inequality in (b) follows by observing that: $$\bdnsr(\serv{S}; \serv{A} | \effsnr) = \bdnsr(\ip{\serv{X}_{\leq t}}{\optlin}; \serv{A} | \effsnr) \explain{def}{=} f_{\star}(\serv{X}_{\leq t}; \serv{A}) \in L^2(\serv{X}_{\leq t}; \serv{A}).$$ Combining the conclusions of \eqref{eq:mmse-lb} and \eqref{eq:mmse-ub} yields:
\begin{align*}
\bmmse(\serv{X}_{\star} | \serv{X}_{\leq t}; \serv{A}) & = \E[\{\serv{X}_\star - f_{\star}(\serv{X}_{\leq t}; \serv{A})\}^2] =  \mmse(\effsnr), 
\end{align*}
which gives us the claimed formula for the MMSE and the MMSE estimator. 
\paragraph*{Analysis of DMMSE} We begin by observing that since $(\serv{X}_{\star}, \serv{S} \explain{def}{=} \ip{\optlin}{\serv{X}_{\leq t}}; \serv{A})$ form a scalar Gaussian channel with SNR $\effsnr$, we can write $\serv{S}$ as:
\begin{align} \label{eq:scalarized-channel}
    \serv{S} & \explain{def}{=} \ip{\optlin}{\serv{X}_{\leq t}} = \sqrt{\effsnr} \serv{X}_{\star} + \sqrt{1-\effsnr} 
    \serv{W}, \quad \serv{W} \explain{def}{=} \frac{\ip{\optlin}{\serv{Z}_{\leq t}}}{\sqrt{1-\effsnr}} \sim \gauss{0}{1}.
\end{align}As before, we begin by showing the lower bound:
\begin{align*}
&\bdmmse(\serv{X}_{\star} | \serv{X}_{\leq t}; \serv{A}) \\
&\explain{def}{=} \left( \min_{f \in L^2(\serv{X}_{\leq t}; \serv{A})} \E[\{\serv{X}_\star - f(\serv{X}_{\leq t}; \serv{A})\}^2] \; \text{subject to } \E[\serv{Z}_i f(\serv{X}_1, \dotsc, \serv{X}_t; \serv{A})] = 0 \; \forall \; i \; \in \; [t]\right) \\&\geq \dmmse(\effsnr).
\end{align*}
For any $f \in L^2(\serv{X}_{\leq t}; \serv{A})$ which satisfies the divergence-free constraints $\E[\serv{Z}_i f(\serv{X}_1, \dotsc, \serv{X}_t; \serv{A})] = 0 \; \forall \; i \; \in \; [t]$, we have:
\begin{align*}
\E[\{\serv{X}_\star - f(\serv{X}_{\leq t}; \serv{A})\}^2] & \explain{(a)}{\geq}  \E[\{\serv{X}_\star - \E[f(\serv{X}_{\leq t}; \serv{A})| \serv{S}, \serv{X}_{\star},\serv{A}]\}^2] \\
& \explain{\eqref{eq:cond-indp}}{=} \E[\{\serv{X}_\star - \E[f(\serv{X}_{\leq t}; \serv{A})| \serv{S}, \serv{A}]\}^2] \\
& \explain{(b)}{\geq} \min_{g \in L^2(\serv{S};\serv{A})} \left( \E[\{\serv{X}_\star - g(\serv{S}; \serv{A})\}^2] \text{ subject to } \E[\serv{W} g(\serv{S}; \serv{A})] = 0 \right)\\
& = \bdmmse(\serv{X}_{\star}|\serv{S}; \serv{A}) \\
& \explain{}{=} \dmmse(\effsnr).
\end{align*}
In the above display:
\begin{enumerate}
\item Step (a) uses Jensen's inequality. 
\item In step (b) we observed that $\E[f(\serv{X}_{\leq t}; \serv{A})| \serv{S}, \serv{A}] \in L^2(\serv{S}; \serv{A})$. Moreover, the estimator $\E[f(\serv{X}_{\leq t}; \serv{A})| \serv{S}, \serv{A}]$ satisfies the divergence-free constraint $\E[\serv{W}\E[f(\serv{X}_{\leq t}; \serv{A})| \serv{S}, \serv{A}]] = 0$. This can be verified as follows:
\begin{align*}
\E[\serv{W}\E[f(\serv{X}_{\leq t}; \serv{A})| \serv{S}, \serv{A}]] &\explain{\eqref{eq:cond-indp}}{=} \E[\serv{W}\E[f(\serv{X}_{\leq t}; \serv{A})| \serv{S}, \serv{X}_{\star} \serv{A}]] \\
& \explain{(i)}{=} \E[\E[\serv{W}f(\serv{X}_{\leq t}; \serv{A})| \serv{S}, \serv{X}_{\star} \serv{A}]] = \E[\serv{W}f(\serv{X}_{\leq t}; \serv{A})]   \explain{(ii)}{=} 0,
\end{align*}
where we used the fact that $\serv{W}$ is measurable with respect to $\serv{X}_{\star},\serv{S}$ (recall \eqref{eq:scalarized-channel}) in step (i). To obtain the equality marked (ii), we expressed $\serv{W}$ as a linear combination of $\serv{Z}_{\leq t}$ (recall \eqref{eq:scalarized-channel}) and used the fact that $f$ satisfies the divergence-free constraints. 
\end{enumerate}
Since $f\in L^2(\serv{X}_{\leq t}; \serv{A})$ was an arbitrary function that satisfies the divergence-free constraints, we can minimize the LHS of the above display with respect to $f$ and obtain:
\begin{align} \label{eq:dmmse-lb}
    \bdmmse(\serv{X}_{\star} | \serv{X}_{\leq t}; \serv{A}) \geq \dmmse(\effsnr).
\end{align}
To obtain the upper bound, we observe that:
\begin{align} 
&\dmmse(\effsnr)  \explain{(a)}{=} \E[\{\serv{X}_{\star}- \dfbdnsr(\serv{S}; \serv{A} | \effsnr)\}^2] \nonumber \\&\explain{(b)}{\geq}  \left( \min_{f \in L^2(\serv{X}_{\leq t}; \serv{A})}  \E[\{\serv{X}_\star - f(\serv{X}_{\leq t}; \serv{A})\}^2] \text{ subject to } \E[\serv{Z}_i f(\serv{X}_1, \dotsc, \serv{X}_t; \serv{A})] = 0 \; \forall \; i \; \in \; [t]\right) \nonumber \\ &\explain{def}{=} \bdmmse(\serv{X}_{\star} | \serv{X}_{\leq t}; \serv{A}). \label{eq:dmmse-ub}
\end{align}
In the above display:
\begin{enumerate}
\item The equality in (a) follows by recalling the formula for the DMMSE estimator in a scalar Gaussian channel (\lemref{lem:dmmse-scalar}).
\item To obtain the inequality in (b), we observed that:
\begin{align*}
    \dfbdnsr(\serv{S}; \serv{A} | \effsnr) = \dfbdnsr(\ip{\serv{X}_{\leq t}}{\optlin}; \serv{A} | \effsnr) \explain{def}{=} \dfnew{f}_{\star}(\serv{X}_{\leq t}; \serv{A}) \in L^2(\serv{X}_{\leq t}; \serv{A}).
\end{align*}
Moreover, $\dfnew{f}_{\star}(\serv{X}_{\leq t}; \serv{A}) \explain{def}{=} \dfbdnsr(\ip{\serv{X}_{\leq t}}{\optlin}; \serv{A} | \effsnr)$ satisfies the divergence-free requirements: $$\E[\serv{Z}_i \dfbdnsr(\ip{\serv{X}_{\leq t}}{\optlin}; \serv{A} | \effsnr)] = 0 \quad  \forall \; i \  \in [t].$$ This can be verified as follows:
\begin{align*}
\E[\serv{Z}_i \dfbdnsr(\ip{\serv{X}_{\leq t}}{\optlin}; \serv{A} | \effsnr)] &\explain{\eqref{eq:scalarized-channel}}{=} \E\left[\serv{Z}_i \dfbdnsr\left(\sqrt{\effsnr}\serv{X}_{\star} + \sqrt{1-\effsnr}\serv{W} ; \serv{A} | \effsnr\right)\right] \\
& = \E[\dfbdnsr\left(\sqrt{\effsnr}\serv{X}_{\star} + \sqrt{1-\effsnr}\serv{W} ; \serv{A} | \effsnr\right) \cdot \E[\serv{Z}_i | \serv{X}_{\star}, \serv{A}, \serv{W}]] \\
& \explain{(i)}{=} c \E[\dfbdnsr\left(\sqrt{\effsnr}\serv{X}_{\star} + \sqrt{1-\effsnr}\serv{W} ; \serv{A} | \effsnr\right) \cdot \serv{W}] \\
& \explain{(ii)}{=} 0,
\end{align*}
where the equality marked (i) follows by observing that $(\serv{Z}_i, \serv{W})$ are centered Gaussian random variables independent of $(\serv{X}_{\star}, \serv{A})$ and hence $\E[\serv{Z}_i | \serv{X}_{\star}, \serv{A}, \serv{W}] = \E[\serv{Z}_i | \serv{W}] = c \serv{W}$ for some constant $c$ (the exact formula for $c$ is not important for the argument). The equality (ii) follows because $\dfbdnsr(\serv{S};\serv{A})$ satisfies the divergence-free requirement $\E[\serv{W} \dfbdnsr(\serv{S};\serv{A})] = 0$.  
\end{enumerate}
Combining the conclusions of \eqref{eq:dmmse-lb} and \eqref{eq:dmmse-ub} yields:
\begin{align*}
\bdmmse(\serv{X}_{\star} | \serv{X}_{\leq t}; \serv{A}) & = \E[\{\serv{X}_\star - \dfnew{f}_{\star}(\serv{X}_{\leq t}; \serv{A})\}^2] =  \dmmse(\effsnr), 
\end{align*}
which gives us the claimed formula for the DMMSE and the DMMSE estimator. This concludes the proof of this lemma.
\end{proof}

\subsubsection{Proof of \lemref{lem:dmmse-monotonicity}} \label{sec:dmmse-monotonicity}
\begin{proof}[Proof of \lemref{lem:dmmse-monotonicity}]
It is sufficient to show that for any $\rho \in [0,1]$ and any $\omega \in [0,1]$, $\dmmse(\omega) \leq \dmmse(\rho \omega)$. Consider a scalar Gaussian channel $(\serv{X}_{\star}, \serv{X}_0 ; \serv{A})$ at SNR $\omega$:
\begin{align*}
(\serv{X}_{\star}; \serv{A}) \sim \pi, \quad \serv{X}_0 = \sqrt{\omega} \cdot \serv{X}_{\star} + \sqrt{1-\omega} \cdot \serv{Z}_0, \quad \serv{Z}_0 \sim \gauss{0}{1},
\end{align*}
where $(\serv{X}_{\star}; \serv{A})$ and $\serv{Z}_0$ are sampled independently. Let $\serv{W}$ be a $\gauss{0}{1}$ random variable, sampled independently of $(\serv{X}_{\star}; \serv{A})$ and $\serv{Z}_0$. We introduce the random variable $\serv{X}_1 = \sqrt{\rho} \cdot \serv{X}_0 + \sqrt{1-\rho} \cdot \serv{W}$. This construction ensures that $(\serv{X}_{\star}, \serv{X}_1; \serv{A})$ is a scalar Gaussian channel with SNR $\rho \omega$:
\begin{align} \label{eq:degraded-channel}
    \serv{X}_1 & = \sqrt{\rho \omega} \cdot \serv{X}_{\star} + \sqrt{1- \rho \omega} \cdot \serv{Z}_1, \quad \serv{Z}_1 \explain{def}{=} \frac{\sqrt{\rho(1-\omega)} \cdot \serv{Z}_0 + \sqrt{1-\rho} \cdot \serv{W}}{\sqrt{1-\rho \omega}} \sim \gauss{0}{1}.
\end{align}To obtain the claim of the lemma, we observe that:
\begin{align*}
&\dmmse(\omega)  \explain{(a)}{=}\bdmmse(\serv{X}_{\star}| \serv{X}_0, \serv{X}_1 ; \serv{A}) \\ &\explain{def}{=} \left( \min_{f \in L^2(\serv{X}_{0},\serv{X}_{1}; \serv{A})} \E[\{\serv{X}_\star - f(\serv{X}_{0},\serv{X}_{1}; \serv{A})\}^2] \; \text{subject to } \E[\serv{Z}_i f(\serv{X}_0,\serv{X}_1; \serv{A})] = 0 \; \forall \; i \; \in \; \{0,1\}\right) \\
& \explain{(b)}{\leq}  \left( \min_{f \in L^2(\serv{X}_{1}; \serv{A})} \E[\{\serv{X}_\star - f(\serv{X}_{1}; \serv{A})\}^2] \; \text{subject to } \E[\serv{Z}_1 f(\serv{X}_1; \serv{A})] = 0\right) \\
& \explain{def}{=} \bdmmse(\serv{X}_{\star} | \serv{X}_1; \serv{A}) \\
& = \dmmse(\rho \omega).
\end{align*}
In the above display, the equality in step (a) follows from \lemref{lem:scalarization-mmse} (the effective SNR of the Gaussian channel $(\serv{X}_{\star}, \serv{X}_0, \serv{X}_1; \serv{A})$ is $\omega$) and the inequality in step (b) follows by observing that the feasible set in the definition of  $\bdmmse(\serv{X}_{\star} | \serv{X}_1; \serv{A})$ is a subset of the feasible set in the definition of  $\bdmmse(\serv{X}_{\star} | \serv{X}_0, \serv{X}_1; \serv{A})$. Indeed if $f \in L^2(\serv{X}_{1}; \serv{A})$ satisfies $\E[\serv{Z}_1 f(\serv{X}_1; \serv{A})] = 0$, then  $f \in L^2(\serv{X}_0, \serv{X}_{1}; \serv{A})$, and:
\begin{align*}
    \E[\serv{Z}_0 f(\serv{X}_1; \serv{A})] &= \E[f(\serv{X}_1; \serv{A}) \E[\serv{Z}_0 | \serv{X}_{\star},\serv{Z}_1,\serv{A}]]\\
    & \explain{(i)}{=} \E[f(\serv{X}_1; \serv{A}) \E[\serv{Z}_0 | \serv{Z}_1]] \explain{\eqref{eq:degraded-channel}}{=} \frac{\sqrt{1-\rho \omega}}{\sqrt{\rho(1-\omega)}} \cdot \E[f(\serv{X}_1; \serv{A}) \serv{Z}_1] = 0,
\end{align*}
where the equality marked (i) follows by observing that $(\serv{Z}_0,\serv{Z}_1)$ are centered Gaussian random variables independent of $(\serv{X}_\star; \serv{A})$ (cf. \eqref{eq:degraded-channel}). This shows that $\dmmse(\cdot)$ is non-increasing. The continuity of $\dmmse(\cdot)$ on $[0,1)$ follows by expressing $\dmmse(\cdot)$ in terms of $\mmse(\cdot)$ using \lemref{lem:dmmse-scalar} (item (3)) and the continuity of $\mmse(\cdot)$ (\factref{fact:mmse-monotonic}).  
\end{proof}

\section{State Evolution for OAMP Algorithms (Theorem~\ref{thm:SE})}\label{Sec:proof_SE}

In this appendix, we present the proof of Theorem~\ref{thm:SE}. We begin by introducing the key ideas involved in the proof in the form of some intermediate results. 

\paragraph*{Polynomial Approximation} Consider a general OAMP algorithm:
\begin{align} \label{eq:OAMP-SE}
\vx_t&=\Psi_t(\bm{Y}) \cdot f_t(\vx_1, \dotsc, \vx_{t-1};\va) \quad \forall \;  t\in\mathbb{N}.
\end{align}
By a polynomial approximation argument, we may assume that the matrix denoisers $\{\mfunc_t\}_{t \in \N}$ are polynomials. This argument is summarized in the following lemma, whose proof is deferred to \appref{appendix:poly-approx}.
\begin{lemma} \label{lem:poly-approx} It is sufficient to prove Theorem~\ref{thm:SE} under the additional assumption that for each $t \in \N$, the matrix denoiser $\mfunc_t: \R \mapsto \R$ is a polynomial. 
\end{lemma}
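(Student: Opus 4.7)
The plan is to reduce the general continuous case to the polynomial case through a Weierstrass approximation combined with a stability argument for the OAMP recursion. Fix an iteration horizon $T \in \N$ and a general OAMP algorithm with continuous matrix denoisers $\{\mfunc_t\}_{t \in [T]}$ (satisfying the trace-free constraint), Lipschitz iterate denoisers $\{\fnonlin_t\}_{t \in [T]}$, and Lipschitz post-processing functions $\{\psi_t\}_{t \in [T]}$. Because $\|\mW\|_{\op} \leq C$ deterministically and $\|\vx_\star\|^2/\dim \pc 1$, on an event of probability tending to one we have $\|\mY\|_{\op} \leq C^\prime$ for $C^\prime \explain{def}{=} C + \theta + 1$, so every matrix denoiser is only ever applied to eigenvalues inside $[-C^\prime, C^\prime]$. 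By the Weierstrass theorem, for each $t \in [T]$ and each $m \in \N$ there is a polynomial $\widetilde{\mfunc}_t^{(m)}$ with $\sup_{\lambda \in [-C^\prime, C^\prime]} |\widetilde{\mfunc}_t^{(m)}(\lambda) - \mfunc_t(\lambda)| \leq 1/m$. To keep the trace-free constraint \eqref{eq:trace-free} intact, I would take the polynomial matrix denoiser to be $\mfunc_t^{(m)}(\lambda) \explain{def}{=} \widetilde{\mfunc}_t^{(m)}(\lambda) - \E_{\serv{\Lambda} \sim \mu}[\widetilde{\mfunc}_t^{(m)}(\serv{\Lambda})]$; since $\mfunc_t$ is already trace-free and $\mathrm{supp}(\mu) \subset [-C, C] \subset [-C^\prime, C^\prime]$, the subtracted constant is $O(1/m)$, so $\mfunc_t^{(m)} \to \mfunc_t$ uniformly on $[-C^\prime, C^\prime]$ as $m \to \infty$.

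Next, I would compare the iterates $(\vx_t^{(m)})_{t \in [T]}$ produced by the polynomial-denoiser OAMP (keeping the same $\fnonlin_t$ and $\psi_t$) with the original iterates $(\vx_t)_{t \in [T]}$. By induction on $t$, I would establish two facts simultaneously: first, that $\sup_{m} \|\vx_t^{(m)}\|^2/\dim$ is tight in $\dim$ (this uses Lipschitzness of $\fnonlin_t$ and the operator-norm bound on $\mfunc_t^{(m)}(\mY)$ inherited from the uniform bound on $[-C^\prime, C^\prime]$); second, that $\vx_t^{(m)} \asymeq \vx_t$ as $m \to \infty$, uniformly in $\dim$ on the high-probability event $\{\|\mY\|_{\op} \leq C^\prime\}$. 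The inductive step uses the decomposition
\begin{align*}
\vx_t - \vx_t^{(m)} &= \mfunc_t(\mY)\bigl[\fnonlin_t(\vx_{<t};\va) - \fnonlin_t(\vx_{<t}^{(m)};\va)\bigr] + \bigl[\mfunc_t(\mY) - \mfunc_t^{(m)}(\mY)\bigr]\fnonlin_t(\vx_{<t}^{(m)};\va),
\end{align*}
controls the first term by the Lipschitz constant of $\fnonlin_t$ times $\|\mfunc_t(\mY)\|_{\op}$ and the induction hypothesis, and controls the second term by the uniform error $\|\mfunc_t - \mfunc_t^{(m)}\|_{\infty,[-C^\prime,C^\prime]}$ times $\|\fnonlin_t(\vx_{<t}^{(m)};\va)\|$.

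In parallel, the state evolution random variables associated with the polynomial-denoiser algorithm converge to those of the original algorithm. Indeed, since $\mfunc_t^{(m)} \to \mfunc_t$ uniformly on the (compactly supported) union of the supports of $\mu$ and $\nu$, all of the expectations $\E[\mfunc_t^{(m)}(\serv{\Lambda}_\nu)]$ and covariances $\Cov[\mfunc_s^{(m)}(\serv{\Lambda}),\mfunc_t^{(m)}(\serv{\Lambda})]$, $\Cov[\mfunc_s^{(m)}(\serv{\Lambda}_\nu),\mfunc_t^{(m)}(\serv{\Lambda}_\nu)]$ appearing in \eqref{Eqn:SE_variables_def} converge to their counterparts under $\mfunc_t$; combined with the continuity of the state evolution recursion in these parameters (which uses Lipschitzness of $\fnonlin_t$ together with dominated convergence), this yields $W_2$ convergence of the polynomial-denoiser state evolution random variables $(\serv{X}_\star, \serv{X}_1^{(m)}, \dotsc, \serv{X}_T^{(m)}; \serv{A})$ to $(\serv{X}_\star, \serv{X}_1, \dotsc, \serv{X}_T; \serv{A})$ as $m \to \infty$. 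Granting \thref{thm:SE} in the polynomial case, a standard triangle inequality for $W_2$ (using PL$_2$ test functions as in \eqref{eq:PL2-testfunc}, the uniform $L^2$ tightness from the first induction, and a diagonal sequence $m = m(\dim) \to \infty$ slowly) yields the desired weak convergence for the original algorithm.

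The main obstacle I anticipate is the inductive control of $\vx_t^{(m)} \asymeq \vx_t$ together with uniform-in-$m$ $L^2$ tightness of the iterates: the recursion composes Lipschitz iterate denoisers with operator-bounded matrix denoisers, but the PL$_2$ test functions in Definition~\ref{def:W2} come with a factor $(1 + \|v\| + \|v^\prime\|)$, so the stability argument must produce quantitative bounds of the form $\|\vx_t - \vx_t^{(m)}\|/\sqrt{\dim} \leq K_t/m$ with constants $K_t$ that do not depend on $\dim$. Carefully propagating these constants through the recursion, while simultaneously checking that the trace-free correction does not degrade the polynomial approximation rate, is the technical heart of the argument.
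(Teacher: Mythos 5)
There is a genuine gap in the proposal. You replace the continuous matrix denoisers $\mfunc_t$ by polynomial approximations $\mfunc_t^{(m)}$ but keep the \emph{same} iterate denoisers $\fnonlin_t$. The resulting algorithm is \emph{not} a valid OAMP algorithm in the sense of Definition~\ref{Def:OAMP_main}, because the divergence-free constraint \eqref{eq:div-free-constraint} is imposed relative to the algorithm's \emph{own} state evolution random variables. When you change $\mfunc_t$ to $\mfunc_t^{(m)}$, the joint law of $(\serv{X}_1, \dotsc, \serv{X}_{t-1})$ changes (the covariance matrix of the Gaussian noise depends on $\mfunc_s$ through \eqref{Eqn:SE_variables_def}), so $\E[\partial_s \fnonlin_t(\serv{X}_1^{(m)}, \dotsc, \serv{X}_{t-1}^{(m)}; \serv{A})]$ will in general be a nonzero quantity $\epsilon_m^{(s)} \to 0$ rather than exactly zero. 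Since your argument at the step "granting \thref{thm:SE} in the polynomial case" must apply Theorem~\ref{thm:SE} to the polynomial-denoiser algorithm, and that theorem's hypotheses include the exact divergence-free constraint, the middle term of your triangle-inequality decomposition is not controlled. It is not enough to observe that the violation is small: the Onsager-type correction it would generate is of size $\epsilon_m \cdot \|\vx_s\| \sim \epsilon_m \sqrt{\dim}$, so one cannot simply ignore it without argument, and more fundamentally Theorem~\ref{thm:SE} simply does not apply to the algorithm you wrote down.

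The paper's proof of the same lemma uses exactly your Weierstrass strategy but repairs this: the polynomial-denoiser algorithm defined in the proof uses the corrected iterate denoiser $\fnonlin_t(\cdot) - \sum_{s} \E[\partial_s \fnonlin_t(\serv{X}_{<t}^{(\degree)};\serv{A})] \cdot \vx_s^{(\degree)}$, which is divergence-free with respect to the \emph{new} state evolution random variables by construction, so Theorem~\ref{thm:SE} in the polynomial case applies to it. One then verifies (the paper's Claim~\ref{claim:itr-conv}) that since $\E[\partial_s \fnonlin_t(\serv{X}_{<t}^{(\degree)}; \serv{A})] \to \E[\partial_s \fnonlin_t(\serv{X}_{<t};\serv{A})] = 0$, the extra subtraction vanishes in the $\degree\to\infty$ limit and the corrected iterates still approximate the original ones. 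The rest of your outline (uniform operator-norm bounds on the good event $\{\|\mY\|_{\op}\le C'\}$, re-centering to preserve trace-freeness, $W_2$ convergence of the state evolution random variables, the three-term triangle inequality) matches the paper and is fine; what is missing is this divergence-free re-projection and the accompanying argument that it has vanishing contribution.
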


\paragraph*{Orthogonal Decomposition} Let $(\serv{X}_{\star}, (\serv{X}_t)_{t \in \N}; \serv{A})$ denote the state evolution random variables associated with the OAMP algorithm in \eqref{eq:OAMP-SE}. A key idea {involved} in the proof of Theorem~\ref{thm:SE} is to consider the following decomposition of the functions $(\fnonlin_{t})_{t \in \N}$:
\begin{align} \label{eq:f-perp}
\fnonlin_t(x_1, \dotsc, x_{t-1}; \aux) \explain{}{=} \alpha_t x_{\star} + \fnonlin_t^{\perp}(x_1, \dotsc, x_{t-1}; x_{\star},  \aux),
\end{align}
where:
\begin{align*}
\alpha_{t} \explain{def}{=} \E[\serv{X}_{\star} \fnonlin_t(\serv{X}_1, \dotsc, \serv{X}_{t-1}; \serv{A})], \quad  \fnonlin_t^{\perp}(x_1, \dotsc, x_{t-1}; x_{\star}, \aux) \explain{def}{=} \fnonlin_t(x_1, \dotsc, x_{t-1}; \aux) - \alpha_t x_{\star}.
\end{align*}
Notice that by construction, $\fnonlin_t^{\perp}$ is orthogonal to the signal state evolution random variable $\serv{X}_{\star}$: 
\begin{align} \label{eq:f-perp-ortho}
\E[\serv{X}_{\star} \fnonlin_t^{\perp}(\serv{X}_1, \dotsc, \serv{X}_{t-1}; \serv{X}_{\star}, \serv{A})] & = 0.
\end{align}
Using this decomposition, we have:
\begin{align} \label{eq:original-OAMP-decomp}
\vx_t & =  \alpha_t \cdot \mfunc_t(\mY) \cdot  \vx_{\star} + \mfunc_t(\mY) \cdot \fnonlin_t^{\perp}(\vx_1, \dotsc, \vx_{t-1}
; \vx_{\star}, \va). 
\end{align}
Recall that the observed matrix $\mY$ consists of a signal part and a noise part: $\mY = \frac{\theta}{\dim} \vx_{\star} \vx_{\star}^\top + \mW$. The decomposition above allows us to disentangle the action of the signal component and the noise component using the following lemma, whose proof is deferred to \appref{appendix:aux1-proof}
\begin{lemma}\label{lem:aux1} Let $\mfunc: \R \mapsto \R$ be a polynomial function.
\begin{enumerate}
    \item There exists a polynomial function $\tilde{\mfunc}: \R \mapsto \R$ associated with $\mfunc$ such that $ \mPsi(\mY) \cdot  \vx_{\star} \;   \explain{$\dim\rightarrow \infty$}{\simeq}  \;  \tilde{\mfunc}(\mW) \cdot \vx_{\star}$, where $\explain{$\dim \rightarrow \infty$}{\simeq}$ denotes asymptotic equivalence of random vectors (Definition~\ref{def:W2}). 
    \item Let $\vv$ be a $\dim$-dimensional random vector with the property that:
    \begin{align} \label{eq:ortho-prop}
            \frac{\ip{\mW^{i} \vv}{\vx_{\star}}}{\dim} \pc 0 \quad \forall \; i \;  \in \;  \N.
    \end{align}
    Then, $ \mPsi(\mY) \cdot  \vv \;   \explain{$\dim\rightarrow \infty$}{\simeq}  \;  {\mfunc}(\mW) \cdot \vv$.
\end{enumerate}
\end{lemma}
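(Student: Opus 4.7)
The plan is to reduce both parts to the monomial case $\mfunc(\lambda) = \lambda^k$ by linearity, and then prove each by induction on $k$. The shared tools are: (i) the uniform operator-norm bound $\|\mY\|_{\op} \leq \|\mW\|_{\op} + \theta\|\vx_\star\|^2/\dim$, which is $O_P(1)$ by \assumpref{assump:noise} together with $\|\vx_\star\|^2/\dim \pc 1$ (\assumpref{assump:signal}); (ii) the quadratic-form concentration of \factref{fact:qf}, which (because $\mW$ has Haar-distributed eigenvectors independent of $\vx_\star$) yields $\vx_\star^\top p(\mW)\vx_\star/\dim \pc \E_{\serv{\Lambda}\sim\mu}[p(\serv{\Lambda})]$ for every polynomial $p$; and (iii) the elementary observation that if $\vu_\dim \simeq \vv_\dim$ in the sense of \defref{def:eq} and $\|\mM_\dim\|_{\op} = O_P(1)$, then $\mM_\dim\vu_\dim \simeq \mM_\dim\vv_\dim$.

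For Part 2, I would prove by induction on $k$ that $\mY^k\vv \simeq \mW^k\vv$. The base case $k=0$ is trivial. For the inductive step, I write
\begin{align*}
\mY^k\vv - \mW^k\vv \;=\; \mY\bigl(\mY^{k-1}\vv - \mW^{k-1}\vv\bigr) \;+\; \bigl(\mY\mW^{k-1}\vv - \mW^k\vv\bigr).
\end{align*}
The first summand has $\|\cdot\|^2/\dim \pc 0$ by the induction hypothesis and tool (iii). The second summand equals the rank-one vector $(\theta/\dim)\vx_\star\ip{\vx_\star}{\mW^{k-1}\vv}$, whose squared norm divided by $\dim$ is $\theta^2(\|\vx_\star\|^2/\dim)(\ip{\mW^{k-1}\vv}{\vx_\star}/\dim)^2 \pc 0$ by hypothesis \eqref{eq:ortho-prop}. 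Linearity over the coefficients of $\mfunc$ then gives $\mfunc(\mY)\vv \simeq \mfunc(\mW)\vv$.

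For Part 1, I define polynomials $(\tilde{q}_k)_{k\geq 0}$ recursively by $\tilde{q}_0 \equiv 1$ and
\begin{align*}
\tilde{q}_k(\lambda) \;\explain{def}{=}\; \lambda\,\tilde{q}_{k-1}(\lambda) + \theta\,\E_{\serv{\Lambda}\sim\mu}[\tilde{q}_{k-1}(\serv{\Lambda})],
\end{align*}
and prove by induction that $\mY^k\vx_\star \simeq \tilde{q}_k(\mW)\vx_\star$. For the step, the hypothesis plus tool (iii) gives $\mY^k\vx_\star \simeq \mY\,\tilde{q}_{k-1}(\mW)\vx_\star$, and then expanding $\mY = (\theta/\dim)\vx_\star\vx_\star^\top + \mW$ yields
\begin{align*}
\mY\,\tilde{q}_{k-1}(\mW)\vx_\star &= \mW\,\tilde{q}_{k-1}(\mW)\vx_\star + \theta\cdot\frac{\vx_\star^\top \tilde{q}_{k-1}(\mW)\vx_\star}{\dim}\cdot\vx_\star \\
&\simeq \mW\,\tilde{q}_{k-1}(\mW)\vx_\star + \theta\,\E_{\serv{\Lambda}\sim\mu}[\tilde{q}_{k-1}(\serv{\Lambda})]\,\vx_\star \;=\; \tilde{q}_k(\mW)\vx_\star,
\end{align*}
where the asymptotic equivalence uses tool (ii) and the norm bound $\|\vx_\star\|^2/\dim \pc 1$. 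The desired polynomial is then $\tilde{\mfunc}(\lambda) = \sum_k c_k\,\tilde{q}_k(\lambda)$ whenever $\mfunc(\lambda) = \sum_k c_k\lambda^k$; its coefficients depend on the moments of the limiting spectral measure $\mu$ but not on $\dim$.

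The main technical point is purely bookkeeping: carrying the $\|\cdot\|^2/\dim \pc 0$ error through $k$ left-multiplications by $\mY$, and ensuring that the finitely many quadratic-form concentrations (one per $\tilde{q}_j$ with $j < k$) and the single operator-norm bound hold simultaneously on a high-probability event. Since $\mfunc$ has fixed degree $D$, only finitely many polynomials enter the argument, so a finite union bound handles all error terms uniformly. I do not anticipate any genuine difficulty beyond this.
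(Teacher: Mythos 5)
Your proof is correct and follows essentially the same route as the paper's: both parts reduce to monomials and proceed by induction on degree, your recursion $\tilde{q}_k(\lambda) = \lambda\tilde{q}_{k-1}(\lambda) + \theta\,\E[\tilde{q}_{k-1}(\serv{\Lambda})]$ is exactly the paper's $Q_{d+1}(\lambda) = \lambda Q_d(\lambda) + \theta\E[Q_d(\serv{\Lambda})]$, and Part 2 uses the same combination of the operator-norm bound on $\mY$ with the orthogonality hypothesis \eqref{eq:ortho-prop}, differing only in a trivial algebraic rearrangement of the telescoping step.
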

We will use the lemma above to show that $\fnonlin_t^{\perp}(\vx_1, \dotsc, \vx_t; \vx_{\star}, \va)$ does not interact with the signal component of $\Psi_t(\mY)$, thanks to the orthogonality property in \eqref{eq:f-perp-ortho}, which will guarantee that $\vv = \fnonlin_t^{\perp}(\vx_1, \dotsc, \vx_t; \vx_{\star}, \va)$ satisfies the orthogonality condition in \eqref{eq:ortho-prop}. Formally, we will show that:
\begin{subequations} \label{eq:OAMP-to-auxOAMP-approx}
\begin{align}
\mfunc_t(\mY) \cdot \fnonlin_t^{\perp}(\vx_1, \dotsc, \vx_{t-1}; \vx_{\star}, \va) \explain{$\dim \rightarrow \infty$}{\simeq} \mfunc_t(\mW)  \cdot \fnonlin_t^{\perp}(\vx_1, \dotsc, \vx_{t-1}; \vx_{\star}, \va),
\end{align}
On the other hand, $\vx_{\star}$ does interact with the signal component in $\mfunc_t(\mY)$ and this interaction is captured by the \emph{transformed polynomial} $\tilde{\mfunc}_t$ associated with $\mfunc_t$, whose existence is guaranteed by the lemma above:
\begin{align}
\mfunc_t(\mY) \cdot \vx_{\star} \explain{$\dim \rightarrow \infty$}{\simeq} \tilde{\mfunc}_t(\mW)  \cdot \vx_{\star}.
\end{align}
\end{subequations}
\paragraph*{Auxiliary OAMP Algorithm} Using the approximations from \eqref{eq:OAMP-to-auxOAMP-approx} in the update equation of the OAMP algorithm given in \eqref{eq:original-OAMP-decomp}, leads us to introduce an auxiliary OAMP algorithm which generates iterates $\tilde{\vx}_1, \tilde{\vx}_2, \dotsc $ via the update rule:
\begin{align} \label{eq:aux-OAMP}
\tilde{\vx}_t & = \alpha_t \cdot \tilde{\mfunc}_t(\mW) \cdot \vx_{\star} + \mfunc_t(\mW)  \cdot \fnonlin_t^{\perp}(\tilde{\vx}_1, \dotsc, \tilde{\vx}_{t-1}; \vx_{\star}, \va) \quad \forall \; t \; \in \; \N.
\end{align}
This auxiliary OAMP algorithm will serve as an easy-to-analyze approximation to the original OAMP algorithm in \eqref{eq:OAMP-SE}. Since the auxiliary OAMP algorithm uses the rotationally invariant noise matrix $\mW$ in its iterations (rather than the non-rotationally invariant matrix $\mY$ used by the original OAMP algorithm), its dynamics can be easily analyzed using known results on the state evolution of AMP algorithms for rotationally invariant matrices and the associated universality class \citep{ma2017orthogonal,rangan2019vector,takeuchi2019rigorous,fan2022approximate,dudeja2023universality,wang2022universality,dudeja2022spectral}. Applying these results requires us to compute:
\begin{align*}
\plim_{\dim \rightarrow \infty} \frac{\Tr[\tilde{\mfunc}_t(\mW)]}{\dim}, \quad \plim_{\dim \rightarrow \infty} \frac{\Tr[ \tilde{\mfunc}_s(\mW) \tilde{\mfunc}_t(\mW)]}{\dim} \quad \forall \; s, t\; \in \; \N.
\end{align*}
At first glance, computing the above limits appears challenging as the transformed polynomials $(\tilde{\mfunc}_t)_{t \in \N}$ constructed in \lemref{lem:aux1} have a complicated recursive characterization. Fortunately, the above limits have a simple formula in terms of $\nu$, the limiting spectral measure of $\mY$ in the direction of the signal (recall Lemma~\ref{lem:RMT}). 
\begin{lemma}\label{lem:nu-trick} For any $s,t \in \N$, we have:
\begin{align*}
\frac{\Tr[ \tilde{\mfunc}_t(\mW)]}{\dim} &\pc \E[\tilde{\mfunc}_t(\serv{\Lambda})] =  \E[\mfunc_t(\serv{\Lambda}_{\nu})], \\
 \frac{\Tr[ \tilde{\mfunc}_s(\mW)  \tilde{\mfunc}_t(\mW)]}{\dim} &\pc \E[\tilde{\mfunc}_s(\serv{\Lambda})  \tilde{\mfunc}_t(\serv{\Lambda})] =  \E[\mfunc_s(\serv{\Lambda}_{\nu})  \mfunc_t(\serv{\Lambda}_{\nu})], 
\end{align*}
where $\serv{\Lambda} \sim \mu$ and $\serv{\Lambda}_{\nu} \sim \nu$.
\end{lemma}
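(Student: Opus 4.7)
The plan is to compute each normalized trace in two different ways: directly, as an integral against the noise spectral measure $\mu_\dim$, and indirectly, by routing through the bilinear form $\vx_\star^\top(\cdot)\vx_\star/\dim$, which converts the statement into an integral against the signal-direction spectral measure $\nu_\dim$. Equating the two probability limits will simultaneously establish the convergence and the claimed identity.

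Since $\tilde{\mfunc}_t$ and $\tilde{\mfunc}_s\tilde{\mfunc}_t$ are polynomials, and by \assumpref{assump:noise} the spectral measure $\mu_\dim$ is supported in a fixed compact set and converges weakly to the compactly supported limit $\mu$, one immediately obtains $\Tr[\tilde{\mfunc}_t(\mW)]/\dim \pc \E[\tilde{\mfunc}_t(\serv{\Lambda})]$ and $\Tr[\tilde{\mfunc}_s(\mW)\tilde{\mfunc}_t(\mW)]/\dim \pc \E[\tilde{\mfunc}_s(\serv{\Lambda})\tilde{\mfunc}_t(\serv{\Lambda})]$. Next, because $\mW$ has Haar eigenvectors independent of $\vx_\star$ and $\|\vx_\star\|^2/\dim \pc 1$ by \assumpref{assump:signal}, the quadratic-form concentration result (\factref{fact:qf}) replaces each trace by the corresponding bilinear form in $\vx_\star$ up to $o_p(1)$: both $\vx_\star^\top \tilde{\mfunc}_t(\mW)\vx_\star/\dim$ and $\vx_\star^\top \tilde{\mfunc}_s(\mW)\tilde{\mfunc}_t(\mW)\vx_\star/\dim$ have the same probability limits as their trace counterparts.

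It then remains to identify those limits with expectations under $\nu$. Using the symmetry of the polynomial matrices, we factor
\[
\vx_\star^\top\tilde{\mfunc}_s(\mW)\tilde{\mfunc}_t(\mW)\vx_\star = \langle \tilde{\mfunc}_s(\mW)\vx_\star,\,\tilde{\mfunc}_t(\mW)\vx_\star\rangle,
\]
apply item~1 of \lemref{lem:aux1} to each factor separately to obtain $\tilde{\mfunc}_r(\mW)\vx_\star \simeq \mfunc_r(\mY)\vx_\star$ for $r\in\{s,t\}$, and control the resulting error by Cauchy--Schwarz, using that $\|\mfunc_r(\mY)\vx_\star\|/\sqrt{\dim}$ and $\|\tilde{\mfunc}_r(\mW)\vx_\star\|/\sqrt{\dim}$ remain bounded in probability (since $\|\mY\|_{\op}$ and $\|\mW\|_{\op}$ are bounded and $\|\vx_\star\|^2/\dim \pc 1$). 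The spectral decomposition of $\mY$ gives $\vx_\star^\top \mfunc_s(\mY)\mfunc_t(\mY)\vx_\star/\dim = \int \mfunc_s(\lambda)\mfunc_t(\lambda)\,\nu_\dim(d\lambda)$, and because $\nu_\dim$ converges weakly in probability to $\nu$ with all measures involved having uniformly compact support (\lemref{lem:RMT}, item~1), this integral converges in probability to $\E[\mfunc_s(\serv{\Lambda}_\nu)\mfunc_t(\serv{\Lambda}_\nu)]$. The single-function case proceeds identically. Combining this with the previous paragraph yields both identities in the lemma.

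The main obstacle is the product case: \lemref{lem:aux1}(1) only guarantees the $\mW\to\mY$ replacement when the polynomial is applied to $\vx_\star$ itself, and $\tilde{\mfunc}_t(\mW)\vx_\star$ need not satisfy the orthogonality hypothesis required for item~2, so one cannot iterate naively. The symmetry rewrite above bypasses this difficulty by splitting the quadratic form into an inner product of two vectors, each produced by a single polynomial in $\mW$ acting on $\vx_\star$, after which only item~1 of \lemref{lem:aux1} is needed.
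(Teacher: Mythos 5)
Your proposal is correct and follows essentially the same route as the paper: weak convergence of the noise spectrum to pass from the trace to $\E_\mu$, Fact~\ref{fact:qf} to pass to the $\vx_\star$ bilinear form, Lemma~\ref{lem:aux1} to swap $\tilde{\mfunc}_r(\mW)$ for $\mfunc_r(\mY)$, and weak convergence of $\nu_\dim$ to finish. The only difference is that you spell out explicitly how Lemma~\ref{lem:aux1} is invoked (factor the quadratic form as an inner product and apply item~1 to each factor with a Cauchy--Schwarz error bound), which the paper leaves implicit by simply citing the lemma.
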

The proof of this lemma is presented in \appref{appendix:nu-trick-proof}. An immediate consequence of this lemma and an existing result on the dynamics of AMP algorithms driven by rotationally invariant matrices \citep[Theorem 2]{dudeja2022spectral} is the following characterization of the dynamics of the auxiliary OAMP algorithm. 
\begin{lemma}\label{lem:aux-OAMP-dyn} \begin{enumerate}
\item For any $t \in \N$, the iterates generated by the auxiliary OAMP algorithm in \eqref{eq:aux-OAMP} satisfy $$(\vx_{\star}, \tilde{\vx}_1, \dotsc, \tilde{\vx}_t; \va) \wc (\serv{X}_{\star}, \serv{X}_1, \dotsc, \serv{X}_t; \serv{A}),$$
where $ (\serv{X}_{\star}, \serv{X}_1, \dotsc, \serv{X}_t; \serv{A})$ are the state evolution random variables associated with the original OAMP algorithm in \eqref{eq:OAMP-SE}. 
\item Moreover, for any $t \in \N$ and any $i \in \N$,
\begin{align*}
\frac{\ip{\mW^{i} \fnonlin^\perp_t(\vx_1, \dotsc, \vx_{t-1}; \vx_{\star}, \va)}{\vx_{\star}}}{\dim} & \pc 0.
\end{align*}
\end{enumerate}
\end{lemma}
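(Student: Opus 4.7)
The plan is to prove both parts simultaneously by induction on $t$, reducing to an existing state-evolution theorem for AMP algorithms driven by rotationally-invariant noise matrices (e.g., \citealt[Theorem 2]{dudeja2022spectral}). The crucial reformulation is to split the signal matrix-vector product via
\begin{equation*}
\tilde{\mfunc}_t(\mW)\vx_\star = \frac{\Tr[\tilde{\mfunc}_t(\mW)]}{\dim}\vx_\star + \hat{\mfunc}_t(\mW)\vx_\star, \qquad \hat{\mfunc}_t(\mW) \explain{def}{=} \tilde{\mfunc}_t(\mW) - \frac{\Tr[\tilde{\mfunc}_t(\mW)]}{\dim}\mI_{\dim},
\end{equation*}
so that \eqref{eq:aux-OAMP} rewrites as
\begin{equation*}
\tilde{\vx}_t = \alpha_t \cdot \frac{\Tr[\tilde{\mfunc}_t(\mW)]}{\dim}\vx_\star + \alpha_t \cdot \hat{\mfunc}_t(\mW)\vx_\star + \mfunc_t(\mW) \cdot \fnonlin_t^{\perp}(\tilde{\vx}_{<t}; \vx_\star, \va).
\end{equation*}
By \lemref{lem:nu-trick}, $\Tr[\tilde{\mfunc}_t(\mW)]/\dim \pc \E[\mfunc_t(\serv{\Lambda}_\nu)]$, so the first summand converges to the desired signal $\beta_t \vx_\star$. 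The two remaining matrix-vector products involve trace-free matrices ($\hat{\mfunc}_t(\mW)$ by construction, $\mfunc_t(\mW)$ by \eqref{eq:trace-free}) applied, respectively, to $\vx_\star$ and to the iterate function $\fnonlin_t^{\perp}$, which is divergence-free because $\partial_s \fnonlin_t^{\perp} = \partial_s \fnonlin_t$ for $s < t$ and \eqref{eq:div-free-constraint} holds. These are precisely the hypotheses of the rotationally-invariant AMP state-evolution theorem.

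Applying that theorem together with the induction hypothesis for $\tilde{\vx}_{<t}$ yields $\tilde{\vx}_t \wc \beta_t \serv{X}_\star + \serv{Z}_t$ jointly with the previous iterates, with signal coefficient $\beta_t = \alpha_t \E[\mfunc_t(\serv{\Lambda}_\nu)]$ as required. For the covariance, the cross terms between $\vx_\star$ and $\fnonlin_t^{\perp}$ vanish asymptotically because $\tfrac{1}{\dim}\ip{\vx_\star}{\fnonlin_t^{\perp}} \pc \E[\serv{X}_\star \serv{F}_t^{\perp}] = 0$ by \eqref{eq:f-perp-ortho}, leaving
\begin{equation*}
\E[\serv{Z}_s \serv{Z}_t] = \alpha_s \alpha_t \cdot \E[\hat{\mfunc}_s(\serv{\Lambda})\hat{\mfunc}_t(\serv{\Lambda})] + \E[\fnonlin_s^{\perp} \fnonlin_t^{\perp}] \cdot \E[\mfunc_s(\serv{\Lambda})\mfunc_t(\serv{\Lambda})].
\end{equation*}
A second appeal to \lemref{lem:nu-trick} gives $\E[\hat{\mfunc}_s(\serv{\Lambda})\hat{\mfunc}_t(\serv{\Lambda})] = \E[\tilde{\mfunc}_s \tilde{\mfunc}_t] - \E[\tilde{\mfunc}_s]\E[\tilde{\mfunc}_t] = \Cov_{\serv{\Lambda}_\nu \sim \nu}[\mfunc_s(\serv{\Lambda}_\nu),\mfunc_t(\serv{\Lambda}_\nu)]$, while trace-freeness of $\mfunc_s, \mfunc_t$ makes $\E[\mfunc_s \mfunc_t] = \Cov_{\serv{\Lambda} \sim \mu}[\mfunc_s,\mfunc_t]$. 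Combined with $\E[\fnonlin_s^{\perp} \fnonlin_t^{\perp}] = \E[\serv{F}_s \serv{F}_t] - \E[\serv{X}_\star \serv{F}_s]\E[\serv{X}_\star \serv{F}_t]$ and $\alpha_s\alpha_t = \E[\serv{X}_\star \serv{F}_s]\E[\serv{X}_\star \serv{F}_t]$, this reproduces \eqref{Eqn:SE_variables_def} term-by-term, completing Part 1.

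Part 2 reduces to Part 1 by a virtual-iterate trick: append an auxiliary $(t+1)$-th step with trace-free denoiser $\mfunc_{t+1}(\lambda) \explain{def}{=} \lambda^i - \E[\serv{\Lambda}^i]$ and iterate denoiser $\fnonlin_{t+1} \explain{def}{=} \fnonlin_t^{\perp}$, which inherits divergence-freeness trivially (as $\fnonlin_{t+1}$ does not depend on $\tilde{\vx}_t$). The resulting virtual iterate is exactly $(\mW^i - \E[\serv{\Lambda}^i]\mI_{\dim})\fnonlin_t^{\perp}$. By Part 1, its SE signal coefficient is $\beta_{t+1} = \E[\serv{X}_\star \fnonlin_t^{\perp}(\serv{X}_{<t}; \serv{X}_\star, \serv{A})] \cdot \E[\mfunc_{t+1}(\serv{\Lambda}_\nu)]$, which vanishes because \eqref{eq:f-perp-ortho} forces $\E[\serv{X}_\star \fnonlin_t^{\perp}] = 0$. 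Testing the $W_2$ convergence from Part 1 against $h(x_\star, x) = x_\star x$ yields $\tfrac{1}{\dim}\ip{\vx_\star}{(\mW^i - \E[\serv{\Lambda}^i]\mI_{\dim})\fnonlin_t^{\perp}} \pc 0$, and adding back $\E[\serv{\Lambda}^i] \cdot \tfrac{1}{\dim}\ip{\vx_\star}{\fnonlin_t^{\perp}} \pc \E[\serv{\Lambda}^i]\E[\serv{X}_\star \fnonlin_t^{\perp}] = 0$ delivers Part 2. The main obstacle is bookkeeping—verifying that the joint collection of trace-free matrix factors and divergence-free iterate functions indexed by $s \le t$ (and, for Part 2, the additional virtual step) fits exactly into the hypotheses of the invoked AMP SE theorem under the Hölder regularity of \assumpref{assump:noise}—which is essentially a checklist verification rather than a new estimate.
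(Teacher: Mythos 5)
Your proposal follows essentially the same route as the paper: both recast the auxiliary iterates as a VAMP algorithm driven by $\mW$ with a deterministic signal shift plus two trace-free matrix pieces (one acting on $\vx_\star$, one acting on the divergence-free $\fnonlin_t^\perp$), invoke the rotationally-invariant state-evolution theorem of \citet[Theorem 2]{dudeja2022spectral} together with \lemref{lem:nu-trick} to pin down the signal coefficient and covariance, and derive Part 2 by adjoining a virtual iterate $(\mW^i - \E[\serv{\Lambda}^i]\mI_{\dim})\fnonlin_t^\perp$ — precisely the paper's $\vw_{t+1}$. The only cosmetic divergences are that you center $\tilde{\Psi}_t(\mW)$ by its random finite-$\dim$ trace rather than the deterministic limit $\E[\tilde{\Psi}_t(\serv{\Lambda})]$ used in the paper (which adds a small vanishing approximation step before the cited theorem's hypotheses strictly apply), and that you present the argument as an induction whereas the paper invokes the theorem once over the full $(t{+}1)$-component collection.
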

The proof of the above lemma is deferred to \appref{app:aux-OAMP-dyn}. We have now introduced all the key ideas involved in the proof of Theorem~\ref{thm:SE} and are in a position to present its proof. 
\begin{proof}[Proof of Theorem~\ref{thm:SE}] Our goal is to show that for any $t \in \N$,
\begin{align*}
(\vx_{\star}, {\vx}_1, \dotsc, {\vx}_t; \va) \wc (\serv{X}_{\star}, \serv{X}_1, \dotsc, \serv{X}_t; \serv{A}).
\end{align*}
In light of \lemref{lem:aux-OAMP-dyn}, it suffices to show that the auxiliary OAMP algorithm in \eqref{eq:aux-OAMP} approximates the given OAMP algorithm in the sense:
\begin{align} \label{eq:OAMP-to-auxOAMP-induction}
\vx_t  \; \explain{$\dim \rightarrow \infty$}{\simeq} \; \tilde{\vx}_t \quad \forall \; t \; \in \; \N.
\end{align}
We show the above claim by induction. As our induction hypothesis, we assume that $\vx_s \; \explain{$\dim \rightarrow \infty$}{\simeq} \; \tilde{\vx}_s$ for all $s < t$, and verify the claim at iteration $t$:
\begin{align*}
\vx_{t} & \;\;  \explain{\eqref{eq:original-OAMP-decomp}}{=} \; \;  \alpha_t \cdot \mfunc_t(\mY) \cdot  \vx_{\star} + \mfunc_t(\mY) \cdot \fnonlin_t^{\perp}(\vx_1, \dotsc, \vx_{t-1}; \vx_{\star}, \va) \\
  &\;\stackrel{(a)}{\explain{$\dim \rightarrow \infty$}{\simeq}} \; \alpha_t \cdot \mfunc_t(\mY) \cdot  \vx_{\star} + \mfunc_t(\mY) \cdot \fnonlin_t^{\perp}(\tilde{\vx}_1, \dotsc, \tilde{\vx}_{t-1}; \vx_{\star}, \va) \\
  & \; \stackrel{(b)}{\explain{$\dim \rightarrow \infty$}{\simeq}} \; \alpha_t \cdot \tilde{\mfunc}_t(\mW) \cdot  \vx_{\star} + \mfunc_t(\mW) \cdot \fnonlin_t^{\perp}(\tilde{\vx}_1, \dotsc, \tilde{\vx}_{t-1}; \vx_{\star}, \va) \\
  & \; \; \explain{\eqref{eq:aux-OAMP}}{=} \; \; \tilde{\vx}_t
\end{align*}
In the above display, step (a) follows from the induction hypothesis and the Lipschitz continuity of $\fnonlin_t^{\perp}$ (which is implied by the Lipschitz continuity of $\fnonlin_t$; see Definition~\ref{Def:OAMP_main} and \eqref{eq:f-perp}). In step (b), we appealed to \lemref{lem:aux1}. Indeed, \lemref{lem:aux-OAMP-dyn} (claim (2)) guarantees that the orthogonality requirement \eqref{eq:ortho-prop} imposed in \lemref{lem:aux1} is met. This proves the claim \eqref{eq:OAMP-to-auxOAMP-induction} by induction and concludes the proof of Theorem~\ref{thm:SE}. 
\end{proof}
\subsection{Proof of \lemref{lem:poly-approx}} \label{appendix:poly-approx}
\begin{proof}[Proof of \lemref{lem:poly-approx}]
To prove \lemref{lem:poly-approx}, we assume that the claim of Theorem~\ref{thm:SE} holds under the additional assumption that the matrix denoisers used in the OAMP algorithm are polynomial functions. We will show that Theorem~\ref{thm:SE} continues to hold even without this additional assumption. To this end, we consider a general OAMP algorithm (recall Definition~\ref{Def:OAMP_main}):
\begin{align} \label{eq:div-free-algo-recall}
\iter{\vx}{t} & = \mfunc_{t}(\mY) \cdot \fnonlin_t(\iter{\vx}{1}, \dotsc, \vx_{t-1}; \va) \quad  \forall \; t \; \in  \;  \N,
\end{align}
where the matrix denoisers $(\mfunc_t)_{t \in \N}$ are possibly non-polynomial functions. Let $(\serv{X}_{\star}, \{\serv{X}_{t}\}_{t \in \N}; \serv{A})$ denote the state evolution random variables associated with the OAMP algorithm above. Our goal is to show that:
\begin{align*}
(\vx_{\star}, \vx_1, \dotsc, \vx_t; \va) \wc (\serv{X}_{\star}, \serv{X}_1, \dotsc, \serv{X}_t; \serv{A}) \quad \forall \; t \; \in \;  \N.
\end{align*}
\paragraph*{Polynomial Approximation} We begin by approximating the matrix denoisers $\mfunc_{t}: \R \mapsto \R$ by polynomials. Recall from Assumption~\ref{assump:signal-noise}, that there is an $\dim$-independent finite constant $C$ such that $\|\mW\|_{\op} \leq C$. Furthermore, thanks to Assumption~\ref{assump:signal-noise},
\begin{align*}
\|\mY\|_{\op} & = \left\| \frac{\theta}{n} \vx_{\star} \vx_{\star}^\top + \mW \right\|_{\op} \leq C + \frac{\theta \cdot \|\vx_{\star}\|^2}{\dim} \pc C + \theta.
\end{align*}
Hence, the event:
\begin{align*}
\mathcal{E}_{\dim} \explain{def}{=} \{ \|\mY\|_{\op} \leq K\} \quad \text{ with } \quad K \explain{def}{=} C + \theta + 1
\end{align*}
occurs with probability tending to $1$. We restrict ourselves to this good event in the remainder of the proof. For each $t \in \N$, the Weierstrass Approximation Theorem guarantees the existence of a sequence of approximating polynomial functions $(\dup{\mfunc_t}{\degree})_{D \in \N}$ of increasing degree, where $\dup{\mfunc_t}{\degree}$ is a degree $\degree$ polynomial, with the approximation guarantee:
\begin{align} \label{eq:weierstrass}
\lim_{\degree \rightarrow \infty}  \sup_{|\lambda| \leq K} \left|\dup{\mfunc_{t}}{\degree}(\lambda) - \mfunc_{t}(\lambda) \right| & = 0,
\end{align}
Consider the OAMP algorithm which uses the degree-$\degree$ approximations $(\dup{\Psi_t}{\degree})_{t\in \N}$ as the matrix denoisers:
\begin{align} \label{eq:low-degree-approx-div-free-algo}
\dup{\vx_{t}}{\degree} & = \left( \dup{\mfunc_{t}}{\degree}(\mY) - \E[\dup{\mfunc_{t}}{\degree}(\serv{\Lambda})]  \mI_{\dim} \right) \cdot \left( \fnonlin_t(\dup{\vx_{<t}}{\degree}; \va) - \sum_{s=1}^{t-1}  \E[\partial_{s} \fnonlin_t(\dup{\serv{X}_{<t}}{\degree}; \serv{A})]  \dup{\vx_s}{\degree} \right)  \  \forall \; t \; \in \; \N,
\end{align}
where $(\serv{X}_{\star}, (\dup{\serv{X}_t}{\degree})_{t\in \N}; \serv{A})$ are the state evolution random variables associated with the OAMP algorithm above. Since we assume that Theorem~\ref{thm:SE} holds for OAMP algorithms with polynomial matrix denoisers, we have that:
\begin{align*}
(\vx_{\star}, \dup{\vx_1}{\degree}, \dotsc, \dup{\vx_t}{\degree}; \va) \wc (\serv{X}_{\star}, \dup{\serv{X}_1}{\degree}, \dotsc, \dup{\serv{X}_t}{\degree}; \serv{A}) \quad \forall \; t \; \in \;  \N, \; \degree \; \in \; \N.
\end{align*}
We make the following claims regarding the polynomial approximation to the original OAMP algorithm and its state evolution random variables.
\begin{claim}[Convergence of State Evolution Random Variables] \label{claim:SE-conv} For any $t \in \N$, $$(\serv{X}_{\star}, \dup{\serv{X}_1}{\degree}, \dotsc, \dup{\serv{X}_t}{\degree}; \serv{A})  \wc (\serv{X}_{\star}, \serv{X}_1, \dotsc, \serv{X}_t; \serv{A}) \quad \text{as $\degree \rightarrow \infty$},$$ where $\wc$ denotes Wasserstein-$2$ convergence for random variables. Formally, this means that for any test function $\hnonlin: \R^{t+1} \times \R^{\auxdim}$, independent of $\dim$, which satisfies the smoothness condition:
\begin{align} \label{eq:PL2}
|\hnonlin(x; \aux) - \hnonlin(x^\prime ; \aux^\prime)| &\leq L \cdot (\|x-x^\prime\| + \|\aux - \aux^\prime\|) \cdot (1 + \|x\| + \|x^\prime\| + \|\aux\| + \|\aux^\prime\|) \\
&\nonumber\qquad\qquad \forall \; x, x^\prime \; \in \; \R^{t+1}, \; \aux, \aux^\prime \; \in \; \R^k,
\end{align}
for some $L < \infty$,  we have:
\begin{align*}
\lim_{\degree \rightarrow \infty} \E[\hnonlin(\dup{\serv{X}_1}{\degree}, \dotsc, \dup{\serv{X}_t}{\degree}; \serv{A})] & = \E[\hnonlin (\serv{X}_{\star}, \serv{X}_{1}, \dotsc, \serv{X}_{t}; \serv{A})].
\end{align*}
\end{claim}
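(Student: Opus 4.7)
The plan is to proceed by induction on $t$, with the base case $t=0$ immediate since $(\serv{X}_\star; \serv{A}) \sim \pi$ in both systems. For the inductive step, assume the conclusion of Claim~\ref{claim:SE-conv} at step $t-1$. Since the state-evolution variables take the form
\[
\serv{X}_s = \beta_s \serv{X}_\star + \serv{Z}_s, \qquad \dup{\serv{X}_s}{\degree} = \dup{\beta_s}{\degree} \serv{X}_\star + \dup{\serv{Z}_s}{\degree} \qquad \forall\; s \in [t],
\]
with $(\serv{Z}_1, \dotsc, \serv{Z}_t)$ and $(\dup{\serv{Z}_1}{\degree}, \dotsc, \dup{\serv{Z}_t}{\degree})$ centered Gaussian vectors independent of $(\serv{X}_\star; \serv{A})$, it suffices to prove (a) $\dup{\beta_s}{\degree} \to \beta_s$ and (b) $\E[\dup{\serv{Z}_r}{\degree} \dup{\serv{Z}_s}{\degree}] \to \E[\serv{Z}_r \serv{Z}_s]$ for all $r, s \leq t$. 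Given (a) and (b), one couples the two Gaussian noise vectors by applying the (pseudo-)Cholesky square roots of their covariance matrices to a common standard Gaussian vector; the resulting coupling is $L^2$-convergent, which together with $\dup{\beta_s}{\degree} \to \beta_s$ upgrades distributional convergence to the $W_2$ convergence of the joint law of $(\serv{X}_\star, \dup{\serv{X}_1}{\degree}, \dotsc, \dup{\serv{X}_t}{\degree}; \serv{A})$ asserted by Claim~\ref{claim:SE-conv}.

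Both (a) and (b) reduce via the recursion \eqref{Eqn:SE_variables_def} to convergence of two families of expectations. For the matrix-denoiser terms, the uniform approximation \eqref{eq:weierstrass} of $\mfunc_s$ by $\dup{\mfunc_s}{\degree}$ on $[-K, K]$, combined with the fact that the supports of $\mu$ and $\nu$ lie inside this interval (by \assumpref{assump:noise} and \lemref{lem:RMT}), yields
\[
\E[\dup{\mfunc_s}{\degree}(\sfLambda)] \to \E[\mfunc_s(\sfLambda)] = 0, \qquad \E[\dup{\mfunc_r}{\degree}(\sfLambda) \dup{\mfunc_s}{\degree}(\sfLambda)] \to \E[\mfunc_r(\sfLambda)\mfunc_s(\sfLambda)],
\]
and analogously with $\sfLambda \sim \mu$ replaced by $\sfLambda_\nu \sim \nu$. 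Consequently the centered polynomial denoiser $\dup{\mfunc_s}{\degree}(\cdot) - \E[\dup{\mfunc_s}{\degree}(\sfLambda)]$ that \eqref{eq:low-degree-approx-div-free-algo} actually plugs into the state-evolution recursion produces the same scalar expectations as $\mfunc_s$ in the limit. For the iterate-denoiser terms, the Lipschitz continuity of $\fnonlin_s$ makes $\serv{X}_\star \fnonlin_s(\cdot)$ and $\fnonlin_r(\cdot)\fnonlin_s(\cdot)$ PL2 test functions in the sense of \eqref{eq:PL2-testfunc}, so the induction hypothesis delivers
\[
\E[\serv{X}_\star \fnonlin_s(\dup{\serv{X}_{<s}}{\degree}; \serv{A})] \to \E[\serv{X}_\star \fnonlin_s(\serv{X}_{<s}; \serv{A})], \quad \E[\fnonlin_r(\dup{\serv{X}_{<r}}{\degree};\serv{A}) \fnonlin_s(\dup{\serv{X}_{<s}}{\degree};\serv{A})] \to \E[\fnonlin_r(\serv{X}_{<r};\serv{A}) \fnonlin_s(\serv{X}_{<s};\serv{A})].
\]
The divergence-correcting coefficients $\E[\partial_r \fnonlin_s(\dup{\serv{X}_{<s}}{\degree}; \serv{A})]$ are bounded in absolute value by the Lipschitz constant of $\fnonlin_s$ and, by the induction hypothesis together with Portmanteau's theorem applied to the bounded continuous function $\partial_r \fnonlin_s$, converge to $\E[\partial_r \fnonlin_s(\serv{X}_{<s}; \serv{A})] = 0$, where the last equality is the divergence-free constraint \eqref{eq:div-free-constraint} satisfied by the original $\fnonlin_s$; hence the divergence-correcting perturbation in \eqref{eq:low-degree-approx-div-free-algo} vanishes in the limit.

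Assembling these pieces inside \eqref{Eqn:SE_variables_def} yields $\dup{\beta_s}{\degree} \to \beta_s$ and $\E[\dup{\serv{Z}_r}{\degree} \dup{\serv{Z}_s}{\degree}] \to \E[\serv{Z}_r \serv{Z}_s]$ for all $r, s \leq t$, closing the induction. The main technical subtlety is that the iterate denoiser used in \eqref{eq:low-degree-approx-div-free-algo} itself depends on $\degree$ through the divergence-correcting perturbation, so one cannot directly invoke the induction hypothesis with a single $\degree$-independent test function; the fix is to first use the uniform boundedness of the correction coefficients and the Lipschitz bound on $\fnonlin_s$ to absorb the perturbation into an $o_{\degree}(1)$ error, and only then apply the PL2 convergence afforded by the induction hypothesis to the unperturbed $\fnonlin_s$.
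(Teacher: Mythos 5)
Your proposal is correct and follows essentially the same route as the paper's own proof: induction on $t$, reducing to convergence of the coefficients $\dup{\beta_s}{\degree}\to\beta_s$ and the covariance entries, which are in turn established via the Weierstrass approximation of the matrix denoisers and the induction hypothesis combined with the Lipschitz/$C^1$ regularity of the iterate denoisers. The only cosmetic difference is the last step: the paper invokes a "weak convergence and uniform integrability" argument to promote convergence of the Gaussian-channel parameters to $W_2$ convergence of the joint law, while you instead build an explicit coupling via matrix square roots of the covariance matrices applied to a common standard Gaussian — both are standard and equivalent here (though note the symmetric PSD square root, rather than a Cholesky factor, is the safe choice since it is continuous across rank drops). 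Your closing remark about the $\degree$-dependence of the divergence-correction coefficients is a correct and slightly more explicit elaboration of what the paper compresses into the phrase that $\fnonlin_1,\dotsc,\fnonlin_t$ are continuously differentiable and Lipschitz.
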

\begin{claim}[Convergence of Iterates] \label{claim:itr-conv} For any $t \in \N$,
\begin{align*}
\limsup_{\degree \rightarrow \infty} \plimsup_{\dim \rightarrow \infty} \frac{\|\vx_t- \dup{\vx_t}{\degree} \|^2}{\dim} & = 0.
\end{align*}
\end{claim}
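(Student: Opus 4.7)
The plan is to establish the claim by induction on $t \in \N$, splitting the discrepancy $\vx_t - \dup{\vx_t}{\degree}$ into three pieces that can each be controlled separately: (i) the replacement of the original matrix denoiser $\mfunc_t$ by its truncated polynomial approximation, (ii) the replacement of the iterate denoiser's argument $\vx_{<t}$ by $\dup{\vx_{<t}}{\degree}$, and (iii) the divergence-free correction introduced in \eqref{eq:low-degree-approx-div-free-algo}. Throughout, I would work on the good event $\mathcal{E}_\dim = \{\|\mY\|_{\op} \leq K\}$ whose probability tends to one, and exploit Weierstrass's uniform approximation \eqref{eq:weierstrass} together with the assumption that \thref{thm:SE} (applied to the polynomial OAMP \eqref{eq:low-degree-approx-div-free-algo}) and Claim \ref{claim:SE-conv} both hold.

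For the base case $t=1$, note that the divergence correction is empty, so $\vx_1 - \dup{\vx_1}{\degree} = \bigl( \mfunc_1(\mY) - \dup{\mfunc_1}{\degree}(\mY) + \E[\dup{\mfunc_1}{\degree}(\serv{\Lambda})]\, \mI \bigr) \fnonlin_1(\va)$, whose operator-norm prefactor is bounded on $\mathcal{E}_\dim$ by $\sup_{|\lambda|\le K} |\mfunc_1(\lambda)-\dup{\mfunc_1}{\degree}(\lambda)| + |\E[\dup{\mfunc_1}{\degree}(\serv{\Lambda})-\mfunc_1(\serv{\Lambda})]|$, which vanishes as $\degree \to \infty$ (using the trace-free constraint $\E[\mfunc_1(\serv{\Lambda})] = 0$). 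Combined with $\|\fnonlin_1(\va)\|^2/\dim \pc \E[\fnonlin_1(\serv{A})^2] < \infty$ from \assumpref{assump:signal} and the Lipschitz property of $\fnonlin_1$, the base case follows. For the inductive step, writing $\dup{\mfunc_t}{\degree}_{\mathrm{tf}} \explain{def}{=} \dup{\mfunc_t}{\degree} - \E[\dup{\mfunc_t}{\degree}(\serv{\Lambda})]$, I decompose
\begin{align*}
\vx_t - \dup{\vx_t}{\degree} &= \bigl(\mfunc_t(\mY) - \dup{\mfunc_t}{\degree}_{\mathrm{tf}}(\mY)\bigr)\, \fnonlin_t(\vx_{<t};\va) \\
&\quad + \dup{\mfunc_t}{\degree}_{\mathrm{tf}}(\mY)\, \bigl(\fnonlin_t(\vx_{<t};\va) - \fnonlin_t(\dup{\vx_{<t}}{\degree};\va)\bigr) \\
&\quad + \dup{\mfunc_t}{\degree}_{\mathrm{tf}}(\mY) \cdot \sum_{s=1}^{t-1} \E\bigl[\partial_s \fnonlin_t(\dup{\serv{X}_{<t}}{\degree};\serv{A})\bigr] \cdot \dup{\vx_s}{\degree}.
\end{align*}
Term (i) is handled as in the base case. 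Term (ii) is controlled by the induction hypothesis applied entrywise through the Lipschitz bound $\|\fnonlin_t(\vx_{<t};\va)-\fnonlin_t(\dup{\vx_{<t}}{\degree};\va)\|^2 \lesssim \sum_{s<t}\|\vx_s-\dup{\vx_s}{\degree}\|^2$, together with the uniform (in $\degree$) bound $\|\dup{\mfunc_t}{\degree}_{\mathrm{tf}}(\mY)\|_{\op} \leq 2\sup_{|\lambda|\le K}|\mfunc_t(\lambda)|+o_\degree(1)$ on $\mathcal{E}_\dim$. Term (iii) is the crux: applying Claim \ref{claim:SE-conv} with the bounded continuous (hence PL2-admissible) test function $\partial_s \fnonlin_t$ gives $\E[\partial_s \fnonlin_t(\dup{\serv{X}_{<t}}{\degree};\serv{A})] \to \E[\partial_s \fnonlin_t(\serv{X}_{<t};\serv{A})] = 0$ as $\degree\to\infty$, the last equality being the divergence-free constraint \eqref{eq:div-free-constraint} for the original algorithm, while $\|\dup{\vx_s}{\degree}\|^2/\dim$ remains uniformly bounded in $\degree$ and $\dim$ by the just-verified induction hypothesis plus the base case.

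The main technical obstacle will be ensuring the uniform-in-$\degree$ boundedness of $\|\dup{\vx_s}{\degree}\|^2/\dim$ and $\|\fnonlin_t(\dup{\vx_{<t}}{\degree};\va)\|^2/\dim$ needed for terms (ii) and (iii); this requires coupling \thref{thm:SE} (which for each fixed $\degree$ identifies the $\dim \to \infty$ limit with $\E[(\dup{\serv{X}_s}{\degree})^2]$) with Claim \ref{claim:SE-conv} (which shows $\E[(\dup{\serv{X}_s}{\degree})^2] \to \E[\serv{X}_s^2]<\infty$ as $\degree \to \infty$). Once these are in place, taking $\plimsup_{\dim \to \infty}$ first and $\limsup_{\degree \to \infty}$ second closes the induction, since the induction hypothesis $\limsup_\degree \plimsup_\dim \|\vx_s-\dup{\vx_s}{\degree}\|^2/\dim = 0$ for all $s<t$ feeds into each of the three terms and yields the same bound at level $t$.
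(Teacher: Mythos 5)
Your proposal is correct and takes essentially the same route as the paper's proof: your induction, error decomposition into the three terms (Weierstrass approximation of $\mfunc_t$, Lipschitz propagation via the induction hypothesis, and the vanishing divergence-free correction via \clref{claim:SE-conv}), and the operator-norm control on the good event $\mathcal{E}_\dim$ are precisely the paper's steps (a), (c), and (b) rephrased as a direct decomposition rather than a chain of asymptotic equivalences. Your explicit attention to the uniform-in-$\degree$ boundedness of $\|\dup{\vx_s}{\degree}\|^2/\dim$ is a valid point the paper treats implicitly, but it does not change the overall argument.
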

We first prove \lemref{lem:poly-approx} assuming the above results. To do so, we consider a test function $h: \R^{t} \times \R^{\auxdim} \mapsto \R$, which satisfies the smoothness hypothesis in \eqref{eq:PL2} and show that:
\begin{align*}
\plim_{\dim \rightarrow \infty} \frac{1}{\dim} \sum_{i=1}^\dim \hnonlin(x_{\star}[i], x_1[i], \dotsc, x[t]; \aux[i]) & = \E \hnonlin(\serv{X}_{\star}, \serv{X}_{1}, \dotsc, \serv{X}_{t}; \serv{A}).
\end{align*}
Notice that for any $\degree \in \N$, we have the decomposition:
\begin{align} \label{eq:OAMP-SE-poly-approx-decomp}
\left|\frac{1}{\dim} \sum_{i=1}^\dim \hnonlin(x_{\star}[i], x_1[i], \dotsc, x[t]; \aux[i]) - \E \hnonlin(\serv{X}_{\star}, \serv{X}_{1}, \dotsc, \serv{X}_{t}; \serv{A}) \right|  \leq (i) + (ii) + (iii),
\end{align}
where,
\begin{align*}
(i) &\explain{def}{=} \left|\frac{1}{\dim} \sum_{i=1}^\dim \hnonlin(x_{\star}[i], x_1[i], \dotsc, x[t]; \aux[i]) - \frac{1}{\dim} \sum_{i=1}^\dim \hnonlin(x_{\star}[i], \dup{x_1}{\degree}[i], \dotsc, \dup{x}{\degree}_t[i]; \aux[i]) \right| \\ (ii) &\explain{def}{=} \left| \frac{1}{\dim} \sum_{i=1}^\dim \hnonlin(x_{\star}[i], \dup{x_1}{\degree}[i], \dotsc, \dup{x}{\degree}_t[i]; \aux[i]) - \E[\hnonlin(\serv{X}_{\star}, \dup{\serv{X}}{\degree}_{1}, \dotsc, \dup{\serv{X}}{\degree}_{t}; \serv{A})] \right| \\ (iii) & \explain{def}{=} \left|\E[\hnonlin(\serv{X}_{\star}, \dup{\serv{X}}{\degree}_{1}, \dotsc, \dup{\serv{X}}{\degree}_{t}; \serv{A})]  - \E[ \hnonlin(\serv{X}_{\star}, \serv{X}_{1}, \dotsc, \serv{X}_{t}; \serv{A})]\right|.
\end{align*}
We consider each of the terms above. From \clref{claim:itr-conv} and \eqref{eq:PL2}, we conclude that:
\begin{align*}
\lim_{\degree \rightarrow \infty} \plimsup_{\dim \rightarrow \infty } (i) & = 0.
\end{align*}
Since we assume that Theorem~\ref{thm:SE} holds for OAMP algorithms with polynomial matrix denoisers,
\begin{align*}
\plim_{\dim \rightarrow \infty } (ii) & = 0.
\end{align*}
Finally, \clref{claim:SE-conv} implies that:
\begin{align*}
\lim_{\degree \rightarrow \infty} (iii) & = 0.
\end{align*}
We let $\dim \rightarrow \infty$ and $\degree \rightarrow \infty$ in \eqref{eq:OAMP-SE-poly-approx-decomp} and use the results above to conclude that:
\begin{align*}
\plim_{\dim \rightarrow \infty} \frac{1}{\dim} \sum_{i=1}^\dim \hnonlin(x_{\star}[i], x_1[i], \dotsc, x[t]; \aux[i]) & = \E \hnonlin(\serv{X}_{\star}, \serv{X}_{1}, \dotsc, \serv{X}_{t}; \serv{A}),
\end{align*}
as desired. To finish the proof of the lemma, we need to prove \clref{claim:SE-conv} and \clref{claim:itr-conv}.

\paragraph*{Convergence of State Evolution Random Variables (Proof of \clref{claim:SE-conv})} Our goal is to show that:
\begin{align} \label{eq:SE-rv-conv}
(\serv{X}_{\star}, \dup{\serv{X}_1}{\degree}, \dotsc, \dup{\serv{X}_t}{\degree}; \serv{A})  \wc (\serv{X}_{\star}, \serv{X}_1, \dotsc, \serv{X}_t; \serv{A})  \quad \text{ as $\degree \rightarrow \infty$} \quad \forall \; t \; \in \;  \N,
\end{align}
We show this by induction. As our induction hypothesis, we assume that \eqref{eq:SE-rv-conv} holds for some $t \in \N$ and show that:
\begin{align*}
(\serv{X}_{\star}, \dup{\serv{X}_1}{\degree}, \dotsc, \dup{\serv{X}_{t+1}}{\degree}; \serv{A})  \wc (\serv{X}_{\star}, \serv{X}_1, \dotsc, \serv{X}_{t+1}; \serv{A})  \quad \text{ as $\degree \rightarrow \infty$}.
\end{align*}
To this end, we consider a test function $\hnonlin:\R^{t + 2} \times \R^{\auxdim} \mapsto \R$ which satisfies the smoothness requirement in \eqref{eq:PL2} and verify that:
\begin{align} \label{eq:SE-conv-induction-goal}
\lim_{\degree \rightarrow \infty} \E[\hnonlin(\serv{X}_{\star}, \dup{\serv{X}_1}{\degree}, \dotsc, \dup{\serv{X}_{t+1}}{\degree}; \serv{A})] & = \E[\hnonlin (\serv{X}_{\star}, \serv{X}_{1}, \dotsc, \serv{X}_{t+1}; \serv{A})].
\end{align}
Recall from Definition~\ref{Def:OAMP_main} that the joint distribution of the state evolution random variables is given by:
\begin{align*}
(\serv{X}_{\star}; \serv{A}) \sim \pi, \quad \serv{X}_i = \serv{\beta}_i \serv{X}_{\star} + \serv{Z}_i, \quad \dup{\serv{X}_i}{\degree} = \dup{\serv{\beta}_i}{\degree} \cdot \serv{X}_{\star} + \dup{\serv{Z}_i}{\degree} \quad \forall \; i \; \in \; [t+1], 
\end{align*}
where $(\serv{Z}_{1}, \dotsc, \serv{Z}_{t+1})$ and $(\dup{\serv{Z}_{1}}{\degree}, \dotsc, \dup{\serv{Z}_{t+1}}{\degree})$ are Gaussian random vectors, sampled independently of $(\serv{X}_{\star}; \serv{A})$:
\begin{align*}
(\serv{Z}_{1}, \dotsc, \serv{Z}_{t+1}) \sim \gauss{0}{\Sigma_{t+1}}, \quad (\dup{\serv{Z}_{1}}{\degree}, \dotsc, \dup{\serv{Z}_{t+1}}{\degree}) \sim \gauss{0}{\dup{\Sigma}{\degree}_{t+1}}.
\end{align*}
In the above equations, coefficients $(\beta_{i})_{i \in [t+1]}$ and  $(\dup{\beta_{i}}{\degree})_{i \in [t+1]}$ are given by:
\begin{align*} 
\beta_i & \explain{def}{=} \E[\serv{X}_\star \serv{F}_i] \mathbb{E}\big[\mfunc_i(\serv{\Lambda}_\nu)], \quad \dup{\beta_i}{\degree} \explain{def}{=} \E[\serv{X}_\star \dup{\serv{F}_i}{\degree}]  \left(\mathbb{E}\big[\dup{\mfunc_i}{\degree}(\serv{\Lambda}_\nu)] - \mathbb{E}\big[\dup{\mfunc_i}{\degree}(\serv{\Lambda})] \right) \; \; \forall \; i \; \in \; [t+1],
\end{align*}
and the entries of the covariance matrices ${\Sigma}{}_{t+1}, \dup{\Sigma}{\degree}_{t+1}$ are given by:
\begin{eqnarray*}
\left({\Sigma}{}_{t+1}\right)_{i,j} & =& \E[\serv{X}_\star {\serv{F}}_{i}] \cdot \E[\serv{X}_\star {\serv{F}}_{j}] \cdot \Cov[{\Psi}_i(\serv{\Lambda}_\nu),{\Psi}_j(\serv{\Lambda}_\nu)] \\ 
&& + (\E[{\serv{F}}_{i} {\serv{F}}_{j}] - \E[\serv{X}_\star {\serv{F}}_{i}]\E[\serv{X}_\star {\serv{F}}_{j}] )  \cdot \Cov[{\Psi}_i(\serv{\Lambda}),{\Psi}_j(\serv{\Lambda})] \quad \forall \; i,j \; \in \; [t+1], \\
\left(\dup{\Sigma}{\degree}_{t+1}\right)_{i,j} & =& \E[\serv{X}_\star \dup{\serv{F}}{\degree}_{i}] \cdot \E[\serv{X}_\star \dup{\serv{F}}{\degree}_{j}] \cdot \Cov[\dup{\Psi}{\degree}_i(\serv{\Lambda}_\nu),\dup{\Psi}{\degree}_j(\serv{\Lambda}_\nu)] \\ 
&&+ (\E[\dup{\serv{F}}{\degree}_{i} \dup{\serv{F}}{\degree}_{j}] - \E[\serv{X}_\star {\serv{F}}_{i}]\E[\serv{X}_\star {\serv{F}}_{j}] )  \cdot \Cov[\dup{\Psi}{\degree}_i(\serv{\Lambda}),\dup{\Psi}{\degree}_j(\serv{\Lambda})] \quad \forall \; i,j \; \in \; [t+1],
\end{eqnarray*}
where $\serv{\Lambda} \sim \mu$ and $\serv{\Lambda}_{\nu} \sim \nu$ and:
\begin{align*}
\serv{F}_i &\explain{def}{=} \fnonlin_i(\serv{X}_1, \dotsc, \serv{X}_{i-1}; \serv{A}), \\ \dup{\serv{F}}{\degree}_i &\explain{def}{=} \fnonlin_i(\dup{\serv{X}}{\degree}_1, \dotsc, \dup{\serv{X}}{\degree}_{i-1}; \serv{A}) - \sum_{j=1}^{i-1} \E[\partial_j \fnonlin_i(\dup{\serv{X}}{\degree}_1, \dotsc, \dup{\serv{X}}{\degree}_{i-1}; \serv{A})] \cdot \dup{\serv{X}}{\degree}_j.
\end{align*}
By a weak convergence and uniform integrability argument, \eqref{eq:SE-conv-induction-goal} follows if we can show that:
\begin{align} \label{eq:SE-conv-induction-goal-suff}
\lim_{\degree \rightarrow \infty} \dup{\beta_i}{\degree}  & = \beta_i, \quad \lim_{\degree \rightarrow \infty} \left(\dup{\Sigma}{\degree}_{t+1}\right)_{i,j} = \left({\Sigma}{}_{t+1}\right)_{i,j} \quad \forall \; i,j \; \in \; [t+1].
\end{align}
Indeed, from \eqref{eq:weierstrass} we deduce that:
\begin{align*}
\lim_{\degree \rightarrow \infty} \mathbb{E}\big[\dup{\mfunc_t}{\degree}(\serv{\Lambda})] &= \mathbb{E}\big[{\mfunc_t}(\serv{\Lambda})], \\
 \lim_{\degree \rightarrow \infty} \mathbb{E}\big[\dup{\mfunc_t}{\degree}(\serv{\Lambda}_{\nu})] &= \mathbb{E}\big[{\mfunc_t}(\serv{\Lambda}_{\nu})], \\ 
\lim_{\degree \rightarrow \infty} \Cov[\dup{\Psi}{\degree}_i(\serv{\Lambda}),\dup{\Psi}{\degree}_j(\serv{\Lambda})] &=  \Cov[{\Psi}_i(\serv{\Lambda}),{\Psi}_j(\serv{\Lambda})], \\
 \lim_{\degree \rightarrow \infty} \Cov[\dup{\Psi}{\degree}_i(\serv{\Lambda}_{\nu}),\dup{\Psi}{\degree}_j(\serv{\Lambda}_{\nu})] &=  \Cov[{\Psi}_i(\serv{\Lambda}_{\nu}),{\Psi}_j(\serv{\Lambda}_{\nu})].
\end{align*}
Furthermore, using the induction hypothesis $(\serv{X}_{\star}, \dup{\serv{X}_1}{\degree}, \dotsc, \dup{\serv{X}_t}{\degree}; \serv{A})  \wc (\serv{X}_{\star}, \serv{X}_1, \dotsc, \serv{X}_t; \serv{A})$ and the fact that the functions $\fnonlin_1, \dotsc, \fnonlin_t$ are continuously differentiable and Lipschitz, we conclude that:
\begin{align*}
\lim_{\degree \rightarrow \infty} \E[\serv{X}_\star \dup{\serv{F}}{\degree}_{i}] & = \E[\serv{X}_\star {\serv{F}}{}_{i}] , \quad \lim_{\degree \rightarrow \infty} \E[\dup{\serv{F}}{\degree}_{i} \dup{\serv{F}}{\degree}_{j}] = \E[{\serv{F}}{}_{i}  {\serv{F}}_{j}].
\end{align*}
Plugging in these limits into the formulae for $(\dup{\beta}{\degree}_{i})_{i \in [t+1]}$ and ${\Sigma}{}_{t+1}, \dup{\Sigma}{\degree}_{t+1}$, immediately yields the desired conclusion \eqref{eq:SE-conv-induction-goal-suff} and hence proves \eqref{eq:SE-conv-induction-goal}. This completes the proof of the claim \eqref{eq:SE-rv-conv} by induction. 
\paragraph*{Convergence of Iterates (Proof of \clref{claim:itr-conv})} We  need to show that the iterates generated by the algorithm in \eqref{eq:low-degree-approx-div-free-algo} approximate the iterates generated by the original OAMP algorithm \eqref{eq:div-free-algo-recall} in the sense that,
\begin{align*}
\limsup_{\degree \rightarrow \infty} \plimsup_{\dim \rightarrow \infty} \frac{\|\vx_t- \dup{\vx_t}{\degree} \|^2}{\dim} & = 0\quad \forall \; t \in  \N.
\end{align*}
We will shorthand asymptotic approximation statements like the above using the notation:
\begin{align} \label{eq:poly-approx-goal}
\dup{\vx_t}{\degree}  \quad  \explain{$\dim, \degree \rightarrow \infty$}{\simeq} \iter{\vx}{t} \quad \forall \; t \in  \N.
\end{align}
We will show the above claim using induction. As our induction hypothesis, we will assume that the claim holds for all the iterates generated before step $t$ of the OAMP algorithm:
\begin{align} \label{eq:lowdegree-approx-induction-hypo}
\dup{\vx_s}{\degree}  \quad  \explain{$\dim, \degree \rightarrow \infty$}{\simeq} \iter{\vx}{s} \quad \forall \; s< t.
\end{align}
To complete the inductive proof, we need to show that the claim applies to the iterate generated at step $t$:
\begin{align*}
\dup{\vx_t}{\degree}  \quad  &\explain{$\dim, \degree \rightarrow \infty$}{\simeq} \iter{\vx}{t}.
\end{align*}
Recalling the update equation for ${\vx_t}$ from \eqref{eq:div-free-algo-recall}, we have:
\begin{align*}
&\iter{\vx}{t}  = \mfunc_{t}(\mY) \cdot \fnonlin_t(\iter{\vx}{1}, \dotsc, \vx_{t-1}; \va) \\
&\stackrel{(a)}{\explain{$\dim, \degree \rightarrow \infty$}{\simeq}}  \left( \dup{\mfunc_{t}}{\degree}(\mY) - \E[\dup{\mfunc_{t}}{\degree}(\serv{\Lambda})] \cdot \mI_{\dim} \right)\cdot \fnonlin_t(\iter{\vx}{1}, \dotsc, \vx_{t-1}; \va) \\
&\stackrel{(b)}{\explain{$\dim, \degree \rightarrow \infty$}{\simeq}}  \left( \dup{\mfunc_{t}}{\degree}(\mY) - \E[\dup{\mfunc_{t}}{\degree}(\serv{\Lambda})]  \mI_{\dim} \right)  \left( \fnonlin_t(\iter{\vx}{<t}; \va) - \sum_{s=1}^{t-1} \E[\partial_{s} \fnonlin_t(\dup{\serv{X}_{<t}}{\degree}; \serv{A})] {\vx_s} \right) \\
& \stackrel{(c)}{\explain{$\dim, \degree \rightarrow \infty$}{\simeq}}  \left( \dup{\mfunc_{t}}{\degree}(\mY) - \E[\dup{\mfunc_{t}}{\degree}(\serv{\Lambda})]  \mI_{\dim} \right)  \left( \fnonlin_t(\dup{\vx_{<t}}{\degree}; \va) - \sum_{s=1}^{t-1}  \E[\partial_{s} \fnonlin_t(\dup{\serv{X}_1}{\degree}, \dotsc, \dup{\serv{X}_{t-1}}{\degree}; \serv{A})]  \dup{\vx_s}{\degree} \right) \\
& \explain{\eqref{eq:low-degree-approx-div-free-algo}}{=} \dup{\vx_t}{\degree}.
\end{align*}
In the above display:
\begin{enumerate}
\item The approximation in step (a) relies on the observations:
\begin{align*}
\limsup_{\degree \rightarrow \infty} \plimsup_{\dim \rightarrow \infty} \|\mfunc_{t}(\mY) - \dup{\mfunc_{t}}{\degree}(\mY)\|_{\op} &\leq \lim_{\degree \rightarrow \infty}  \sup_{|\lambda| \leq K} \left|\dup{\mfunc_{t}}{\degree}(\lambda) - \mfunc_{t}(\lambda) \right|  \explain{\eqref{eq:weierstrass}}{=} 0, \\
 \lim_{\degree \rightarrow \infty}\E[\dup{\mfunc_{t}}{\degree}(\serv{\Lambda})]  = \E[\mfunc_t(\serv{\Lambda})] &\explain{Def. \ref{Def:OAMP_main}}{=}  0.
\end{align*}
\item The approximation in step (b) uses \clref{claim:SE-conv} and the fact that $\fnonlin_t$ is Lipschitz to conclude that:
\begin{align*}
\lim_{\degree \rightarrow \infty} \E[\partial_{s} \fnonlin_t(\dup{\serv{X}_1}{\degree}, \dotsc, \dup{\serv{X}_{t-1}}{\degree}; \serv{A})] & = \E[\partial_{s} \fnonlin_t({\serv{X}_1}, \dotsc, {\serv{X}_{t-1}}; \serv{A})] = 0,
\end{align*}
where the last equality follows by recalling that the iterate denoisers satisfy the divergence-free requirement $\E[\partial_{s} \fnonlin_t({\serv{X}_1}, \dotsc, {\serv{X}_{t-1}}; \serv{A})] = 0$ (recall Definition~\ref{Def:OAMP_main}). 
\item The approximation in step (c) relies on the induction hypothesis \eqref{eq:lowdegree-approx-induction-hypo} and the fact that the functions $\fnonlin_t$ are Lipschitz. 
\end{enumerate}
Hence, \eqref{eq:poly-approx-goal} holds by induction. This concludes the proof of \lemref{lem:poly-approx}.
\end{proof}

\subsection{Proof of \lemref{lem:aux1}} \label{appendix:aux1-proof}
\begin{proof}[Proof of \lemref{lem:aux1}]. Since any polynomial can be written as a linear combination of monomials, it suffices to prove the claims of the lemma when $\Psi(\lambda) = \lambda^{d} \; \forall \; \lambda \; \in \R$ for some $d \in \N$. 
\paragraph*{Proof of Claim (1)} We need to show that there exists a sequence of polynomials $(Q_{d})_{d \in \N}: \R \mapsto \R$ such that:
\begin{align} \label{eq:aux-lemma-goal1}
\mY^{d} \cdot \bm{x}_\star \; & \explain{$\dim\rightarrow \infty$}{\simeq} \; {Q}_{d}(\bm{W}) \cdot \bm{x}_\star \quad \forall \; d \; \in \; \N.
\end{align}
We will show that \eqref{eq:aux-lemma-goal1} holds by induction on $d$. We assume that \eqref{eq:aux-lemma-goal1} holds for some  $d \in \N$ and show that the claim also holds for $d+1$. Recalling that $\bm{Y}=\theta \cdot \frac{\bm{x}_\star\bm{x}_\star^\UT}{N}+\bm{W}$, we have:
\begin{align*}
\bm{Y}^{d+1} \cdot \bm{x}_\star & =  \left(\theta \cdot \frac{\bm{x}_\star\bm{x}_\star^\UT}{N}+\bm{W} \right) \cdot \mY^d  \cdot  \vx_{\star} \\
& \stackrel{(a)}{\explain{$\dim \rightarrow \infty$}{\simeq}} \left(\theta \cdot \frac{\bm{x}_\star\bm{x}_\star^\UT}{N}+\bm{W} \right) \cdot Q_{d}(\mW) \cdot \vx_{\star} \\
& = \theta \cdot \frac{\vx_{\star}^\top Q_{d}(\mW) \vx_{\star}}{\dim} + \mW Q_{d}(\mW) \cdot \vx_{\star} \\
& \stackrel{(b)}{\explain{$\dim \rightarrow \infty$}{\simeq}}  \theta \cdot \E[Q_{d}(\serv{\Lambda})] +  \mW Q_{d}(\mW) \cdot \vx_{\star}.
\end{align*}
In the above display, step (a) follows from the induction hypothesis $\mY^d \cdot \vx_{\star} \explain{$\dim\rightarrow \infty$}{\simeq} \; {Q}_{d}(\bm{W}) \cdot \bm{x}_\star$. and step (b) uses a standard result on the concentration of quadratic forms of rotationally invariant matrices (see \factref{fact:qf} in \appref{appendix:misc}). We define the polynomial $Q_{d+1}: \R \mapsto \R$ as:
\begin{equation}\label{Eqn:Q_recursive_def}
Q_{d+1}(\lambda) = \theta \cdot \E[Q_{d}(\serv{\Lambda})] + \lambda Q_d(\lambda) \quad \forall \; \lambda \;  \in \;  \R.
\end{equation}
Hence, $\bm{Y}^{d+1} \cdot \bm{x}_\star \explain{$\dim\rightarrow \infty$}{\simeq} \; {Q}_{d+1}(\bm{W}) \cdot \bm{x}_\star$, as desired. This proves the first claim made in the lemma by induction.
\paragraph*{Proof of Claim (2)} The proof of the second claim follows from an analogous induction argument. Specifically, we show by induction that:
\begin{align*}
    \mY^{d} \cdot \vv \; & \explain{$\dim\rightarrow \infty$}{\simeq} \; \mW^d \cdot \vv \quad \forall \; d \; \in \; \N.
\end{align*}As before, we assume the claim holds for some $d \in \N$, and we verify it for $d+1$:
\begin{align*}
\bm{Y}^{d+1} \cdot \vv & =  \left(\theta \cdot \frac{\bm{x}_\star\bm{x}_\star^\UT}{N}+\bm{W} \right) \cdot \mY^d  \cdot  \vv \\
& \stackrel{(a)}{\explain{$\dim \rightarrow \infty$}{\simeq}} \left(\theta \cdot \frac{\bm{x}_\star\bm{x}_\star^\UT}{N}+\bm{W} \right) \cdot \mW^d \cdot \vv \\
& = \theta \cdot \frac{\vx_{\star}^\top \mW^d \vv}{\dim} + \mW^{d+1} \cdot \vx_{\star} \\
& \stackrel{(b)}{\explain{$\dim \rightarrow \infty$}{\simeq}}  \mW^{d+1} \cdot \vv,
\end{align*}
where step (a) follows from the induction hypothesis and step (b) uses the assumption $\vx_{\star}^\top \mW^d \vv/\dim  \pc 0$. This concludes the proof of this lemma. 
\end{proof}
\subsection{Proof of \lemref{lem:nu-trick}} \label{appendix:nu-trick-proof}
\begin{proof}[Proof of \lemref{lem:nu-trick}]
We prove the second claim, the proof of the first claim is analogous. Since the spectral measure of $\mW$ converges to $\mu$ (recall Assumption~\ref{assump:signal-noise}), 
\begin{align*}
\frac{\Tr[ \tilde{\mfunc}_s(\mW)  \tilde{\mfunc}_t(\mW)]}{\dim} \pc  \E_{\serv{\Lambda} \sim \mu}[\tilde{\mfunc}_s(\serv{\Lambda}) \tilde{\mfunc}_t(\serv{\Lambda})], \;\; \frac{\Tr[ \tilde{\mfunc}_s(\mW)  \tilde{\mfunc}_t(\mW)]}{\dim} \pc  \E_{\serv{\Lambda} \sim \mu}[\tilde{\mfunc}_s(\serv{\Lambda}) \tilde{\mfunc}_t(\serv{\Lambda})]. 
\end{align*}
To obtain a more convenient formula for the limit which does not involve the transformed polynomials $(\tilde{\mfunc}_t)_{t \in \N}$, we observe that:
\begin{align*}
\E[\tilde{\mfunc}_s(\serv{\Lambda}) \tilde{\mfunc}_t(\serv{\Lambda})] & \explain{(a)}{=} \plim_{\dim \rightarrow \infty} \frac{\vx_{\star}^\top \tilde{\mfunc}_s(\mW) \tilde{\mfunc}_t(\mW) \vx_{\star}}{\dim} \explain{Lem. \ref{lem:aux1}}{=} \plim_{\dim \rightarrow \infty} \frac{\vx_{\star}^\top {\mfunc}_s(\mY) {\mfunc}_t(\mY) \vx_{\star}}{\dim}. 
\end{align*}
where the equality in step (a) follows from standard concentration results for quadratic forms of rotationally invariant matrices (see \factref{fact:qf} in \appref{appendix:misc}). Notice that the RHS of the above display can be written as an expectation with respect to $\nu_{\dim}$, the spectral measure of $\mY$ in the direction of $\vx_{\star}$. Indeed, if $\lambda_{1:\dim}(\mY)$ and $\vu_{1:\dim}(\mY)$ denote the eigenvalues and corresponding eigenvectors of $\mY$:
\begin{align*}
\frac{\vx_{\star}^\top {\mfunc}_s(\mY) {\mfunc}_t(\mY) \vx_{\star}}{\dim} &= \frac{1}{\dim} \sum_{i=1}^{\dim} \ip{\vx_{\star}}{\vu_i(\mY)}^2 \cdot \mfunc_s(\lambda_i(\mY)) {\mfunc}_t(\lambda_i(\mY)) \\ &= \int_{\R} \mfunc_s(\lambda) \mfunc_t(\lambda) \;  \nu_{\dim}(\diff \lambda).
\end{align*}
By Lemma~\ref{lem:RMT} (item (1)), we know that $\nu_{\dim}$ converges weakly to $\nu$. Hence,
\begin{align*}
\plim_{\dim \rightarrow \infty} \frac{\Tr[ \tilde{\mfunc}_s(\mW)  \tilde{\mfunc}_t(\mW)]}{\dim} = \plim_{\dim \rightarrow \infty} \frac{\vx_{\star}^\top \tilde{\mfunc}_s(\mW) \tilde{\mfunc}_t(\mW) \vx_{\star}}{\dim} = \E[\mfunc_s(\serv{\Lambda}_{\nu})  \mfunc_t(\serv{\Lambda}_{\nu})], \quad \serv{\Lambda}_{\nu} \sim \nu,
\end{align*}
as claimed. 
\end{proof}

\subsection{Proof of \lemref{lem:aux-OAMP-dyn}} \label{app:aux-OAMP-dyn}
\begin{proof}
Recall that we started with a general OAMP algorithm (cf. Definition~\ref{Def:OAMP_main}) with polynomial matrix denoising functions:
\begin{align} \label{eq:div-free-algo-recall-2}
\iter{\vx}{t} & = \mfunc_{t}(\mY) \cdot \fnonlin_t(\iter{\vx}{1}, \dotsc, \vx_{t-1}; \va) \quad  \forall \; t \; \in  \;  \N,
\end{align}
with associated state evolution random variables $$(\serv{X}_{\star}, (\serv{X}_{t})_{t\in \N}; \serv{A}),$$ and constructed an auxiliary OAMP algorithm:
\begin{align} \label{eq:aux-OAMP-recall}
\tilde{\vx}_t & = \alpha_t \cdot \tilde{\mfunc}_t(\mW) \cdot \vx_{\star} + \mfunc_t(\mW)  \cdot \fnonlin_t^{\perp}(\tilde{\vx}_1, \dotsc, \tilde{\vx}_{t-1}; \vx_{\star}, \va) \quad \forall \; t \; \in \; \N,
\end{align}
which is intended as an easy-to-analyze approximation to the original OAMP algorithm in \eqref{eq:div-free-algo-recall-2}. In the above display:
\begin{align} \label{eq:perp-f-recall}
\alpha_{t} \explain{def}{=} \E[\serv{X}_{\star} \fnonlin_t(\serv{X}_1, \dotsc, \serv{X}_{t-1}; \serv{A})], \quad  \fnonlin_t^{\perp}(x_1, \dotsc, x_{t-1}; x_{\star}, \aux) \explain{def}{=} \fnonlin_t(x_1, \dotsc, x_{t-1}; \aux) - \alpha_t x_{\star},
\end{align}
and $(\tilde{\Psi}_{t})_{t \in \N}$ are the transformed polynomials associated with matrix denoisers $(\Psi_{t})_{t\in \N}$, which were constructed in \lemref{lem:aux1}. Our goal is to show that for any $t \in \N$ and any $i \in \N$,
\begin{align*}
(\vx_{\star}, \tilde{\vx}_1, \dotsc, \tilde{\vx}_{t}; \va) \wc (\serv{X}_{\star}, \serv{X}_1, \dotsc, \serv{X}_t ; \serv{A}), \quad
\frac{\ip{\mW^{i} \fnonlin^\perp_t(\vx_1, \dotsc, \vx_{t-1}; \vx_{\star}, \va)}{\vx_{\star}}}{\dim} \pc 0.
\end{align*}We will show that the auxiliary OAMP algorithm can be re-written as an algorithm whose dynamics have characterized in prior work \citep[Theorem 2] {dudeja2022spectral}. 
To this end, we fix a $i \in \N$ and a $t \in \N$.  Introduce the random variables:
\begin{align*}
\serv{\Lambda} \sim \mu, \quad \serv{\Lambda}_{\nu} \sim \nu.
\end{align*}
For any $s \leq t$, we define:
\begin{subequations}\label{eq:v-w-def}
\begin{align}
\vv_s &\explain{def}{=} \Big( \tilde{\Psi}_s(\mW) - \E[\tilde{\Psi}_s(\serv{\Lambda})] \cdot \mI_{\dim}\Big)\cdot\vx_{\star},  \quad \vw_s \explain{def}{=}\Psi_s(\mW) \cdot \fnonlin_s^\perp(\tilde{\vx}_1, \dotsc, \tilde{\vx}_{s-1} ;\bm{x}_\star,\bm{a}), \\
\vw_{t+1} & \explain{def}{=} (\mW^i - \E[\serv{\Lambda^i}] \cdot \mI_{\dim}) \cdot \fnonlin_t^\perp(\tilde{\vx}_1, \dotsc, \tilde{\vx}_{t-1} ;\bm{x}_\star,\bm{a}).
\end{align}
\end{subequations}
Comparing \eqref{eq:aux-OAMP-recall} and \eqref{eq:v-w-def}, we conclude that:
\begin{align} \label{eq:aux-OAMP-to-prior}
\tilde{\vx}_s = \beta_s \cdot \vx_{\star} + \alpha_s \cdot \vv_s + \vw_s \quad \forall \; s \; \leq \; t,
\end{align}
where we defined:
\begin{align*}
\beta_s \explain{def}{=} \alpha_s \cdot  \E[\tilde{\Psi}_s(\serv{\Lambda})] \explain{Lem. \ref{lem:nu-trick}}{=} \alpha_s \cdot \E[{\Psi}_s(\serv{\Lambda}_{\nu})]  \quad \forall \;  s \leq t.
\end{align*}
Hence, \eqref{eq:v-w-def} can be re-expressed as:
\begin{align*}
\vv_s &= \Big( \tilde{\Psi}_s(\mW) - \E[\tilde{\Psi}_s(\serv{\Lambda})] \cdot \mI_{\dim}\Big)\cdot\vx_{\star},  \\ \vw_s &\explain{\eqref{eq:aux-OAMP-to-prior}}{=}\Psi_s(\mW) \cdot \fnonlin_s^\perp(\beta_1 \vx_{\star} + \alpha_1 \vv_1 + \vw_1, \dotsc, \beta_{s-1} \vx_{\star} + \alpha_{s-1} \vv_{s-1} + \vw_{s-1} ;\bm{x}_\star,\bm{a}).
\end{align*}
The above algorithm is an instance of a Vector Approximate Message Passing (VAMP) algorithm, as defined in \citep[Section 4.1.2]{dudeja2022spectral}. Moreover, \citep[Theorem 2] {dudeja2022spectral} shows that:
\begin{align} \label{eq:prior-work-SE}
(\vx_{\star}, \vv_1, \dotsc, \vv_t, \vw_1, \dotsc, \vw_{t+1}; \va) \wc (\serv{X}_{\star}, \serv{V}_1, \dotsc, \serv{V}_t, \serv{W}_1, \dotsc, \serv{W}_{t+1}; \serv{A}),
\end{align}
where the collection of random variables:
\begin{align*}
    (\serv{X}_{\star}; \serv{A}), \quad (\serv{V}_1, \dotsc, \serv{V}_t, \serv{W}_1, \dotsc, \serv{W}_{t+1})
\end{align*}
are mutually independent.  The random variables $(\serv{V}_1, \dotsc, \serv{V}_t, \serv{W}_1, \dotsc, \serv{W}_{t+1})$ have zero mean and are jointly Gaussian. The covariance matrices of $(\serv{V}_1, \dotsc, \serv{V}_t)$ and  $(\serv{W}_1, \dotsc, \serv{W}_{t+1})$ are given by the recursion:
\begin{subequations} \label{eq:SE-cov-prior-work}
    \begin{align} 
\E[\serv{V}_s \serv{V}_{\tau}] & = \Cov[\tilde{\Psi}_s(\serv{\Lambda}), \tilde{\Psi}_{\tau}(\serv{\Lambda})] \explain{Lem. \ref{lem:nu-trick}}{=} \Cov[\Psi_s(\serv{\Lambda}_{\nu}), \Psi_{\tau}(\serv{\Lambda}_{\nu})] \quad \forall \; s, \tau \leq t, \\
\E[\serv{W}_s \serv{W}_{\tau}] & = \Cov[\Psi_s(\serv{\Lambda}), \Psi_{\tau}(\serv{\Lambda})] \cdot (\E[\serv{F}_s \serv{F}_t] - \alpha_s \alpha_t) \quad \forall \; s, \tau \leq t, \\
\E[\serv{V}_s \serv{W}_{\tau}] & = \Cov[\tilde{\Psi}_s(\serv{\Lambda}), \Psi_{\tau}(\serv{\Lambda})] \cdot (\E[\serv{F}_\tau\serv{X}_{\star}] - \alpha_s) \quad \forall \; s, \tau \leq t.
\end{align}
\end{subequations}
In the above display, for any $\tau \leq t$, $$\serv{F}_\tau \explain{def}{=} \fnonlin_s(\beta_1 \serv{X}_{\star} + \alpha_1 \serv{V}_1 + \serv{W}_1, \dotsc, \beta_{s\tau-1} \serv{X}_{\star} + \alpha_{\tau-1} \serv{V}_{\tau-1} + \serv{W}_{\tau-1}; \serv{A}).$$ Consequently,
\begin{align}
&(\vx_{\star}, \tilde{\vx}_1, \dotsc, \tilde{\vx}_{t}; \va) \explain{\eqref{eq:aux-OAMP-to-prior}}{=} (\vx_{\star}, \beta_1 \cdot \vx_{\star} + \alpha_1 \cdot \vv_1 + \vw_1, \dotsc, \beta_t \cdot \vx_{\star} + \alpha_t \cdot \vv_t + \vw_t; \va) \nonumber \\
& \hspace{0.2cm} \wc (\serv{X}_{\star}, \beta_1 \cdot \serv{X}_{\star} + \alpha_1 \cdot \serv{V}_1 + \serv{W}_1, \dotsc, \beta_t \cdot \serv{X}_{\star} + \alpha_t \cdot \serv{V}_t + \serv{W}_t; \serv{A}) \quad \text{[Using \eqref{eq:prior-work-SE}]}.  \label{eq:aux-SE-claim-extra}
\end{align}
{We claim that:
\begin{subequations} \label{eq:reviewer-details}
    \begin{align} 
    \E[\serv{V}_\tau \serv{W}_{\tau^\prime}] & = 0 \quad \forall \; \tau^\prime, \tau \leq t, \\
    (\serv{X}_{\star}, \beta_1 \cdot \serv{X}_{\star} + \alpha_1 \cdot \serv{V}_1 + \serv{W}_1, \dotsc, \beta_t \cdot \serv{X}_{\star} + \alpha_t \cdot \serv{V}_t + \serv{W}_t; \serv{A}) &\explain{d}{=} (\serv{X}_{\star}, \serv{X}_1, \dotsc, \serv{X}_t ; \serv{A}).
\end{align}
\end{subequations}
The above claims are easily proved by induction. The base case is immediate, and if we assume:
\begin{subequations}  \label{eq:reviewer-details-hypothesis}
\begin{align}
&\E[\serv{V}_\tau \serv{W}_{\tau^\prime}]  = 0 \quad \forall \; \tau \leq t, \; \tau^\prime < s \\
    &(\serv{X}_{\star}, \beta_1 \cdot \serv{X}_{\star} + \alpha_1 \cdot \serv{V}_1 + \serv{W}_1, \dotsc, \beta_{s-1} \cdot \serv{X}_{\star} + \alpha_{s-1} \cdot \serv{V}_{s-1} + \serv{W}_{s-1}; \serv{A}) \explain{d}{=} (\serv{X}_{\star}, \serv{X}_1, \dotsc, \serv{X}_{s-1} ; \serv{A}),
\end{align}
\end{subequations}
for some $s \leq t$ as our induction hypothesis, we can complete the induction step:
\begin{align*}
    &\E[\serv{V}_\tau \serv{W}_s]  \explain{\eqref{eq:SE-cov-prior-work}}{=} \Cov[\tilde{\Psi}_\tau(\serv{\Lambda}), \Psi_{s}(\serv{\Lambda})] \cdot (\E[\serv{F}_s\serv{X}_{\star}] - \alpha_s) \quad \text{ for any $\tau \leq t$} \\
    & = \Cov[\tilde{\Psi}_\tau(\serv{\Lambda}), \Psi_{s}(\serv{\Lambda})] \cdot (\E[\serv{F}_s(\serv{X}_1,\dotsc, \serv{X}_{s -1}; \serv{A}) \cdot \serv{X}_{\star}] - \alpha_s) & \text{[by induction hypothesis \eqref{eq:reviewer-details-hypothesis}]} \\
    & \explain{\eqref{eq:perp-f-recall}}{=} 0.
\end{align*}
On the other hand, by comparing \eqref{eq:SE-cov-prior-work} and the description of the joint distribution of the state evolution random variables $(\serv{X}_{\star}, \serv{X}_1, \dotsc, \serv{X}_s ; \serv{A})$ corresponding to the original OAMP algorithm \eqref{eq:div-free-algo-recall-2} provided in Definition~\ref{Def:OAMP_main} we conclude that:
\begin{align*}
     (\serv{X}_{\star}, \beta_1 \cdot \serv{X}_{\star} + \alpha_1 \cdot \serv{V}_1 + \serv{W}_1, \dotsc, \beta_{s} \cdot \serv{X}_{\star} + \alpha_{s} \cdot \serv{V}_{s} + \serv{W}_{s}; \serv{A}) &\explain{d}{=} (\serv{X}_{\star}, \serv{X}_1, \dotsc, \serv{X}_{s} ; \serv{A}),
\end{align*}
which proves the claims in \eqref{eq:reviewer-details} by induction. Combining \eqref{eq:aux-SE-claim-extra} with \eqref{eq:reviewer-details} we conclude that:
\begin{align}  \label{eq:aux-SE-claim1}
(\vx_{\star}, \tilde{\vx}_1, \dotsc, \tilde{\vx}_{t}; \va) &\wc (\serv{X}_{\star}, \serv{X}_1, \dotsc, \serv{X}_t ; \serv{A}),
\end{align}
which proves the first claim made in the lemma.} To prove the second claim, we observe that:
\begin{align*}
\frac{\ip{\mW^{i} \fnonlin^\perp_t(\vx_1, \dotsc, \vx_{t-1}; \vx_{\star}, \va)}{\vx_{\star}}}{\dim} & \explain{\eqref{eq:v-w-def}}{=} \frac{\ip{\vw_{t+1}}{\vx_{\star}}}{\dim} + \E[\serv{\Lambda}^i] \cdot \frac{\ip{\fnonlin_t^\perp(\tilde{\vx}_1, \dotsc \tilde{\vx}_{t-1}; \vx_{\star}, \va)}{\vx_{\star}}}{\dim} \\
&\pc \E[\serv{W}_{t+1}\serv{X}_{\star}] + \E[\serv{\Lambda}^i] \cdot \E[\serv{X}_{\star} \fnonlin_t^\perp(\serv{X}_1, \dotsc, \serv{X}_{t-1}; \serv{X}_{\star}, \serv{A})] \\
& = 0.
\end{align*}
In the above display, the convergence in the second step follows from \eqref{eq:prior-work-SE} and \eqref{eq:aux-SE-claim1}. The equality in the last step follows by recalling that $\serv{W}_{t+1}$ is a mean-zero Gaussian random variable independent of $\serv{X}_{\star}$ and the definition of $\fnonlin_t^\perp$ in \eqref{eq:perp-f-recall} ensures $\E[\serv{X}_{\star} \fnonlin_t^\perp(\serv{X}_1, \dotsc, \serv{X}_{t-1}; \serv{X}_{\star}, \serv{A})]$ = 0.
\end{proof}

\section{State Evolution for Optimal OAMP (Proposition~\ref{prop:optimal-OAMP-SE})}\label{App:proof_OAMP_opt_SE}

Under the assumption $\E\Var[\serv{X}_{\star} | \serv{A}] \in (0,1)$, for the readers convenience, we recall that the optimal OAMP algorithm is given by (cf. \eqref{eq:optimal-OAMP}):
\begin{align}
    {\vx}_{t} & = \frac{1}{\sqrt{{\omega}_{t}}}\left( 1+  \frac{1}{{\rho}_{t}}  \right) \cdot \omd(\mY; {\rho}_{t}) \cdot \dfbdnsr({\vx}_{t-1}; \va | {\omega}_{t-1}).
    \end{align}
    The estimator returned by the optimal OAMP algorithm at iteration $t$ is:
    \begin{align}
    \hat{\vx}_t \explain{def}{=} \bdnsr(\vx_t; \va | \omega_t).
    \end{align}
    In the above equations, the matrix denoiser $\omd$ used by the optimal OAMP algorithm is given by:
    \begin{align} \label{eq:optimal-OAMP-denoiser-2}
     \omd(\lambda; \rho) & = 1 - \left( \E \left[ \frac{ \phi(\serv{\Lambda})}{ \phi(\serv{\Lambda}) + \rho} \right] \right)^{-1} \cdot \frac{\phi(\lambda)}{\phi(\lambda)+\rho} \quad \forall \; \lambda \; \in \; \R, \; \rho \; \in \;  (0,\infty),
    \end{align}
    where $\serv{\Lambda} \sim \mu$ and the function $\phi: \R \mapsto \R$ was introduced in Lemma~\ref{lem:RMT} (recall \eqref{eq:phi}). The parameters ${\omega}_{t}$ and ${\rho}_{t}$ are computed using the following recursion, initialized with $\omega_0 \explain{def}{=} 0$:
    \begin{align}  \label{eq:rho-omega-recall}
    {\rho}_{t} = \mathscr{F}_2(\omega_{t-1}), \; \omega_t = \mathscr{F}_1(\rho_t) \quad \text{where} \quad \mathscr{F}_2(\omega) \explain{def}{=} \frac{1}{\dmmse({\omega})} - 1, \quad \mathscr{F}_1(\rho) \explain{def}{=}   1 - \frac{ \E \left[\frac{1}{\phi(\serv{\Lambda})+{\rho}} \right]}{ \E \left[ \frac{\phi(\serv{\Lambda})}{\phi(\serv{\Lambda}) + {\rho}} \right] }. 
    \end{align}
 Let $(\serv{X}_{\star}, (\serv{X}_t)_{t \in \N}; \serv{A})$ denote the state evolution random variables associated with the above algorithm.  We will rely on the following intermediate lemma.
\begin{lemma} \label{lem:scrF-prop} We have,
\begin{enumerate}
    \item The function $\mathscr{F}_1$ is a continuous non-decreasing function which maps $(0,\infty)$ to $[0,1)$ and satisfies $\lim_{\rho \rightarrow \infty} \mathscr{F}_1(\rho) < 1$. 
   \item The function $\mathscr{F}_2$ is a continuous non-decreasing function which maps $[0,1)$ to $(0,\infty)$. 
\end{enumerate} 
\end{lemma}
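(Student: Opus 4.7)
My plan is to treat $\mathscr{F}_1$ and $\mathscr{F}_2$ separately, since they depend on different inputs: random-matrix properties of $\nu$ and $\mu$ for $\mathscr{F}_1$, and properties of the scalar AWGN channel for $\mathscr{F}_2$. For $\mathscr{F}_1$, the starting point is the pointwise identity $\phi/(\phi+\rho)+\rho/(\phi+\rho)=1$, which upon integrating against $\mu$ gives $A(\rho)+\rho B(\rho)=1$ where $A(\rho)\bydef\E[\phi(\serv{\Lambda})/(\phi(\serv{\Lambda})+\rho)]$ and $B(\rho)\bydef\E[1/(\phi(\serv{\Lambda})+\rho)]$, so that $\mathscr{F}_1(\rho)=1-B/(1-\rho B)$. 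Continuity on $(0,\infty)$ follows from dominated convergence, using that $\phi$ is bounded on $\mathrm{supp}(\mu)$ (the support is compact and $\mu,\hlb_\mu$ are Hölder continuous, hence bounded on compacta), which moreover yields $\E[\phi]\in(0,\infty)$. Differentiation produces $\mathscr{F}_1'(\rho)=-(B'+B^2)/A^2$, and $B^2\le -B'$ is Cauchy--Schwarz applied to $1/(\phi(\serv{\Lambda})+\rho)$, establishing monotonicity. For the range, $\mathscr{F}_1(\rho)<1$ reduces to $A(\rho)>0$, which holds because $\phi>0$ Lebesgue-a.e.\ by item (2) of \lemref{lem:RMT} combined with the absolute continuity of $\mu$; while $\mathscr{F}_1(\rho)\ge 0$ reduces, upon letting $\rho\downarrow 0$ and applying monotone/dominated convergence, to $\E[1/\phi(\serv{\Lambda})]\le 1$, which is exactly the total mass of $\nu_\parallel$ by item (3) of \lemref{lem:RMT} and thus at most $\nu(\R)=1$. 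Finally, the estimates $\rho B(\rho)\to 1$ and $\rho A(\rho)\to\E[\phi]$ as $\rho\to\infty$ (both by dominated convergence) yield $\lim_{\rho\to\infty}\mathscr{F}_1(\rho)=1-1/\E[\phi]<1$.

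For $\mathscr{F}_2$, I would start by rewriting, via item (3) of \lemref{lem:dmmse-scalar}, $\mathscr{F}_2(\omega)=1/\mmse(\omega)-1/(1-\omega)$. Continuity on $[0,1)$ then follows from the standard continuity of $\mmse$ and the strict positivity $\mmse(\omega)>0$ on $[0,1)$, which is a consequence of $\mmse(1)=0$ together with the strict monotonicity guaranteed by \factref{fact:mmse-monotonic} (applicable since $\mmse(0)>0$). Monotonicity is inherited from $\mathscr{F}_2=1/\dmmse-1$ and the monotonicity of $\dmmse$ in \lemref{lem:dmmse-monotonicity}. Positivity follows from $\dmmse(\omega)\le\dmmse(0)=\mmse(0)<1$, and finiteness on $[0,1)$ from $\dmmse\ge\mmse>0$ (the first inequality because imposing the divergence-free constraint can only enlarge the MSE).

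The main obstacle is the divergence $\lim_{\omega\to 1}\mathscr{F}_2(\omega)=\infty$, equivalently $\dmmse(\omega)\to 0$. Rearranging the identity above gives $\dmmse(\omega)=\mmse(\omega)(1-\omega)/\bigl((1-\omega)-\omega\mmse(\omega)\bigr)$, a $0/0$ indeterminate form as $\omega\to 1$, so the claim demands quantitative control of $1-\omega-\mmse(\omega)$ relative to $\mmse(\omega)(1-\omega)$. My plan here is to show that the divergence-free estimator $\dfbdnsr(\serv{X};\serv{A}\mid\omega)$ converges to $\serv{X}_\star$ in $L^2$: using \assumpref{assump:regularity}, the denoiser $\bdnsr(\cdot\mid\omega)$ is Lipschitz and continuously differentiable, and Stein's identity $\E[\serv{Z}\,\bdnsr(\serv{X};\serv{A}\mid\omega)]=\sqrt{1-\omega}\,\E[\partial_x\bdnsr(\serv{X};\serv{A}\mid\omega)]$ drives the coefficient $\beta$ appearing in \eqref{eq:dmmse-scalar} to $1$ as $\omega\to 1$. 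Handling the ensuing $0/0$ in the explicit formula for $\dfbdnsr$ via a uniform dominated-convergence argument is the technical crux and the step I expect to demand the most care.
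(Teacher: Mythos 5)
Your treatment of $\mathscr{F}_1$ is correct and closely tracks the paper's own argument: the paper also arrives at $\mathscr{F}_1'(\rho)=\bigl(-B'(\rho)-B^2(\rho)\bigr)/A^2(\rho)$ in your notation, but certifies non-negativity via Chebyshev's association inequality rather than your Cauchy--Schwarz bound $B^2\le-B'$; both are valid, and your route is the more direct one. Your arguments for the range $[0,1)$ (via $\E[1/\phi(\serv{\Lambda})]=\nu_\parallel(\R)\le 1$ and monotonicity) and for the $\rho\to\infty$ limit $1-1/\E[\phi(\serv{\Lambda})]$ coincide with the paper's. On the $\mathscr{F}_2$ side, your proofs of continuity, monotonicity, positivity, and finiteness on $[0,1)$ are all sound and match what the paper does.

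The step you flag as ``the technical crux'' cannot, however, be carried out: the claim $\lim_{\omega\to1}\mathscr{F}_2(\omega)=\infty$ is \emph{false} under the paper's standing hypotheses. Take uninformative side information and $\serv{X}_\star\sim\gauss{\mu}{\sigma^2}$ with $\mu\neq 0$, $\sigma^2\in(0,1)$, $\mu^2+\sigma^2=1$, so that $\E[\serv{X}_\star^2]=1$, $\mmse(0)=\sigma^2\in(0,1)$, and \assumpref{assump:regularity} holds trivially since the MMSE denoiser is affine. One computes $\mmse(\omega)=\sigma^2(1-\omega)/(\omega\sigma^2+1-\omega)$, and plugging into item~(3) of \lemref{lem:dmmse-scalar} yields $1/\dmmse(\omega)=1/\sigma^2$ for every $\omega<1$: the DMMSE denoiser here is just the constant $\mu$, so $\dmmse(\omega)\equiv\sigma^2$ and $\mathscr{F}_2(\omega)\equiv(1-\sigma^2)/\sigma^2$ is constant, not divergent. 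Your Stein-identity route would correctly show that the normalizing coefficient $\beta(\omega)\to 1$ as $\omega\to1$, but this does not force the DMMSE to vanish --- in the example above $\beta(\omega)\to1$ and yet $\dmmse(\omega)=\sigma^2>0$ for all $\omega$. The $0/0$ indeterminacy you identify is genuine, and in general it does \emph{not} resolve to $\infty$.

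Reassuringly, the paper's own proof of this lemma only establishes that $\mathscr{F}_2$ maps $[0,1)$ into $(0,\infty)$ and silently omits the divergence claim, so the lemma statement overclaims relative to what is actually proved. The divergence is also not needed downstream: the single place it is invoked, in the proof of \propref{prop:optimal-OAMP-SE}, requires only $\plimsup_{\omega\to1}\mathscr{F}_1\circ\mathscr{F}_2(\omega)<1$, which follows from $\mathscr{F}_1$ being non-decreasing with range in $[0,1)$ and $\lim_{\rho\to\infty}\mathscr{F}_1(\rho)<1$, regardless of whether $\mathscr{F}_2$ tends to $\infty$ or to a finite limit. The right move is therefore to delete the divergence assertion from the statement rather than expend effort trying to prove it.
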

We defer the proof of this lemma to the end of this appendix (\appref{app:scrF-prop-proof}), and present the proof of Proposition~\ref{prop:optimal-OAMP-SE}.
\begin{proof}[Proof of Proposition~\ref{prop:optimal-OAMP-SE}] We prove each of the two claims below. 
\paragraph*{Proof of Claim (1)} We will show by induction that:
\begin{align} \label{eq:induction-SE}
\omega_t \in [0,1), \quad \rho_t \in (0,\infty), \quad (\serv{X}_{\star}, \serv{X}_t; \serv{A}) \text{ form a scalar Gaussian channel with SNR $\omega_t$.}
\end{align}
Assuming this claim, we can compute the asymptotic MSE of the estimator $\hat{\vx}_t = \bdnsr(\vx_t; \va | \effsnrt{t})$ returned by the OAMP algorithm:
\begin{align*}
\plim_{\dim \rightarrow \infty} \frac{\|\hat{\vx}_t - \vx_{\star} \|^2}{\dim} & \explain{Thm. \ref{thm:SE}}{=} \E| \bdnsr(\serv{X}_t; \serv{A} | \effsnrt{t}) - \serv{X}_{\star}|^2 \explain{(a)}{=}  \mmse(\omega_t).
\end{align*}
In the above display, step (a) follows by observing that $\bdnsr(\serv{X}_t; \serv{A} | \effsnrt{t})$ is the MMSE estimator for the scalar Gaussian channel $(\serv{X}_{\star}, \serv{X}_t; \serv{A})$ (which has SNR $\omega_t$). We now focus on proving \eqref{eq:induction-SE} by induction.  We assume that the claim \eqref{eq:induction-SE} holds at step $t$ as the induction hypothesis and show that it also holds at step $t+1$. Indeed, since $\omega_t \in [0,1)$, \lemref{lem:scrF-prop} guarantees that $\rho_{t+1} = \mathscr{F}_2(\omega_{t}) \in (0,\infty)$ and $\omega_{t+1} = \mathscr{F}_1(\rho_{t+1}) \in [0,1)$. Next, we verify that $(\serv{X}_{\star}, \serv{X}_{t+1}; \serv{A})$ form a scalar Gaussian channel with SNR $\omega_{t+1}$. From Definition~\ref{Def:OAMP_main}, we know that:
    \begin{align*}
    \serv{X}_{t+1} | (\serv{X}_{\star}; \serv{A}) \sim \gauss{\beta_{t+1} \serv{X}_{\star}}{\sigma_{t+1}^2},
    \end{align*}
    where:
    \begin{eqnarray*}
    \beta_{t+1}  & =& \frac{\E[\serv{X}_{\star} \cdot \dfbdnsr(\serv{X}_t; \serv{A} | \effsnrt{t}{})] \cdot \E[{\omd(\serv{\Lambda}_{\nu} \; ; \rhot{t+1}{})}]}{\sqrt{\effsnrt{t+1}{}}(1-\dmmse(\effsnrt{t+1})) } \cdot , \\ 
    \sigma_{t+1}^2 &=& \frac{ \{\E[\serv{X}_{\star} \cdot \dfbdnsr(\serv{X}_t; \serv{A} | \effsnrt{t}{})]\}^2  \cdot \Var[{\omd(\serv{\Lambda}_{\nu} \; ; \rhot{t+1}{})}]}{\effsnrt{t+1}{}(1-\dmmse(\effsnrt{t+1}))^2}  \\
    && + \frac{\{\E[\dfbdnsr^2(\serv{X}_t; \serv{A} | \effsnrt{t}{})] - (\E[\serv{X}_{\star} \cdot \dfbdnsr(\serv{X}_t; \serv{A} | \effsnrt{t}{})])^2 \}\cdot \E[{\omd^2(\serv{\Lambda} \; ; \rhot{t+1}{})}]}{\effsnrt{t+1}{}(1-\dmmse(\effsnrt{t+1}))^2}.
    \end{eqnarray*}
    In the above equations $\serv{\Lambda} \sim \mu$ and $\serv{\Lambda}_{\nu} \sim \nu$. We begin by simplifying $\beta_{t+1}$. By the induction hypothesis, $(\serv{X}_{\star}, \serv{X}_t; \serv{A})$ forms a Gaussian channel with SNR $\effsnrt{t}{}$. A convenient property of the DMMSE estimator for a scalar Gaussian channel is the following identity (see \lemref{lem:dmmse-scalar} in \appref{appendix:gauss-channel}):  
    \begin{align} \label{eq:Gauss-channel-convenient}
        \E[\serv{X}_{\star} \cdot \dfbdnsr(\serv{X}_t; \serv{A} | \effsnrt{t}{})] = \E[\dfbdnsr^2(\serv{X}_t; \serv{A} | \effsnrt{t}{})] =  1 - \dmmse(\effsnrt{t}{}).
    \end{align}
    To  compute $\E[{\omd(\serv{\Lambda}_{\nu} \; ; \rhot{t+1}{})}]$, we consider the Lebesgue decomposition of $\nu$ into the absolutely continuous part $\nu_{\parallel}$ and the singular part $\nu_{\perp}$:
    \begin{align} 
    \E[{\omd(\serv{\Lambda}_{\nu} \; ; \rhot{t+1}{})}] &\explain{\eqref{eq:optimal-OAMP-denoiser-2}}{=} 1 -  \left( \E \left[ \frac{ \phi(\serv{\Lambda})}{ \phi(\serv{\Lambda}) + \rho} \right] \right)^{-1} \cdot \E \left[\frac{\phi(\serv{\Lambda}_{\nu})}{\phi(\serv{\Lambda}_{\nu})+\rho_{t+1}} \right] \nonumber \\ & \explain{(a)}{=} 1 -  \left( \E \left[ \frac{ \phi(\serv{\Lambda})}{ \phi(\serv{\Lambda}) + \rho} \right] \right)^{-1} \cdot \int_{\R} \frac{\phi(\lambda)}{\phi(\lambda)+\rho_{t+1}}  \nu_{\parallel}(\diff \lambda)  \nonumber\\& \explain{(b)}{=} 1 - \frac{ \E \left[\frac{1}{\phi(\serv{\Lambda})+{\rho_{t+1}}} \right]}{ \E \left[ \frac{\phi(\serv{\Lambda})}{\phi(\serv{\Lambda}) + {\rho_{t+1}}} \right] } \explain{\eqref{eq:rho-omega-recall}}{=} \omega_{t+1}. \label{eq:convenient-formula-2}
    \end{align}
    In the above display, step (a) follows by recalling that $\phi(\lambda) = 0$ for $\nu_\perp$-a.e. $\lambda$ (Lemma~\ref{lem:RMT}, item (2)) and step (b) follows from recalling that the density of $\nu_{\parallel}$ is given by $\mu(\cdot)/\phi(\cdot)$ (Lemma~\ref{lem:RMT}, item (3)).
   Plugging in the identities \eqref{eq:Gauss-channel-convenient} and \eqref{eq:convenient-formula-2} into the expression for $\beta_{t+1}$, we obtain: \begin{equation}\label{Eqn:OAMP_opt_alpha_omega}
         \beta_{t+1}  = \frac{(1 - \dmmse(\effsnrt{t}{})) \cdot \omega_{t+1}}{\sqrt{\effsnrt{t+1}{}}(1-\dmmse(\effsnrt{t+1}))} \explain{}{=} \sqrt{\effsnrt{t+1}{}}.
    \end{equation}
    Similarly we can also simplify $\sigma_{t+1}^2$ by exploiting the identities \eqref{eq:Gauss-channel-convenient} and \eqref{eq:convenient-formula-2}:
    \begin{subequations}\label{Eqn:OAMP_opt_sigma2_omega}
    \begin{align}
        \sigma_{t+1}^2 & \explain{\eqref{eq:Gauss-channel-convenient}}{=} \frac{ \Var[{\omd(\serv{\Lambda}_{\nu} \; ; \rhot{t+1}{})}]  + \tfrac{\dmmse(\effsnrt{t+1})}{1-\dmmse(\effsnrt{t+1})} \cdot \E[{\omd^2(\serv{\Lambda} \; ; \rhot{t+1}{})}]}{\effsnrt{t+1}{}} \\
        & \explain{\eqref{eq:rho-omega-recall}}{=} \frac{\E[{\omd^2(\serv{\Lambda}_{\nu} \; ; \rhot{t+1}{})}] - (\E[{\omd(\serv{\Lambda}_{\nu} \; ; \rhot{t+1}{})}])^2  + \tfrac{1}{\rhot{t+1}} \cdot \E[{\omd^2(\serv{\Lambda} \; ; \rhot{t+1}{})}]}{\effsnrt{t+1}{}} \\
        & \explain{\eqref{eq:convenient-formula-2}}{=} \frac{\E[{\omd^2(\serv{\Lambda}_{\nu} \; ; \rhot{t+1}{})}] + \tfrac{1}{\rhot{t+1}} \cdot \E[{\omd^2(\serv{\Lambda} \; ; \rhot{t+1}{})}]  - \omega_{t+1}^2}{\effsnrt{t+1}{}}.
    \end{align}
    After a straightforward (but tedious) calculation analogous to the one used to derive \eqref{eq:convenient-formula-2}, we obtain the identity:
    \begin{align}
    \E[{\omd^2(\serv{\Lambda}_{\nu} \; ; \rhot{t+1}{})}] + \tfrac{1}{\rhot{t+1}} \cdot \E[{\omd^2(\serv{\Lambda} \; ; \rhot{t+1}{})}] & = \omega_{t+1},
    \end{align}
    which implies that $\sigma_{t+1}^2 = 1 - \omega_{t+1}$. 
    \end{subequations}
    Hence, 
    \begin{equation}\label{Eqn:OAMP_opt_Xt_omega_app}
    \serv{X}_{t+1} | (\serv{X}_{\star}; \serv{A}) \sim \gauss{\sqrt{\effsnrt{t+1}{}} \cdot \serv{X}_{\star}}{1- \effsnrt{t+1}{}},
    \end{equation}
    which verifies that $(\serv{X}_{\star}, \serv{X}_{t+1}; \serv{A})$ forms a Gaussian channel with SNR $\effsnrt{t+1}{}$. 

\paragraph*{Proof of Claim (2)} Recall that the sequences $\{\omega_t\}_{t\in\mathbb{N}}$ and $\{\rho_t\}_{t\in\mathbb{N}}$ are generated via the recursion:
\begin{subequations}\label{Eqn:OAMP_FP_proof_temp1}
\begin{align}
{\rho}_{t} &= \mathscr{F}_2(\omega_{t-1})\bydef\frac{1}{\dmmse({\omega}_{t-1})} - 1,\label{Eqn:OAMP_FP_proof_temp1_a}\\
\omega_t &=\mathscr{F}_1(\rho_{t})\bydef 1 -  \left( \E \left[ \frac{\phi(\serv{\Lambda})}{\phi(\serv{\Lambda}) + {\rho}_{t}} \right] \right)^{-1} \cdot \E \left[\frac{1}{\phi(\serv{\Lambda})+{\rho}_{t}} \right],\label{Eqn:OAMP_FP_proof_temp1_b}
\end{align}
\end{subequations}
where $\omega_{0}=0$.  We first prove the monotonicity of $\{\omega_t\}_{t\in\mathbb{N}}$ using an induction argument. Note that $\omega_t = \mathscr{F}_1\circ \mathscr{F}_2(\omega_{t-1})$. First, note that $\omega_1\ge\omega_0=0$ (recall we have already shown $\omega_t \geq 0$ for all $t \in \N$). Since the composite function $\mathscr{F}_1\circ \mathscr{F}_2$ is non-decreasing (\lemref{lem:scrF-prop}), it follows that $\mathscr{F}_1\circ \mathscr{F}_2(\omega_1)\ge \mathscr{F}_1\circ \mathscr{F}_2(\omega_0)$, namely, $\omega_2\ge\omega_1$. Repeating this argument proves that $\{\omega_t\}_{t\in\mathbb{N}}$ is non-decreasing. Moreover, $\{\omega_t\}_{t\in\mathbb{N}}$ is bounded sequence taking values in $[0,1)$ (proved in the first claim). Hence, $\{\omega_t\}_{t\in\mathbb{N}}$ converges to a limit point $\omega_{\star} \in [0,1]$. We eliminate the possibility that $\omega_{\star} = 1$ by observing that \lemref{lem:scrF-prop} guarantees that:
\begin{align*}
    \omega_{\star} = \lim_{t \rightarrow \infty} \omega_{t+1} = \lim_{t \rightarrow \infty}\mathscr{F}_1\circ \mathscr{F}_2(\omega_t) \leq  \lim_{\rho \rightarrow \infty}  \mathscr{F}_1(\rho)  < 1. 
\end{align*} \lemref{lem:scrF-prop} also guarantees the continuity of $\mathscr{F}_1\circ \mathscr{F}_2$ on $[0,1)$, and hence, the limit point $\omega_{\star}$ satisfies the equation $\omega_{\star} = \mathscr{F}_1\circ \mathscr{F}_2(\omega_{\star})$. 
Since the sequence $\{\rho_t\}_{t\in\mathbb{N}}$ is obtained by applying a continuous and non-decreasing function $\mathscr{F}_2$ to a non-decreasing and convergent sequence $\{\omega_t\}_{t\in\mathbb{N}}$, we conclude that $\{\rho_t\}_{t\in\mathbb{N}}$ is non-decreasing and converges to a limit $\rho_{\star} \explain{def}{=} \mathscr{F}_2(\omega_{\star}) \in (0,\infty)$. Observe that the limit points satisfy the equations: $\omega_{\star} = \mathscr{F}_1\circ \mathscr{F}_2(\omega_{\star}), \; \rho_{\star} = \mathscr{F}_2(\omega_{\star})$, which can be expressed as $\omega_{\star} = \mathscr{F}_1(\rho_{\star}), \; \rho_{\star} = \mathscr{F}_2(\omega_{\star})$ as claimed.  This concludes the proof of the proposition.
\end{proof}

\subsection{Proof of \lemref{lem:scrF-prop}} \label{app:scrF-prop-proof}
\begin{proof}[Proof of \lemref{lem:scrF-prop}] We prove the claims made about $\mathscr{F}_1$ and $\mathscr{F}_2$ below. 
\paragraph*{Properties of $\mathscr{F}_1$} Recall that:
\begin{align} \label{eq:F_1-def-recall}
\mathscr{F}_1(\rho) \explain{def}{=}   1 - \frac{ \E \left[\frac{1}{\phi(\serv{\Lambda})+{\rho}} \right]}{ \E \left[ \frac{\phi(\serv{\Lambda})}{\phi(\serv{\Lambda}) + {\rho}} \right]} \quad \forall \; \rho \; \in \; (0,\infty). 
\end{align}
The continuity of $\mathscr{F}_1(\rho)$ follows from the Dominated Convergence Theorem. To verify the monotonicity of $\mathscr{F}_1$, we calculate its derivative:
\[
\begin{split} \label{eq:scrF-1-recall}
\mathscr{F}_1^\prime(\rho) &=\frac{\E\left[\left(\frac{1}{\phi(\sfLambda)+\rho}\right)^2\right]\cdot\E\left[\frac{\phi(\sfLambda)}{\phi(\sfLambda)+\rho}\right]-\E\left[\frac{1}{\phi(\sfLambda)+\rho}\right]\cdot \E\left[\frac{\phi(\sfLambda)}{(\phi(\sfLambda)+\rho)^2}\right]}{\left(\E\left[\frac{\phi(\sfLambda)}{\phi(\sfLambda)+\rho}\right]\right)^2}\ge0,
\end{split}
\]
where the inequality follows from Chebyshev's association inequality \cite[Theorem 2.14]{Boucheron2013}, which states that for any two random variables $(X,Z)$ such that $Z \geq 0$ and any two non-decreasing functions $f,g$ we have:
\begin{align*}
\E[Z] \E[f(X) g(X) Z] \geq \E[f(X) Z] \cdot \E[g(X) Z].
\end{align*}
Indeed, applying this inequality with $X = \phi(\serv{\Lambda})$, $f(X) = X$, $g(X) = X + \rho$ and $Z = 1/(\phi(\serv{\Lambda}) + \rho)^2$ shows that the numerator of $\mathscr{F}_1^\prime(\rho)$ is non-negative. Finally, we check that $\mathscr{F}_1: (0,\infty) \mapsto [0, 1)$. Notice from \eqref{eq:F_1-def-recall} that for any $\rho \in (0,\infty)$,  $\mathscr{F}_1(\rho) < 1$. On the other hand, since $\mathscr{F}_1(\rho) \geq 0$ since, thanks to the monotonicity of $\mathscr{F}_1$ we have:
\begin{align*}
\mathscr{F}_1(\rho) \geq \mathscr{F}_1(0) = 1 - \E\left[ \frac{1}{\phi(\serv{\Lambda})} \right] \explain{(a)}{=} 1 - \nu_{\parallel}(\R) \geq 0.
\end{align*}
In the above display, the step (a) follows by recalling from Lemma~\ref{lem:RMT} (item (3)) that the density of $\nu_{\parallel}$, the absolutely continuous part of $\nu$ is given by $\mu(\cdot)/\phi(\cdot)$ and by observing that $\nu_{\parallel}(\R) \leq \nu(\R) = 1$. Last we compute:
\begin{align*}
\lim_{\rho \rightarrow \infty}  \mathscr{F}_1(\rho) & = 1 - \lim_{\rho \rightarrow \infty} \frac{ \E \left[\frac{1}{\phi(\serv{\Lambda})+{\rho}} \right]}{ \E \left[ \frac{\phi(\serv{\Lambda})}{\phi(\serv{\Lambda}) + {\rho}} \right]} =1 -  \lim_{\rho \rightarrow \infty} \frac{ \E \left[\frac{1}{\rho^{-1} \cdot \phi(\serv{\Lambda})+{1}} \right]}{ \E \left[ \frac{\phi(\serv{\Lambda})}{\rho^{-1} \cdot \phi(\serv{\Lambda}) + {1}} \right]} = 1 - \frac{1}{\E[\phi(\serv{\Lambda})]} < 1,
\end{align*}
as claimed.
\paragraph*{Properties of $\mathscr{F}_2$} Recall that:
\begin{align*}
\mathscr{F}_2(\omega) \explain{def}{=} \frac{1}{\dmmse({\omega})} - 1.
\end{align*}
 The continuity and monotonicity of the DMMSE function $\dmmse(\cdot)$ (see From Lemma \ref{lem:dmmse-monotonicity} in Appendix \ref{appendix:gauss-channel}) implies that $\mathscr{F}_2$  continuous and non-decreasing. Next, we verify that $\mathscr{F}_2: [0,1) \mapsto (0,\infty)$. Notice that for any $\omega \in [0,1)$, $\mathscr{F}_2(\omega) \geq \mathscr{F}_2(0) = (\dmmse(0))^{-1} -1 = (\mmse(0))^{-1} - 1 > 0$ (recall that $\mmse(0) \in (0,1)$ was one of the hypothesis assumed in Proposition~\ref{prop:optimal-OAMP-SE}). Moreover, for any $\omega < 1$, $\mathscr{F}_2(\omega) < \infty$ since:
 \begin{align*}
 \dmmse(\omega) \geq \mmse(\omega) \explain{(a)}{>} \mmse(1) = 0,
 \end{align*}
 where (a) follows from the fact that $\mmse(\cdot)$ is strictly decreasing if $\mmse(0) \in (0,1)$ (see \factref{fact:mmse-monotonic} in \appref{appendix:gauss-channel}).  Hence, $\mathscr{F}_2: [0,1) \mapsto (0, \infty)$. This concludes the proof of the lemma. 
\end{proof}
\section{Proof of the Optimality Result (Theorem~\ref{thm:optimality})}
\label{appendix:optimality-proof}

\subsection{Proof of \propref{prop:LOAMP-reduction}} \label{appendix:LOAMP-reduction}
We consider a given iterative algorithm (Definition~\ref{def:itr}) of the form:
\begin{align} \label{eq:itr-recall}
\iter{\vr}{t} & = \mfunc_t(\mY) \cdot \fnonlin_t(\iter{\vr}{1}, \dotsc, \vr_{t-1}; \va) +  \gnonlin_t(\iter{\vr}{1}, \dotsc, \vr_{t-1}; \va) \quad \forall \; t \in \N,
\end{align}
which returns the following estimator after $t$ iterations:
\begin{align} \label{eq:optimal-gfom-estimator}
\hat{\vr}_t & = \psi_t(\iter{\vr}{1}, \dotsc, \iter{\vr}{t}; \va),
\end{align}
Our goal will be to construct a {lifted OAMP algorithm} (\defref{def:LOAMP}), which can approximate the estimator above. Throughout the proof, we will refer to the given iterative algorithm \eqref{eq:itr-recall} as the \emph{target algorithm}. Our construction of the desired lifted OAMP algorithm proceeds in two steps.
\paragraph*{Step 1 (Divergence Removal)} We first implement the target algorithm \eqref{eq:itr-recall} using an \emph{intermediate OAMP algorithm} which takes the form:
\begin{align} \label{eq:div-free-algo}
\iter{\vv}{t,S} & = \dfnew{\mfunc}_{S}(\mY)  \cdot \dfnew{\Fnonlin}_t(\iter{\vv}{<t,\bdot}; \va) \quad \forall \;  t \in  \N, \; S \subset \N, \; 1 \leq |S| < \infty.
\end{align}
 In the above display,
\begin{enumerate}
\item The iterates generated by the algorithm are indexed by two indices, a time index $t$, which takes values in $\N$, and a set index $S$, which can be any finite and non-empty subset of $\N$. 
\item For any finite subset $S = \{s_1, s_2, \dotsc, s_k \}$ with sorted elements $s_1 < s_2 < \dotsb < s_k$, we define the matrix denoiser:
\begin{align*}
\mfunc_{S}(\lambda) & \explain{def}{=} \mfunc_{s_k}(\lambda) \cdot \mfunc_{s_{k-1}}(\lambda) \cdot \dotsb \cdot \mfunc_{s_1}(\lambda), \quad \dfnew{\mfunc}_S(\lambda) \explain{def}{=} \mfunc_{S}(\lambda) - \E_{\serv{\Lambda} \sim \mu} \left[ \mfunc_{S}(\serv{\Lambda})\right]  \quad \forall \; \lambda \; \in \; \R.
\end{align*}
\item The iterates are generated in the following sequence:
\begin{enumerate}
\item In step (1), we generate the iterate $\iter{\vv}{1,\{1\}}$. 
\item In step (2), we generate the new iterates $\iter{\vv}{1,\{1,2\}}$ and $\iter{\vv}{2,\{1\}}, \iter{\vv}{2,\{1,2\}}$.
\item At step $(t)$, we generate the new iterates:
\begin{align*}
\{\iter{\vv}{1,S}: S \subset [t], t \in S\},  \dotsc, \{\iter{\vv}{t-1,S}: S \subset [t], t \in S\}, \{\iter{\vv}{t,S}: S \subset [t]\}. 
\end{align*}
We use the notation $\iter{\vv}{ \leq t, \bdot}$ to denote the collection of iterates that have been generated at the end of $t$ steps:
\begin{align*}
\iter{\vv}{ \leq t, \bdot} & = \{\iter{\vv}{s,S} : s \leq t, \; S \subset [t]\}.
\end{align*}
The notation $\iter{\vv}{< t, \bdot}$ denotes the collection of iterates that have been generated before the $t$th step, which is the same as the collection $\iter{\vv}{\leq (t-1), \bdot}$.
\end{enumerate}
\item The functions $(\Fnonlin_t)_{t\in \N}$ are continuously differentiable, Lipschitz and satisfy the divergence-free requirement:
\begin{align*}
\E[\partial_{s,S} \dfnew{\Fnonlin}_t(\serv{V}_{<t, \bdot}; \serv{A})] & = 0 \quad \forall \; S \subset [t-1], \; s < t,
\end{align*}
where $(\serv{X}_{\star}, \{\serv{V}_{t,S}: t \in \N, S \subset \N\} ; \serv{A})$ are the state evolution random variables associated with the intermediate OAMP algorithm in \eqref{eq:div-free-algo}. 
\item Furthermore, the iterates generated by the intermediate OAMP algorithm in \eqref{eq:div-free-algo} can approximate the iterates of the target algorithm in \eqref{eq:itr-recall}. Specifically, for each $t \in \N$, there is a postprocessing function $\proc_{t}:\R^{t\cdot 2^t + \auxdim} \mapsto \R$, which is continuously differentiable and Lipschitz, which satisfies:
\begin{align} \label{eq:div-free-approx}
\plim_{\dim \rightarrow \infty} \frac{\|\iter{\vr}{t} - \proc_t(\iter{\vv}{\leq t,\bdot}, \va) \|^2}{\dim} & =  0.
\end{align}
\end{enumerate}
The construction of the intermediate OAMP algorithm with the properties stated above is presented in the proof of the following \lemref{lem:intermediate-OAMP}, whose proof is deferred to \sref{sec:div-removal}.
\begin{lemma}\label{lem:intermediate-OAMP} For any target algorithm of the form \eqref{eq:itr-recall}, there is an intermediate OAMP algorithm of the form \eqref{eq:div-free-algo}, whose iterates can approximate the iterates generated by the target algorithm in the sense of \eqref{eq:div-free-approx}.
\end{lemma}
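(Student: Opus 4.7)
I would prove the lemma by induction on $t$. At each step, I will simultaneously design the iterate denoiser $\dfnew{\Fnonlin}_t$, generate the new iterates $\vv_{s,S}$ indexed by $s \leq t$ and $S \subseteq [t]$ with $t \in S$ (together with $\vv_{t,S}$ for all $S \subseteq [t]$), and construct the post-processing function $\proc_t$. The inductive hypothesis asserts that for all $s < t$, the previously designed $\dfnew{\Fnonlin}_s$ is continuously differentiable, Lipschitz, and divergence-free, and the post-processing function $\proc_s$ satisfies $\|\vr_s - \proc_s(\vv_{\leq s,\bdot}; \va)\|^2/\dim \pc 0$. The base case $t=1$ is trivial: set $\dfnew{\Fnonlin}_1 \explain{def}{=} \fnonlin_1(\va)$ (the divergence-free constraint is vacuous) and read off $\proc_1$ from the identity $\vr_1 = \vv_{1,\{1\}} + \E[\mfunc_1(\serv{\Lambda})] \fnonlin_1(\va) + \gnonlin_1(\va)$.

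\textbf{Inductive step.} Using the induction hypothesis, I would define the candidate iterate denoiser
$$\tilde{\fnonlin}_t(\vv_{<t,\bdot}; \va) \explain{def}{=} \fnonlin_t\bigl(\proc_1(\vv_{\leq 1,\bdot}; \va), \dotsc, \proc_{t-1}(\vv_{\leq t-1,\bdot}; \va); \va\bigr),$$
and its analogue $\tilde{\gnonlin}_t$ for $\gnonlin_t$. The divergence-free correction
$$\dfnew{\Fnonlin}_t(\vv_{<t,\bdot}; \va) \explain{def}{=} \tilde{\fnonlin}_t(\vv_{<t,\bdot}; \va) - \sum_{\substack{s<t \\ S \subseteq [t-1]}} c^t_{s,S} \cdot \vv_{s,S}, \qquad c^t_{s,S} \explain{def}{=} \E[\partial_{s,S} \tilde{\fnonlin}_t(\serv{V}_{<t,\bdot}; \serv{A})],$$
inherits continuous differentiability and Lipschitz regularity from $\tilde{\fnonlin}_t$ (only finitely many constant-in-$\vv_{s,S}$ shifts are subtracted) and satisfies the divergence-free constraint by construction. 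The new iterates $\vv_{t,S}$ and $\vv_{s,S}$ with $t \in S$ are then produced by the OAMP update rule \eqref{eq:div-free-algo}.

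\textbf{Recovery of $\vr_t$.} The construction of $\proc_t$ rests on the algebraic identity, valid for $t \notin S$,
$$\mfunc_t(\mY) \cdot \vv_{s,S} \;=\; \vv_{s, S \cup \{t\}} + \E[\mfunc_{S\cup\{t\}}(\serv{\Lambda})] \cdot \dfnew{\Fnonlin}_s(\vv_{<s,\bdot};\va) - \E[\mfunc_S(\serv{\Lambda})] \cdot \bigl(\vv_{s,\{t\}} + \E[\mfunc_t(\serv{\Lambda})] \dfnew{\Fnonlin}_s(\vv_{<s,\bdot};\va)\bigr),$$
which follows from $\mfunc_{S\cup\{t\}}(\lambda) = \mfunc_t(\lambda)\mfunc_S(\lambda)$ and the definition $\dfnew{\mfunc}_R = \mfunc_R - \E[\mfunc_R(\serv{\Lambda})]$. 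Combined with $\mfunc_t(\mY) \dfnew{\Fnonlin}_t = \vv_{t,\{t\}} + \E[\mfunc_t(\serv{\Lambda})] \dfnew{\Fnonlin}_t$, this lets me expand the target update $\mfunc_t(\mY)\tilde{\fnonlin}_t(\vv_{<t,\bdot};\va) + \tilde{\gnonlin}_t(\vv_{<t,\bdot};\va)$ as an explicit linear combination of iterates $\vv_{\leq t,\bdot}$ (with deterministic coefficients built from expected traces) plus a continuously differentiable Lipschitz function of $(\vv_{<t,\bdot}; \va)$. I would define $\proc_t$ to be this expression; the approximation error $\|\vr_t - \proc_t(\vv_{\leq t,\bdot};\va)\|^2/\dim \pc 0$ then follows from the induction hypothesis, the Lipschitz continuity of $\fnonlin_t, \gnonlin_t, \proc_{<t}$, and the almost-sure boundedness of $\|\mfunc_t(\mY)\|_{\op}$ implied by \assumpref{assump:noise}.

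\textbf{Main obstacle.} The central difficulty is the combinatorial bookkeeping, namely verifying that after all trace-subtractions and divergence corrections the right-hand side \emph{closes} within the collection $\{\vv_{s,S}: s \leq t, \; S \subseteq [t]\}$, so that $\proc_t$ does not reference quantities outside this set. This is precisely what forces the subset-indexed parametrization of iterates in \eqref{eq:div-free-algo}. Once the closure identity is established (by induction on $|S|$ applied to the algebraic identity above), the remaining analytic steps---Lipschitz propagation, verifying the divergence-free property, and transferring state-evolution convergence via \thref{thm:SE}---are routine.
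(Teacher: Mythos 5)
Your proposal follows essentially the same inductive construction as the paper: at each step you compose $\fnonlin_t$ with the previously-built post-processing maps $\proc_{<t}$ to form the intermediate-algorithm iterate denoiser, subtract the expected derivatives to enforce the divergence-free constraint, use the algebraic identity $\mfunc_t(\mY)\,\dfnew{\mfunc}_S(\mY) = \dfnew{\mfunc}_{S\cup\{t\}}(\mY) + \E[\mfunc_{S\cup\{t\}}(\serv{\Lambda})]\mI - \E[\mfunc_S(\serv{\Lambda})]\bigl(\dfnew{\mfunc}_{\{t\}}(\mY) + \E[\mfunc_t(\serv{\Lambda})]\mI\bigr)$ to close the expansion within the subset-indexed iterate family, and then read off $\proc_t$. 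This matches the paper's proof in Appendix \ref{sec:div-removal} both in the decomposition and in the resulting post-processing formula, so no substantive differences to report.
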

\paragraph*{Step 2: Polynomial Approximation} Next, by approximating the functions $(\mfunc_{S})_{S \subset \N}$ by polynomials, we construct a lifted OAMP algorithm, whose iterates can approximate the iterates generated by the intermediate OAMP algorithm \eqref{eq:div-free-algo}. This construction is presented in the following \lemref{lem:poly-approx}, whose proof is deferred to \sref{sec:poly-approx-proof-sec}.

\begin{lemma} \label{lem:poly-approx-2} For each $\degree \in \N$, there is a degree-$\degree$ lifted AMP algorithm (\defref{def:LOAMP}):
\begin{align} \label{eq:LAMP-constructed}
\dup{\iter{\vw}{t,i}}{\degree} & = \left( \mY^i - \E_{\serv{\Lambda} \sim \mu}[\serv{\Lambda}^i] \cdot \mI_{\dim} \right) \cdot \dup{\dfnew{\Fnonlin}}{\degree}_{t}(\dup{\iter{\vw}{1,\bdot}}{\degree}, \dotsc, \dup{\iter{\vw}{t-1,\bdot}}{\degree}; \va) \quad t \in \N, \; i \in [\degree],
\end{align}
which can approximate the iterates produced by the intermediate OAMP algorithm \eqref{eq:div-free-algo} in the following sense: for any $\degree \in \N$, $t \in \N$, and any finite subset $S \subset \N$, there is a homogeneous linear postprocessing function $\dup{\hnonlin_{t,S}}{\degree}: \R^{\degree} \mapsto \R$ which has the property that the vector $\dup{\hnonlin}{\degree}_{t,S}(\dup{\iter{\vw}{t,\bdot}}{\degree})$ approximates the intermediate OAMP iterate $\iter{\vv}{t,S}$ in the sense:
\begin{align} \label{eq:approx}
\lim_{\degree \rightarrow \infty} \plimsup_{\dim \rightarrow \infty} \frac{\| \dup{\hnonlin_{t,S}}{\degree}( \dup{\iter{\vw}{t,\bdot}}{\degree}) - \iter{\vv}{t,S} \|^2}{\dim} & = 0
\end{align}
\end{lemma}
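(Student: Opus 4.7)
The plan is to construct $(\dup{\dfnew{\Fnonlin}}{\degree}_t)_{t \in \N}$ and $(\dup{\hnonlin}{\degree}_{t,S})$ inductively on $t$, leveraging two facts. First, the linear span of $\{\dup{\vw_{t,1}}{\degree},\dotsc,\dup{\vw_{t,\degree}}{\degree}\}$ equals $\{Q(\mY) \dup{\dfnew{\Fnonlin}}{\degree}_t(\dup{\vw_{<t,\bdot}}{\degree};\va) : Q \text{ a trace-free polynomial of degree } \leq \degree\}$, because $\sum_{i=1}^\degree c_i(\mY^i - \E_{\serv{\Lambda}\sim\mu}[\serv{\Lambda}^i]\mI_\dim)$ ranges over exactly these polynomials as $(c_1,\dotsc,c_\degree)$ varies. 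Second, since $\|\mY\|_{\op}$ is bounded by a deterministic constant $K$ with probability tending to $1$ and each $\dfnew{\mfunc}_S$ is continuous, Weierstrass's theorem supplies a trace-free polynomial $\dup{P_{t,S}}{\degree}$ of degree at most $\degree$ with $\sup_{|\lambda| \leq K}|\dup{P_{t,S}}{\degree}(\lambda) - \dfnew{\mfunc}_S(\lambda)| \to 0$ as $\degree \to \infty$; reading off its coefficients in the basis $\{\lambda^i - \E[\serv{\Lambda}^i]\}_{i=1}^\degree$ specifies the linear map $\dup{\hnonlin}{\degree}_{t,S}$.

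\paragraph{} In the base case $t=1$ set $\dup{\dfnew{\Fnonlin}}{\degree}_1 \explain{def}{=} \dfnew{\Fnonlin}_1$, which is trivially divergence-free. For the inductive step, assume the hypothesis holds at times $1,\dotsc,t-1$ and define the \emph{uncorrected} step-$t$ denoiser
$$G_t^{(\degree)}(\dup{\vw_{<t,\bdot}}{\degree}; \va) \explain{def}{=} \dfnew{\Fnonlin}_t\Big(\big\{\dup{\hnonlin}{\degree}_{s,S}(\dup{\vw_{s,\bdot}}{\degree})\big\}_{s<t,\,S\subset[t-1]}; \va\Big),$$
so that the induction hypothesis together with the Lipschitz continuity of $\dfnew{\Fnonlin}_t$ imply that $G_t^{(\degree)}(\dup{\vw_{<t,\bdot}}{\degree}; \va)$ approximates $\dfnew{\Fnonlin}_t(\iter{\vv}{<t,\bdot}; \va)$ in $\|\cdot\|^2/\dim$. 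Combined with the operator-norm bound $\|\dup{P_{t,S}}{\degree}(\mY) - \dfnew{\mfunc}_S(\mY)\|_{\op} \to 0$, a two-term decomposition of the error into a polynomial-approximation piece and an iterate-approximation piece then yields \eqref{eq:approx} for step $t$, provided the actual iterate denoiser $\dup{\dfnew{\Fnonlin}}{\degree}_t$ differs from $G_t^{(\degree)}$ by a vanishing amount.

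\paragraph{} The main technical hurdle is that $G_t^{(\degree)}$ will typically fail the divergence-free requirement of Definition~\ref{def:LOAMP}, which must hold \emph{exactly} for every finite $\degree$, not merely in the limit. We resolve this via a standard Onsager-style correction: writing $\dup{\serv{V}_{s,S}}{\degree} \explain{def}{=} \dup{\hnonlin}{\degree}_{s,S}(\dup{\serv{W}_{s,\bdot}}{\degree})$, define
$$\dup{\dfnew{\Fnonlin}}{\degree}_t(\dup{\vw_{<t,\bdot}}{\degree}; \va) \explain{def}{=} G_t^{(\degree)}(\dup{\vw_{<t,\bdot}}{\degree}; \va) - \sum_{s<t}\sum_{S'\subset[t-1]} \beta^{(\degree)}_{s,S',t}\,\dup{\hnonlin}{\degree}_{s,S'}(\dup{\vw_{s,\bdot}}{\degree}),$$
with $\beta^{(\degree)}_{s,S',t} \explain{def}{=} \E[\partial_{s,S'}\dfnew{\Fnonlin}_t(\dup{\serv{V}_{<t,\bdot}}{\degree}; \serv{A})]$. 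A direct chain-rule computation, exploiting the homogeneity of the linear maps $\dup{\hnonlin}{\degree}_{s,S'}$, then gives $\E[\partial_{s,i}\dup{\dfnew{\Fnonlin}}{\degree}_t(\dup{\serv{W}_{<t,\bdot}}{\degree}; \serv{A})] = 0$ for all $s<t$ and $i \in [\degree]$. To show the correction is asymptotically negligible, apply Corollary~\ref{cor:SE-LOAMP} to the lifted OAMP algorithm built so far: this yields $\dup{\serv{V}_{<t,\bdot}}{\degree} \wc \serv{V}_{<t,\bdot}$ as $\degree \to \infty$, whence $\beta^{(\degree)}_{s,S',t} \to \E[\partial_{s,S'}\dfnew{\Fnonlin}_t(\serv{V}_{<t,\bdot}; \serv{A})] = 0$ by the divergence-freeness of the intermediate denoiser, while $\|\dup{\hnonlin}{\degree}_{s,S'}(\dup{\vw_{s,\bdot}}{\degree})\|/\sqrt{\dim}$ remains bounded in probability. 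This closes the induction and establishes~\eqref{eq:approx}.
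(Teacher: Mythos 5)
Your overall architecture is sound: the trace-free polynomial parametrization of the span of $\{\dup{\vw_{t,i}}{\degree}\}_{i\le\degree}$, the Weierstrass approximation of $\dfnew{\mfunc}_S$, the chain-rule computation showing that your Onsager correction makes $\dup{\dfnew{\Fnonlin}}{\degree}_t$ exactly divergence-free, and the operator-norm/iterate-error decomposition are all correct and in fact closely mirror what the paper does. Your organization differs in one respect worth noting: the paper splits the argument into (i) constructing a degree-$\degree$ polynomial OAMP approximation to the intermediate algorithm using the Lemma~\ref{lem:poly-approx} machinery, and then (ii) showing that this polynomial OAMP can be reconstructed \emph{exactly} (for every $\dim$) by a lifted OAMP with linear postprocessing. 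Your proof fuses these into one pass, which is legitimate, but it means the two distinct limits ($\dim\to\infty$ and $\degree\to\infty$) get entangled in a single step.

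That entanglement is where the gap lies. You justify $\beta^{(\degree)}_{s,S',t}\to 0$ by invoking Corollary~\ref{cor:SE-LOAMP} to conclude $\dup{\serv{V}_{<t,\bdot}}{\degree}\wc\serv{V}_{<t,\bdot}$ as $\degree\to\infty$. But Corollary~\ref{cor:SE-LOAMP} is a state-evolution result about the $\dim\to\infty$ limit at \emph{fixed} $\degree$; it says nothing about how the state evolution random variables $\dup{\serv{V}_{<t,\bdot}}{\degree}$ (which are $\dim$-independent objects) behave as $\degree\to\infty$. What you actually need is the analogue of Claim~\ref{claim:SE-conv} from the proof of Lemma~\ref{lem:poly-approx}: an inductive argument that the coefficients $\alpha$, $\Sigma$ appearing in the SE recursion depend continuously on the degree-$\degree$ polynomial approximants, and hence that the SE laws converge as the polynomials converge. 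This argument is nontrivial because the divergence corrections $\beta^{(\degree)}$ themselves enter the definition of the degree-$\degree$ algorithm, so the claim that $\beta^{(\degree)}\to 0$ cannot be pulled out of thin air; it must be established jointly with the SE convergence by induction on $t$. Supplying this (or, alternatively, first building the paper's polynomial OAMP approximation and then quoting the exact-lifting step, which sidesteps the entanglement) would close the gap.
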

With these preliminary results, we can now present the proof of \propref{prop:LOAMP-reduction}. 
\begin{proof}[Proof of \propref{prop:LOAMP-reduction}] Throughout the proof, we shorthand an approximation result like \eqref{eq:div-free-approx} using the notation:
\begin{align} \label{eq:intermediate-OAMP-approx-shorthand}
\iter{\vr}{t} & \explain{$\dim\rightarrow \infty$}{\simeq} \proc_{t}(\iter{\vv}{\leq t, \bdot}; \va) \quad \forall \; t \in  \N.
\end{align}
Likewise, we shorthand an approximation result like \eqref{eq:approx} using:
\begin{align} \label{eq:poly-approx-shorthand}
\iter{\vv}{t,S}  \quad  \explain{$\dim, \degree \rightarrow \infty$}{\simeq} \dup{\iter{\vv}{t,S}}{\degree} \quad \forall \; t \in  \N, \; S \subset \N, \; 1 \leq |S| < \infty.
\end{align}
\lemref{lem:intermediate-OAMP} and \lemref{lem:poly-approx-2} together show that the lifted OAMP algorithm on \eqref{eq:LAMP-constructed} can approximate the iterates of the target algorithm in \eqref{eq:itr-recall}:
\begin{align} \label{eq:optimal-itr-to-LOAMP}
\iter{\vr}{t} \stackrel{(a)}{\explain{$\dim\rightarrow \infty$}{\simeq}} \proc_{t}(\iter{\vv}{1,\bdot}, \dotsc, \iter{\vv}{t,\bdot}; \va) \stackrel{(b)}{\explain{$\dim,\degree \rightarrow \infty$}{\simeq}} \proc_{t}(\{\dup{\hnonlin}{\degree}_{s,R}(\dup{\iter{\vw}{s, \bdot}}{\degree}): s \leq t, R \subset [t]\} ; \va) \quad \forall \; t \in \N.
\end{align}
In the above display, the approximation in step (a) follows from \eqref{eq:div-free-approx} and the approximation in step (b) follows by replacing each iterate $\iter{\vv}{s,R}, \; s \in [t], R \subset [t]$ of the intermediate OAMP algorithm by its approximation in \eqref{eq:approx}, and exploiting the fact that $\proc_t$ is a Lipschitz function. For convenience, we define the composite postprocessing functions $\{\dup{\proc}{\degree}_{t}: t \in \N, \degree \in \N\}$:
\begin{align*}
\dup{\proc}{\degree}_{t}(w_{\leq t, \bdot}; \aux) \explain{def}{=} \proc_{t}(\{\dup{\hnonlin}{\degree}_{s,R}({w}_{s, \bdot}): s \leq t, R \subset [t]\} ; \aux) \quad \forall \; w_{\leq t, \bdot} \in \R^{t \degree}, \; \aux \in \R^{\auxdim}.
\end{align*}
With this definition, \eqref{eq:optimal-itr-to-LOAMP} can be expressed as:
\begin{align*}
\iter{\vr}{t} \explain{$\dim,\degree \rightarrow \infty$}{\simeq} \dup{\proc}{\degree}_{t}(\dup{\iter{\vw}{\leq t, \bdot}}{\degree}; \va) \quad \forall \; t  \in \N.
\end{align*}
Plugging in the above approximation in \eqref{eq:optimal-gfom-estimator}, we have the following approximation for the estimator $\hat{\vr}_t$ returned by the target algorithm:
\begin{align} \label{eq:optimal-gfom-estimator-final-approx}
\hat{\vr}_t \quad \explain{$\dim,\degree \rightarrow \infty$}{\simeq}  \quad \dup{\tilde{\vw}_{t}}{\degree} \; \explain{def}{=} \; \psi_t(\dup{\proc}{\degree}_{1}(\dup{\iter{\vw}{1, \bdot}}{\degree}; \va), \dup{\proc}{\degree}_{2}(\dup{\iter{\vw}{\leq 2, \bdot}}{\degree}; \va), \dotsc, \dup{\proc}{\degree}_{t}(\dup{\iter{\vw}{\leq t, \bdot}}{\degree}; \va) ; \va).
\end{align}
Hence, we have constructed an estimator $\dup{\tilde{\vw}_t}{\degree}$ which can be computed by running a degree-$\degree$ lifted OAMP algorithm for $t$ steps, and approximates the estimator $\hat{\vr}_t$ returned by the target iterative algorithm after $t$ steps. This proves the claim of \propref{prop:LOAMP-reduction}. 
\end{proof}
We now present the proofs of \lemref{lem:intermediate-OAMP} and \lemref{lem:poly-approx-2}.

\subsubsection{Divergence Removal (Proof of \lemref{lem:intermediate-OAMP})} \label{sec:div-removal}
\begin{proof}[Proof of \lemref{lem:intermediate-OAMP}] Consider a target iterative algorithm of the form:
\begin{align} \label{eq:itr-intermediate-OAMP}
\iter{\vr}{t} & = \mfunc_t(\mY) \cdot \fnonlin_t(\iter{\vr}{1}, \dotsc, \iter{\vr}{t-1}; \va) +  \gnonlin_t(\iter{\vr}{1}, \dotsc, \iter{\vr}{t-1}; \va) \quad \forall \; t \in \N.
\end{align}
Our goal is to construct an intermediate OAMP algorithm of the form:
\begin{align}  \label{eq:intermediate-OAMP}
\iter{\vv}{t,S} & = \dfnew{\mfunc}_{S}(\mY)  \cdot \dfnew{\Fnonlin}_t(\iter{\vv}{<t,\bdot}; \va) \quad \forall \;  t \in  \N, \; S \subset \N, \; 1 \leq |S| < \infty.
\end{align}
For each $t \in \N$, we wish to design a postprocessing function $\proc_{t}:\R^{t\cdot 2^t + \auxdim} \mapsto \R$  which allows us to approximate the iterates of the target algorithm with the iterates of the intermediate OAMP algorithm in the sense:
\begin{align}
\plim_{\dim \rightarrow \infty} \frac{\|\iter{\vr}{t} - \proc_t(\iter{\vv}{\leq t,\bdot}, \va) \|^2}{\dim} \rightarrow 0 \quad \forall \; t \in \N.
\end{align}
We will shorthand approximation statements of the form \eqref{eq:div-free-approx} using the notation:
\begin{align} \label{eq:intermediate-OAMP-approx}
\iter{\vr}{t} & \explain{$\dim\rightarrow \infty$}{\simeq} \proc_{t}(\iter{\vv}{\leq t, \bdot}; \va)   \quad \forall \; t \in \N.
\end{align}
The construction proceeds by induction. Suppose that we have specified the first $t$ functions $(\dfnew{\Fnonlin}_s)_{s \leq t}$ and the post-processing functions $(\proc_{s})_{s \leq t}$ for $t$ iterations. We now extend our construction to the $(t+1)$th iteration. Recall the update rule for $\iter{\vr}{t+1}$:
\begin{align*}
&\iter{\vr}{t+1}  = \mfunc_{t+1}(\mY) \cdot \fnonlin_{t+1}(\iter{\vr}{\leq t}; \va) +  \gnonlin_{t+1}(\iter{\vr}{ \leq t}; \va) \\
& \explain{$\dim \rightarrow \infty$}{\simeq} \;   \mfunc_{t+1}(\mY)  \fnonlin_{t+1}(\proc_{1}(\iter{\vv}{\leq 1,\bdot}; \va), \dotsc, \proc_{t}(\iter{\vv}{\leq t,\bdot}; \va)) +  \gnonlin_{t+1}(\proc_{1}(\iter{\vv}{\leq 1,\bdot}; \va), \dotsc, \proc_{t}(\iter{\vv}{\leq t,\bdot}; \va)),
\end{align*}
where the approximation in the last step follows from the induction hypothesis $\iter{\vr}{s} \explain{$\dim \rightarrow \infty$}{\simeq} \quad \proc_{s}(\iter{\vv}{\leq s, \bdot}; \va)$ for any $s \leq t$, along with the fact that $\|\mfunc_{t}(\mY)\|_{\op}$ is uniformly bounded as $\dim \rightarrow \infty$ and the functions $\fnonlin_{t+1},\gnonlin_{t+1}$ are Lipschitz. We introduce the composite functions:
\begin{align*}
{\Fnonlin}_{t+1}(v_{\leq t, \bdot}; \aux) &\explain{def}{=} \fnonlin_{t+1}(\proc_{1}(v_{\leq 1,\bdot}; \aux), \proc_{2}(v_{\leq 2,\bdot}; \aux), \dotsc, \proc_{t}(v_{\leq t,\bdot}; \aux); \aux) \quad v_{\leq t,\bdot} \in \R^{t2^t}, \; \aux \in \R^{\auxdim}, \\
{\Gnonlin}_{t+1}(v_{\leq t, \bdot}; \aux) &\explain{def}{=} \gnonlin_{t+1}(\proc_{1}(v_{\leq 1,\bdot}; \aux), \proc_{2}(v_{\leq 2,\bdot}; \aux), \dotsc, \proc_{t}(v_{\leq t,\bdot}; \aux); \aux) \quad v_{\leq t,\bdot} \in \R^{t2^t}, \; \aux \in \R^{\auxdim}.
\end{align*}
With these definitions we have that,
\begin{align} \label{eq:given-algo-approx}
\iter{\vr}{t+1}  &  \explain{$\dim \rightarrow \infty$}{\simeq} \quad \mfunc_{t+1}(\mY) \cdot {\Fnonlin}_{t+1}(\iter{\vv}{\leq t,\bdot}; \va) +  {\Gnonlin}_{t+1}(\iter{\vv}{\leq t,\bdot}; \va).
\end{align}
The above expression suggests that we should use the function ${\Fnonlin}_{t+1}$ in the $t+1$ step of the intermediate OAMP algorithm. However, since $\Fnonlin_{t+1}$ is not divergence-free (cf. Definition~\ref{Def:OAMP_main}), we define:
 \begin{align*}
\dfnew{\Fnonlin}_{t+1}(v_{\leq t, \bdot}; \aux) \explain{def}{=} \Fnonlin_{t}(v_{ \leq t,\bdot}; \aux) - \sum_{s = 1}^{t} \sum_{R \subset [t]} b_{s,R} \cdot v_{s,R} \quad \forall  \;  v_{\leq t,\bdot}  \in \R^{t\cdot 2^{t}},  \; \aux \in \R^{\auxdim}.
\end{align*}
In the above display, we introduced the scalars:
\begin{align*}
{b}_{s,R} \explain{def}{=} \E\left[\partial_{s,R} \Fnonlin_{t+1}(\serv{V}_{\leq t,\bdot}; \serv{A}) \right] \quad \forall \; s \leq t, \; R \subset [t],
\end{align*}
where $(\serv{X}_{\star}, \{\serv{V}_{s,R}: s \leq t, R \subset [t]\} \; ; \serv{A})$ denote the state evolution random variables corresponding to the iterates of the intermediate OAMP algorithm that have already been constructed (by the induction hypothesis). Notice that by construction, $\dfnew{\Fnonlin}_{t+1}$ satisfies the divergence-free requirement:
\begin{align*}
\E[\partial_{s,S} \dfnew{\Fnonlin}_{t+1}(\serv{V}_{\leq t; \bdot}; \serv{A})] &  = 0 \quad \forall \; s \leq t, \; S \subset [t].
\end{align*}
We can now extend our construction of the intermediate OAMP algorithm to the $(t+1)$th step:
\begin{align} \label{eq:constructed-LAMP-new-iterate}
\iter{\vv}{t+1,S} & = \dfnew{\mfunc}_{S}(\mY) \cdot \dfnew{\Fnonlin}_{t+1}(\iter{\vv}{\leq t,\bdot}; \va)  \quad \forall \; S \subset \N \quad 1 \leq |S| < \infty. 
\end{align}
Next, we define the postprocessing function $H_{t+1}$. For convenience, we introduce the vectors $(\dfnew{\mF}_{\tau})_{\tau\in [t+1]}$:
\begin{align} \label{eq:boldF-def}
\dfnew{\mF}_{\tau} \explain{def}{=} \dfnew{\Fnonlin}_{\tau}(\iter{\vv}{< \tau,\bdot}; \va) \quad \forall \; \tau \in [t+1].
\end{align}
With these definitions, we can express the approximation in \eqref{eq:given-algo-approx} as:
\begin{align}
\iter{\vr}{t+1}  &  \explain{$\dim \rightarrow \infty$}{\simeq} \mfunc_{t+1}(\mY) \cdot {\Fnonlin}_{t+1}(\iter{\vv}{\leq t,\bdot}; \va) +  {\Gnonlin}_{t+1}(\iter{\vv}{\leq t,\bdot}; \va) \nonumber \\
& {\explain{$\dim \rightarrow \infty$}{\simeq}}  \mfunc_{t+1}(\mY) \cdot \dfnew{\mF}_{t+1} + \sum_{s=1}^t \sum_{S \subset [t]} {b}_{s,S} \cdot \mfunc_{t+1}(\mY) \cdot \iter{\vv}{s,S} +  {\Gnonlin}_{t+1}(\iter{\vv}{\leq t,\bdot}; \va). \label{eq:t+1-div-free-approx} \end{align}
For any $S \subset [t+1]$ we define:
\begin{align*}m_{S} \explain{def}{=} \E_{\serv{\Lambda} \sim \mu}[\mfunc_S(\Lambda)].
\end{align*}
With this definition, we can express the approximation in \eqref{eq:t+1-div-free-approx} as:
\begin{align}
&\iter{\vr}{t+1}   \explain{$\dim \rightarrow \infty$}{\simeq} \mfunc_{t+1}(\mY) \cdot \dfnew{\mF}_{t+1} + \sum_{s=1}^t \sum_{S \subset [t]} b_{s,S} \cdot \mfunc_{t+1}(\mY) \cdot \iter{\vv}{s,S} +  {\Gnonlin}_{t+1}(\iter{\vv}{\leq t,\bdot}; \va) \nonumber\\
& = \mfunc_{t+1}(\mY) \cdot \dfnew{\mF}_{t+1} + \sum_{s=1}^t \sum_{S \subset [t]} b_{s,S} \cdot \mfunc_{t+1}(\mY) \cdot (\mfunc_{S}(\mY) - {m}_{S} \mI_{\dim}) \cdot \dfnew{\mF}_{s} +  {\Gnonlin}_{t+1}(\iter{\vv}{\leq t,\bdot}; \va) \nonumber\\
& = \mfunc_{t+1}(\mY) \cdot \dfnew{\mF}_{t+1} + \sum_{s=1}^t \sum_{S \subset [t]} b_{s,S} \cdot (\mfunc_{\{t+1\} \cup S}(\mY) - m_S \mfunc_{t+1}(\mY)) \cdot \dfnew{\mF}_{s} +  {\Gnonlin}_{t+1}(\iter{\vv}{\leq t,\bdot}; \va)  \nonumber\\
& {\explain{\eqref{eq:intermediate-OAMP},\eqref{eq:constructed-LAMP-new-iterate}}{=}} \iter{\vv}{t+1,\{t+1\}}  + \left( \sum_{s=1}^t \sum_{S \subset [t]} b_{s,S}  \cdot (\iter{\vv}{s,S \cup \{t+1\}} - m_{S} \cdot \iter{\vv}{s,\{t+1\}}) \right) + {\Gnonlin}_{t+1}(\iter{\vv}{\leq t,\bdot}; \va)  \nonumber\\& \hspace{2cm} + m_{\{t+1\}} \cdot \dfnew{\mF}_{t+1} + \left(\sum_{s=1}^t \sum_{S \subset [t]} b_{s,S}  \cdot (m_{S \cup\{t+1\}} - m_{S} \cdot m_{\{t+1\}}) \cdot \dfnew{\mF}_s  \right) \label{eq:t+1-div-free-approx-final}. 
\end{align}
Recalling \eqref{eq:boldF-def},
\begin{align*}
&\iter{\vr}{t+1}  \explain{$\dim \rightarrow \infty$}{\simeq}  \iter{\vv}{t+1,\{t+1\}}  + \left( \sum_{s=1}^t \sum_{S \subset [t]} b_{s,S}  \cdot (\iter{\vv}{s,S \cup \{t+1\}} - m_{S} \cdot \iter{\vv}{s,\{t+1\}}) \right) + {\Gnonlin}_{t+1}(\iter{\vv}{\leq t,\bdot}; \va)  \nonumber\\& \hspace{0.1cm}+ m_{\{t+1\}} \cdot \dfnew{\Fnonlin}_{t+1}(\iter{\vv}{\leq t,\bdot}; \va) + \left(\sum_{s=1}^t \sum_{S \subset [t]} b_{s,S}  \cdot (m_{S \cup\{t+1\}} - m_{S} \cdot m_{\{t+1\}}) \cdot \dfnew{\Fnonlin}_s(\iter{\vv}{\leq s, \bdot}; \va)  \right).
\end{align*}
In light of the above expression, we define the postprocessing function for the $t+1$th iteration as: 
\begin{align*}
\proc_{t+1}(v_{\leq t+1,\bdot}; \aux) &\explain{def}{=} v_{t+1,\{t+1\}} +  \left( \sum_{s=1}^t \sum_{S \subset [t]} b_{s,S}  \cdot (v_{s,S \cup \{t+1\}} - m_{S} \cdot v_{s,\{t+1\}}) \right) \\ &  \hspace{1cm}+  {\Gnonlin}_{t+1}(v_{\leq t,\bdot}; \va) + m_{\{t+1\}}  \dfnew{\Fnonlin}_{t+1}(v_{\leq t,\bdot}; \aux) \\ & \hspace{2cm}+ \left(\sum_{s=1}^t \sum_{S \subset [t]} b_{s,S}   (m_{S \cup\{t+1\}} - m_{S}  m_{\{t+1\}})  \dfnew{\Fnonlin}_s(v_{\leq s, \bdot}; \aux)  \right).
\end{align*}
Hence, we can approximate the $t+1$th iterate of the target algorithm using the iterates of the intermediate OAMP algorithm:
\begin{align*}
\iter{\vr}{t+1} & \explain{$\dim\rightarrow \infty$}{\simeq} \proc_{t+1}(\iter{\vv}{\leq t+1, \bdot}; \va).
\end{align*}
This completes the construction of the intermediate OAMP algorithm with all the desired properties by induction. 
\end{proof}

\subsubsection{Polynomial Approximation (Proof of \lemref{lem:poly-approx-2})} \label{sec:poly-approx-proof-sec}
\begin{proof}[Proof of \lemref{lem:poly-approx-2}]
Recall the intermediate OAMP algorithm takes the form:
\begin{align}
\iter{\vv}{t,S} & = \dfnew{\mfunc}_{S}(\mY) \cdot \dfnew{\Fnonlin}_t(\iter{\vv}{<t,\bdot}; \va) \quad \forall \; t \in  \N, \; S \subset \N, \; 1 \leq |S| < \infty. 
\end{align}
To implement this algorithm using a degree-$\degree$ lifted OAMP algorithm, we begin by repeating the arguments used in the proof of \lemref{lem:poly-approx} to construct an OAMP algorithm of the form:
\begin{align} 
&\dup{\iter{\vv}{t,S}}{\degree} =  \nonumber \\ & \left( \dup{\mfunc_{S}}{\degree}(\mY) - \E_{\serv{\Lambda} \sim \mu}[\dup{\mfunc_{S}}{\degree}(\serv{\Lambda})]  \mI_{\dim} \right)  \left( \dfnew{\Fnonlin}_t(\dup{\iter{\vv}{<t,\bdot}}{\degree}; \va) - \sum_{s=1}^{t-1} \sum_{R \subset [t-1]} \E[\partial_{s,R} \dfnew{\Fnonlin}_t(\dup{\serv{V}_{<t,\bdot}}{\degree}; \serv{A})]  \dup{\iter{\vv}{s,R}}{\degree} \right) \nonumber \\ & \hspace{9cm} \forall \;  t \in  \N, \; S \subset \N, \; 1 \leq |S| < \infty,  \label{eq:low-degree-approx-div-free-algo-2}
\end{align}
which approximates the iterates generated by the intermediate OAMP algorithm in the sense that,
\begin{align} \label{eq:low-degree-appx-claim1}
\limsup_{\degree \rightarrow \infty} \plimsup_{\dim \rightarrow \infty} \frac{\|\dup{\iter{\vv}{t,S}}{\degree} - \iter{\vv}{t,S}\|^2}{\dim} & = 0\quad \forall \; t \in  \N, \; S \subset \N, \; 1 \leq |S| < \infty.
\end{align}
In the above display:
\begin{itemize}
\item The matrix denoiser $\dup{\mfunc_{S}}{\degree}: \R \mapsto \R$ is a degree-$\degree$ polynomial which approximates the matrix denoiser $\mfunc_S: \R \mapsto \R$ and is constructed using the Weierstrass approximation theorem.
\item $(\serv{X}_{\star}, \{\dup{\serv{V}}{\degree}_{t,S}: t \in \N, S \subset \N \}; \serv{A})$ denote the state evolution random variables associated with \eqref{eq:low-degree-approx-div-free-algo-2}.
\end{itemize}
We claim that there is a degree-$\degree$ lifted AMP algorithm (\defref{def:LOAMP}):
\begin{align} \label{eq:desired-LOAMP}
\dup{\iter{\vw}{t,i}}{\degree} & = \left( \mY^i - \E_{\serv{\Lambda} \sim \mu}[\serv{\Lambda}^i] \cdot \mI_{\dim} \right) \cdot \dup{\dfnew{\Fnonlin}}{\degree}_{t}(\dup{\iter{\vw}{<t,\bdot}}{\degree}; \va)  \quad \forall \;  t \in \N, \; i \in [\degree],
\end{align}
 along with  homogeneous linear postprocessing maps \begin{align*}
\dup{\hnonlin}{\degree}_{t,S}: \R^{\degree} \mapsto \R \quad t \in \N, \; S \subset \N, \; 1 \leq |S| < \infty,
\end{align*}
which can exactly recover the iterates generated by the degree-$\degree$ approximation to the intermediate OAMP algorithm \eqref{eq:low-degree-approx-div-free-algo-2}:
\begin{align} \label{eq:low-degree-appx-claim2}
\dup{\iter{\vv}{t,S}}{\degree} & = \dup{\hnonlin}{\degree}_{t,S}(\dup{\iter{\vw}{t,\bdot}}{\degree}) \quad \forall\;  t \in \N, \; S \subset \N, \; 1 \leq |S| < \infty.
\end{align}
Observe that \eqref{eq:low-degree-appx-claim1} and \eqref{eq:low-degree-appx-claim2} immediately imply \lemref{lem:poly-approx-2}. Indeed, for any $t \in \N$ and any finite subset $S \subset \N$:
\begin{align*}
\limsup_{\degree \rightarrow \infty} \plimsup_{\dim \rightarrow \infty} \frac{\|\dup{\hnonlin}{\degree}_{t,S}(\dup{\iter{\vw}{t,\bdot}}{\degree}) - \iter{\vv}{t,S}\|^2}{\dim} & \explain{\eqref{eq:low-degree-appx-claim2}}{=}  \limsup_{\degree \rightarrow \infty} \plimsup_{\dim \rightarrow \infty} \frac{\|\dup{\iter{\vv}{t,S}}{\degree} - \iter{\vv}{t,S}\|^2}{\dim}  \explain{\eqref{eq:low-degree-appx-claim1}}{=} 0,
\end{align*}
as desired. Hence, the remainder of the proof consists of constructing the lifted OAMP algorithm and proving \eqref{eq:low-degree-appx-claim2}. 

\paragraph*{Construction of the lifted OAMP Algorithm} We will construct the desired lifted OAMP algorithm \eqref{eq:desired-LOAMP} by induction. As our induction hypothesis, we will assume that we have specified the functions $\dup{\dfnew{\Fnonlin}}{\degree}_{1}, \dotsc, \dup{\dfnew{\Fnonlin}}{\degree}_{t-1}$, as well as the homogenous linear postprocessing maps:
\begin{align*}
\dup{\hnonlin}{\degree}_{s,S}: \R^{\degree} \mapsto \R \quad s < t, \; S \subset [t-1], \; S \geq 1,
\end{align*}
so that the iterates resulting lifted OAMP algorithm $\{\dup{\iter{\vw}{s,i}}{\degree}: s < t, \; i \in [\degree]\}$ can reconstruct the following iterates of the  degree-$\degree$ approximation to the intermediate OAMP algorithm:
\begin{align} \label{eq:induction-hypo-lowdegree-to-liftedOAMP}
    \dup{\iter{\vv}{s,S}}{\degree} & = \dup{\hnonlin}{\degree}_{s,S}(\dup{\iter{\vw}{s, \bdot}}{\degree}) \quad \forall \; s < t, \; S \subset [t-1], \; |S| > 1.
\end{align}
We now extend our construction to step $t$. To do so, we need to construct the function $\dup{\dfnew{\Fnonlin}}{\degree}_{t}$ used in the $t$th iteration of the lifted OAMP algorithm, as well as the linear postprocessing maps:
\begin{align*}
    \dup{\hnonlin}{\degree}_{t,S}: \R^{\degree} \mapsto \R \quad S \subset [t], \; S \geq 1, \\
     \dup{\hnonlin}{\degree}_{s,S}: \R^{\degree} \mapsto \R \quad S \subset [t], S \ni t,
\end{align*}
which ensure that:
\begin{align}
    \dup{\iter{\vv}{t,S}}{\degree} & = \dup{\hnonlin}{\degree}_{t,S}(\dup{\iter{\vw}{ t, \bdot}}{\degree}) \quad \forall\; S \subset [t], \; |S| > 1, \label{eq:implement-lowdegree-by-OAMP} \\
    \dup{\iter{\vv}{s,S}}{\degree} & = \dup{\hnonlin}{\degree}_{s,S}(\dup{\iter{\vw}{ s, \bdot}}{\degree}) \quad \forall \; s < t, \; S \subset [t], \; S \ni t. \label{eq:implement-OAMP-skip}
\end{align}
We will present the proof for \eqref{eq:implement-lowdegree-by-OAMP}, the proof for \eqref{eq:implement-OAMP-skip} is analogous. We begin by recalling the update rule for $\dup{\iter{\vv}{t,S}}{\degree}$:
\begin{align}
&\dup{\iter{\vv}{t,S}}{\degree}= \nonumber  \\ & \left( \dup{\mfunc_{S}}{\degree}(\mY) - \E_{\serv{\Lambda} \sim \mu}[\dup{\mfunc_{S}}{\degree}(\serv{\Lambda})]  \mI_{\dim} \right)  \underbrace{\left( \dfnew{\Fnonlin}_t(\dup{\iter{\vv}{<t,\bdot}}{\degree}; \va) - \sum_{s=1}^{t-1} \sum_{R \subset [t-1]} \E[\partial_{s,R} \dfnew{\Fnonlin}_t(\dup{\serv{V}_{<t,\bdot}}{\degree}; \serv{A})]  \dup{\iter{\vv}{s,R}}{\degree} \right)}_{(\diamondsuit)}  \label{eq:sub-here-temp}. 
\end{align}
Using the induction hypothesis in \eqref{eq:induction-hypo-lowdegree-to-liftedOAMP}, we can express the vector $(\diamondsuit)$ in the above display as:
\begin{align*}
    (\diamondsuit) & =  \dfnew{\Fnonlin}_t(\{\dup{\hnonlin}{\degree}_{s,R}(\dup{\iter{\vw}{ s,\bdot}}{\degree})\}; \va) - \sum_{s=1}^{t-1} \sum_{R \subset [t-1]} \E[\partial_{s,R} \dfnew{\Fnonlin}_t(\dup{\serv{V}_{<t,\bdot}}{\degree}; \serv{A})]   \dup{\hnonlin}{\degree}_{s,R}(\dup{\iter{\vw}{s,\bdot}}{\degree}).
\end{align*}In light of the above expression, we define $\Fnonlin_{t,\degree}$ as the composite function (for any $w_{<t, \bdot} \in \R^{(t-1)\degree}, \; \aux \in \R^{\auxdim}$)
\begin{align} \label{eq:dup-F}
\dup{\Fnonlin}{\degree}_{t}(w_{< t,\bdot}; \aux )  \explain{def}{=} \dfnew{\Fnonlin}_{t}(\{\dup{\hnonlin}{\degree}_{s,R}(w_{s, \bdot}): s < t, R \subset [t-1], |R| > 1\}; \aux),
\end{align}
and the divergence-free iterate denoiser:
\begin{align} \label{eq:dup-F-df}
\dup{\dfnew{\Fnonlin}}{\degree}_{t}(w_{< t,\bdot}; \aux ) & = \dup{\Fnonlin}{\degree}_{t}(w_{< t,\bdot}; \aux ) - \sum_{s=1}^{t-1}\sum_{i=1}^{\degree} \E[\partial_{s,i} \dup{\Fnonlin}{\degree}_{t}(\dup{\serv{W}}{\degree}_{<t,\bdot}; \serv{A})] \cdot w_{s,i},
\end{align}
where $(\serv{X}_{\star}, \{\serv{W}_{s,i}: s < t, i \in [\degree]\} ; \serv{A})$ are the state evolution random variables associated with the lifted OAMP algorithm \eqref{eq:desired-LOAMP} that have been constructed so far (as a part of the induction hypothesis). We now extend our construction of the desired lifted OAMP algorithm to step $t$. At step $t$, the lifted OAMP algorithm generates the iterates:
\begin{align} \label{eq:newly-constructed-liftedOAMP-iterates}
\dup{\iter{\vw}{t,i}}{\degree} & = \left( \mY^i - \E_{\serv{\Lambda} \sim \mu}[\serv{\Lambda}^i] \cdot \mI_{\dim} \right) \cdot \dup{\dfnew{\Fnonlin}}{\degree}_{t}(\dup{\iter{\vw}{<t,\bdot}}{\degree}; \va)  \quad i \in [\degree].
\end{align}
Introducing the shorthand notation  $\dup{\overline{\mfunc}_{S}}{\degree}(\mY) \explain{def}{=} \dup{\mfunc_{S}}{\degree}(\mY) - \E_{\serv{\Lambda} \sim \mu}[\dup{\mfunc_{S}}{\degree}(\serv{\Lambda})] \cdot \mI_{\dim}$ for convenience, we can express \eqref{eq:sub-here-temp} as:
\begin{align}
\dup{\iter{\vv}{t,S}}{\degree} & \explain{\eqref{eq:dup-F}}{=} \;  \dup{\overline{\mfunc}_{S}}{\degree}(\mY) \left( \dup{\Fnonlin}{\degree}_{t}(\dup{\iter{\vw}{<t,\bdot}}{\degree}; \va) - \sum_{s=1}^{t-1} \sum_{R \subset [t-1]} \E[\partial_{s,R} \dfnew{\Fnonlin}_t(\dup{\serv{V}_{<t,\bdot}}{\degree}; \serv{A})]   \dup{\hnonlin}{\degree}_{s,R}(\dup{\iter{\vw}{s,\bdot}}{\degree}) \right) \nonumber \\
& \explain{(b)}{=} \dup{\overline{\mfunc}_{S}}{\degree}(\mY)  \left( \dup{\Fnonlin}{\degree}_{t}(\dup{\iter{\vw}{<t,\bdot}}{\degree}; \va)  - \sum_{s=1}^{t-1} \sum_{i=1}^\degree \E[\partial_{s,i} \dup{\Fnonlin}{\degree}_{t}(\dup{\serv{W}}{\degree}_{<t,\bdot}; \serv{A})]  \dup{\iter{\vw}{s,i}}{\degree} \right) \nonumber \\
& \explain{\eqref{eq:dup-F-df}}{=} \left( \dup{\mfunc_{S}}{\degree}(\mY) - \E_{\serv{\Lambda} \sim \mu}[\dup{\mfunc_{S}}{\degree}(\serv{\Lambda})]  \mI_{\dim} \right)  \dup{\dfnew{\Fnonlin}}{\degree}_{t}(\dup{\iter{\vw}{<t,\bdot}}{\degree}; \va) \label{eq:sub-here-temp2}
\end{align}
where the step marked (b) follows by relating the derivatives of the function $\dup{\Fnonlin}{\degree}_{t}$ and the function $\dfnew{\Fnonlin}_{t}$ using the chain rule and by exploiting the linearity of postprocessing maps $\{h_{s,R}\}$. Recall that $\dup{\mfunc}{\degree}_{S}: \R \mapsto \R$ is a degree-$\degree$ polynomial, and so, it can be expressed as a linear combination of  the monomials $\{\lambda \mapsto \lambda^i : i \in [\degree]\}$:
\begin{align*}
\dup{\mfunc}{\degree}_{S}(\lambda) - \E_{\serv{\Lambda} \sim \mu}[\dup{\mfunc_{S}}{\degree}(\serv{\Lambda})] & = \sum_{i=1}^\degree \dup{\chi}{\degree}_{S}[i] \cdot (\lambda^i-\E_{\serv{\Lambda} \sim \mu}[\serv{\Lambda}^i]) \quad i \in [\degree].
\end{align*}
Using this representation, the formula for $\dup{\iter{\vv}{t,S}}{\degree}$ obtained in \eqref{eq:sub-here-temp2} can be expressed as a linear combination of the newly constructed lifted OAMP iterates in \eqref{eq:newly-constructed-liftedOAMP-iterates}:
\begin{align*}
\dup{\iter{\vv}{t,S}}{\degree} & = \sum_{i=1}^{\degree} \dup{\chi}{\degree}_{S}[i] \cdot \dup{\iter{\vw}{t,i}}{\degree}.
\end{align*}
Hence, we define the postprocessing function $\dup{\hnonlin}{\degree}_{t,S}$ as:
\begin{align*}
\dup{\hnonlin}{\degree}_{t,S}(w_{t,\bdot}) & = \sum_{i=1}^\degree \dup{\chi}{\degree}_{S}[i] \cdot w_{t,i} \quad \forall \;  w_{ t, \bdot} \in \R^{\degree}.
\end{align*}
which is linear and ensures that $\dup{\iter{\vv}{t,S}}{\degree} = \dup{\hnonlin}{\degree}_{t,S}(\dup{\iter{\vw}{t, \bdot}}{\degree})$, as desired. This completes the construction of the desired lifted OAMP algorithm by induction and concludes the proof of this lemma. 
\end{proof}

\subsection{Proof \propref{prop:greedy-optimality}} \label{appendix:optimal-LOAMP}
Consider any degree-$\degree$ lifted OAMP algorithm (\defref{def:LOAMP}) of the form:
\begin{align}
\iter{\vw}{t,i} & = \left( \mY^i - \E_{\serv{\Lambda \sim \nu}}[\serv{\Lambda}^i] \cdot \mI_{\dim} \right) \cdot \fnonlin_{t}\big(\iter{\vw}{1,\bdot}, \dotsc, \iter{\vw}{t-1,\bdot}; \va \big)  \quad i \in [\degree], \; t \in \N,
\end{align}
which returns the following estimator after $t$ iterations:
\begin{align*}
{\hat{\vw}_t} = \hnonlin_t(\iter{\vw}{1,\bdot}, \dotsc, \iter{\vw}{t,\bdot}; \va).
\end{align*}
Let $(\serv{X}_{\star}, \serv{W}_{1,\bdot}, \dotsc, \serv{W}_{t,\bdot};\serv{A})$ denote the state evolution random variables associated with the first $t$ iterations of the lifted OAMP algorithm, which form a Gaussian channel in the sense of Definition~\ref{def:gauss-channel}. We observe that, by \corref{cor:SE-LOAMP}:\begin{align} \label{eq:mmse-functional-motivation}
\plim_{\dim \rightarrow \infty} \frac{\|\vx_{\star} - {\hat{\vw}_t}\|^2}{\dim} & = \E[\{\serv{X}_{\star} - \hnonlin(\serv{W}_{1,\bdot}, \dotsc, \serv{W}_{t,\bdot}; \serv{A})\}^2] \nonumber \\
& \explain{(a)}{\geq} \min_{h \in L^2(\serv{W}_{\leq t,\bdot}; \serv{A})} \E[\{\serv{X}_{\star} - \hnonlin(\serv{W}_{1,\bdot}, \dotsc, \serv{W}_{t,\bdot}; \serv{A})\}^2] \nonumber \\
& \explain{def}{=} \bmmse(\serv{X}_{\star} | \serv{W}_{1,\bdot}, \dotsc, \serv{W}_{t,\bdot}; \serv{A}).
\end{align}
 In the above display, the inequality in step (a) follows by observing that since the function $h$ is a Lipschitz function (as required by \defref{def:LOAMP}),  and hence, $h \in L^2(\serv{W}_{\leq t,\bdot}; \serv{A})$. Furthermore, thanks to Assumption~\ref{assump:regularity} holds, the lower bound above is tight since we can choose $h$ as the MMSE estimator for the Gaussian channel $(\serv{X}_{\star}, \serv{W}_{1,\bdot}, \dotsc, \serv{W}_{t,\bdot};\serv{A})$.

In light of \eqref{eq:mmse-functional-motivation}, for a $t$ iteration lifted OAMP algorithm which uses functions $\fnonlin_{1:t}$ in its iterations, we define:
\begin{align*}
\scrM_t(f_1, \dotsc, f_t) \explain{def}{=} \bmmse(\serv{X}_{\star} | \serv{W}_{1,\bdot}, \dotsc, \serv{W}_{t,\bdot}; \serv{A}),
\end{align*}
where $(\serv{X}_{\star}, \serv{W}_{1,\bdot}, \dotsc, \serv{W}_{t,\bdot}; \serv{A})$ are the state evolution random variables associated with the lifted OAMP algorithm. Notice that the RHS of the above display depends on the functions $\fnonlin_{1:t}$ implicitly, since the distribution of the state evolution random variables is determined by $\fnonlin_{1:t}$ (recall \defref{def:LOAMP}).  
\paragraph*{A convenient formula for $\scrM_t(f_1, \dotsc, f_t)$} To determine the optimal lifted OAMP algorithm, we need to optimize the functional $\scrM_t$ with respect to $\fnonlin_{1:t}$.  To do so, we first derive a convenient formula for $\scrM_t(f_1, \dotsc, f_t) = \bmmse(\serv{X}_{\star} | \serv{W}_{1,\bdot}, \dotsc, \serv{W}_{t,\bdot}; \serv{A})$. \lemref{lem:scalarization-mmse} in \appref{appendix:gauss-channel} shows that the MMSE (and other relevant properties) of a multivariate Gaussian channel, such as $(\serv{X}_{\star}, \serv{W}_{1,\bdot}, \dotsc, \serv{W}_{t,\bdot}; \serv{A})$ can be expressed in terms of the MMSE of an \emph{effective scalar Gaussian channel}:
\begin{align} \label{eq:scalarization-recall}
(\serv{X}_{\star}, \serv{S} \explain{def}{=} \ip{\optlin}{\serv{W}_{\leq t,\bdot}} ; \serv{A}),
\end{align}
whose observation random variable $\serv{S}$ is a linear combination of the observation random variables of the original multivariate Gaussian channel. The weights of linear combination are given by $\optlin(\serv{X}_{\star} | \serv{W}_{1, \bdot}, \dotsc, \serv{W}_{t,\bdot}; \serv{A})$ (which we will sometimes shorthand as $\optlin$, when the multivariate Gaussian channel in question is clear from the context). The effective scalar Gaussian channel operates at SNR $\effsnr(\serv{X}_{\star} | \serv{W}_{1, \bdot}, \dotsc, \serv{W}_{t,\bdot}; \serv{A})$, and the MMSE of the original multivariate channel and the effective scalar channel are connected via the formula:
\begin{align*}
\bmmse(\serv{X}_{\star} | \serv{W}_{1,\bdot}, \dotsc, \serv{W}_{t,\bdot}; \serv{A}) = \bmmse(\serv{X}_{\star} | \serv{S}; \serv{A}) = \mmse \left(\effsnr(\serv{X}_{\star} | \serv{W}_{1,\bdot}, \dotsc, \serv{W}_{t,\bdot}; \serv{A}) \right),
\end{align*}
where for $\omega \in [0,1]$, $\mmse(\omega)$ is the MMSE of a scalar Gaussian channel at SNR $\omega$ (recall Definition~\ref{def:gauss-channel}). The following lemma provides a formula for the effective SNR $\effsnr(\serv{X}_{\star} | \serv{W}_{1,\bdot}, \dotsc, \serv{W}_{t,\bdot}; \serv{A})$. To state this formula, recall from \defref{def:LOAMP} that the joint distribution of the state evolution random variables $(\serv{X}_{\star}, \serv{W}_{\leq t,\bdot}; \serv{A})$ is given by:
\begin{subequations} \label{eq:liftedOAMP-SE}
\begin{align} 
(\serv{X}_{\star}; \serv{A}) &\sim \pi, \\ (\serv{W}_{1,\bdot}, \dotsc, \serv{W}_{t,\bdot}) | (\serv{X}_\star;\serv{A}) &\sim \gauss{\serv{X_\star} \cdot  \alpha \otimes q}{ \alpha \alpha^\top   \otimes (Q - q q^\top) + (\Sigma - \alpha \alpha^\top)  \otimes \Gamma}. \nonumber
\end{align}
In the above display, the entries of $q \in \R^{\degree}$, $Q \in \R^{\degree \times \degree}$, and $\Gamma \in \R^{\degree \times \degree}$ are given by:
\begin{align} 
q_i  &= \E_{\serv{\Lambda}_{\nu} \sim \nu}[{\serv{\Lambda}}_{\nu}^i] - \E_{\serv{\Lambda} \sim \mu}[{\serv{\Lambda}}^i],  \\  Q_{ij} 
&= \E_{\serv{\Lambda}_{\nu} \sim \nu}\left[ \left({\serv{\Lambda}}_{\nu}^i - \E_{\serv{\Lambda} \sim \mu}[{\serv{\Lambda}}^i] \right)  \left({\serv{\Lambda}}_{\nu}^j - \E_{\serv{\Lambda} \sim \mu}[{\serv{\Lambda}}^j] \right) \right], \; \Gamma_{ij} = \Cov_{\serv{\Lambda} \sim \mu}[\serv{\Lambda}^i ,\serv{\Lambda}^j].
\end{align}
the entries of $\alpha \in \R^t$ and $\Sigma \in \R^{t \times t}$ are given by:
\begin{align}
\alpha_{s} \explain{def}{=} \E[\serv{X}_{\star} \cdot {\fnonlin}_{s}(\serv{W}_{<s,\bdot}; \serv{A})], \quad \Sigma_{s \tau} \explain{def}{=} \E[{\fnonlin}_{s}(\serv{W}_{<s,\bdot}; \serv{A}) \cdot {\fnonlin}_{\tau}(\serv{W}_{<\tau,\bdot}; \serv{A})] \quad \forall \;  s,\tau \; \in \; [t],
\end{align}
 and $\otimes$ denotes the Kronecker (or tensor) product for matrices. 
\end{subequations}

\begin{lemma} \label{lemma:effsnr-LOAMP} Consider a lifted OAMP algorithm which uses functions $\fnonlin_1, \fnonlin_2, \dotsc, \fnonlin_t$ with $\fnonlin_{i}: \R^{(i-1)\cdot \degree + \auxdim} \mapsto \R$ in the first $t$ iterations. Let $(\serv{X}_{\star}, \serv{W}_{1,\bdot}, \dotsc, \serv{W}_{t,\bdot}; \serv{A})$ be the state evolution random variables associated with the algorithm. Then,
\begin{align*}
\effsnr(\serv{X}_{\star} | \serv{W}_{1, \bdot}, \dotsc, \serv{W}_{t,\bdot}; \serv{A}) & = \begin{cases} q \tran \left[ Q  + \frac{\kappa_t}{1-\kappa_t} \cdot \Gamma \right]^{\dagger} q  & :\kappa_t \in [0,1) \\ 0 & :\kappa_t = 1. \end{cases}
\end{align*}
Moreover, if $\kappa_t < 1$, then,
\begin{align*}
\optlin(\serv{X}_{\star} | \serv{W}_{1, \bdot}, \dotsc, \serv{W}_{t,\bdot}; \serv{A}) & = \frac{1}{\sqrt{\effsnr(\serv{X}_{\star} | \serv{W}_{\leq t, \bdot}; \serv{A}) }} \cdot (\Sigma^\dagger \alpha) \otimes [(1-\kappa_t) Q + \kappa_t \Gamma ]^\dagger q, 
\end{align*}
where the scalar $\kappa_t \in [0,1]$ is defined as\footnote{For a collection of functions $f_1, \dotsc, f_t$, $\mathrm{Span}(f_1, \dotsc, f_t)$ denotes the set of all functions that can be written as a linear combination of $f_{1},\dotsc, f_t$.}:
\begin{align*}
    \kappa_t \explain{def}{=} \min\bigg\{ \E\left[\{\serv{X}_{\star} - g(\serv{W}_{< t,\bdot}; \serv{A})\}^2\right] : g \in \mathrm{Span}({\fnonlin}_1, \dotsc, {\fnonlin}_t) \bigg\} .
\end{align*}
\end{lemma}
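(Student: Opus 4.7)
The plan is to apply \lemref{lem:scalarization-mmse} (in particular the formulas \eqref{eq:optimal-weights}--\eqref{eq:effective-snr}) to the Gaussian channel in \eqref{eq:liftedOAMP-SE}, which reduces the claim to evaluating the quadratic form $\tilde\alpha^\top M^\dagger \tilde\alpha$, where $\tilde\alpha := \alpha\otimes q$ is the conditional mean vector and $M := \alpha\alpha^\top\otimes(Q-qq^\top) + (\Sigma-\alpha\alpha^\top)\otimes\Gamma$ is the conditional covariance from \eqref{eq:liftedOAMP-SE}. The claimed formula for $\optlin$ will then be read off from $M^\dagger\tilde\alpha$ via \eqref{eq:optimal-weights}.

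First I would dispose of the corner case. Writing $\fnonlin_s(\serv{W}_{<s,\bdot};\serv{A}) = \alpha_s \serv{X}_\star + \serv{E}_s$ with $\E[\serv{X}_\star \serv{E}_s]=0$ (possible because $\E[\serv{X}_\star^2]=1$), one has $\Sigma-\alpha\alpha^\top = \E[\serv{E}\serv{E}^\top]\succeq 0$, and a standard Gram-matrix regression argument yields the identity $\kappa_t = 1-\alpha^\top\Sigma^\dagger\alpha$ together with $\alpha\in\mathrm{Range}(\Sigma)$. Consequently $\kappa_t=1$ forces $\alpha=0$, hence $\tilde\alpha=0$ and $\effsnr=0$, matching the claim.

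For $\kappa_t\in[0,1)$, set $s := 1-\kappa_t = \alpha^\top\Sigma^\dagger\alpha$ and rewrite
\[
M = \Sigma\otimes\Gamma + (\alpha\otimes I_\degree)(Q-qq^\top-\Gamma)(\alpha\otimes I_\degree)^\top.
\]
Applying the Woodbury identity (first assuming $\Sigma,\Gamma,Q-qq^\top-\Gamma$ are invertible, then extending by the continuity principle $(A+\epsilon I)^{-1}v\to A^\dagger v$ for $v\in\mathrm{Range}(A)$), a chain of Kronecker-product simplifications collapses to
\[
M^\dagger\tilde\alpha = (\Sigma^\dagger\alpha)\otimes\bigl[(\Gamma+sN)^\dagger q\bigr], \qquad N := Q-qq^\top-\Gamma,
\]
and hence $\tilde\alpha^\top M^\dagger\tilde\alpha = s\,q^\top(\Gamma+sN)^\dagger q$. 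Noting that $\Gamma+sN = T - s\,qq^\top$ with $T := \kappa_t\Gamma + (1-\kappa_t)Q$, a second Sherman-Morrison pass on the rank-one term $-s\,qq^\top$ yields
\[
(\Gamma+sN)^\dagger q = \frac{T^\dagger q}{1 - s\,q^\top T^\dagger q}, \qquad q^\top(\Gamma+sN)^\dagger q = \frac{q^\top T^\dagger q}{1 - s\,q^\top T^\dagger q}.
\]
Substituting these into $\effsnr = (1+\tilde\alpha^\top M^\dagger\tilde\alpha)^{-1}\tilde\alpha^\top M^\dagger\tilde\alpha$ from \eqref{eq:effective-snr} and simplifying collapses to
\[
\effsnr = s\,q^\top T^\dagger q = q^\top\bigl[Q + \tfrac{\kappa_t}{1-\kappa_t}\Gamma\bigr]^\dagger q,
\]
which is the claimed identity. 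The companion formula for $\optlin$ then follows by plugging the explicit expression for $M^\dagger\tilde\alpha$ into \eqref{eq:optimal-weights}, using $x = \tilde\alpha^\top M^\dagger\tilde\alpha = \effsnr/(1-\effsnr)$ and $\sqrt{x+x^2} = \sqrt{\effsnr}/(1-\effsnr)$.

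The main obstacle is the careful handling of pseudo-inverses: $\Sigma$, $\Gamma$, $T$, and $M$ may all be rank-deficient, so both the Woodbury pass and the two Sherman-Morrison manipulations have to be justified in the generalized-inverse setting. The cleanest route is the continuity argument above, combined with verifying the range conditions $\alpha\in\mathrm{Range}(\Sigma)$ (already established) and $\tilde\alpha\in\mathrm{Range}(M)$, the latter following a posteriori from the explicit formula for $M^\dagger\tilde\alpha$ once one checks that $q\in\mathrm{Range}(T)$, a property that can be read off from the joint moment structure of $\serv{\Lambda}$ and $\serv{\Lambda}_\nu$.
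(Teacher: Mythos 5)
Your overall route is the same as the paper's: invoke Lemma~\ref{lem:scalarization-mmse} (via \eqref{eq:optimal-weights}--\eqref{eq:effective-snr}) on the channel \eqref{eq:liftedOAMP-SE}, dispose of $\kappa_t=1$ by showing $\alpha=0$, and for $\kappa_t<1$ compute $\tilde\alpha^\top M^\dagger\tilde\alpha$ where $M$ is the conditional covariance. Your decomposition $M=\Sigma\otimes\Gamma+\alpha\alpha^\top\otimes(Q-qq^\top-\Gamma)$ followed by Woodbury on the Kronecker-rank-one update differs mechanically from the paper's, which instead Gram--Schmidt orthogonalizes $f_{1:t}$ and writes $M$ in the block basis $\bar\alpha\bar\alpha^\top\otimes\{\cdot\}\;+\;P\otimes\Gamma$, so the pseudo-inverse factorizes by inspection. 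Both agree on the generic set; the paper's choice avoids Woodbury and the $+\epsilon I$ regularization entirely.

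The genuine gap is your treatment of the degenerate case. You assert that $\tilde\alpha\in\mathrm{Range}(M)$ ``follows a posteriori\dots once one checks that $q\in\mathrm{Range}(T)$.'' That inference is wrong. The paper shows in claim~\eqref{eq:LA-claim1} that $q\in\mathrm{Range}(Q+\lambda\Gamma)$ for \emph{every} $\lambda\ge 0$, so this condition is vacuous; the correct characterization, claim~\eqref{eq:LA-claim2}, is that for $\kappa_t<1$ one has $\tilde\alpha\in\mathrm{Range}(M)\Longleftrightarrow q^\top\bigl[Q+\tfrac{\kappa_t}{1-\kappa_t}\Gamma\bigr]^\dagger q\neq 1$. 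When this quantity equals $1$: (i) the regularized inverse $(M+\epsilon I)^{-1}\tilde\alpha$ diverges rather than converging to $M^\dagger\tilde\alpha$, so your continuity principle does not apply; (ii) your second Sherman--Morrison pass on $T-s\,qq^\top$ has vanishing denominator $1-s\,q^\top T^\dagger q=0$ (equivalently $q\notin\mathrm{Range}(\Gamma+sN)$, even though $q\in\mathrm{Range}(T)$); and (iii) the formula $\effsnr=\vartheta/(1+\vartheta)$ is the wrong branch of \eqref{eq:effective-snr} --- one must invoke the second branch, which gives $\effsnr=1$ directly. The final identity $\effsnr=q^\top\bigl[Q+\tfrac{\kappa_t}{1-\kappa_t}\Gamma\bigr]^\dagger q$ still holds there because both sides equal $1$, but this requires the explicit case split the paper makes; it cannot be recovered from a blanket continuity argument. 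You therefore need to prove the equivalence in \eqref{eq:LA-claim2} and handle that subcase separately.
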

\begin{proof} The proof of this lemma is presented in \sref{sec:proof-mmse-functional}.
\end{proof}
Combining the effective SNR formula from the lemma above with \eqref{eq:scalarization-recall}, we obtain the following expression for $\scrM(\fnonlin_1, \dotsc, \fnonlin_t)$:
\begin{subequations}  
\label{eq:scrm}
\begin{align}
\scrM(\fnonlin_1, \dotsc, \fnonlin_t) & \explain{def}{=} \bmmse(\serv{X}_{\star} | \serv{W}_{1,\bdot}, \dotsc, \serv{W}_{t,\bdot}; \serv{A})  \\ &= \mmse(\effsnr(\serv{X}_{\star} | \serv{W}_{1,\bdot}, \dotsc, \serv{W}_{t,\bdot}; \serv{A})) = \scrm(\kappa_t),
\end{align}
where we introduced the function $\scrm:[0,1] \mapsto [0,1]$ defined as follows:
\begin{align}
\scrm(\kappa) \explain{def}{=} \begin{cases} \mmse \left( q \tran \left[ Q  + \frac{\kappa}{1-\kappa} \cdot \Gamma \right]^{\dagger} q \right) & :\kappa \in [0,1) \\ \mmse(0) & :\kappa = 1, \end{cases}
\end{align}
where $q, Q, \Gamma$ are as defined in \eqref{eq:liftedOAMP-SE}.
\end{subequations}

\paragraph*{Monotonicity of $\scrm(\cdot)$} The following monotonicity property of the function $\scrm$ introduced in \eqref{eq:scrm} will play an important role in the proof of \propref{prop:greedy-optimality}.

\begin{lemma}\label{lem:scrm-monotonic} The function $\scrm:[0,1] \mapsto [0,1]$ is non-decreasing. 
\end{lemma}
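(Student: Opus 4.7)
The strategy is to reduce the monotonicity claim about $\scrm$ to a combination of (i) the monotonicity of the scalar $\mmse$ function (Fact \ref{fact:mmse-monotonic}) and (ii) the Loewner monotonicity of matrix inversion. First I would set up an auxiliary scalar function
\[
\omega(\kappa) \explain{def}{=} q^\top \left[Q + \tfrac{\kappa}{1-\kappa}\,\Gamma\right]^{-1} q, \quad \kappa \in [0,1),
\]
extended to $\omega(1) = 0$, so that $\scrm(\kappa) = \mmse(\omega(\kappa))$ for every $\kappa \in [0,1]$. Since $\mmse$ is non-increasing, it would then be enough to show that $\omega$ is non-increasing on $[0,1]$, from which $\scrm$ is non-decreasing by composition.

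Before I can even write $[Q + \frac{\kappa}{1-\kappa}\Gamma]^{-1}$, I need to verify that $Q$ and $\Gamma$ are positive definite, because this is what lets me replace the pseudo-inverse in the definition of $\scrm$ by an ordinary inverse and lets me invoke operator monotonicity of inversion without fuss. For $\Gamma$, note that $v^\top \Gamma v = \mathrm{Var}_{\serv{\Lambda} \sim \mu}[\sum_{i=1}^D v_i \serv{\Lambda}^i]$; this vanishes only when the polynomial $p(\lambda) = \sum_i v_i \lambda^i$ is constant $\mu$-a.s., and since $\mu$ is absolutely continuous with respect to Lebesgue measure (Assumption \ref{assump:noise}) it has infinite support, forcing $v = 0$. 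For $Q$, $v^\top Q v = \E_{\serv{\Lambda}_\nu \sim \nu}[(\sum_i v_i(\serv{\Lambda}_\nu^i - \E_\mu[\serv{\Lambda}^i]))^2]$, and Lemma \ref{lem:RMT} guarantees that $\nu$ has a nontrivial absolutely continuous part $\nu_\parallel$ with density $\mu(\cdot)/\phi(\cdot)$, so $\nu$ has infinite support as well, and a similar argument gives $Q \succ 0$.

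With $Q, \Gamma \succ 0$ established, the map $c \mapsto Q + c\Gamma$ is strictly increasing in the Loewner order on $[0,\infty)$, and all of these matrices are positive definite. The operator monotonicity of matrix inversion then yields $(Q + c_1\Gamma)^{-1} \succeq (Q + c_2\Gamma)^{-1}$ whenever $0 \leq c_1 \leq c_2$, and pairing this against $q$ gives that $c \mapsto q^\top(Q+c\Gamma)^{-1}q$ is non-increasing on $[0,\infty)$. Composing with the non-decreasing map $\kappa \mapsto \kappa/(1-\kappa)$ from $[0,1)$ onto $[0,\infty)$ shows $\omega$ is non-increasing on $[0,1)$. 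For the boundary behavior at $\kappa = 1$, as $c \to \infty$ I would bound $(Q + c\Gamma)^{-1} \preceq (c\Gamma)^{-1} = \Gamma^{-1}/c \to 0$, so $\omega(\kappa) \to 0 = \omega(1)$ as $\kappa \to 1^-$. Combining these, $\omega$ is non-increasing on all of $[0,1]$, and therefore $\scrm = \mmse \circ \omega$ is non-decreasing.

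I do not expect any serious obstacle here: the only step that is not entirely routine is verifying positive definiteness of $Q$ and $\Gamma$, and this is a short application of Lemma \ref{lem:RMT} together with the observation that both $\mu$ and $\nu_\parallel$ have infinitely supported densities (so no nonzero polynomial of degree $\leq D$ vanishes on their supports). The rest is standard Loewner-order manipulation.
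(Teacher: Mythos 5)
Your proof is correct under the paper's standing assumptions, but it takes a genuinely different route from the paper's, and it's worth noting the trade-off. The paper never asserts that $Q$ and $\Gamma$ are invertible: it works with pseudo-inverses throughout, proves only the weaker range containment $q \in \mathrm{Range}(Q + \lambda\Gamma)$ for all $\lambda \geq 0$ via a Cauchy--Schwarz argument (claim \eqref{eq:LA-claim1}), and then obtains the monotonicity of $\lambda \mapsto q^\top (Q+\lambda\Gamma)^\dagger q$ from the variational (Fenchel-conjugate) characterization
\[
q^\top (Q + \lambda\Gamma)^\dagger q \;=\; \sup_{\xi} \; 2\ip{q}{\xi} - \xi^\top(Q+\lambda\Gamma)\xi,
\]
which is valid whenever $q$ lies in the range and manifestly decreases as $\lambda$ grows. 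You instead establish the stronger fact that $Q, \Gamma \succ 0$ --- correctly, since $\mu$ is absolutely continuous (Assumption \ref{assump:noise}), hence infinitely supported, and $\nu_\parallel$ has density $\mu/\phi$ with $\phi \neq 0$ a.e.\ (Lemma \ref{lem:RMT}), hence $\nu$ is infinitely supported as well, so no nonzero polynomial of degree $\leq D$ is $\mu$- or $\nu$-a.s.\ constant. That replaces pseudo-inverses by honest inverses and lets you invoke Loewner monotonicity of $M \mapsto M^{-1}$ directly; your treatment of the endpoint $\kappa = 1$ via $(Q + c\Gamma)^{-1} \preceq c^{-1}\Gamma^{-1} \to 0$ is also sound. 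What the paper's approach buys is robustness: it works without the strict positive-definiteness lemma, so it would go through verbatim even if $\mu$ or $\nu$ were allowed to have finite support or some polynomial degeneracy; what yours buys is a more elementary, self-contained argument that avoids the Fenchel machinery at the modest cost of proving $Q,\Gamma \succ 0$ from the model assumptions. Both are valid here.
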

\begin{proof}
See \sref{sec:proof-scrm-monotonic}.
\end{proof}
\paragraph*{A Greedy Approach Minimizing $\scrM_t$} To derive the optimal OAMP algorithm, our goal is to solve the variational problem:
\begin{align} \label{eq:complex-variational-problem}
\min_{\fnonlin_1} \min_{\fnonlin_2} \dotsb \min_{\fnonlin_t} \scrM_t(\fnonlin_1, \dotsc, \fnonlin_t),
\end{align}
Recall that \propref{prop:greedy-optimality} claims that the iterate denoisers $(\fnonlin_t^{\star})_{t\in \N}$ that optimize \eqref{eq:complex-variational-problem} are defined recursively: once the optimal denoisers $\fnonlin^{\star}_{1}, \dotsc, \fnonlin_{t}^\star$ have been specified, the optimal denoiser for the $t+1$ iteration is the DMMSE estimator for the Gaussian channel  $(\serv{X}_\star, \serv{W}_{1,\bdot}, \dotsc,  \serv{W}_{t,\bdot} ; \serv{A})$, where $(\serv{X}_\star, \serv{W}_{1,\bdot}, \dotsc,  \serv{W}_{t,\bdot} ; \serv{A})$ denote the state evolution random variables corresponding to the first $t$ iterations. We will show that these choices for the iterate denoisers correspond to a \emph{greedy approach} to solving the optimization problem in \eqref{eq:complex-variational-problem}:
\begin{align*}
\fnonlin_1^\star \in \argmin_{\fnonlin_1} \scrM_1(\fnonlin_1), \  \fnonlin_2^{\star} \in \argmin_{\fnonlin_2} \scrM_2(\fnonlin_1^{\star},\fnonlin_2), \  \dotsc, \quad \fnonlin_t^{\star} \in \argmin_{\fnonlin_t} \scrM_t(\fnonlin_{1:t-1}^{\star}, \fnonlin_t).
\end{align*}
Proving this requires us to characterize the solution of the simpler (compared to \eqref{eq:complex-variational-problem}) variational problem:
\begin{align*}
\min_{\fnonlin_t} \scrM_t(\fnonlin_1, \dotsc, \fnonlin_t),
\end{align*}
which is done in the following lemma. 
\begin{lemma} \label{lem:greedy-lemma} Let:
\begin{enumerate}
    \item $\fnonlin_{1}, \dotsc, \fnonlin_{t-1}$ with $\fnonlin_i: \R^{(i-1) \times \degree + \auxdim} \mapsto \R$ be a fixed collection of iterate denoisers which satisfy the requirements imposed in the definition of lifted OAMP algorithms (\defref{def:LOAMP}) ;
    \item $(\serv{X}_{\star}, \serv{W}_{1; \bdot}, \dotsc, \serv{W}_{t-1,\bdot}; \serv{A})$ be the state evolution random variables associated with the first $t-1$ iterations of a lifted OAMP algorithm which uses the functions $\fnonlin_{1}, \dotsc, \fnonlin_{t-1}$ in the first $t-1$ iterations;
    \item $\fnonlin_t^{\sharp}$ denote the DMMSE estimator  for the Gaussian channel $(\serv{X}_{\star}, \serv{W}_{1; \bdot}, \dotsc, \serv{W}_{t-1,\bdot}; \serv{A})$.
\end{enumerate} 
Then, for any iterate denoiser $\fnonlin_t:\R^{(t-1) \times \degree + \auxdim} \mapsto \R$  which satisfies the requirements imposed in \defref{def:LOAMP}, we have:
\begin{align} \label{eq:greedy-LB}
\scrM_t(\fnonlin_1, \dotsc, \fnonlin_t)  \geq \scrM_t(\fnonlin_1, \dotsc, \fnonlin_t^{\sharp}) =  \scrm(\kappa_t^\sharp),
\end{align}
where $\kappa_{t}^{\sharp} \explain{def}{=} \dmmse \circ \mmse^{-1}(\scrM_{t-1}(\fnonlin_{1:t-1})))$.
\end{lemma}
\begin{proof}
The proof of this lemma is presented in \sref{sec:proof-greedy-lemma}.
\end{proof}

Using the above intermediate lemmas, we can now present the proof of \propref{prop:greedy-optimality}. 

\begin{proof}[Proof of \propref{prop:greedy-optimality}.] To prove the claim of \propref{prop:greedy-optimality}, we need to show that for any $t \in \N$ and for any choice of iterate denoisers $\fnonlin_{1:t}$ which satisfy the requirements imposed in \defref{def:LOAMP},
\begin{align} \label{eq:greedy-optimality-goal}
    \scrM_t(\fnonlin_{1}, \dotsc, \fnonlin_{t}) \geq \scrM_t(\fnonlin_{1}^\star, \dotsc, \fnonlin_t^\star),
\end{align}
where $\fnonlin_{1:t}^\star$ denote the functions used in the iterations of the optimal lifted OAMP algorithm, which are specified recursively as follows: after the functions $\fnonlin^{\star}_{1}, \dotsc, \fnonlin_{t-1}^\star$ have been specified, $\fnonlin_{t}^\star$ was defined as the DMMSE estimator for the Gaussian channel corresponding to the first $t-1$ iterations of the resulting lifted OAMP algorithm. By the \lemref{lem:greedy-lemma}, $\fnonlin_{1:t}^\star$ are precisely the functions derived from the greedy heuristic:
\begin{align*}
\fnonlin_1^\star \in \argmin_{\fnonlin_1} \scrM_1(\fnonlin_1), \  \fnonlin_2^{\star} \in \argmin_{\fnonlin_2} \scrM_2(\fnonlin_1^{\star},\fnonlin_2), \  \dotsc, \quad \fnonlin_t^{\star} \in \argmin_{\fnonlin_t} \scrM_t(\fnonlin_{1:t-1}^{\star}, \fnonlin_t),
\end{align*}
where the minimization is understood to be over iterate denoisers which satisfy the requirements imposed in \defref{def:LOAMP}. We will obtain the desired conclusion in \eqref{eq:greedy-optimality-goal} by induction. As our induction hypothesis, we assume that for any choice of the iterate denoisers $\fnonlin_{1}, \dotsc, \fnonlin_{t-1}$, we have:
\begin{align} \label{eq:greedy-optimality-induction}
    \scrM_{t-1}(\fnonlin_{1}, \dotsc, \fnonlin_{t-1}) \geq \scrM_{t-1}(\fnonlin_{1}^\star, \dotsc, \fnonlin_{t-1}^\star).
\end{align}
In order to obtain the desired conclusion \eqref{eq:greedy-optimality-goal}, we observe that:
\begin{align*}
 \scrM_t(\fnonlin_{1}, \dotsc, \fnonlin_{t}) & \explain{(a)}{\geq}   \scrm(\kappa_t^\sharp),  \text{ where } \kappa_{t}^{\sharp} \explain{def}{=} \dmmse \circ \mmse^{-1}(\scrM_{t-1}(\fnonlin_{1:t-1}))) \\
 &\explain{(b)}{\geq} \scrm(\kappa_t^\star),  \text{ where } \kappa_{t}^{\star} \explain{def}{=} \dmmse \circ \mmse^{-1}(\scrM_{t-1}(\fnonlin_{1:t-1}^\star))) \\
 & \explain{(c)}{=} \scrM_t(\fnonlin_{1}^\star, \dotsc, \fnonlin_{t}^\star).
\end{align*}
In the above display: 
\begin{itemize}
    \item Step (a) used the lower bound for $ \scrM_t(\fnonlin_{1}, \dotsc, \fnonlin_{t})$ provided in \lemref{lem:greedy-lemma}.
    \item In step (b), we observed by the induction hypothesis that $\scrM_{t-1}(\fnonlin_{1:t-1}^\star) \leq \scrM_{t-1}(\fnonlin_{1:t-1})$. Since $\mmse$ and $\dmmse$ are non-increasing functions (\lemref{fact:mmse-monotonic} and \lemref{lem:dmmse-monotonicity}), we conclude that $\kappa_t^\star \leq \kappa_t^\sharp$. Finally, since $\scrm$ is non-increasing (\lemref{lem:scrm-monotonic}), we concluded that $\scrm(\kappa_t^\sharp) \geq \scrm(\kappa_t^\star)$. 
    \item In step (c), we again appealed to \lemref{lem:greedy-lemma}:
    \begin{align*}
     \scrM_t(\fnonlin_{1}^\star, \dotsc, \fnonlin_{t}^\star) & = \min_{\fnonlin_t}  \scrM_t(\fnonlin_{1}^\star, \dotsc, \fnonlin_{t}) \\ &= \scrm(\kappa_t^\star), \text{ where } \kappa_{t}^{\star} \explain{def}{=} \dmmse \circ \mmse^{-1}(\scrM_{t-1}(\fnonlin_{1:t-1}^\star))).
    \end{align*}
\end{itemize} 
This concludes the proof of \propref{prop:greedy-optimality}.
\end{proof}

\subsubsection{Proof of \lemref{lemma:effsnr-LOAMP}} \label{sec:proof-mmse-functional}
\begin{proof}[Proof of \lemref{lemma:effsnr-LOAMP}] Recall from \defref{def:LOAMP} that the joint distribution of the state evolution random variables is given by:
\begin{align} \label{eq:SE-gauss-channel}
(\serv{X}_{\star}; \serv{A}) &\sim \pi, \\ (\serv{W}_{1,\bdot}, \dotsc, \serv{W}_{t,\bdot}) | (\serv{X}_\star;\serv{A}) &\sim \gauss{\serv{X_\star} \cdot  \alpha \otimes q}{ \alpha \alpha^\top   \otimes (Q - q q^\top) + (\Sigma - \alpha \alpha^\top)  \otimes \Gamma}.
\end{align}
In the above display, the entries of $q \in \R^{\degree}$, $Q \in \R^{\degree \times \degree}$, and $\Gamma \in \R^{\degree \times \degree}$ are given by:
\begin{subequations}  \label{eq:q-Q-Gamma-def}
   \begin{align}
q_i  &= \E_{\serv{\Lambda}_{\nu} \sim \nu}[{\serv{\Lambda}}_{\nu}^i] - \E_{\serv{\Lambda} \sim \mu}[{\serv{\Lambda}}^i],  \\  Q_{ij} 
&= \E_{\serv{\Lambda}_{\nu} \sim \nu}\left[ \left({\serv{\Lambda}}_{\nu}^i - \E_{\serv{\Lambda} \sim \mu}[{\serv{\Lambda}}^i] \right)  \left({\serv{\Lambda}}_{\nu}^j - \E_{\serv{\Lambda} \sim \mu}[{\serv{\Lambda}}^j] \right) \right], \; \Gamma_{ij} = \Cov_{\serv{\Lambda} \sim \mu}[\serv{\Lambda}^i ,\serv{\Lambda}^j].
\end{align} 
\end{subequations}
and the entries of $\alpha \in \R^t$ and $\Sigma \in \R^{t \times t}$ are given by:
\begin{align} \label{eq:alpha-Sigma-def}
\alpha_{s} \explain{def}{=} \E[\serv{X}_{\star} \cdot {\fnonlin}_{s}(\serv{W}_{<s,\bdot}; \serv{A})], \quad \Sigma_{s \tau} \explain{def}{=} \E[{\fnonlin}_{s}(\serv{W}_{<s,\bdot}; \serv{A}) \cdot {\fnonlin}_{\tau}(\serv{W}_{<\tau,\bdot}; \serv{A})] \quad \forall \;  s,\tau \; \in \; [t].
\end{align}
The proof of the claim is obtained by applying the effective SNR formula for general multivariate Gaussian channels stated as \lemref{lem:scalarization-mmse} in \appref{appendix:gauss-channel} to the Gaussian channel $(\serv{X}_{\star}; \serv{W}_{\leq t; \bdot}; \serv{A})$. According to this formula:
\begin{subequations}\label{eq:effective-snr-recall}
\begin{align} 
\effsnr(\serv{X}_{\star}| \serv{W}_{\leq t,\bdot}; \serv{A}) =  \begin{cases} \frac{\vartheta}{1+\vartheta} & \text{ if } \alpha \otimes q \in \mathrm{Range}(\alpha \alpha^\top   \otimes (Q - q q^\top) + (\Sigma - \alpha \alpha^\top)  \otimes \Gamma), \\
1 & \text{ if } \alpha \otimes q \notin \mathrm{Range}(\alpha \alpha^\top   \otimes (Q - q q^\top) + (\Sigma - \alpha \alpha^\top)  \otimes \Gamma),
\end{cases}
\end{align}
where:
\begin{align}
\vartheta \explain{def}{=}  \alpha \otimes q^\top ( \alpha \alpha^\top   \otimes (Q - q q^\top) + (\Sigma - \alpha \alpha^\top)  \otimes \Gamma)^{\dagger}  \alpha \otimes q.
\end{align}
\end{subequations}
The claim of the lemma is obtained by simplifying the above formula.
\paragraph*{Orthogonalization} To obtain the simplified formula stated in the lemma, it will be convenient to construct an orthonormal basis for $\mathrm{Span}({f}_{1:t})$. Let $r = \mathrm{Rank}(\Sigma)$ and  $\hat{\fnonlin}_{1}, \hat{\fnonlin}_{2}, \dotsc, \hat{\fnonlin}_r:\R^{(t-1)\degree} \mapsto \R$ denote the basis functions constructed by orthogonalizing the functions ${\fnonlin}_{1:t}$ with respect to the state evolution random variables $(\serv{W}_{<t,\bdot}; \serv{A})$, via the Gram-Schmidt procedure. These basis functions are orthonormal in the sense that:
\begin{align*}
\E[\hat{\fnonlin}_{s}(\serv{W}_{< t,\bdot}; \serv{A}) \hat{\fnonlin}_{\tau}(\serv{W}_{< t,\bdot}; \serv{A})] & = \begin{cases} 1 & : s= \tau \\ 0 & : \text{otherwise}. \end{cases}
\end{align*}
We can express the functions ${\fnonlin}_{1:t}$ as a linear combination of the basis functions:
\begin{align} \label{eq:ortho-decomp}
{\fnonlin}_{\tau} & = \sum_{s = 1}^r L_{\tau s} \hat{\fnonlin}_{s} \quad \forall \; \tau \; \in \; [t], \quad L_{\tau s} \explain{def}{=} \E[{\fnonlin}_{\tau}(\serv{W}_{<\tau,\bdot}; \serv{A}) \hat{\fnonlin}_{s}(\serv{W}_{< t,\bdot}; \serv{A})] \quad \forall \; s\ \in [r], \; \tau \in [t].
\end{align}
Let $L$ denote the $t \times r$ matrix whose entries are given by the coefficients $(L_{\tau s})_{\tau \in [t], s \in [r]}$ defined above. Using the decomposition in \eqref{eq:ortho-decomp}, along with the definitions of $\alpha, \Sigma$  in \eqref{eq:alpha-Sigma-def}, we obtain the following alternative expressions for $\alpha$ and $\Sigma$:
\begin{align} \label{eq:alpha-Sigma-alt}
\Sigma & = L L^\top, \quad \alpha = L \hat{\alpha}, \quad \hat{\alpha} \explain{def}{=} (\E[\serv{X}_{\star} \hat{\fnonlin}_1(\serv{W}_{< t, \bdot}; \serv{A})], \E[\serv{X}_{\star} \hat{\fnonlin}_2(\serv{W}_{< t, \bdot}; \serv{A})] \cdots, \E[\serv{X}_{\star} \hat{\fnonlin}_t(\serv{W}_{< t, \bdot}; \serv{A})])^\top.
\end{align}
Similarly, by exploiting the fact that $\hat{\fnonlin}_{1:t}$ form an orthonormal basis for $\mathrm{Span}({\fnonlin}_{1:t})$, we have the following alternate formula for $\kappa_t$:
\begin{align}
    \kappa_t &\explain{def}{=} \min\bigg\{ \E\left[\{\serv{X}_{\star} - g(\serv{W}_{< t,\bdot}; \serv{A})\}^2\right] : g \in \mathrm{Span}({\fnonlin}_1, \dotsc, {\fnonlin}_t) \bigg\} \nonumber \\
    & = 1 - \|\hat{\alpha}\|^2 \in [0,1]. \label{eq:kappa-alt}
\end{align}
These alternate expressions for $\alpha, \Sigma, \kappa_t$ will be useful in our subsequent calculations. 
\paragraph*{Useful Linear Algebraic Results} The proof relies on the following intermediate linear algebraic claims.
\begin{align}
&\text{For any }  \lambda \geq 0, \quad  q \in \mathrm{Range}(Q + \lambda \Gamma),  \label{eq:LA-claim1}\\
&\text{If } \kappa_t < 1,  \label{eq:LA-claim2} \\  &q^\top \left(Q + \frac{\kappa_t}{1-\kappa_t} \cdot \Gamma \right)^\dagger q \neq 1 \Leftrightarrow  \alpha \otimes q  \in \mathrm{Range}( \alpha \alpha^\top   \otimes (Q - q q^\top) + (\Sigma - \alpha \alpha^\top)  \otimes \Gamma). \nonumber
\end{align}
We defer the justification of these claims to the end of the proof.
\paragraph*{Corner Cases} We begin by verifying the claim of the lemma for two corner cases. Consider the situation where $\kappa_t = 1$. In this case, \eqref{eq:alpha-Sigma-alt} and \eqref{eq:kappa-alt} imply that $\alpha = 0$. As a result, using the formula for the effective SNR for a general multivariate Gaussian channel given in \lemref{lem:scalarization-mmse} (\appref{appendix:gauss-channel}), the effective SNR of the Gaussian channel in \eqref{eq:SE-gauss-channel} is $\effsnr = 0$,  as claimed. Next, we consider the case when:
\begin{align*}
 \kappa_t < 1,  & \quad q^\top \left(Q + \frac{\kappa_t}{1-\kappa_t} \cdot \Gamma \right)^\dagger q = 1.
\end{align*}
From \eqref{eq:LA-claim2}, we conclude that:
\begin{align*} 
\alpha \otimes q  \notin \mathrm{Range}( \alpha \alpha^\top   \otimes (Q - q q^\top) + (\Sigma - \alpha \alpha^\top)  \otimes \Gamma).
\end{align*}
Hence, from \eqref{eq:effective-snr-recall}, we conclude that the effective SNR of the Gaussian channel in \eqref{eq:SE-gauss-channel} is $\effsnr = 1$, as claimed. 
\paragraph*{Derivation of the claimed formulae} The remaining task is to prove the claim of the lemma when:
\begin{align*}
\kappa_t < 1,  & \quad q^\top \left(Q + \frac{\kappa_t}{1-\kappa_t} \cdot \Gamma \right)^\dagger q \neq  1,
\end{align*}
and we work under the above assumption for the remainder of the proof. From \eqref{eq:LA-claim2}, we conclude that:
\begin{align*} 
\alpha \otimes q  \in \mathrm{Range}( \alpha \alpha^\top   \otimes (Q - q q^\top) + (\Sigma - \alpha \alpha^\top)  \otimes \Gamma).
\end{align*}
From \eqref{eq:effective-snr-recall}, we have that:
\begin{align} \label{eq:snr-optlin-vartheta}
\effsnr(\serv{X}_{\star} | \serv{W}_{\leq t, \bdot}; \serv{A}) & =  \frac{\vartheta}{1+\vartheta}, \\ \optlin(\serv{X}_{\star} | \serv{W}_{\leq t, \bdot}; \serv{A}) &= \frac{1}{\sqrt{\vartheta + \vartheta^2}} \cdot  ( \alpha \alpha^\top   \otimes (Q - q q^\top) + (\Sigma - \alpha \alpha^\top)  \otimes \Gamma)^{\dagger} \cdot \alpha \otimes q
\end{align}
where:
\begin{align} \label{eq:vartheta}
\vartheta \explain{def}{=} \alpha \otimes q^\top ( \alpha \alpha^\top   \otimes (Q - q q^\top) + (\Sigma - \alpha \alpha^\top)  \otimes \Gamma)^{\dagger}  \alpha \otimes q.
\end{align}
To compute the pseudoinverse $( \alpha \alpha^\top   \otimes (Q - q q^\top) + (\Sigma - \alpha \alpha^\top)  \otimes \Gamma)^{\dagger}$, we define $\overline{\alpha}$ as the unit vector along $\hat{\alpha}$ and $P$ as the projector orthogonal to $\hat{\alpha}$:
\begin{align} \label{eq:alpha-bar-P}
\overline{\alpha} \explain{def}{=} \frac{\hat{\alpha}}{\|\hat{\alpha}\|}, \quad P \explain{def}{=} I - \overline{\alpha} \overline{\alpha}^\top.
\end{align}
We have,
\begin{align*}
&( \alpha \alpha^\top   \otimes (Q - q q^\top) + (\Sigma - \alpha \alpha^\top)  \otimes \Gamma)^{\dagger}  \\ &\hspace{1cm}\explain{\eqref{eq:alpha-Sigma-alt}}{=} (L^\dagger \otimes I)^{\top} \cdot [ \hat{\alpha} \hat{\alpha}^\top \otimes (Q- q q^\top - \Gamma ) + I \otimes \Gamma ]^\dagger \cdot (L^{\dagger} \otimes I) \\
& \hspace{1cm} \explain{\eqref{eq:kappa-alt},\eqref{eq:alpha-bar-P}}{=} (L^\dagger \otimes I)^{\top} \cdot [ \overline{\alpha} \overline{\alpha}^\top \otimes \{ (1-\kappa_t) \cdot (Q- q q^\top) + \kappa_t \cdot \Gamma \} + P \otimes \Gamma ]^\dagger \cdot (L^{\dagger} \otimes I) \\
& \hspace{1cm} = (L^\dagger \otimes I)^{\top} \cdot [ \overline{\alpha} \overline{\alpha}^\top \otimes \{ (1-\kappa_t) \cdot (Q- q q^\top) + \kappa_t \cdot \Gamma \}^{\dagger} + P \otimes \Gamma^\dagger] \cdot (L^{\dagger} \otimes I).
\end{align*}
The above formula for the pseudo-inverse yields: 
\begin{align} \label{eq:inspiration}
&( \alpha \alpha^\top   \otimes (Q - q q^\top) + (\Sigma - \alpha \alpha^\top)  \otimes \Gamma)^{\dagger} \cdot \alpha \otimes q \\ & \hspace{1cm}=   (L^{\dagger \top} \hat{\alpha})  \otimes   [ \{(1-\kappa_t) \cdot (Q- q q^\top) + \kappa_t \cdot \Gamma \}^{\dagger} \cdot q] \nonumber \\
& \hspace{1cm} \explain{(a)}{=} (1-\kappa_t)^{-1} \cdot  \left\{1 - q^\top \left(Q + \frac{\kappa_t}{1-\kappa_t} \cdot \Gamma \right)^\dagger q \right\}^{-1} \cdot (L^{\dagger \top} \hat{\alpha})  \otimes   \left(Q + \frac{\kappa_t}{1-\kappa_t} \cdot \Gamma \right)^\dagger q \nonumber\\
& \hspace{1cm} \explain{\eqref{eq:alpha-Sigma-alt}}{=} (1-\kappa_t)^{-1} \cdot  \left\{1 - q^\top \left(Q + \frac{\kappa_t}{1-\kappa_t} \cdot \Gamma \right)^\dagger q \right\}^{-1} \cdot (\Sigma^{\dagger} {\alpha})  \otimes   \left(Q + \frac{\kappa_t}{1-\kappa_t} \cdot \Gamma \right)^\dagger q, 
\end{align}
where we used the Sherman-Morrison formula in (a). Combining the computation above with the definition of $\vartheta$ in \eqref{eq:vartheta} yields the following formula for $\vartheta$:
\begin{align*}
\vartheta & \explain{\eqref{eq:kappa-alt}}{=} (\alpha^\top \Sigma^\dagger \alpha) \cdot (1-\kappa_t)^{-1} \cdot \left\{1 - q^\top \left(Q + \frac{\kappa_t}{1-\kappa_t} \cdot \Gamma \right)^\dagger q \right\}^{-1} \cdot q^\top \left(Q + \frac{\kappa_t}{1-\kappa_t} \cdot \Gamma \right)^\dagger q \\
& \explain{(a)}{=} \left\{1 - q^\top \left(Q + \frac{\kappa_t}{1-\kappa_t} \cdot \Gamma \right)^\dagger q \right\}^{-1} \cdot q^\top \left(Q + \frac{\kappa_t}{1-\kappa_t} \cdot \Gamma \right)^\dagger q
\end{align*}
where the equality marked (a) follows by observing that $ \alpha^\top \Sigma^\dagger \alpha = \|\hat{\alpha}\|^2 = 1-\kappa_t$. Plugging the expression of the previous display in \eqref{eq:snr-optlin-vartheta} yields:
\begin{align*}
\effsnr(\serv{X}_{\star} | \serv{W}_{\leq t, \bdot}; \serv{A}) &=  q^\top \left(Q + \frac{\kappa_t}{1-\kappa_t} \cdot \Gamma \right)^\dagger q, \\
\optlin(\serv{X}_{\star} | \serv{W}_{\leq t, \bdot}; \serv{A})& =   \frac{1}{\sqrt{\effsnr(\serv{X}_{\star} | \serv{W}_{\leq t, \bdot}; \serv{A})}} \cdot (\Sigma^{\dagger} {\alpha})  \otimes   \left((1-\kappa_t) \cdot Q + {\kappa_t} \cdot \Gamma \right)^\dagger q
\end{align*}
as claimed. To finish the proof, we need to prove the linear algebraic claims in \eqref{eq:LA-claim1} and \eqref{eq:LA-claim2}.
\paragraph*{Proof of claim \eqref{eq:LA-claim1}} Fix any $\lambda \geq 0$. To show that $q \in \mathrm{Range}(Q + \lambda \Gamma)$, it is sufficient to verify that $q$ is orthogonal to $\mathrm{Null}(Q + \lambda \Gamma)$. Towards this goal, we consider any $v \in \mathrm{Null}(Q + \lambda \Gamma)$ and aim to show that $\ip{v}{q} = 0$. Since $v$ lies in the null space of $Q + \lambda \Gamma$,
\begin{align*}
v^\top (Q + \lambda \Gamma) v & = 0 \implies v^\top Q v = 0,
\end{align*}
where the implication is obtained by observing that $Q \succeq 0$ and $\Gamma \succeq 0$ (cf. \eqref{eq:q-Q-Gamma-def}). Recalling the definition of $q$ from \eqref{eq:q-Q-Gamma-def}, we obtain:
\begin{align*}
|\ip{q}{v}|^2 & = \left|\E \bigg[\sum_{i=1}^\degree v_i \cdot (\serv{\Lambda}_{\nu}^i -\E[\serv{\Lambda}^i]) \bigg]\right|^2 \explain{(a)}{\leq}\E \left[\bigg|\sum_{i=1}^\degree v_i \cdot (\serv{\Lambda}_{\nu}^i -\E[\serv{\Lambda}^i]) \bigg|^2\right] \explain{\eqref{eq:q-Q-Gamma-def}}{=} v^\top Q v = 0.
\end{align*}
In the above display, $\serv{\Lambda} \sim \mu, \serv{\Lambda}_{\nu} \sim \nu$ and the inequality in (a) follows from the Cauchy–Schwarz inequality. Hence, $\ip{q}{v} = 0$ for any $v \in \mathrm{Null}(Q + \lambda \Gamma)$, which implies the claim in \eqref{eq:LA-claim1}. 
\paragraph*{Proof of claim \eqref{eq:LA-claim2}} 
For convenience, we introduce the definitions:
\begin{align*}
    S \explain{def}{=}  \alpha \alpha^\top   \otimes (Q - q q^\top) + (\Sigma - \alpha \alpha^\top)  \otimes \Gamma, \quad T \explain{def}{=} Q + \frac{\kappa_t}{1-\kappa_t} \cdot \Gamma.
\end{align*}
Assuming $\kappa_t < 1$, our goal is to show the equivalence:
\begin{align*}
q^\top T^\dagger q \neq 1 \Leftrightarrow  \alpha \otimes q  \in \mathrm{Range}(S).
\end{align*}
Consider the vector:
\begin{align*}
v \explain{def}{=} (L^{\dagger \top} \hat{\alpha})  \otimes   (T^{\dagger} q).
\end{align*}
A short computation reveals that:
\begin{align*}
S v & = (1-\kappa_t) \cdot (1-q^\top T^\dagger q) \cdot \alpha \otimes q, \quad \ip{q}{v} = (1-\kappa_t) \cdot q^\top T^\dagger q. 
\end{align*}
If $q^\top T^{\dagger} q \neq 1$, we have found a vector $u = (1-\kappa_t)^{-1} \cdot (1- q^\top T^\dagger q)^{-1} \cdot v$ such that $S u = \alpha \otimes q$, which certifies that $\alpha \otimes q \in \mathrm{Range}(S)$. On the other hand, if $q^\top T^{\dagger} q = 1$, then $S v = 0$ and we have found a vector $v \in \mathrm{Null}(S)$ such that $\ip{v}{q} = (1-\kappa_t) \neq 0$, which certifies that $\alpha \otimes q \notin \mathrm{Range}(S)$. This concludes the proof of this lemma. 
\end{proof}

\subsubsection{Proof of \lemref{lem:scrm-monotonic}} \label{sec:proof-scrm-monotonic}
\begin{proof}[Proof of \lemref{lem:scrm-monotonic}] Recall from \eqref{eq:scrm} that:
\begin{align}
\scrm(\kappa) \explain{def}{=} \begin{cases} \mmse \left( q \tran \left[ Q  + \frac{\kappa}{1-\kappa} \cdot \Gamma \right]^{\dagger} q \right) & :\kappa \in [0,1) \\ \mmse(0) & :\kappa = 1. \end{cases}
\end{align}
The expressions for the vector $q \in \R^{\degree}$ and the \emph{positive semi-definite} matrices $Q, \Gamma \in \R^{\degree \times \degree}$ appear in \eqref{eq:liftedOAMP-SE}, but will not be needed in the proof.  To show that $\scrm$ is a non-decreasing function, it suffices to verify that for any $0 \leq \kappa \leq \kappa^\prime < 1$, we have $\scrm(\kappa) \leq \scrm(\kappa^\prime) \leq \scrm(1)$. Indeed we have, 
\begin{align}
&0 \leq \kappa \leq \kappa^\prime < 1 \implies 0 \leq \frac{\kappa}{1-\kappa} \leq \frac{\kappa^{\prime}}{1-\kappa^{\prime}}  \explain{(a)}{\implies}  0 \preceq Q  + \frac{\kappa}{1-\kappa} \cdot \Gamma \preceq Q  + \frac{\kappa^\prime}{1-\kappa^\prime} \cdot \Gamma  \label{eq:Q-Gamma-psd-order} \\
&  \explain{(b)}{\implies} q^{\top}\left(Q  + \frac{\kappa}{1-\kappa} \cdot \Gamma\right)^{\dagger} q \geq q^{\top}\left(Q   + \frac{\kappa^\prime}{1-\kappa^\prime} \cdot \Gamma\right)^{\dagger} \geq 0 \nonumber \\
&  \explain{(c)}{\implies}\mmse \left( q \tran \left[ Q  + \frac{\kappa}{1-\kappa} \cdot \Gamma \right]^{\dagger} q \right) \leq \mmse \left( q \tran \left[ Q  + \frac{\kappa^{\prime}}{1-\kappa^{\prime}} \cdot \Gamma \right]^{\dagger} q \right) \leq \mmse(0), \nonumber \\
&\explain{(d)}{\implies}\scrm(\kappa) \leq \scrm(\kappa^\prime) \leq \scrm(1). \nonumber
\end{align}
In the above display, step (a) follows by observing that $Q,\Gamma$ are positive semidefinite ($\preceq$ denotes the standard Loewner partial ordering of symmetric matrices defined by the positive semidefinite cone), step (c) relies on the monotonicity of $\mmse(\cdot)$ (\factref{fact:mmse-monotonic}). Finally, step (b) follows from the Fenchel conjugate formula for convex quadratic forms (recall from \eqref{eq:LA-claim1} that $q \in \mathrm{Range}(Q + \lambda \Gamma)$ for any $\lambda \geq 0$):
\begin{align*}
&q^{\top}\left(Q  + \frac{\kappa}{1-\kappa} \cdot \Gamma\right)^{\dagger} q  = \sup_{\xi \in \R^{\degree}} 2\ip{q}{\xi} - \xi^\top \left( Q  + \frac{\kappa}{1-\kappa} \cdot \Gamma \right) \xi \\ &\hspace{2cm}\explain{\eqref{eq:Q-Gamma-psd-order}}{\geq} \sup_{\xi \in \R^{\degree}} 2\ip{q}{\xi} - \xi^\top \left( Q  + \frac{\kappa^\prime}{1-\kappa^\prime} \cdot \Gamma \right) \xi = q^{\top}\left(Q  + \frac{\kappa^\prime}{1-\kappa^\prime} \cdot \Gamma\right)^{\dagger} q.
\end{align*}
This verifies that $\scrm$ is non-decreasing. 
\end{proof}

\subsubsection{Proof of \lemref{lem:greedy-lemma}} \label{sec:proof-greedy-lemma}
\begin{proof}[Proof of \lemref{lem:greedy-lemma}] Given a collection of iterate denoisers $\fnonlin_{1}, \dotsc, \fnonlin_{t}$, which satisfy the requirements of \defref{def:LOAMP}, our goal is to show that:
\begin{align}
    \scrM_{t}(f_1, f_2, \dotsc, f_{t-1}, f_t) &\geq \scrm(\kappa_t^{\sharp}) \text{ and,} \label{eq:greedy-lemma-claim1}\\
    \scrM_{t}(f_1, f_2, \dotsc, f_{t-1}, f_t^{\sharp}) & =  \scrm(\kappa_t^{\sharp}) \label{eq:greedy-lemma-claim2},
\end{align}
where $\kappa_t^{\sharp} \explain{def}{=}  \dmmse \circ \mmse^{-1}(\scrM_{t-1}(\fnonlin_{1:t-1})))$ and $\fnonlin_t^{\sharp}$ is the DMMSE estimator for the Gaussian channel $(\serv{X}_{\star}, \serv{W}_{1,\bdot}, \dotsc, \serv{W}_{t-1,\bdot}; \serv{A})$ generated by the state evolution random variables associated with the lifted OAMP algorithm which uses the denoisers $\fnonlin_{1}, \dotsc, \fnonlin_{t-1}$ in the first $t-1$ iterations.  From \lemref{lemma:effsnr-LOAMP}, we know that:
\begin{subequations} \label{eq:mmse-functional-recall}
\begin{align}
\scrM_{t}(f_1, f_2, \dotsc, f_t) & = \scrm(\kappa_t)
\end{align}
where the scalar $\kappa_t \in [0,1]$ was defined as:
\begin{align}
    \kappa_t \explain{def}{=} \min\bigg\{ \E\left[\{\serv{X}_{\star} - g(\serv{W}_{< t,\bdot}; \serv{A})\}^2\right] : g \in \mathrm{Span}({\fnonlin}_1, \dotsc, {\fnonlin}_t) \bigg\}.
\end{align}
\end{subequations}
We consider the two claims of the lemma \eqref{eq:greedy-lemma-claim1} and \eqref{eq:greedy-lemma-claim2} one by one.
\paragraph*{Proof of Claim \eqref{eq:greedy-lemma-claim1}}  In light of the monotonicity of $\scrm$ (\lemref{lem:scrm-monotonic}), it suffices to show that
\begin{align} \label{eq:kappa_t-LB-goal}
\kappa_t &\explain{def}{=} \min\bigg\{ \E\left[\{\serv{X}_{\star} - g(\serv{W}_{< t,\bdot}; \serv{A})\}^2\right] : g \in \mathrm{Span}({\fnonlin}_1, \dotsc, {\fnonlin}_t) \bigg\} \geq \kappa_{t}^{\sharp} \\ &\explain{def}{=} \dmmse \circ \mmse^{-1}(\scrM_{t-1}(\fnonlin_{1:t-1}))).
\end{align} We can lower bound $\kappa_t$ as follows:
\begin{align}
&\kappa_t \explain{def}{=} \min_{g \in \mathrm{Span}({\fnonlin}_1, \dotsc, {\fnonlin}_t)} \E\left[\{\serv{X}_{\star} - g(\serv{W}_{< t,\bdot}; \serv{A})\}^2\right] \nonumber \\
& \explain{(a)}{\geq}  \min_{g \in \mathrm{Span}({\fnonlin}_1, \dotsc, {\fnonlin}_t)}  \E\left[\{\serv{X}_{\star} - g(\serv{W}_{< t,\bdot}; \serv{A})\}^2\right] \text{ s.t. } \E[\serv{Z}_{s,i} g(\serv{W}_{<t,\bdot}; \serv{A})] = 0 \; \forall \; s \in [t-1], \; i \in [\degree] \nonumber \\
& \explain{def}{=} \bdmmse(\serv{X}_{\star} | \serv{W}_{< t,\bdot}; \serv{A}). \label{eq: kappa_t-LB-prelim}
\end{align}
In the above display, the random variables $\serv{Z}_{<t,\bdot}$ represent the Gaussian noise random variables for the Gaussian channel $(\serv{X}_{\star}, \serv{W}_{<t,\bdot}; \serv{A})$. To obtain the inequality in step (a), we used the fact that any $g \in \mathrm{Span}(\fnonlin_1, \dotsc, {\fnonlin}_t)$ satisfies the divergence-free constraints:
\begin{align*}
\E[\serv{Z}_{s,i} \cdot g(\serv{W}_{<t,\bdot}; \serv{A})] = 0 \; \forall \; s \in [t-1], \; i \in [\degree].
\end{align*} 
Indeed, by Gaussian integration by parts, for any $i< t$, $s < t$ and $j \in [\degree]$, $$\E[\serv{Z}_{s,j} \cdot {\fnonlin}_{i}(\serv{W}_{<i;\bdot}; \serv{A})]$$ can be expressed as a linear combination of the expected derivatives $$\{\E[\partial_{\tau,\ell}{\fnonlin}_{i}(\serv{W}_{<i;\bdot}; \serv{A})]: \tau < i, \ell \in [\degree]\},$$ which are all identically zero thanks to the divergence-free requirement imposed on the iterate denoisers $\fnonlin_1, \dotsc, \fnonlin_{t-1}$ in \defref{def:LOAMP}. We now cast the lower bound on $\kappa_t$ obtained in \eqref{eq: kappa_t-LB-prelim} in the desired form \eqref{eq:kappa_t-LB-goal}. Using the DMMSE formula for general multivariate Gaussian channels (stated as \lemref{lem:scalarization-mmse} in Appendix \appref{appendix:gauss-channel}) we obtain:
\begin{align*}
\kappa_t \geq \bdmmse(\serv{X}_{\star} | \serv{W}_{< t,\bdot}; \serv{A}) = \dmmse(\effsnr),
\end{align*}
where $\effsnr$ is the effective SNR of the Gaussian channel $(\serv{X}_{\star}, \serv{W}_{<t,\bdot}; \serv{A})$. On the other hand, the MMSE formula for general multivariate Gaussian channels (stated as \lemref{lem:scalarization-mmse} in  \appref{appendix:gauss-channel}) yields:
\begin{align*}
\scrM_{t-1}(\fnonlin_1, \dotsc, \fnonlin_{t-1}) \explain{def}{=} \bmmse(\serv{X}_{\star} | \serv{W}_{<t,\bdot}; \serv{A}) = \mmse(\effsnr).
\end{align*}
Since $\mmse(\cdot)$ is a strictly decreasing function (\factref{fact:mmse-monotonic}), it has a well-defined inverse\footnote{Strictly speaking, \factref{fact:mmse-monotonic} guarantees that $\mmse:[0,1] \mapsto [0,1]$ is strictly increasing only when $\mmse(0) = \E[\Var[\serv{X}_{\star}| \serv{A}]] > 0$. In the corner case when $\mmse(0) = \E[\Var[\serv{X}_{\star}| \serv{A}]] = 0$, notice that $\dmmse(0) = \E[\Var[\serv{X}_{\star}| \serv{A}]= 0$. Since $\mmse$ and $\dmmse$ are non-negative, non-increasing functions (cf. \factref{fact:mmse-monotonic} and \lemref{lem:dmmse-monotonicity}), this means that $\mmse(\omega) = \dmmse(\omega) = 0$ for any $\omega \in [0,1]$. In this situation, we can define  $\mmse^{-1}$ arbitrarily. Irrespective of our convention, $\dmmse \circ \mmse^{-1}(\omega) = 0$, since $\dmmse$ is a constant function which takes the value $0$.} $\mmse^{-1}$, which can be used to relate the previous two displays:
\begin{align} \label{eq:kappa_t_star-DMMSE-relation}
\kappa_t \geq \dmmse(\effsnr) = \dmmse \circ \mmse^{-1}(\scrM_{t-1}(\fnonlin_1, \dotsc, \fnonlin_{t-1})) \explain{def}{=} \kappa_t^{\sharp},
\end{align}
which proves the claim made in \eqref{eq:kappa_t-LB-goal} and concludes the proof of the first part of the lemma. 
\paragraph*{Proof of Claim \eqref{eq:greedy-lemma-claim2}} In light of the formula in \eqref{eq:mmse-functional-recall}, it suffices to show that:
\begin{align*} 
 \min\bigg\{ \E\left[\{\serv{X}_{\star} - g(\serv{W}_{< t,\bdot}; \serv{A})\}^2\right] : g \in \mathrm{Span}({\fnonlin}_1, \dotsc,  {\fnonlin}_{t-1},  {\fnonlin}_t^{\sharp}) \bigg\} = \kappa_t^\sharp.
\end{align*}
From \eqref{eq:kappa_t_star-DMMSE-relation}, we already know that:
\begin{align*}
     \min\bigg\{ \E\left[\{\serv{X}_{\star} - g(\serv{W}_{< t,\bdot}; \serv{A})\}^2\right] : g \in \mathrm{Span}({\fnonlin}_1, \dotsc,  {\fnonlin}_{t-1},  {\fnonlin}_t^{\sharp}) \bigg\} \geq \kappa_t^\sharp
\end{align*}
Hence, we only need to show the upper bound:
\begin{align} \label{eq:kappa_t_hash}
     \min\bigg\{ \E\left[\{\serv{X}_{\star} - g(\serv{W}_{< t,\bdot}; \serv{A})\}^2\right] : g \in \mathrm{Span}({\fnonlin}_1, \dotsc,  {\fnonlin}_{t-1},  {\fnonlin}_t^{\sharp}) \bigg\} \leq \kappa_t^\sharp.
\end{align}
We upper bound the LHS in \eqref{eq:kappa_t_hash} by taking $g=\fnonlin_t^\sharp$ and use the DMMSE formula for a general multivariate Gaussian channel (see \lemref{lem:scalarization-mmse} in \appref{appendix:gauss-channel}) to obtain: 
\begin{align*}
 &   \min\bigg\{ \E\left[\{\serv{X}_{\star} - g(\serv{W}_{< t,\bdot}; \serv{A})\}^2\right] : g \in \mathrm{Span}({\fnonlin}_1, \dotsc,  {\fnonlin}_{t-1},  {\fnonlin}_t^{\sharp}) \bigg\} \\&
 \leq \E[(\serv{X}_{\star}- \fnonlin_t^{\sharp}(\serv{W}_{<t,\bdot}; \serv{A}))^2] \\ & \explain{(a)}{=} \bdmmse(\serv{X}_{\star}|\serv{W}_{<t,\bdot}; \serv{A}) \\& \explain{Lem. \ref{lem:scalarization-mmse}}{=}  \dmmse(\effsnr) \explain{\eqref{eq:kappa_t_star-DMMSE-relation}}{=}  \kappa_t^\sharp,
\end{align*}
where step (a) follows by recalling that $\fnonlin_t^{\sharp}$ was the DMMSE estimator for $(\serv{X}_{\star}, \serv{W}_{1,\bdot}, \dotsc, \serv{W}_{t-1, \bdot}; \serv{A})$. This concludes the proof of the lemma. 
\end{proof}

\subsection{Proof of \propref{prop:simplification}}\label{App:proof_prop_simplications}
We begin by reminding the reader that the optimal degree-$\degree$ lifted OAMP algorithm was given by:
\begin{align} \label{eq:optimal-LOAMP-general-form}
\iter{\vw}{t,i} & = \left( \mY^i - \E_{\serv{\Lambda} \sim \mu}[\serv{\Lambda}^i] \cdot \mI_{\dim} \right) \cdot \fnonlin^{\star}_{t}\big(\iter{\vw}{1,\bdot}, \dotsc, \iter{\vw}{t-1,\bdot}; \va \big)  \quad \forall \; i \in [\degree], \; t \in \N,
\end{align}
At the end of $t$ iterations, the optimal lifted OAMP algorithm estimates $\vx_{\star}$ by:
\begin{align*}
\dup{\hat{\vw}_t}{\degree} & = \hnonlin_t^{\star}(\iter{\vw}{1,\bdot}, \dotsc, \iter{\vw}{t,\bdot}; \va).
\end{align*}
\paragraph*{Description of the iterate denoisers and post-processing functions} The iterate denoisers functions $\fnonlin^{\star}_1, \fnonlin^{\star}_2, \dotsc$ and the post-processing functions $\hnonlin_1^\star, \hnonlin_2^{\star}, \dotsb $ were defined recursively: once $\fnonlin^{\star}_{1}, \dotsc, \fnonlin_{t}^\star$ have been specified, let $(\serv{X}_\star, \serv{W}_{1,\bdot}, \dotsc,  \serv{W}_{t,\bdot} ; \serv{A})$ denote the state evolution random variables corresponding to the first $t$ iterations of the resulting lifted OAMP algorithm. Then, the iterate denoiser for step $t+1$ is the DMMSE estimator for the Gaussian channel  $(\serv{X}_\star, \serv{W}_{1,\bdot}, \dotsc,  \serv{W}_{t,\bdot} ; \serv{A})$. \lemref{lem:scalarization-mmse} in \appref{appendix:gauss-channel} provides a general formula for the DMMSE estimator of a multivariate Gaussian channel. An application of this formula yields:
\begin{subequations} \label{eq:optimal-LOAMP-denoisers}    
\begin{align}
\fnonlin_{t+1}^{\star} (w ; \aux) & \explain{def}{=} \dfbdnsr\bigg(\ip{\optlintd{t}}{w} ; \aux \; \big| \;  \effsnrtd{t} \bigg) \quad \forall \; w \; \in \; \R^{t\degree}, \; \aux \; \in \; \R^{\auxdim}
\end{align}
where: \begin{align}
    {\optlintd{t}}\explain{def}{=} \optlin(\serv{X}_\star|\serv{W}_{1,\bdot}, \dotsc,  \serv{W}_{t,\bdot} ; \serv{A}),\quad \effsnrtd{t} \explain{def}{=}\effsnr(\serv{X}_\star|\serv{W}_{1,\bdot}, \dotsc,  \serv{W}_{t,\bdot}; \serv{A})
\end{align} 
denote the optimal linear combination and effective SNR of the channel $(\serv{X}_\star, \serv{W}_{1,\bdot}, \dotsc,  \serv{W}_{t,\bdot} ; \serv{A})$ (as defined in \lemref{lem:scalarization-mmse}).
Moreover, the estimator at the end of $t$ iterations is obtained by applying the MMSE estimator for the channel $(\serv{X}_\star, \serv{W}_{1,\bdot}, \dotsc,  \serv{W}_{t,\bdot} ; \serv{A})$ to the iterates:
\begin{align}
\dup{\iter{\hat{\vw}}{t}}{\degree} & = \hnonlin_{t}^{\star} (\iter{\vw}{\leq t, \bdot} ; \aux), \\
\hnonlin_{t}^{\star} (w ; \aux) & \explain{def}{=} \bdnsr\bigg(\ip{\optlintd{t}}{w} ; \aux \; \big| \;  \effsnrtd{t} \bigg) \quad \forall \; w \; \in \; \R^{t\degree}, \; \aux \; \in \; \R^{\auxdim}
\end{align}
\end{subequations}
\paragraph*{Dynamics of the Optimal Lifted OAMP Algorithm} By \corref{cor:SE-LOAMP}, the joint distribution of $(\serv{X}_{\star}, \serv{W}_{\leq t, \bdot} ; \serv{A})$ is given by: \begin{subequations} \label{eq:optimal-liftedOAMP-SE}
\begin{align} 
&(\serv{X}_{\star}; \serv{A}) \sim \pi, \\&(\serv{W}_{1,\bdot}, \dotsc, \serv{W}_{t,\bdot}) | (\serv{X}_\star;\serv{A}) \nonumber \\ & \hspace{2.2cm} \sim \gauss{\serv{X_\star} \cdot  \alpha_t \otimes \dup{q}{\degree}}{ \alpha_t \alpha_t^\top   \otimes (\dup{Q}{\degree} - \dup{q}{\degree} {\dup{q}{\degree}}^\top) + (\Sigma_t - \alpha_t \alpha^\top_t)  \otimes \dup{\Gamma}{\degree}}. \nonumber 
\end{align}
In the above display, the entries of $\dup{q}{\degree} \in \R^{\degree}$, $\dup{Q}{\degree} \in \R^{\degree \times \degree}$, and $\dup{\Gamma}{\degree} \in \R^{\degree \times \degree}$ are given by:
\begin{align}  \label{eq:optimal-LOAMP-denoisers-c}
\dup{q}{\degree}_i  &= \E_{\serv{\Lambda}_{\nu} \sim \nu}[{\serv{\Lambda}}_{\nu}^i] - \E[{\serv{\Lambda}}^i],  \;  \dup{Q}{\degree}_{ij} 
= \E_{\serv{\Lambda}_{\nu} \sim \nu}\left[ \left({\serv{\Lambda}}_{\nu}^i - \E_{\serv{\Lambda} \sim \mu}[{\serv{\Lambda}}^i] \right)  \left({\serv{\Lambda}}_{\nu}^j - \E_{\serv{\Lambda} \sim \mu}[{\serv{\Lambda}}^j] \right) \right], \\ \dup{\Gamma}{\degree}_{ij} &= \Cov_{\serv{\Lambda} \sim \mu}[\serv{\Lambda}^i ,\serv{\Lambda}^j]. \nonumber
\end{align}
the entries of $\alpha_t \in \R^t$ and $\Sigma_t \in \R^{t \times t}$ are given by:
\begin{align}
(\alpha_t)_{s} \explain{def}{=} \E[\serv{X}_{\star} \cdot {\fnonlin}^\star_{s}(\serv{W}_{<s,\bdot}; \serv{A})], \quad (\Sigma_t)_{s \tau} \explain{def}{=} \E[{\fnonlin}^\star_{s}(\serv{W}_{<s,\bdot}; \serv{A}) \cdot {\fnonlin}^\star_{\tau}(\serv{W}_{<\tau,\bdot}; \serv{A})] \quad \forall \;  s,\tau \; \in \; [t],
\end{align}
and $\otimes$ denotes the Kronecker (or tensor) product for matrices. Finally, the asymptotic MSE of the estimator $\dup{\hat{\vw}_t}{\degree}$ returned after $t$ iterations is given by:
\begin{align} \label{eq:optimal-LOAMP-mse}
\plim_{\dim \rightarrow \infty} \frac{\|\dup{\hat{\vw}_t}{\degree}-\vx_{\star}\|^2}{\dim} & = \E \left[ \left( \serv{X}_{\star} - \hnonlin_t^{\star}(\serv{W}_{\leq t, \bdot}; \serv{A}) \right)^2 \right] = \bmmse(\serv{X}_{\star}| \serv{W}_{\leq t, \bdot}; \serv{A}) \explain{Lem. \ref{lem:scalarization-mmse}}{=}\mmse(\effsnrtd{t}). 
\end{align}
\end{subequations}
The following lemma provides a formula for the effective SNR $\effsnrtd{t} \explain{def}{=}\effsnr(\serv{X}_\star|\serv{W}_{1:t,\bdot}; \serv{A})$ and the optimal linear combination ${\optlintd{t}}\explain{def}{=} \optlin(\serv{X}_\star|\serv{W}_{1,\bdot}, \dotsc,  \serv{W}_{t,\bdot} ; \serv{A})$ for the Gaussian channel $(\serv{X}_{\star}, \serv{W}_{1, \bdot}, \dotsc, \serv{W}_{t, \bdot}; \serv{A})$ at step $t$.  
\begin{lemma} \label{lem:optlin-LOAMP} Let $\effsnrtd{0} \explain{def}{=} 0$ and $\dmmsetd{0} \explain{def}{=} \mmse(0) = \dmmse(0) = \E[\Var[\serv{X}_{\star}| \serv{A}]]$. 
\begin{enumerate}
\item If $\dmmsetd{0} < 1$, then $\effsnrtd{t}$ and $\optlintd{t}$ admit the following recursive characterization:
\begin{align*}
\dmmsetd{t} & = \dmmse(\effsnrtd{t-1}), \\
\effsnrtd{t} & = {\dup{q}{\degree}}^\top \left[\dup{Q}{\degree} + \frac{\dmmsetd{t}}{1-\dmmsetd{t}} \cdot \dup{\Gamma}{\degree}   \right]^\dagger \dup{q}{\degree}, \\
\optlintd{t} & = {(\effsnrtd{t})^{-\frac{1}{2}}} \cdot (1-\dmmsetd{t})^{-1} \cdot  e_t \otimes \left[  \dup{Q}{\degree} + \frac{\dmmsetd{t}}{1-\dmmsetd{t}} \cdot  \dup{\Gamma}{\degree} \right]^{\dagger} \dup{q}{\degree},
\end{align*}
where $e_t = (0, 0, \dotsc, 1)^\top \in \R^t$ denotes the last standard basis vector.  Moreover, for each $t \in \N$, 
\begin{align*}
    \effsnrtd{t} > 0, \quad \dmmsetd{t} < 1 \quad \forall \; t \; \in \; \N.
\end{align*}
\item If $\dmmsetd{0} = 1$, then $\effsnrtd{t} = 0$ for all $t \in \N$.
\end{enumerate}
\end{lemma}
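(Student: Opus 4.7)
The plan is to proceed by induction on $t$, handling the two cases separately and leveraging Lemmas~\ref{lemma:effsnr-LOAMP}, \ref{lem:scalarization-mmse}, \ref{lem:dmmse-scalar}, and the divergence-free argument used inside the proof of Lemma~\ref{lem:greedy-lemma}.

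For the corner case $\dmmsetd{0} = 1$, the identity $\E[\Var[\serv{X}_\star\mid \serv{A}]] = 1 = \E[\serv{X}_\star^2]$ combined with the law of total variance forces $\E[\serv{X}_\star\mid \serv{A}] = 0$ almost surely. Hence $\fnonlin_1^\star = \bdnsr(\cdot\mid 0) = \dfbdnsr(\cdot\mid 0) \equiv 0$, so $\vw_{1,\bdot} = 0$. A straightforward induction propagates this vanishing: at every step $\fnonlin_t^\star \equiv 0$, so $\kappa_t = 1$ and the ``$\kappa_t = 1$'' branch of Lemma~\ref{lemma:effsnr-LOAMP} gives $\effsnrtd{t} = 0$.

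For $\dmmsetd{0} < 1$, I would induct on $t$ with a strengthened hypothesis: the claimed formulas hold and the state-evolution parameters satisfy
\begin{align*}
\alpha_s = 1 - \dmmsetd{s}, \qquad \Sigma_{s, \tau} = 1 - \dmmsetd{\min(s,\tau)} \quad \forall\ s, \tau \leq t,
\end{align*}
which forces $\Sigma_t e_t = \alpha_t$. Thus $e_t$ is a valid representative of $\Sigma_t^\dagger \alpha_t$ used in Lemma~\ref{lemma:effsnr-LOAMP} (any two solutions of $\Sigma_t c = \alpha_t$ produce the same scalar channel in Lemma~\ref{lem:scalarization-mmse}, so the pseudo-inverse ambiguity is immaterial). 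The base case $t=1$ is direct, since $\fnonlin_1^\star = \bdnsr(\cdot\mid 0)$ gives $\alpha_1 = \Sigma_{1,1} = 1 - \mmse(0) = 1 - \dmmsetd{1}$. The inductive step proceeds in three moves. First, identify $\kappa_t = \dmmsetd{t}$: the upper bound $\kappa_t \leq \dmmsetd{t}$ comes from taking $g = \fnonlin_t^\star = \dfbdnsr(\serv{S}_{t-1};\serv{A}\mid\effsnrtd{t-1})$ in the definition of $\kappa_t$ and applying Lemma~\ref{lem:scalarization-mmse} to the scalar channel $(\serv{X}_\star, \serv{S}_{t-1};\serv{A})$ at SNR $\effsnrtd{t-1}$ (available via the inductive hypothesis and Lemma~\ref{lemma:effsnr-LOAMP}), yielding MSE $\dmmse(\effsnrtd{t-1}) = \dmmsetd{t}$; the matching lower bound $\kappa_t \geq \dmmsetd{t}$ comes from the Gaussian integration-by-parts argument in the proof of Lemma~\ref{lem:greedy-lemma}, which shows that every $g \in \mathrm{Span}(\fnonlin_1^\star, \dotsc, \fnonlin_t^\star)$ satisfies the multivariate divergence-free constraints with respect to the noise of $\serv{W}_{<t,\bdot}$, so $\kappa_t \geq \bdmmse(\serv{X}_\star\mid \serv{W}_{<t,\bdot};\serv{A}) = \dmmse(\effsnrtd{t-1}) = \dmmsetd{t}$ by Lemma~\ref{lem:scalarization-mmse}. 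Second, since $\fnonlin_t^\star$ attains this minimum over $\mathrm{Span}(\fnonlin_{1:t}^\star)$, the projection theorem gives the orthogonality $\E[\fnonlin_s^\star(\serv{X}_\star - \fnonlin_t^\star)] = 0$ for all $s \leq t$, i.e.\ $\Sigma_{s,t} = \alpha_s$; combined with the scalar DMMSE identity $\alpha_t = \E[(\fnonlin_t^\star)^2] = 1 - \dmmsetd{t}$ from Lemma~\ref{lem:dmmse-scalar}, this closes the strengthened hypothesis at step $t$. Third, substitute $\kappa_t = \dmmsetd{t}$ and $\Sigma_t^\dagger \alpha_t = e_t$ into Lemma~\ref{lemma:effsnr-LOAMP}, simplify using $(1-\kappa_t)\dup{Q}{\degree} + \kappa_t \dup{\Gamma}{\degree} = (1-\dmmsetd{t})\bigl(\dup{Q}{\degree} + \tfrac{\dmmsetd{t}}{1-\dmmsetd{t}}\dup{\Gamma}{\degree}\bigr)$, and recover the stated formulas for $\effsnrtd{t}$ and $\optlintd{t}$.

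The positivity claims are then immediate: at each step $\dmmsetd{t} = \dmmse(\effsnrtd{t-1}) \leq \dmmse(0) = \dmmsetd{0} < 1$ by monotonicity of $\dmmse$ (Lemma~\ref{lem:dmmse-monotonicity}), while $\effsnrtd{t} > 0$ follows from $\dup{q}{\degree} \neq 0$ (which holds whenever $\theta > 0$, since then $\nu \neq \mu$ by Lemma~\ref{lem:RMT}) together with the range condition $\dup{q}{\degree} \in \mathrm{Range}(\dup{Q}{\degree} + \lambda \dup{\Gamma}{\degree})$ for $\lambda \geq 0$ established inside the proof of Lemma~\ref{lemma:effsnr-LOAMP}. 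The main obstacle is the structural identity $\Sigma_{s,t} = \alpha_s$ for $s < t$: its proof requires first pinning down $\kappa_t = \dmmsetd{t}$ (both bounds used above are nontrivial) before the projection theorem can be invoked, and the small bookkeeping about pseudo-inverse ambiguity when $\Sigma_t$ is rank-deficient must be handled by appealing to the fact that only the induced scalar channel matters in Lemma~\ref{lem:scalarization-mmse}.
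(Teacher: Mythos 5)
Your proof is correct and reaches the same conclusion, but the central structural step is obtained by a genuinely different route than the paper's. The paper proves the orthogonality $\alpha_s = (\Sigma_{t})_{s,t}$ as a standalone identity (its Claim~\ref{claim:banded-cov}), by writing the MMSE estimator for $(\serv{X}_\star, \serv{W}_{<t,\bdot};\serv{A})$ as an affine combination of the DMMSE estimator $\fnonlin_t^\star$ and the linear statistic $\ip{\optlin}{\serv{W}_{<t,\bdot}}$, applying the tower property, and exploiting the divergence-free property of $\fnonlin_s^\star$ to kill the cross term with the effective noise. You instead establish it as a corollary of the optimality of $\fnonlin_t^\star$ over $\mathrm{Span}(\fnonlin^\star_1,\dotsc,\fnonlin^\star_t)$: once $\kappa_t = \dmmsetd{t}$ is pinned down (upper bound by plugging in $g=\fnonlin_t^\star$; lower bound by the Gaussian-integration-by-parts argument showing that the span is contained in the divergence-free class), the Hilbert-space projection theorem delivers the orthogonality of the residual $\serv{X}_\star - \fnonlin_t^\star$ to the whole span for free. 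Both routes ultimately lean on the same divergence-free property, but your projection-theorem argument is conceptually cleaner and avoids re-deriving the MMSE-versus-DMMSE decomposition of Lemma~\ref{lem:dmmse-scalar}, at the cost of making the identification $\kappa_t = \dmmsetd{t}$ an explicit intermediate step (the paper reaches that identity through Lemma~\ref{lem:greedy-lemma} anyway, so this is not a real cost). Your corner-case argument is also more explicit than the paper's — you show $\fnonlin_t^\star\equiv 0$ directly rather than simply iterating $\dmmse(0)=1$ — and your justification of $\effsnrtd{t}>0$ (via $\dup{q}{\degree}\neq 0$ and the range condition \eqref{eq:LA-claim1}) fills in a detail the paper merely asserts. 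Finally, you correctly flag that $\Sigma_t^\dagger\alpha_t = e_t$ needs care when $\Sigma_t$ is rank-deficient (the paper states this implication without comment); the resolution you sketch — that the ambiguity lies in $\mathrm{Ker}(\Sigma_t)$ and hence produces the same scalar statistic $\serv{S}$ almost surely, because the $\Sigma$-quadratic form controlling the noise covariance vanishes on the kernel — is the right one and deserves to be spelled out.
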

\begin{proof}
See \appref{sec:optlin-LOAMP}.
\end{proof}

\paragraph*{Eliminating Corner Cases} Next, we observe that the proof of \propref{prop:simplification} is quite simple in the corner cases $\mmse(0) = \E[\Var[\serv{X}_{\star} | \serv{A}]] = 1$ and $\mmse(0) = \E[\Var[\serv{X}_{\star} | \serv{A}]] = 0$. 
\begin{description}
\item[Case 1: $\mmse(0) = 1$.] In this case, combining  \eqref{eq:optimal-LOAMP-mse} and \lemref{lem:optlin-LOAMP} (Claim 2), we conclude that for any $t \in \N$,
\begin{align*}
\plim_{\dim \rightarrow \infty} \frac{\|\dup{\hat{\vw}_{t}}{\degree}-\vx_{\star}\|^2}{\dim} & = \mmse(\effsnrtd{t})=  \mmse(0) = 1.
\end{align*}
On the other hand, the optimal OAMP algorithm (recall \eqref{eq:optimal-OAMP} from Section~\ref{sec:OAMP_opt}) returns the estimator $\hat{\vx}_t = \vzero$ for any $t \in \N$. Hence,
\begin{align*}
\plim_{\dim \rightarrow \infty} \frac{\|\hat{\vx}_{t}-\vx_{\star}\|^2}{\dim} & = \plim_{\dim \rightarrow \infty} \frac{\|\vx_{\star}\|^2}{\dim} = \E[\serv{X}_{\star}^2] = 1 = \plim_{\dim \rightarrow \infty} \frac{\|\dup{\hat{\vw}_{t}}{\degree}-\vx_{\star}\|^2}{\dim},
\end{align*}
as claimed. 
\item[Case 2: $\mmse(0) = 0$.] In this situation, we observe that for any $t \in \N$,
\begin{align*}
0 \leq \plim_{\dim \rightarrow \infty} \frac{\|\dup{\hat{\vw}_{t}}{\degree}-\vx_{\star}\|^2}{\dim} & = \mmse(\effsnrtd{t}) \explain{\factref{fact:mmse-monotonic}}{\leq} \mmse(0) = 0 \implies \plim_{\dim \rightarrow \infty} \frac{\|\dup{\hat{\vw}_{t}}{\degree}-\vx_{\star}\|^2}{\dim}  = 0.
\end{align*}
On the other hand, the simplified OAMP algorithm returns the estimator $\hat{\vx}_t = \bdnsr(\va|0)$ for any $t \in \N$. Hence,
\begin{align*}
\plim_{\dim \rightarrow \infty} \frac{\|\hat{\vx}_{t}-\vx_{\star}\|^2}{\dim} &  = \E[(\serv{X}_{\star}-\bdnsr(\serv{A}|0))^2] = \mmse(0) = 0=  \plim_{\dim \rightarrow \infty} \frac{\|\dup{\hat{\vw}_{t}}{\degree}-\vx_{\star}\|^2}{\dim},
\end{align*}
as claimed. 
\end{description}
Hence, in the remainder of the proof, we will assume that:
\begin{align} \label{eq:eliminate-corner}
\mmse(0) = \dmmse(0)  \in (0,1).
\end{align}

\paragraph*{Simplifying the Optimal Lifted OAMP Algorithm}  Recall the estimator computed at the end of $t$ iterations of the optimal lifted OAMP algorithm is:
\begin{align*}
\hat{\vw}_t & = h_t^{\star}(\iter{\vw}{1, \bdot}, \dotsc, \iter{\vw}{t,\bdot}),
\end{align*}
where $h_t^{\star}$ is the MMSE estimator for the channel $(\serv{X}_{\star}, \serv{W}_{1,\bdot}, \dotsc, \serv{W}_{t,\bdot})$:
\begin{align*}
\hnonlin_{t}^{\star} (w ; \aux) & \explain{def}{=} \bdnsr\bigg(\ip{\optlintd{t}}{w} ; \aux| \effsnrtd{t}\bigg) \quad \forall \; w \; \in \; \R^{t\degree}, \; \aux \; \in \; \R^{\auxdim}.
\end{align*}
To compute this estimator, we only need to track the following vector $\dup{\vx_t}{\degree}$, which is a linear combination of the lifted OAMP iterates $\iter{\vw}{\leq t, \bdot}$ with weights given by $\optlintd{t}$:
\begin{align} \label{eq:suff-stat-LOAMP}
\dup{\vx_t}{\degree} &  \explain{def}{=} \sum_{s =1}^t \sum_{i = 1}^\degree (\optlintd{t})_{s,i} \cdot \iter{\vw}{s,i}.
\end{align}
Indeed, the estimator returned by the optimal lifted OAMP algorithm can be computed using $\dup{\vx_t}{\degree}$:
\begin{align*}
\dup{{\hat{\vw}}_{t}}{\degree} & = \bdnsr(\dup{\vx_t}{\degree}; \va | \effsnrtd{t}).
\end{align*}
In fact, $\dup{\vx_t}{\degree}$ can be tracked using a simpler iterative algorithm. To see this, we observe using the formula for $\optlintd{t}$ from \lemref{lem:optlin-LOAMP} (Claim 1) that we have:
\begin{align*}
\dup{\vx_t}{\degree} &  \explain{def}{=}  \sum_{i = 1}^\degree (\optlintd{t})_{t,i} \cdot \iter{\vw}{t,i} \\ &= \frac{1}{(1-\dmmsetd{t})\sqrt{\effsnrtd{t}}}\sum_{i=1}^{\degree} \left\{\left(  \dup{Q}{\degree} + \frac{\dmmsetd{t}}{1-\dmmsetd{t}} \cdot  \dup{\Gamma}{\degree} \right)^{\dagger} \dup{q}{\degree} \right\}_i  \cdot \iter{\vw}{t,i} \\
\end{align*}
Recall that the update equation for $\iter{\vw}{t,i}$:
\begin{align*}
\iter{\vw}{t,i} & = \left( \mY^i - \E_{\serv{\Lambda} \sim \mu}[\serv{\Lambda}^i] \cdot \mI_{\dim} \right) \cdot \fnonlin^{\star}_{t}\big(\iter{\vw}{1,\bdot}, \dotsc, \iter{\vw}{t-1,\bdot}; \va \big)  \quad \forall \; i \in [\degree], \; t \in \N,
\end{align*}
In light of the above equations, we introduce the degree-$\degree$ polynomial matrix denoising function:
\begin{align} \label{eq:matdnsr-D}
\omdd{\degree}(\lambda; \rho) \explain{def}{=} \sum_{i=1}^{\degree}\left\{\left(  \dup{Q}{\degree} + \frac{1}{\rho} \cdot  \dup{\Gamma}{\degree} \right)^{\dagger} \dup{q}{\degree} \right\}_i  \cdot (\lambda^i - \E_{\serv{\Lambda} \sim \mu}[\serv{\Lambda}^i]) \quad \forall \; \lambda \; \in \; \R, \; \rho \in (0,\infty],
\end{align}
as well as the sequence $(\rhotd{t})_{t \in \N}$:
\begin{align} \label{eq:rhotd-def}
\rhotd{t} \explain{def}{=} \frac{1}{\dmmsetd{t}} - 1, \quad \dmmsetd{t} = \dmmse(\effsnrtd{t-1}) \quad \forall \; t \; \in \; \N.
\end{align}
This leads to the following formula for $\dup{\vx_t}{\degree}$:
\begin{align*}
\dup{\vx_t}{\degree} & = \frac{1}{\sqrt{\effsnrtd{t}}} \left( \frac{1}{\rhotd{t}} + 1 \right) \cdot \omdd{\degree}(\mY; \rhotd{t}) \cdot  \fnonlin_{t}^{\star}(\iter{\vw}{1;\bdot}, \dotsc, \iter{\vw}{t-1;\bdot}; \va) \\
& \explain{\eqref{eq:optimal-LOAMP-denoisers},\eqref{eq:suff-stat-LOAMP}}{=} \frac{1}{\sqrt{\effsnrtd{t}}} \left( \frac{1}{\rhotd{t}} + 1 \right) \cdot \omdd{\degree}(\mY; \rhotd{t}) \cdot  \dfbdnsr(\dup{\iter{\vx}{t-1}}{\degree}; \va | \effsnrtd{t-1})
\end{align*}
In summary, we have obtained an OAMP algorithm (in the sense of Definition~\ref{Def:OAMP_main}):
\begin{align} \label{eq:simplified-algo-1}
\dup{\vx_t}{\degree} & = \frac{1}{\sqrt{\effsnrtd{t}}} \left( \frac{1}{\rhotd{t}} + 1 \right) \cdot \omdd{\degree}(\mY; \rhotd{t}) \cdot  \dfbdnsr(\dup{\iter{\vx}{t-1}}{\degree}; \va | \effsnrtd{t-1}) \quad \forall \; t \; \in \; \N,
\end{align}
which can reconstruct the estimator $\dup{\hat{\vw}_{t}}{\degree}$ computed by the optimal degree-$\degree$ lifted OAMP algorithm in \eqref{eq:optimal-LOAMP-general-form}:
\begin{align*}
\dup{\hat{\vw}_{t}}{\degree} & = \bdnsr(\dup{\vx_t}{\degree}; \va | \effsnrtd{t}) \quad \forall \; t \; \in \; \N.
\end{align*}
\paragraph*{Large-degree limit} Notice the similarity between the simplified version of the optimal degree-$\degree$ lifted OAMP algorithm in \eqref{eq:simplified-algo-1} and the optimal OAMP algorithm introduced in \eqref{eq:optimal-OAMP}. Our goal will be to show that the performance of the simplified OAMP algorithm in \eqref{eq:simplified-algo-1} converges to the performance of the optimal OAMP algorithm in \eqref{eq:optimal-OAMP} as $\degree \rightarrow \infty$. To do so, we will rely on the following lemma which shows that the matrix denoiser $\omdd{\degree}$ used in the simplified OAMP algorithm \eqref{eq:simplified-algo-1} can be viewed as the minimizer of a variational problem over degree-$\degree$ polynomials. By studying the same variational problem over $L^2(\mu+\nu)$, we obtain a candidate for the optimal matrix denoiser in the large degree limit, which turns out to be precisely the matrix denoiser \eqref{eq:optimal-OAMP-denoiser} used by the optimal OAMP algorithm in \eqref{eq:optimal-OAMP}.
\begin{lemma} \label{lem:large-degree-lim} Let $\scrL{\degree}$ and $\scrL{\infty}$ denote the following functions on the domain $(0,\infty)$:
\begin{align}
\scrL{\degree}(\rho) &\explain{def}{=} \inf_{\mfunc \in \polynom{\degree}} \E[|\mfunc(\serv{\Lambda}_{\nu})-1|^2] + \frac{1}{\rho} \cdot \E[\mfunc^2(\serv{\Lambda})] \quad \text{ subject to } \quad \E[\mfunc(\serv{\Lambda})] = 0,\label{eq:var-prob-D} \\
\scrL{\infty}(\rho) &\explain{def}{=} \inf_{\mfunc \in L^2(\mu + \nu)} \E[|\mfunc(\serv{\Lambda}_{\nu})-1|^2] + \frac{1}{\rho} \cdot \E[\mfunc^2(\serv{\Lambda})] \quad \text{ subject to } \quad \E[\mfunc(\serv{\Lambda})] = 0, \label{eq:var-prob-lim}
\end{align}
where $\serv{\Lambda} \sim \mu, \serv{\Lambda}_{\nu} \sim \nu$, $\polynom{\degree}$ denotes the set of all polynomial functions on $\R$ with degree at most $\degree$, and $L^2(\mu + \nu)$ denotes the set of all real valued functions on $\R$ which are square integrable with respect to $\mu + \nu$. Then, we have:
\begin{enumerate}
\item For any $\rho \in (0,\infty)$, the function $\omdd{\degree}(\cdot; \rho)$ defined in \eqref{eq:matdnsr-D} is the minimizer of the variational problem in \eqref{eq:var-prob-D} and,
\begin{align*}
\scrL{\degree}(\rho) & = 1 - q^\top_{\degree} \cdot \left(\dup{Q}{\degree} + \frac{1}{\rho} \cdot \dup{\Gamma}{\degree} \right)^{\dagger} \cdot \dup{q}{\degree} = 1 - \E[\omdd{\degree}(\serv{\Lambda}_{\nu}; \rho)].
\end{align*}
\item For any $\rho \in (0,\infty)$, the function: \begin{align*} 
     \omd(\lambda; \rho) & = 1 - \left( \E_{\serv{\Lambda} \sim \mu} \left[ \frac{ \phi(\serv{\Lambda})}{ \phi(\serv{\Lambda}) + \rho} \right] \right)^{-1} \cdot \frac{\phi(\lambda)}{\phi(\lambda)+\rho} \quad \forall \; \lambda \; \in \; \R, \; \rho \; \in \;  (0,\infty),
    \end{align*}
is a minimizer of the variational problem in \eqref{eq:var-prob-lim}.  In the above display, for any $\lambda \in \R$, $\phi(\lambda) \explain{def}{=} {(1- \pi \theta \hlb_{\mu}(\lambda))^2 + \pi^2 \theta^2  \mu^2(\lambda)}$, where $\hlb_{\mu}$ is the Hilbert transform of $\mu$ and $\theta$ is the SNR of the spiked matrix model $\mY = (\theta/\dim) \cdot \vx_{\star} \vx_{\star}^\top + \mW$.
\item For any $\degree \in \N \cup \{\infty\}$, the function  $\scrL{\degree}$ maps the interval $(0,\infty)$ to $(0,1]$. 
\item Let $(\rho_{\degree})_{\degree \in \N}$ be any non-decreasing sequence in $(0,\infty)$ which converges to $\rho \in (0,\infty)$ as $\degree \rightarrow \infty$. Then,  $\scrL{\degree}(\rho_{\degree}) \downarrow \scrL{\infty}(\rho)$ as $\degree \rightarrow \infty$. Moreover, $\omdd{\degree}(\cdot \; ; \rho_{\degree})$ converges to $\omd(\cdot \; ; \rho)$ in $L^2(\mu + \nu)$ as $\degree \rightarrow \infty$.
\item For any $\rho \in (0,\infty)$, the function $\omd( \cdot; \rho)$ satisfies $\E[\omd(\serv{\Lambda}_{\nu}; \rho)] = 1 - \scrL{\infty}(\rho)$.
\end{enumerate}
\end{lemma}
\begin{proof}
The proof of this lemma is provided in \appref{sec:large-degree-lim}.
\end{proof}
\paragraph*{Proof of \propref{prop:simplification}} We now have all the ingredients to complete the proof of \propref{prop:simplification}. Recall that the optimal estimator that can be computed using $t$ iterations of a degree-$\degree$ lifted OAMP algorithm is:
\begin{align*}
\dup{\hat{\vw}_{t}}{\degree} & = \bdnsr(\dup{\iter{\vx}{t}}{\degree}; \va | \effsnrtd{t}),
\end{align*}
where the iterates $\iter{\vx}{t}$ are computed using the update rule:
\begin{subequations} \label{eq:simplified-optimal-LOAMP}
\begin{align} \label{eq:OAMP-degree-D}
\dup{\vx_t}{\degree} & = \frac{1}{\sqrt{\effsnrtd{t}}} \left( \frac{1}{\rhotd{t}} + 1 \right) \cdot \omdd{\degree}(\mY; \rhotd{t}) \cdot  \dfbdnsr(\dup{\iter{\vx}{t-1}}{\degree}; \va | \effsnrtd{t-1}) \quad \forall \; t \; \in \; \N.
\end{align}
Furthermore, the limiting mean-squared error of the estimator $\dup{\hat{\vw}_t}{\degree}$ is given by:
\begin{align} \label{eq:MMSE-optimal-LOAMP}
\plim_{\dim \rightarrow \infty} \frac{\|\dup{\hat{\vw}_t}{\degree} - \vx_{\star}\|^2}{\dim} & = \mmse(\effsnrtd{t}).
\end{align}
In the above equations, the parameters $\{\effsnrtd{t}\}_{t \in \N}, \{\dmmsetd{t}\}_{t \in \N}, \{\rhotd{t}\}_{t \in \N}$ are updated according to the following recursion from \lemref{lem:optlin-LOAMP} (also recall the definition of $\rhotd{t}$ from \eqref{eq:rhotd-def}) initialized with $\effsnrtd{0} = 0$:
\begin{align} \label{eq:snr-dmmse-recursion-degree}
\dmmsetd{t} &= \dmmse(\effsnrtd{t-1}), \  \rhotd{t} = \frac{1}{\dmmsetd{t}} -1 , \\  \effsnrtd{t} & = (\dup{q}{\degree})^\top \left[\dup{Q}{\degree} + \frac{1}{\rhotd{t}}  \dup{\Gamma}{\degree}   \right]^\dagger \dup{q}{\degree} \explain{(a)}{=} 1- \scrL{\degree}(\rhotd{t}), \nonumber
\end{align}
\end{subequations}
where the equality marked (a) follows from \lemref{lem:large-degree-lim} (Claim (2)). To prove \propref{prop:simplification}, we need to show that in the large degree limit ($\degree \rightarrow \infty$), the performance of the estimator $\dup{\hat{\vw}_t}{\degree}$ is identical to the performance of the estimator $\hat{\vx}_{t}$:
\begin{align*}
\hat{\vx}_{t} & \explain{def}{=} \bdnsr(\iter{\vx}{t}; \va | \effsnrt{t}{}) \quad \forall \; t \; \in \; \N,
\end{align*}
returned by the optimal OAMP algorithm from \eqref{eq:optimal-OAMP}:
\begin{align} \label{eq:OAMP-degree-infinity}
{\vx}_{t} & = \frac{1}{\sqrt{{\omega}_{t}}}\left( 1+  \frac{1}{{\rho}_{t}}  \right) \cdot \omd(\mY; {\rho}_{t}) \cdot \dfbdnsr({\vx}_{t-1}; \va | {\omega}_{t-1}) \quad \forall \; t \; \in \; \N.
\end{align}
In the above equation, the parameters $\{\effsnrt{t}{}\}_{t \in \N}, \{\dmmset{t}\}_{t\in \N}, \{\rhot{t}{}\}_{t \in \N}$ are updated according to the following recursion initialized with $\effsnrt{0} = 0$: 
\begin{align}\label{eq:snr-dmmse-recursion-lim}
\dmmset{t}  = \dmmse(\effsnrt{t-1}), \quad \rhot{t} = \frac{1}{\dmmset{t}}-1, \quad \effsnrt{t}{} & =  1- \scrL{\infty}(\rhot{t}{}).
\end{align}
Let $(\serv{X}_{\star}, (\serv{X}_{t})_{t\in \N}; \serv{A})$ denote the state evolution random variables corresponding to the above algorithm. We will show by induction that for each $t \in \N$,
\begin{enumerate}
    \item for any $D \in \N$, the sequences 
the sequences $\{\effsnrtd{t}\}_{t \in \N}, \{\dmmsetd{t}\}_{t \in \N}, \{\rhotd{t}\}_{t \in \N}$ generated by \eqref{eq:snr-dmmse-recursion-degree} satisfy: \begin{align} \label{eq:finit-D-recursion-validity}
\effsnrtd{t} \in [0,1), \quad \dmmsetd{t} \in (0,1), \quad \rhotd{t} \in (0,\infty) \quad \forall \; t \; \in \; \N.
\end{align}
In particular, the update equation for $\rhotd{t}$ in \eqref{eq:snr-dmmse-recursion-degree} is well-defined (avoids division by zero). 
\item $\effsnrtd{t} \uparrow \effsnrt{t}{} \in [0,1), \; \dmmsetd {t}\downarrow \dmmset{t} \in (0,1)$, and $\rhotd{t} \uparrow \rhot{t}{}  \in (0,\infty)$ as $\degree \rightarrow \infty$.
\end{enumerate}
These claims immediately imply \propref{prop:simplification}:
\begin{align*}
\plim_{\dim \rightarrow \infty} \frac{\|\hat{\vx}_{t} - \vx_{\star}\|^2}{\dim} & \explain{Prop. \ref{prop:optimal-OAMP-SE}}{=} \mmse(\effsnrt{t}{}) \explain{(a)}{=} \lim_{\degree \rightarrow \infty} \mmse(\effsnrtd{t}) \explain{\eqref{eq:MMSE-optimal-LOAMP}}{=} \lim_{\degree \rightarrow \infty}\plim_{\dim \rightarrow \infty} \frac{\|\dup{\hat{\vw}_t}{\degree} - \vx_{\star}\|^2}{\dim},
\end{align*}
as claimed. In the above display step (a) follows from the claim $\effsnrtd{t} \uparrow \effsnrt{t}{}$ as $\degree \rightarrow \infty$. 
We now prove the three claims made above. As the induction hypothesis, we assume the claims hold for some $t \in \N$ and verify that they continue to hold at step $t+1$. 
\paragraph*{Proof of Claim (1)} We focus on proving \eqref{eq:finit-D-recursion-validity} holds at iteration $t+1$, assuming it holds at iteration $t$. 
The induction hypothesis $\effsnrtd{t} \geq 0$ and the monotonicity of $\dmmse(\cdot)$ (see \lemref{lem:dmmse-monotonicity} in \appref{appendix:gauss-channel}) imply that $\dmmsetd{t+1} = \dmmse(\effsnrtd{t}) \leq \dmmse(0) = \mmse(0) < 1$ (recall \eqref{eq:eliminate-corner}). On the other hand, since $\effsnrtd{t} < 1$, by the strict monotonicity of $\mmse$ (see \factref{fact:mmse-monotonic} in \appref{appendix:gauss-channel}), we conclude that $\dmmsetd{t+1} = \dmmse(\effsnrtd{t}) \geq \mmse(\effsnrtd{t})  > \mmse(1) = 0$. Hence, $\dmmsetd{t+1} \in (0,1)$. Since $\rhotd{t+1} = (\dmmsetd{t+1})^{-1} - 1$, we conclude that $\rhotd{t+1} \in (0,\infty)$. Finally, since $\effsnrt{t+1} = 1 - \scrL{\degree}(\rhotd{t+1})$ and \lemref{lem:large-degree-lim} guarantees that $\scrL{\degree}: (0,\infty) \mapsto (0,1]$, we conclude that $\effsnrtd{t} \in [0,1)$, as claimed in \eqref{eq:finit-D-recursion-validity}. This proves the first claim.
\paragraph*{Proof of Claim (2)} We recall from \eqref{eq:snr-dmmse-recursion-degree} that $ \dmmsetd{t+1} = \dmmse(\effsnrtd{t})$. By the induction hypothesis $\effsnrtd{t} \uparrow \effsnrt{t}{} \in [0,1)$. Since $\dmmse(\cdot)$ is a non-increasing {continuous} function (see \lemref{lem:dmmse-monotonicity} in \appref{appendix:gauss-channel}), we conclude that $\dmmsetd{t+1} \downarrow \dmmse(\effsnrt{t}{}) \explain{\eqref{eq:snr-dmmse-recursion-lim}}{=} \dmmset{t+1}{} \in (0,1)$ and $\rhotd{t+1} \explain{\eqref{eq:snr-dmmse-recursion-degree}}{=} (\dmmsetd{t+1})^{-1} - 1 \uparrow (\dmmset{t+1})^{-1} - 1 \explain{\eqref{eq:snr-dmmse-recursion-lim}}{=} \rhot{t+1} \in (0,\infty)$. Finally, recalling the update rule for $\effsnrtd{t+1}$ from \eqref{eq:snr-dmmse-recursion-degree} and using \lemref{lem:large-degree-lim} (Claim 4), we conclude that:  $\effsnrtd{t+1} = 1 - \scrL{\degree}(\rhotd{t+1}) \uparrow 1- \scrL{\infty}(\rhot{t+1}) \explain{\eqref{eq:snr-dmmse-recursion-lim}}{=} \effsnrt{t+1} \in [0,1)$. This proves the second claim. 

This concludes the proof of \propref{prop:simplification}.

\subsubsection{Proof of \lemref{lem:optlin-LOAMP}} \label{sec:optlin-LOAMP}
The proof of \lemref{lem:optlin-LOAMP} relies on the following intermediate result.

\begin{claim} \label{claim:banded-cov} For any $s,t \in \N$ with $s \leq t$:
\begin{align*}
\E[\fnonlin_s^{\star}(\serv{W}_{1,\bdot}, \dotsc, \serv{W}_{s-1,\bdot}; \serv{A}) \cdot \fnonlin_t^{\star}(\serv{W}_{1,\bdot}, \dotsc, \serv{W}_{t-1,\bdot}; \serv{A})] & = \E[\serv{X}_{\star} \cdot \fnonlin_s^{\star}(\serv{W}_{1,\bdot}, \dotsc, \serv{W}_{s-1,\bdot}; \serv{A})].
\end{align*}
\end{claim}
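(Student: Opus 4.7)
The plan is to exploit the first-order optimality condition that characterizes $\fnonlin_t^\star$ as a DMMSE estimator. By construction (cf.\ \eqref{eq:optimal-LOAMP-denoisers} and \lemref{lem:scalarization-mmse}), $\fnonlin_t^\star$ minimises $\E[(\serv{X}_\star - f)^2]$ over all $f \in L^2(\serv{W}_{<t,\bdot};\serv{A})$ satisfying the divergence-free constraints $\E[\serv{Z}_{\tau,i} f] = 0$ for every $\tau < t, i \in [\degree]$. Perturbing $\fnonlin_t^\star$ along any divergence-free direction $g$ and setting the derivative to zero yields the orthogonality identity
\begin{align*}
\E[\serv{X}_\star \cdot g] \;=\; \E[\fnonlin_t^\star \cdot g] \quad \text{for every divergence-free } g \in L^2(\serv{W}_{<t,\bdot};\serv{A}).
\end{align*}
Equivalently, this identity is readable off the MSE decomposition $\E[(\serv{X}_\star - g)^2] = 1 - \E[(\fnonlin_t^\star)^2] + \E[(g - \fnonlin_t^\star)^2]$ established (in scalar form) in the proof of \lemref{lem:dmmse-scalar} and extended to the multivariate channel via \lemref{lem:scalarization-mmse}.

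With the orthogonality identity in hand, the claim reduces to verifying that $g = \fnonlin_s^\star$ is admissible as a test function, i.e.\ that $\fnonlin_s^\star \in L^2(\serv{W}_{<t,\bdot};\serv{A})$ and satisfies the \emph{full} set of divergence-free constraints $\E[\serv{Z}_{\tau,i}\,\fnonlin_s^\star] = 0$ for all $\tau < t$ and $i \in [\degree]$. Membership in $L^2(\serv{W}_{<t,\bdot};\serv{A})$ is automatic from $s \leq t$. For the constraints with $\tau < s$, this is precisely the divergence-free property that $\fnonlin_s^\star$ inherits as a DMMSE estimator of its own (smaller) channel.

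The only nontrivial point is the extension of the divergence-free property to the indices $s \leq \tau < t$, and this is the main (but mild) obstacle. The argument will proceed by Gaussian conditioning: since $\fnonlin_s^\star$ is measurable with respect to $\sigma(\serv{X}_\star, \serv{A}, \serv{Z}_{<s,\bdot})$, and $(\serv{X}_\star;\serv{A})$ is independent of the jointly Gaussian noise $\{\serv{Z}_{\tau,i}\}$, one has $\E[\serv{Z}_{\tau,i}\mid \serv{X}_\star,\serv{A},\serv{Z}_{<s,\bdot}] = \E[\serv{Z}_{\tau,i}\mid \serv{Z}_{<s,\bdot}]$, which by Gaussian conditioning is a linear combination of $\{\serv{Z}_{\sigma,j} : \sigma < s,\; j \in [\degree]\}$. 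Taking the outer expectation against $\fnonlin_s^\star$ therefore collapses the case $\tau \geq s$ back to the case $\tau < s$, which has already been handled.

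Applying the orthogonality identity with $g = \fnonlin_s^\star$ then yields $\E[\fnonlin_s^\star \fnonlin_t^\star] = \E[\serv{X}_\star \fnonlin_s^\star]$, as desired.
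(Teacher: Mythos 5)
Your proof is correct, and it takes a genuinely different route from the paper's. The paper proves this claim via the tower property $\E[\serv{X}_{\star} \fnonlin_s^{\star}] = \E[\E[\serv{X}_{\star} \mid \serv{W}_{<t,\bdot}, \serv{A}] \cdot \fnonlin_s^{\star}]$ and then substitutes the explicit affine relation (cf. \eqref{eq:mmse-dmmse-denoiser-connection}, lifted to the multivariate channel via \lemref{lem:scalarization-mmse}) between the MMSE estimator $\bdnsr$ and the DMMSE estimator $\dfbdnsr$, namely $\bdnsr = (1-\sqrt{\omega}\,\beta)\dfbdnsr + \beta \cdot \ip{\optlin}{\serv{W}_{<t,\bdot}}$; after expanding $\ip{\optlin}{\serv{W}_{<t,\bdot}}$ into a signal plus effective noise $\serv{Z}_{\mathrm{eff}}$ and killing the noise cross-term by the same Gaussian conditioning trick you employ, the claim follows by rearranging. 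Your argument instead invokes the abstract Hilbert-space characterization of $\fnonlin_t^{\star}$ as the orthogonal projection of $\serv{X}_{\star}$ onto the closed subspace $V$ of $L^2(\serv{W}_{<t,\bdot};\serv{A})$ functions orthogonal to $\{\serv{Z}_{\tau,i} : \tau<t, i\in[\degree]\}$, giving $\E[(\serv{X}_{\star}-\fnonlin_t^{\star}) g]=0$ for all $g \in V$, and then checks that $\fnonlin_s^{\star}\in V$. This is cleaner in that it dispenses with the scalarization formulas and the explicit constant $\beta$, isolating the one nontrivial ingredient both proofs share: the Gaussian conditioning step showing $\E[\serv{Z}_{\tau,i}\mid\serv{X}_{\star},\serv{A},\serv{Z}_{<s,\bdot}]$ is a linear combination of $\serv{Z}_{<s,\bdot}$, hence orthogonal to $\fnonlin_s^{\star}$. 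The paper's version has the virtue of staying entirely within the explicit-formula framework developed in \appref{appendix:gauss-channel}; yours shows the structural reason the identity holds. One small thing you state loosely but which is correct: the constraint set is a closed \emph{linear} subspace (the divergence-free constraints are homogeneous), which is what makes the projection/orthogonality characterization valid; it would be worth saying so explicitly in a full write-up.
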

We will first prove \lemref{lem:optlin-LOAMP} assuming this claim. The proof of \clmref{claim:banded-cov} is provided at the end of this section. 
\begin{proof}[Proof of \lemref{lem:optlin-LOAMP}] Recall that for each $t \in \N$,
\begin{align*}
{\optlintd{t}} &\explain{def}{=} \optlin(\serv{X}_\star|\serv{W}_{1,\bdot}, \dotsc,  \serv{W}_{t,\bdot} ; \serv{A}), \quad 
\effsnrtd{t} \explain{def}{=}\effsnr(\serv{X}_\star|\serv{W}_{1,\bdot}, \dotsc,  \serv{W}_{t,\bdot}; \serv{A}),
\end{align*}
where $(\serv{X}_{\star}, (\serv{W}_{t,\bdot})_{t \in \N}; \serv{A})$ are the state evolution random variables associated with the optimal lifted OAMP algorithm. In addition, we will find it helpful to introduce the quantity:
\begin{align*}
\dmmsetd{t} &\explain{def}{=} \min\bigg\{ \E\left[\{\serv{X}_{\star} - g(\serv{W}_{< t,\bdot}; \serv{A})\}^2\right] : g \in \mathrm{Span}({\fnonlin}_1^{\star}, \dotsc, {\fnonlin}^{\star}_t) \bigg\}. 
\end{align*}
Since $\mathrm{Span}({\fnonlin}_1^{\star}) \subset \mathrm{Span}({\fnonlin}_1^{\star}, {\fnonlin}^{\star}_2) \subset \dotsb \mathrm{Span}({\fnonlin}_1^{\star}, \dotsc, {\fnonlin}^{\star}_t) \subset \dotsb$, $(\dmmsetd{t})_{t \in \N}$ is a non-increasing sequence. 
\paragraph*{Case 1: $\dmmsetd{0} < 1$} Notice that since $(\dmmsetd{t})_{t \in \N}$ is a non-increasing sequence and $\dmmsetd{0} < 1$, we conclude that $\dmmsetd{t}< 1$ for all $t \in \N$, which is one of the claims pertaining to this case made in the lemma. To  prove the other claims, we will show by induction on $t$ that:
\begin{subequations} \label{eq:effsnr-ind-hypo}
\begin{align}
    \dmmsetd{t} & = \dmmse(\effsnrtd{t-1}), \\
\effsnrtd{t} & = q^\top_{\degree} \left[\dup{Q}{\degree} + \frac{\dmmsetd{t}}{1-\dmmsetd{t}} \cdot \dup{\Gamma}{\degree}   \right]^\dagger \dup{q}{\degree}, \\
\effsnrtd{t} & > 0, \\
\optlintd{t} & = {(\effsnrtd{t})^{-\frac{1}{2}}} \cdot (1-\dmmsetd{t})^{-1} \cdot  e_1 \otimes \left[  \cdot \dup{Q}{\degree} + \frac{\dmmsetd{t}}{1-\dmmsetd{t}} \cdot  \dup{\Gamma}{\degree} \right]^{\dagger} \dup{q}{\degree}.
\end{align}
\end{subequations}
As our induction hypothesis we assume that \eqref{eq:effsnr-ind-hypo} holds, and show that \eqref{eq:effsnr-ind-hypo} also holds for $t+1$. Indeed, since $\fnonlin_{t+1}^{\star}$ is the DMMSE estimator for the Gaussian channel $(\serv{X}_{\star}, \serv{W}_{\leq t, \bdot}; \serv{A})$, by \lemref{lem:greedy-lemma},
\begin{align*}
\dmmsetd{t+1} &\explain{def}{=} \min\bigg\{ \E\left[\{\serv{X}_{\star} - g(\serv{W}_{\leq t,\bdot}; \serv{A})\}^2\right] : g \in \mathrm{Span}({\fnonlin}_1^{\star}, \dotsc, {\fnonlin}^{\star}_{t+1}) \bigg\} = \dmmse(\effsnrtd{t}).
\end{align*}
By appealing to \lemref{lemma:effsnr-LOAMP}, we conclude that:
\begin{align*}
\effsnrtd{t+1} & = q^\top_{\degree} \left[\dup{Q}{\degree} + \frac{\dmmsetd{t+1}}{1-\dmmsetd{t+1}} \cdot \dup{\Gamma}{\degree}   \right]^\dagger \dup{q}{\degree} \explain{}{>} 0  \\
\optlintd{t+1} & = {(\effsnrtd{t+1})^{-\frac{1}{2}}} \cdot (1-\dmmsetd{t+1})^{-1} \cdot  (\Sigma_{t+1}^\dagger \alpha_{t+1}) \otimes \left[  \dup{Q}{\degree} + \frac{\dmmsetd{t+1}}{1-\dmmsetd{t+1}} \cdot  \dup{\Gamma}{\degree} \right]^{\dagger} \dup{q}{\degree}.
\end{align*}
From \clmref{claim:banded-cov}, we know that:
\begin{align*}
(\alpha_{t+1})_s & = (\Sigma_{t+1})_{s,t+1} \quad \forall \; s \in \; [t+1]\implies \alpha_{t+1} = \Sigma_{t+1} e_{t+1} \implies \Sigma_{t+1}^\dagger \alpha_{t+1} = e_{t+1}.
\end{align*}
Hence,
\begin{align*}
\optlintd{t+1} & = {(\effsnrtd{t+1})^{-\frac{1}{2}}} \cdot (1-\dmmsetd{t+1})^{-1} \cdot  e_{t+1} \otimes \left[  \dup{Q}{\degree} + \frac{\dmmsetd{t+1}}{1-\dmmsetd{t+1}} \cdot  \dup{\Gamma}{\degree} \right]^{\dagger} \dup{q}{\degree},
\end{align*}
as desired. This proves the claim of the lemma when $\dmmsetd{0}<1$. 
\paragraph*{Case 2: $\dmmsetd{0} = 1$} In this situation, we will show by induction that for any $t\in \N$,
\begin{align*}
\dmmsetd{t} & = 1, \quad \effsnrtd{t}  = 0.
\end{align*}
We assume that the above claim holds at step $t$ as the induction hypothesis. As in the previous case, by \lemref{lem:greedy-lemma}, we have:
\begin{align*}
\dmmsetd{t+1} & = \dmmse(\effsnrtd{t}) = \dmmse(0) = \dmmsetd{0} = 1. 
\end{align*}
Appealing to \lemref{lemma:effsnr-LOAMP}, we conclude that $\effsnrtd{t+1}  = 0$, as desired.This proves the claim of the lemma.
\end{proof}
We now present the proof of \clmref{claim:banded-cov}.
\begin{proof}[Proof of \clmref{claim:banded-cov}]
By the Tower property, we have:
\begin{align} \label{eq:banded-tower}
\E[\serv{X}_{\star} \cdot \fnonlin_s^{\star}(\serv{W}_{1,\bdot}, \dotsc, \serv{W}_{s-1,\bdot}; \serv{A})] & = \E[\E[\serv{X}_{\star} | \serv{W}_{<t,\bdot},\serv{A}]\fnonlin_s^{\star}(\serv{W}_{1,\bdot}, \dotsc, \serv{W}_{s-1,\bdot}; \serv{A})].
\end{align}
Since $\E[\serv{X}_{\star} | \serv{W}_{<t,\bdot},\serv{A}]$ is the MMSE estimator for the channel $(\serv{X}_{\star}, \serv{W}_{<t,\bdot};\serv{A})$. Using the formula for the MMSE estimator for a general multivariate Gaussian channel (see \lemref{lem:scalarization-mmse} in \appref{appendix:gauss-channel}), we obtain:
\begin{align} \label{eq:banded-combine-1}
\E[\serv{X}_{\star} | \serv{W}_{<t,\bdot},\serv{A}] & = \bdnsr\left(\ip{\serv{W}_{<t,\bdot}}{\optlin}; \serv{A} |\effsnr \right),
\end{align}
where we use the shorthand notations $\effsnr$ and $\optlin$ denote the effective SNR and optimal linear combination\footnote{We suppress the dependence of these quantities on the iteration number $t$ and the degree $\degree$ of the lifted OAMP for notational convenience since it does not play a role in the proof.} for the Gaussian channel  $(\serv{X}_\star, \serv{W}_{1,\bdot}, \dotsc,  \serv{W}_{t-1,\bdot} ; \serv{A})$. Similarly, \lemref{lem:scalarization-mmse} gives the following formula for $\fnonlin_t^{\star}$, the DMMSE estimator for this channel:
\begin{align} \label{eq:banded-combine-2}
\fnonlin_{t}^{\star}(\serv{W}_{<t,\bdot};\serv{A})& = \dfbdnsr\left(\ip{\serv{W}_{<t,\bdot}}{\optlin}; \serv{A} |\effsnr \right).
\end{align}
Recall the formula for $\dfbdnsr$ (the DMMSE estimator in a scalar Gaussian channel) provided in Definition~\ref{def:gauss-channel} (cf. \eqref{eq:dmmse-scalar}):
\begin{align} \label{eq:banded-combine-3}
\dfbdnsr (x; \aux| \omega) &\explain{def}{=}  \begin{cases} 
 \left(1 - \sqrt{\omega}  \cdot \beta(\omega)   \right)^{-1} \cdot \left(\bdnsr(x;a | \omega) -\beta(\omega) \cdot x \right) & : \omega < 1 \\ \bdnsr(x; \aux| \omega) &: \omega = 1 \end{cases},
\end{align}
where:
\begin{align*}
\beta(\omega) \explain{def}{=} \frac{1}{\sqrt{1-\omega}} \cdot \E[\serv{Z}\bdnsr(\sqrt{\omega} \serv{X}_{\star} + \sqrt{1-\omega} \serv{Z};\serv{A} | \omega)] \quad \text{where } (\serv{X}_{\star}; \serv{A}) \sim \pi, \; \serv{Z} | \serv{X}_{\star}; \serv{A} \sim \gauss{0}{1}.
\end{align*}
Combining \eqref{eq:banded-combine-1}, \eqref{eq:banded-combine-2}, and \eqref{eq:banded-combine-3} yields the following formula relating the MMSE and DMMSE estimators for the Gaussian channel $(\serv{X}_\star, \serv{W}_{1,\bdot}, \dotsc,  \serv{W}_{t,\bdot} ; \serv{A})$:
\begin{align*}
\E[\serv{X}_{\star} | \serv{W}_{<t,\bdot},\serv{A}] & = \begin{cases} \left(1-\sqrt{\effsnr} \cdot \beta\left(\effsnr\right)\right) \cdot \fnonlin_t^{\star}(\serv{W}_{<t,\bdot};\serv{A}) +  \beta\left(\effsnr\right)\cdot \ip{\optlin}{\serv{W}_{<t,\bdot}} & : \effsnr < 1 \\ \fnonlin_t^{\star}(\serv{W}_{<t,\bdot};\serv{A}) & :  \effsnr = 1. \end{cases}
\end{align*}
We substitute the above expression in \eqref{eq:banded-tower}. If $\effsnr = 1$, the claim of the lemma is immediate. If $\effsnr<1$, we have:
\begin{eqnarray*}
\E[\serv{X}_{\star} \cdot \fnonlin_s^{\star}(\serv{W}_{<s,\bdot}; \serv{A})]  &= &\left(1-\sqrt{\effsnr} \cdot \beta\left(\effsnr\right)\right) \cdot \E[\fnonlin_s^{\star}(\serv{W}_{<s,\bdot}; \serv{A}) \cdot  \fnonlin_t^{\star}(\serv{W}_{<t,\bdot};\serv{A})] \\
&&+  \beta\left(\effsnr\right)\cdot \E[\ip{\optlin}{\serv{W}_{<t,\bdot}} \cdot \fnonlin_s^{\star}(\serv{W}_{<s,\bdot}; \serv{A})]. 
\end{eqnarray*}
Let $\serv{Z}_{1,\bdot}, \dotsc, \serv{Z}_{t-1,\bdot}$ denote the Gaussian noise random variables in the Gaussian channel $(\serv{X}_\star, \serv{W}_{1,\bdot}, \dotsc,  \serv{W}_{t,\bdot} ; \serv{A})$. Since $(\serv{X}_{\star}, \ip{\optlin}{\serv{W}_{<t,\bdot}}; \serv{A})$ forms a scalar Gaussian channel with SNR $\effsnr$:
\begin{align*}
\ip{\optlin}{\serv{W}_{<t,\bdot}}  & = \sqrt{\effsnr}  \cdot \serv{X}_{\star} + \sqrt{1-\effsnr} \cdot \serv{Z}_{\mathrm{eff}}, \quad \serv{Z}_{\mathrm{eff}} \explain{def}{=} \frac{\ip{\optlin}{\serv{Z}_{<t,\bdot}}}{\sqrt{1-\effsnr}}.
\end{align*}
Hence,
\begin{eqnarray*}
\E[\serv{X}_{\star} \cdot \fnonlin_s^{\star}(\serv{W}_{<s,\bdot}; \serv{A})] &=& (1-\sqrt{\effsnr} \cdot \beta(\effsnr)) \cdot \E[\fnonlin_s^{\star}(\serv{W}_{<s,\bdot}; \serv{A}) \cdot  \fnonlin_t^{\star}(\serv{W}_{<t,\bdot};\serv{A})] \nonumber\\ 
& & +  \beta(\effsnr)\cdot \E[(\sqrt{\effsnr}  \cdot \serv{X}_{\star} + \sqrt{1-\effsnr} \cdot \serv{Z}_{\mathrm{eff}}) \cdot \fnonlin_s^{\star}(\serv{W}_{<s,\bdot}; \serv{A})]  \nonumber\\
 &=& (1-\sqrt{\effsnr} \cdot \beta(\effsnr)) \cdot \E[\fnonlin_s^{\star}(\serv{W}_{<s,\bdot}; \serv{A}) \cdot  \fnonlin_t^{\star}(\serv{W}_{<t,\bdot};\serv{A})] \\
 &&+  \sqrt{\effsnr} \cdot \beta(\effsnr)\cdot \E[\serv{X}_{\star} \cdot \fnonlin_s^{\star}(\serv{W}_{<s,\bdot}; \serv{A})]. \label{eq:banded-penultimate}
\end{eqnarray*}
The final equality in the previous display follows by observing that:
\begin{align*}
\E[\serv{Z}_{\mathrm{eff}} \cdot \fnonlin_s^{\star}(\serv{W}_{<s,\bdot}; \serv{A})] &= \E[\E[\serv{Z}_{\mathrm{eff}}| \serv{X}_{\star}, \serv{Z}_{<s,\bdot}, \serv{A}] \cdot \fnonlin_s^{\star}(\serv{W}_{<s,\bdot}; \serv{A})] \\
&= \E[\E[\serv{Z}_{\mathrm{eff}}|\serv{Z}_{<s,\bdot}] \cdot \fnonlin_s^{\star}(\serv{W}_{<s,\bdot}; \serv{A})] = 0.
\end{align*}
This is because $\E[\E[\serv{Z}_{\mathrm{eff}}|\serv{Z}_{<s,\bdot}]]$ is a linear combination of $\serv{Z}_{<s,\bdot}$ (since $(\serv{Z}_{<s,\bdot}, \serv{Z}_{\mathrm{eff}})$ are jointly Gaussian) and the DMMSE estimator $\fnonlin_{s}^{\star}$ is uncorrelated with the channel noise $\serv{Z}_{<s,\bdot}$. Rearranging \eqref{eq:banded-penultimate} gives us:
\begin{align*}
\E[\serv{X}_{\star} \cdot \fnonlin_s^{\star}(\serv{W}_{<s,\bdot}; \serv{A})] &=\E[\fnonlin_s^{\star}(\serv{W}_{<s,\bdot}; \serv{A}) \cdot  \fnonlin_t^{\star}(\serv{W}_{<t,\bdot};\serv{A})],
\end{align*}
which is precisely the claim set out to prove. 
\end{proof}

\subsubsection{Proof of \lemref{lem:large-degree-lim}} \label{sec:large-degree-lim}
\begin{proof}[Proof of \lemref{lem:large-degree-lim}]
We consider each of the claims made in the lemma. 
\paragraph*{Proof of Claim (1)} To solve the variational problem:
\begin{align*}
\scrL{\degree}(\rho) &\explain{def}{=} \inf_{\mfunc \in \polynom{\degree}} \E[|\mfunc(\serv{\Lambda}_{\nu})-1|^2] + \frac{1}{\rho} \cdot \E[\mfunc^2(\serv{\Lambda})] \quad \text{ subject to } \quad \E[\mfunc(\serv{\Lambda})] = 0,
\end{align*}
we parameterize any polynomial $f$ of degree at most $\degree$ which satisfies $\E[\mfunc(\serv{\Lambda})] = 0$ as:
\begin{align*}
\mfunc(\lambda) & = \sum_{i=0}^{\degree} v_i \cdot (\lambda^i-\E_{\serv{\Lambda} \sim \mu}[\serv{\Lambda}^i)]) \quad \forall \; \lambda \in \R.
\end{align*}
Hence,
\begin{align} \label{eq:scrL-D-recall}
\scrL{\degree}(\rho) &= \min_{v \in \R^{\degree}}  \E\left| \sum_{i=1}^{\degree} v_i \cdot (\serv{\Lambda}^i_{\nu} - \E_{\serv{\Lambda} \sim \mu}[\serv{\Lambda}^i)]) - 1  \right|^2 + \frac{1}{\rho} \cdot \E\left|\sum_{i=1}^{\degree} v_i \cdot (\serv{\Lambda}^i - \E_{\serv{\Lambda} \sim \mu}[\serv{\Lambda}^i)])\right|^2.
\end{align}
Expanding the square norms and recalling that the entries of $\dup{q}{\degree} \in \R^{\degree}$, $\dup{Q}{\degree} \in \R^{\degree \times \degree}$, and $\dup{\Gamma}{\degree} \in \R^{\degree \times \degree}$ are given by:
\begin{align} 
\dup{q}{\degree}_i & = \E[{\serv{\Lambda}}_{\nu}^i] - \E[{\serv{\Lambda}}^i],  \\
\dup{Q}{\degree}_{ij} &= \E\left[ \left({\serv{\Lambda}}_{\nu}^i - \E[{\serv{\Lambda}}^i] \right) \cdot \left({\serv{\Lambda}}_{\nu}^j - \E[{\serv{\Lambda}}^j] \right) \right],\\\dup{\Gamma}{\degree}_{ij} &= \Cov[\serv{\Lambda}^i ,\serv{\Lambda}^j] \quad \text{ where } \serv{\Lambda} \sim \mu, \; \serv{\Lambda}_{\nu} \sim \nu,
\end{align}
we obtain the following formula for $\scrL{\degree}(\rho)$:
\begin{align*}
\scrL{\degree}(\rho) &= \min_{v \in \R^{\degree}} 1 +  v^\top \dup{Q}{\degree} v  - 2 \ip{\dup{q}{\degree}}{v} +  \frac{ v^\top \dup{\Gamma}{\degree} v }{\rho}.
\end{align*}
The optimizer of the above quadratic form is:
\begin{align*}
v & = \left(\dup{Q}{\degree} + \frac{1}{\rho} \cdot  \dup{\Gamma}{\degree} \right)^{\dagger} \cdot \dup{q}{\degree}.
\end{align*}
Hence, a minimizer of \eqref{eq:scrL-D-recall} is:
\begin{align*}
\mfunc(\lambda) & = \sum_{i=1}^{\degree} \left\{ \left(\dup{Q}{\degree} + \frac{1}{\rho} \cdot \dup{\Gamma}{\degree} \right)^{\dagger} \cdot \dup{q}{\degree} \right\}_i \cdot (\lambda^i - \E_{\serv{\Lambda}\sim \mu}[\serv{\Lambda}^i]) \explain{\eqref{eq:matdnsr-D}}{=} \omdd{\degree}(\lambda; \rho) \quad \forall \; \lambda \in \R,
\end{align*}
and:
\begin{align*}
\scrL{\degree}(\rho) & = 1 -   {\dup{q}{\degree}}^\top \left(\dup{Q}{\degree} + \frac{1}{\rho} \cdot  \Gamma_{\degree} \right)^{\dagger}  \dup{q}{\degree} = 1 - \ip{\dup{q}{\degree}}{v} = 1 - \E_{\serv{\Lambda}_{\nu} \sim \nu}[\omdd{\degree}(\serv{\Lambda}_{\nu}; \rho)],
\end{align*}
as claimed. 
\paragraph*{Proof of Claim (2)} Our goal is to solve the optimization problem:
\begin{align} \label{eq:scrL-limit-repara}
 \scrL{\infty}(\rho) &\explain{def}{=} \inf_{\mfunc \in L^2(\mu + \nu)} \E[|\mfunc(\serv{\Lambda}_{\nu})-1|^2] + \frac{1}{\rho} \cdot \E[\mfunc^2(\serv{\Lambda})] \quad \text{ subject to } \quad \E[\mfunc(\serv{\Lambda})] = 0.
\end{align} This is exactly the problem considered in \eqref{Eqn:local_opt_Psi_MSE}, where we showed that the minimizer of \eqref{eq:scrL-limit-repara} is:
\begin{align} \label{eq:opt-denoiser-recall}
\mfunc(\lambda) & =  1 - \left( \E_{\serv{\Lambda} \sim \mu} \left[ \frac{ \phi(\serv{\Lambda})}{ \phi(\serv{\Lambda}) + \rho} \right] \right)^{-1} \cdot \frac{\phi(\lambda)}{\phi(\lambda)+\rho} \explain{def}{=} \omd(\lambda; \rho) \quad \forall \; \lambda \; \in \; \R, 
\end{align}
as claimed.
\paragraph*{Proof of Claim (3)} We need to show that for any $D \in \N \cup\{\infty\}$, the function $\scrL{\degree}$ maps the interval $(0,\infty)$ to $(0,1]$. We will show this claim for $\scrL{\infty}$ and the exact same argument works for $\scrL{\degree}$ for any $\degree \in \N$. The definition of $\scrL{\infty}(\rho)$ (cf. \eqref{eq:scrL-limit-repara}) implies that $\scrL{\infty}(\rho) \geq 0$. Taking $\mfunc = 0$ in \eqref{eq:scrL-limit-repara} shows that $\scrL{\infty}(\rho) \leq 1$. To prove Claim (3), we need to verify that if $\rho \in  (0,\infty)$ then, $\scrL{\infty}(\rho) > 0$. We prove this by contradiction. If $\scrL{\infty}(\rho) = 0$, then the function $\omd(\cdot ; \rho)$, which minimizes the objective in \eqref{eq:scrL-limit-repara} satisfies $\E[|\omd(\serv{\Lambda}_{\nu}; \rho)-1|^2]  + \rho^{-1} \cdot \E[|\omd(\serv{\Lambda}; \rho)|^2] = 0$. This means that:
\begin{align*}
\nu(\{\lambda \in \R: \omd(\lambda; \rho)  = 1\}) = 1, \quad \mu(\{\lambda \in \R: \omd(\lambda; \rho)  = 0\}) = 1.
\end{align*}
In particular $\mu,\nu$ are mutually singular. However, this contradicts Lemma~\ref{lem:RMT} which tells us that $\nu_{\parallel}$, the absolutely continuous part of $\nu$ with respect to the Lebesgue measure has density $\frac{\diff \nu_{\parallel}}{\diff \lambda}(\lambda) = \frac{\mu(\lambda)}{\phi(\lambda)}$, and hence $\nu_{\parallel} \ll \mu$. 

\paragraph*{Proof of Claim (4)} Consider a non-decreasing sequence $(\rho_{\degree})_{\degree \in \N}$ which converges to $\rho \in (0,\infty)$ as $\degree \rightarrow \infty$. We begin by observing that $\scrL{\degree}(\rho_{\degree})$ is a non-increasing sequence:
\begin{align*}
\scrL{\degree+1}(\rho_{\degree+1}) &\explain{\eqref{eq:scrL-D-recall}}{=}   \inf_{\mfunc \in \polynom{\degree+1}} \E[|\mfunc(\serv{\Lambda}_{\nu})-1|^2] + \frac{1}{\rho_{\degree+1}} \cdot \E[\mfunc^2(\serv{\Lambda})] \  \text{ subject to } \  \E[\mfunc(\serv{\Lambda})] = 0 \\
& \explain{(a)}{\leq}\inf_{\mfunc \in \polynom{\degree+1}} \E[|\mfunc(\serv{\Lambda}_{\nu})-1|^2] + \frac{1}{\rho_{\degree}} \cdot \E[\mfunc^2(\serv{\Lambda})] \  \text{ subject to } \  \E[\mfunc(\serv{\Lambda})] = 0\\
& \explain{(b)}{\leq} \inf_{\mfunc \in \polynom{\degree}} \E[|\mfunc(\serv{\Lambda}_{\nu})-1|^2] + \frac{1}{\rho_{\degree}} \cdot \E[\mfunc^2(\serv{\Lambda})] \  \text{ subject to } \  \E[\mfunc(\serv{\Lambda})] = 0 \explain{\eqref{eq:scrL-D-recall}}{=}\scrL{\degree}(\rho_{\degree}). 
\end{align*}
In the above display, step (a) follows by observing that $\rho_{\degree} \leq \rho_{\degree+1}$. Step (b) relies on the observation that $\polynom{\degree} \subset \polynom{\degree+1}$ and the infimum over larger sets is smaller. Hence $(\scrL{\degree}(\rho_{\degree}))_{\degree \in \N}$ is a non-increasing sequence. To show that $\scrL{\degree}(\rho_{\degree}) \downarrow \scrL{\infty}(\rho)$, we begin by observing that by repeating the argument used in the previous display, we can also obtain:
\begin{align*}
\scrL{\degree}(\rho_{\degree}) \geq \scrL{\infty}(\rho). 
\end{align*}
Hence, we have obtained a lower bound for $\scrL{\degree}({\rho_{\degree}})$ in terms of $\scrL{\infty}({\rho})$. To show that in fact $\scrL{\degree}(\rho_{\degree}) \downarrow \scrL{\infty}(\rho)$ we will also need an upper bound. Towards this goal, we consider a sequence of functions $\{\hat{\mfunc}_{\degree}\}_{\degree \in \N}$ indexed by $\degree \in \N$ such that $\hat{\mfunc}_{\degree}: \R \mapsto \R$ is a degree-$\degree$ polynomial and:
\begin{equation} \label{eq:poly-approx}
\begin{split}
& \lim_{\degree \rightarrow \infty} \| \hat{\mfunc}_{\degree} -  {\omd}(\cdot; \rho)\|_{\mu + \nu}^2  = 0 \\
 \Leftrightarrow  & \lim_{\degree \rightarrow \infty} \left(\E_{\serv{\Lambda} \sim \mu}|\hat{\mfunc}_{\degree}(\serv{\Lambda}) -  {\omd}(
\serv{\Lambda}; \rho)|^2 + \E_{\serv{\Lambda}_{\nu} \sim \nu}|\hat{\mfunc}_{\degree}(\serv{\Lambda}_{\nu}) -  {\omd}(
\serv{\Lambda}_{\nu}); \rho)|^2  \right)= 0.
\end{split}
\end{equation}
The existence of these polynomial approximations follows by the fact that polynomials form a dense subset of $L^2(\mu + \nu)$ since $\mu + \nu$ is a compactly supported measure \citep[Corollary 14.24, Definition 14.1]{schmudgen2017moment}. We can use the function $\lambda \mapsto \hat{\mfunc}_{\degree}(\lambda) - \E[\hat{\mfunc}_{\degree}(\serv{\Lambda})]$ as a candidate minimizer to upper bound $\scrL{\degree}(\rho_{\degree})$:
\begin{align*}
\scrL{\infty}(\rho) &  \leq \scrL{\degree}(\rho_{\degree}) \leq \E\left[\left|\hat{\mfunc}_{\degree}(\serv{\Lambda}_{\nu})-1 - \E[\hat{\mfunc}_{\degree}(\serv{\Lambda})] \right|^2\right] + \frac{1}{\rho_{\degree}} \cdot \E\left[ \left|\hat{\mfunc}_{\degree}(\serv{\Lambda})-\E[\hat{\mfunc}_{\degree}(\serv{\Lambda})]\right|^2 \right]
\end{align*}
We let $\degree \rightarrow \infty$ in the above display and exploit \eqref{eq:poly-approx} and $\rho_{\degree} \rightarrow \rho \in (0,\infty)$ to conclude that:
\begin{align*}
\lim_{\degree \rightarrow \infty} \scrL{\degree}(\rho_{\degree}) &= \E[|\omd(\serv{\Lambda}_{\nu}; \rho)-1|^2] + \frac{1}{\rho} \cdot \E[\omd^2(\serv{\Lambda};\rho)] = \scrL{\infty}(\rho),
\end{align*}
as claimed. Finally, we show that:
\begin{align*}
   \lim_{\degree \rightarrow \infty} \E_{\serv{\Lambda} \sim \mu}|\omdd{\degree}(\serv{\Lambda};\rho_{\degree}) -  {\omd}(
\serv{\Lambda}; \rho)|^2 & = 0, \quad \lim_{\degree \rightarrow \infty}  \E_{\serv{\Lambda}_{\nu} \sim \nu}|\omdd{\degree}(\serv{\Lambda}_{\nu}) -  {\omd}(
\serv{\Lambda}_{\nu}); \rho_{\degree})|^2  = 0
\end{align*}
To do so, we will exploit the strong convexity of the objective function:
\begin{align} \label{eq:scrO}
\mathscr{O}(\mfunc) \explain{def}{=} \E[|\mfunc(\serv{\Lambda}_{\nu})-1|^2] + \frac{1}{\rho} \cdot \E[\mfunc^2(\serv{\Lambda})].
\end{align}
Notice that $\mathscr{O}(\mfunc) -  \E[|\mfunc(\serv{\Lambda}_{\nu})-1|^2]$ is convex and hence for any $\lambda \in (0,1)$ and any $\mfunc,\hat{\mfunc} \in L^2(\mu + \nu)$,
\begin{align*}
&\mathscr{O}(\lambda \mfunc + (1-\lambda) \hat{\mfunc}) -  \E[|\lambda \mfunc(\serv{\Lambda}_{\nu}) + (1-\lambda) \hat{\mfunc}(\serv{\Lambda}_{\nu})-1|^2] \\ &  \leq \lambda \mathscr{O}(\mfunc) + (1-\lambda) \mathscr{O}(\hat{\mfunc}) - \lambda \E[|\mfunc(\serv{\Lambda}_{\nu})-1|^2] - (1-\lambda) \E[|\hat{\mfunc}(\serv{\Lambda}_{\nu})-1|^2].
\end{align*}
Rearranging yields:
\begin{align*}
\E[|{\mfunc}(\serv{\Lambda}_{\nu})-\hat{\mfunc}(\serv{\Lambda}_{\nu})|^2]\leq  \frac{ \lambda \mathscr{O}(\mfunc) + (1-\lambda) \mathscr{O}(\hat{\mfunc}) - \mathscr{O}(\lambda \mfunc + (1-\lambda) \mfunc)}{\lambda (1-\lambda)} .
\end{align*}
We instantiate the above inequality with $\lambda = 1/2$, $\mfunc = \omd(\cdot \; ; \rho)$ and $\hat{\mfunc} = \omdd{\degree}(\cdot \; ; \rho_{\degree})$. For notational convenience, we define the ``midpoint'' between these two matrix denoisers as $\dup{\Phi}{\degree} \explain{def}{=} (\omd(\cdot \; ; \rho) + \omdd{\degree}(\cdot \; ; \rho_{\degree}))/2$:
\begin{align}
    \E[|{\mfunc}(\serv{\Lambda}_{\nu})-\hat{\mfunc}(\serv{\Lambda}_{\nu})|^2] &\leq  2 \cdot \left[{ \mathscr{O}(\omd(\cdot \; ; \rho)) + \mathscr{O}( \omdd{\degree}(\cdot \; ; \rho_{\degree})) - 2\mathscr{O}(\dup{\Phi}{\degree})} \right]\label{eq:str-cvx}
\end{align}Recalling that $\omd(\cdot \; ; \rho)$ is the minimizer of the variational problem that defines $\scrL{\infty}(\rho)$ (claim 2 of the lemma):
\begin{align} \label{eq:str-cvx-1}
\mathscr{O}( \omd(\cdot \; ; \rho)) & \explain{\eqref{eq:scrO}}{=} \E[|\omd(\serv{\Lambda}_{\nu})-1|^2] + \frac{1}{\rho} \cdot \E[\omd^2(\serv{\Lambda})] = \scrL{\infty}(\rho).
\end{align}
Moreover, since $\dup{\Phi}{\degree}$ is a feasible point for the optimization problem that defines $\scrL{\infty}(\rho)$ (recall \eqref{eq:scrL-limit-repara}), we conclude that:
\begin{align} \label{eq:str-cvx-2}
\mathscr{O}(\dup{\Phi}{\degree}) & \explain{\eqref{eq:scrO}}{=} \E[|\dup{\Phi}{\degree}(\serv{\Lambda}_{\nu})-1|^2] + \frac{1}{\rho} \cdot \E[\dup{\Phi}{\degree}(\serv{\Lambda})^2]  \explain{\eqref{eq:scrL-limit-repara}}{\geq} \scrL{\infty}(\rho).
\end{align}
Recall that $\rho_{\degree} \uparrow \rho$ and that $\omdd{\degree}(\cdot \; ; \rho_{\degree})$ is the minimizer of the variational problem that defines $\scrL{\degree}(\rho_{\degree})$. Hence,
\begin{align} 
\mathscr{O}( \omdd{\degree}(\cdot \; ; \rho_{\degree}))   &\explain{\eqref{eq:scrO}}{=} \E[|\omdd{\degree}(\serv{\Lambda}_{\nu};\rho_{\degree})-1|^2] + \frac{1}{\rho} \cdot \E[\omdd{\degree}(\serv{\Lambda}; \rho_{\degree})^2]  \nonumber\\
& \leq\E[|\omdd{\degree}(\serv{\Lambda}_{\nu};\rho_{\degree})-1|^2] + \frac{1}{\rho_{\degree}} \cdot \E[\omdd{\degree}(\serv{\Lambda}; \rho_{\degree})^2] = \scrL{\degree}(\rho_{\degree}). \label{eq:str-cvx-3}
\end{align}
Plugging in \eqref{eq:str-cvx-1}, \eqref{eq:str-cvx-2}, and \eqref{eq:str-cvx-3} into \eqref{eq:str-cvx}, we obtain:
\begin{align*}
 \E[|{\mfunc}(\serv{\Lambda}_{\nu})-\hat{\mfunc}(\serv{\Lambda}_{\nu})|^2] & \leq 2 \cdot (\scrL{\degree}(\rho_{\degree}) - \scrL{\infty}(\rho)).
\end{align*}
We have already shown that $\scrL{\degree}(\rho_{\degree}) \rightarrow  \scrL{\infty}(\rho)$ as $\degree \rightarrow \infty$. Hence, \begin{align}\lim_{D \rightarrow \infty}  \E[|{\mfunc}(\serv{\Lambda}_{\nu})-\hat{\mfunc}(\serv{\Lambda}_{\nu})|^2] = 0. \label{eq:L2nu-convergence}\end{align} An analogous argument yields $\lim_{D \rightarrow \infty} \E[|{\mfunc}(\serv{\Lambda})-\hat{\mfunc}(\serv{\Lambda})|^2]= 0$, which completes the proof of Claim (4) in the statement of the lemma. 
\paragraph*{Proof of Claim (5)} Recall that from the first claim of the lemma:
\begin{align*}
1 - \E_{\serv{\Lambda} \sim \nu}[\omdd{\degree}(\serv{\Lambda}; \rho)] = \scrL{\degree}(\rho).
\end{align*}
Taking $\degree \rightarrow \infty$ and using the fact that $\scrL{\degree}(\rho) \rightarrow \scrL{\infty}(\rho)$ and \eqref{eq:L2nu-convergence} yields the desired conclusion. This concludes the proof of the lemma.
\end{proof}

\section{Equivalence of State Evolution and Replica Fixed Point Equations (Proposition~\ref{Lem:SE2_quartic})}\label{App:replica}

This appendix is devoted to proving Proposition~\ref{Lem:SE2_quartic}. We begin with some preliminary results about the trace ensemble. The proof of Proposition~\ref{Lem:SE2_quartic} will be presented in Appendix \ref{App:replica_quartic}.

\subsection{Preliminary Results on the Trace Ensemble}\label{App:trace_ensemble_pre} We remind the reader that under the trace ensemble, the noise matrix $\mW$ is drawn from the probability density function:
\begin{align} \label{eq:trace-ensemble-supp}
p(\mW) & \propto \exp\left( - \frac{\dim}{2}\sum_{i=1}^\dim V(\lambda_i(\mW)) \right) \text{ where $\lambda_{1:\dim}(\mW)$ denote the eigenvalues of $\mW$,}
\end{align}
and $V : \R \mapsto \R$ denotes the potential function, which is a functional parameter for the noise model. In addition, we recall the definition of the function $\phi:\R \mapsto \R$ (introduced in Lemma~\ref{lem:RMT}), which plays an important role in this paper:
\begin{align} \label{eq:replica-phi}
\phi(\lambda) & \explain{def}{=} {(1- \pi \theta \hlb_{\mu}(\lambda))^2 + \pi^2 \theta^2  \mu^2(\lambda)} \quad \lambda \; \in \; \R.
\end{align}
In the above display, $\hlb_\mu:\R \mapsto \R$ is the Hilbert transform of $\mu$, the limiting spectral measure of the noise matrix $\mW$. Lemma \ref{Lem:phi} below summarizes some useful properties of the trace ensemble.

\begin{lemma}[The function $\phi(\lambda)$ for trace ensemble]\label{Lem:phi} Suppose that the noise matrix $\mW$ is drawn from the trace ensemble defined in \eqref{eq:trace-ensemble-supp}. Assume that  the potential $V:\mathbb{R}\mapsto\mathbb{R}$ is a degree-$k$ polynomial for some $k\ge2$. Let $\mu$ denote the limiting spectral measure of $\mW$ and let $\phi:\mathbb{R}\mapsto\mathbb{R}$ be the function  from \eqref{eq:replica-phi}. Then:
\begin{enumerate}
\item The spectral measure $\mu$ is absolutely continuous with density function:
\begin{equation}\label{Eqn:trace_ensemble_density}
\mu(\lambda)=\frac{1}{2\pi}\sqrt{\big(4Q(\lambda)-(V'(\lambda))^2\big)_+},\quad\forall\lambda\in\mathbb{R},
\end{equation}
where $(x)_+=\max\{x,0\}$ and $Q(\lambda)$ is given by the Cauchy principal value integral:
\begin{equation}\label{Eqn:Q_def}
Q(\lambda)=\int\frac{V'(\lambda)-V'(t)}{\lambda-t}\mu(t)\mathrm{d}t.
\end{equation}
The support of $\mu$ is a finite union of finite intervals, and the density function $\mu(\lambda)$ is Holder continuous.
\item Define ${\phi}_{\text{poly}}:\mathbb{R}\mapsto\mathbb{R}$ as
\begin{equation}\label{Eqn:phi_poly_second}
\begin{split}
{\phi}_{\text{poly}}(\lambda)\bydef 1-\theta\cdot V'(\lambda)+\theta^2\cdot Q(\lambda),\quad\forall \; \lambda \; \in \; \mathbb{R}.
\end{split}
\end{equation}
Then, ${\phi}_{\text{poly}}$ is a degree $k-1$ polynomial and
\begin{align}
\phi(\lambda)={\phi}_{\text{poly}}(\lambda) \quad\forall \; \lambda \; \in \; \text{Supp}(\mu), \label{Eqn:phi_phi_poly_inside} \\
\phi(\lambda)={\phi}_{\text{poly}}(\lambda) \quad\forall \; \lambda \; \in \; \text{Supp}(\nu) \label{Eqn:phi_phi_poly_inside-2}
\end{align}
\end{enumerate}

\end{lemma}

\begin{proof} We consider the two claims one by one. 
\begin{enumerate}
\item The expression of the density function of $\mu$ can be found in, e.g., \citep[Theorem 11.2.1]{pastur2011eigenvalue}. Since $V(x)$ is a degree-$k$ polynomial, $V'(\lambda)$ is a degree $k-1$ polynomial and $Q(\lambda)$ defined in \eqref{Eqn:Q_def} is a degree $k-2$ polynomial. Hence, the support $\text{Supp}(\mu)=\left\{\lambda|4Q(\lambda)-(V'(\lambda))^2\ge0\right\}$ must be a union of finite intervals \citep[Eq.~(11.2.14)]{pastur2011eigenvalue}. Furthermore, $\mu(\lambda)$ is a composition of Holder continuous functions and is therefore Holder continuous. 
\item To prove the second claim, we note that the Hilbert transform of $\mu(\lambda)$ is related to the potential function via \citep[Theorem 11.2.1]{pastur2011eigenvalue}:
\begin{equation}\label{Eqn:trace_ensemble_Hilbert}
\pi\cdot\mathscr{H}_{\mu}(\lambda)=\frac{1}{2}V'(\lambda),\quad \forall \lambda\in\text{Supp}(\mu).
\end{equation}
Substituting \eqref{Eqn:trace_ensemble_density} and \eqref{Eqn:trace_ensemble_Hilbert} into \eqref{eq:replica-phi}, we get
\begin{equation}\label{Eqn:phi_second}
\begin{split}
\phi(\lambda) &\bydef \left[1-\theta\pi\mathscr{H}_{\mu}(\lambda)\right]^2+\pi^2\theta^2\mu^2\left(\lambda\right) \\ &=1-\theta\cdot V'(\lambda)+\theta^2\cdot Q(\lambda) \bydef \phi_{\text{poly}}(\lambda) \quad\forall\lambda\in\text{Supp}(\mu).
\end{split}
\end{equation}
This verifies \eqref{Eqn:phi_phi_poly_inside}. To show that \eqref{Eqn:phi_phi_poly_inside-2}, we consider the Lebesgue decomposition of $\nu = \nu_{\parallel} + \nu_{\perp}$ into the absolutely continuous part $\nu_{\parallel}$ and the singular part $\nu_{\perp}$.  Recall from Lemma~\ref{lem:RMT} that $\nu_{\parallel}$ has the same support as $\mu$. Hence, we only need to show that $\phi = \phi_{\text{poly}}$ on $\text{Supp}(\nu_{\perp})$. Lemma~\ref{lem:RMT} shows that the measure $\nu_\perp$ concentrates on the set $S\bydef\{x\in\mathbb{R}:\phi(x)=0\}$. It is therefore sufficient to prove $\phi_{\text{poly}}(\lambda)=\phi(\lambda)$ for $\lambda\in S$. From \eqref{eq:replica-phi}, we have
\[
\pi\mathscr{H}_\mu(\lambda)=\frac{1}{\theta}\quad\text{and}\quad \mu(\lambda)=0,\quad\forall \lambda\in S.
\]
On the other hand, the Stieltjes transform of $\mu$ for the trace ensemble with a polynomial potential $V$ satisfies \citep[Theorem 11.2.1]{pastur2011eigenvalue}
\[
\stl_{\mu}^2(z)-V'(z)\stl_{\mu}(z)+Q(z)=0,\quad\forall z\in \mathbb{C}\backslash \mathbb{R}, 
\]
where the function $Q$ is defined in \eqref{Eqn:Q_def}. By the Sohotsky-Plemelj formula \citep[Section 2.1]{pastur2011eigenvalue},
\begin{align*}
\lim_{\epsilon \downarrow 0}\stl_{\mu}(\lambda + \I \epsilon)= \pi \hlb_{\mu}(\lambda) -\I \cdot\pi\mu(\lambda),\quad\forall \lambda\in\mathbb{R}.
\end{align*}
Combining the above three equations leads to the following equation:
\[
\frac{1}{\theta^2}-V'(\lambda)\cdot \frac{1}{\theta} +Q(\lambda)=0,\quad \forall \lambda\in S.
\]
Therefore,
\[
\phi_{\text{poly}}(\lambda)\bydef1-\theta\cdot V'(\lambda)+\theta^2\cdot Q(\lambda)=0,\quad\forall \lambda\in S.
\]
Hence, $\phi_{\text{poly}}(\lambda)=\phi(\lambda)$ for $\lambda\in S$, which completes the proof of \eqref{Eqn:phi_phi_poly_inside-2}.
To check that ${\phi}_{\text{poly}}$ is a degree $k-1$ polynomial, we note that $V'(\lambda)$ is a degree $k-1$ polynomial and $Q(\lambda)$ in \eqref{Eqn:trace_ensemble_density} is a degree $k-2$ polynomial.  
\end{enumerate}
\end{proof}

Lemma \ref{Lem:phi} shows that $\phi(\lambda)$ agrees with a polynomial $\phi_{\text{poly}}(\lambda)$ for $\lambda \in\text{Supp}(\mu)$, but they are generally different outside of the support. The following proposition shows that we can replace $\phi(\cdot)$ by the polynomial ${\phi}_{\text{poly}}(\cdot)$ in the optimal OAMP algorithm with no performance loss asymptotically. 

\begin{proposition}[A variant of optimal OAMP]\label{Lem:optimal_OAMP_variant}
Assume that $\bm{W}$ is drawn from the trace ensemble with a polynomial potential. Consider a variant of the optimal OAMP algorithm from \eqref{Eqn:psi_bar_normalization} where $\phi(\cdot)$ in the matrix denoiser is replaced by $\phi_{\text{poly}}(\cdot)$. Then, Proposition~\ref{prop:optimal-OAMP-SE} still holds for this OAMP algorithm. 
\end{proposition}

\begin{proof}
First of all, the variant of the optimal OAMP algorithm satisfies the required trace-free condition: $\mathbb{E}\left[\Psi_{\text{poly}}(\sfLambda;\rho)\right]=0$, where
\[
\Psi_{\text{poly}}(\lambda; \rho)\bydef 1- \left( \E \left[ \frac{ \phi_{\text{poly}}(\serv{\Lambda})}{ \phi_{\text{poly}}(\serv{\Lambda}) + \rho} \right] \right)^{-1} \cdot \frac{\phi_{\text{poly}}(\lambda)}{\phi_{\text{poly}}(\lambda)+\rho}, \quad \forall \; \lambda \; \in \; \R, \; \rho \; \in \;  (0,\infty).
\]
Therefore, the state evolution framework applies to this variant of optimal OAMP.

{To prove that the state evolution characterization claimed in Proposition~\ref{prop:optimal-OAMP-SE} holds for this new variant of the optimal OAMP algorithm it suffices to verify that:
\begin{subequations} \label{eq:variant-quant}
\begin{align}
\mathbb{E}\big[{\Psi_{\text{poly}}}(\sfLambda_\nu;\rho)] &=\mathbb{E}\big[{\Psi}_\star(\sfLambda_\nu)], \quad \mathbb{E}\big[{\Psi^2_{\text{poly}}}(\sfLambda_\nu ;\rho)] =\mathbb{E}\big[{\Psi}^2_\star(\sfLambda_\nu ;\rho)] \label{eq:variant-quant-1}\\
\mathbb{E}\big[{\Psi_{\text{poly}}}(\sfLambda ;\rho )], &=\mathbb{E}\big[{\Psi}_\star(\sfLambda ;\rho)], \quad \mathbb{E}\big[{\Psi^2_{\text{poly}}}(\sfLambda ;\rho)] =\mathbb{E}\big[{\Psi}^2_\star(\sfLambda ;\rho)]. \label{eq:variant-quant-2}
\end{align}
\end{subequations}
This is because, if $(\serv{X}_{\star}, (\serv{X}_t)_{t \in \N}; \serv{A})$ represent the state evolution random variables associated with the new variant of the optimal OAMP algorithm, then the joint distribution of $(\serv{X}_{\star}, \serv{X}_t; \serv{A})$ is determined only by the quantities in \eqref{eq:variant-quant} (cf. Definition~\ref{Def:OAMP_main}).} Moreover,  the claims in \eqref{eq:variant-quant} are immediate from item 3 of Lemma \ref{Lem:phi}, which shows that $\phi, \phi_{\text{poly}}$ agree on $\text{Supp}(\mu)$ and $\text{Supp}(\nu)$, and hence, $\Psi_\star = \Psi_{\text{poly}}$ on $\text{Supp}(\mu)$ and $\text{Supp}(\nu)$.
\end{proof}

We next define a function closely related to ${\phi}_{\text{poly}}$, which recovers the matrix processing function used in the Bayes-optimal AMP (BAMP) algorithm proposed in \citet{barbier2023fundamental}.

\begin{definition}[Definition of $J(\lambda)$]\label{Def:phi_J}
Let $k\ge2$ and $V:\mathbb{R}\mapsto\mathbb{R}$ be a degree $k$ polynomial. Let the degree $k-1$ polynomial ${\phi}_{\text{poly}}$ in \eqref{Eqn:phi_poly_second} be represented as
\BS\label{Eqn:phi_J_defs}
\begin{equation}\label{Eqn:phi_polynomial}
{\phi}_{\text{poly}}(\lambda)=C_\phi+\sum_{i=1}^{k-1} C_{\phi,i}\lambda^i,\quad\forall\lambda\in\mathbb{R}.
\end{equation}
We define the function $J:\mathbb{R}\mapsto\mathbb{R}$ as
\begin{equation}\label{Eqn:J_def}
J(\lambda)\bydef-\sum_{i=1}^{k-1} C_{\phi,i}\lambda^i,\quad\forall\lambda\in\mathbb{R}.
\end{equation}
\ES
\end{definition}

\paragraph*{Examples of $\phi_{\text{poly}}(\lambda)$ and $J(\lambda)$} We now provide explicit formulas for the functions $\phi_{\text{poly}}(\lambda)$ and $J(\lambda)$ for three polynomial potential functions considered in \citet{barbier2023fundamental}. 

\begin{enumerate}
\item \emph{Quadratic potential}:
\BS
\begin{equation}\label{Eqn:quadratic}
\begin{split}
V(\lambda) &=\frac{\lambda^2}{2}.
\end{split}
\end{equation}
This corresponds to the Gaussian orthogonal ensemble (GOE). The density function and its Hilbert transform are $\mu(\lambda)=\frac{1}{2\pi}\sqrt{4-\lambda^2}$ and $\mathscr{H}_\mu(\lambda)=V'(\lambda)/2\pi$, $\forall\lambda\in[-2,2]$. From \eqref{Eqn:phi_poly_second} and \eqref{Eqn:J_def}, we have
\begin{align}
\phi_{\text{poly}}(\lambda) &= 1+\theta^2-\theta\cdot\lambda,\\
J(\lambda) &= \theta\cdot\lambda.
\end{align}
\ES

\item \emph{Quartic potential}:
\BS
\begin{equation}\label{Eqn:quartic}
V(\lambda)=\frac{\gamma \lambda^2}{2}+\frac{\kappa \lambda^4}{4},
\end{equation}
where $\gamma\in[0,1]$ is a parameter and $\kappa$ is given by: $$\kappa=\kappa(\gamma)= \frac{8-9\gamma +\sqrt{64-144\gamma+108\gamma^2-27\gamma^3}}{27}\in \left[0,\frac{16}{27}\right].$$ This choice of $\kappa$ ensures that the limiting eigenvalue distribution $\mu$ has unit variance. Its density function is given by $\mu(\lambda)=(\gamma+2a^2\kappa + \kappa \lambda^2)\sqrt{4a^2-\lambda^2}/(2\pi)$, $\forall\lambda\in[-2a,2a]$, where $a^2:=\big(\sqrt{\gamma^2+12\kappa}-\gamma\big)/(6\kappa)$. The functions $\phi_{\text{poly}}$ and $J$ read \citep{potters2020first}
\begin{align}
\phi_{\text{poly}}(\lambda) &= \theta^2a^2(\gamma+2a^2\kappa)^2 + 1 -\theta\left(\gamma\lambda-\theta\kappa\lambda^2+\kappa\lambda^3\right),\label{Eqn:phi_quartic}\\
J(\lambda) &=\theta\left(\gamma\lambda-\theta\kappa\lambda^2+\kappa\lambda^3\right).\label{Eqn:J_quartic}
\end{align}
\ES

\item \emph{Purely sestic potential}:
\BS
\begin{equation}\label{Eqn:sestic}
V(\lambda)=\frac{\xi \lambda^6}{6},
\end{equation}
where $\xi=\frac{27}{80}$. Again, this value of $\xi$ ensures that the spectral measure $\mu$ has unit variance. Its density function is given by $\mu(\lambda)=( 6b^4\xi + 2b^2\xi\lambda^2 + \xi\lambda^4 )\sqrt{4b^2-\lambda^2}/(2\pi)$, $\forall \lambda\in[-2b,2b]$, where $b^2:=2/3$. The functions $\phi_{\text{poly}}$ and $J$ are given by \citep{barbier2023fundamental}
\begin{align}
\phi_{\text{poly}}(\lambda) &= \frac{27}{50}\theta^2 + 1 -\theta\left(-\theta\xi\lambda^2-\theta\xi\lambda^4+\xi\lambda^5\right),\\
J(\lambda) &= \theta\left(-\theta\xi\lambda^2-\theta\xi\lambda^4+\xi\lambda^5\right).
\end{align}
\ES
\end{enumerate}

Fig.~\ref{Fig:phi} plots the functions $\phi(\cdot)$ and $\phi_{\text{poly}}(\cdot)$ corresponding to the quartic potential. In this setup, the measure $\mu$ is supported on the interval $[-2a,2a]$ with $2a\approx1.7$, and it can be shown that $\nu$ has a point mass (corresponding to the outlying eigenvalue of $\bm{Y}$) at $\lambda_o\approx 2.3$. We see from Fig.~\ref{Fig:phi} that $\phi(\cdot)$ and $\phi_{\text{poly}}(\cdot)$ coincide inside the support of $\mu$ but differ outside. Moreover, the curves of $\phi(\cdot)$ and $\phi_{\text{poly}}(\cdot)$ intersect precisely at the location of the outlying eigenvalue: $\phi(\lambda_o)=\phi_{\text{poly}}(\lambda_o)=0$; see the proof of Proposition \ref{Lem:optimal_OAMP_variant}. 

\begin{figure}[htbp]
\begin{center}
\includegraphics[width=0.5\linewidth]{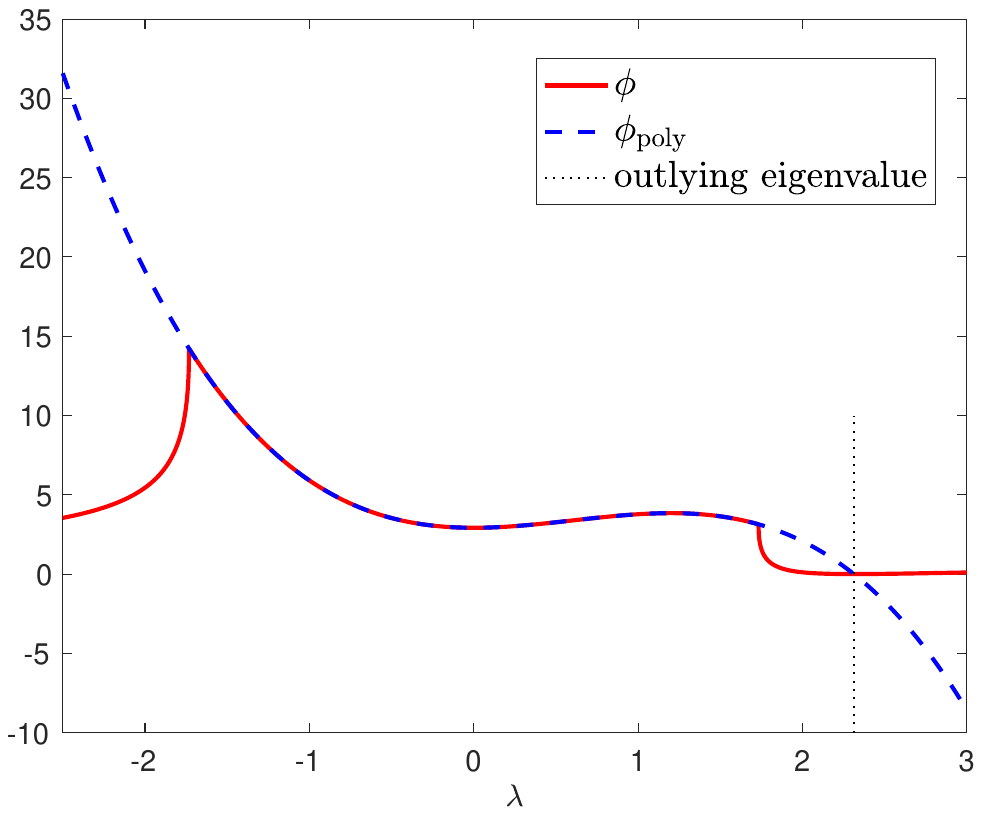}
\end{center}
\caption{Plot of $\phi$ and $\phi_{\text{poly}}$ for the quartic ensemble with $\gamma=0$. $\theta=1.8$. $\text{Supp}(\mu)=[-2a,2a]$ with $2a\approx1.7$. The singular part of the measure $\nu$ contains a single point mass at $\lambda_o\approx2.3$.}
\label{Fig:phi}
\end{figure} 

\subsection{Proof of Proposition~\ref{Lem:SE2_quartic}} \label{App:replica_quartic}

We will show that any solution $(\rho,\omega)$ to the state evolution (SE) fixed point equation:
\begin{align} \label{eq:rho-omega-supp}
    \omega = 1- \left( \E \left[ \frac{\phi(\serv{\Lambda})}{\phi(\serv{\Lambda}) + {\rho}} \right] \right)^{-1} \cdot {\mathbb{E}\left[\frac{1}{\rho+\phi(\sfLambda)}\right]}, \quad \rho=\frac{1}{\dmmse(\omega)}-1, \quad \serv{\Lambda} \sim \mu.
\end{align}
is also a solution to the replica fixed point equations:
\begin{subequations}\label{Eqn:SE_FP_quartic}
\begin{align}
&m = 1-\mmse(\omega),\label{Eqn:SE_FP_quartic_0}\\
&\mathbb{E}[\mathsf{H}] = 1-m,\label{Eqn:SE_FP_quartic_1}\\
&\chi = \mathbb{E}[\sfLambda \mathsf{Q H}],\label{Eqn:SE_FP_quartic_2}\\
&\hat{m}\equiv \frac{\omega}{1-\omega} = \kappa\theta^2 \left(\frac{m}{1-m}\E[\sfLambda^2\mathsf{H}]+\frac{\chi}{1-m}\E[\sfLambda \mathsf{H}]+\E[\sfLambda^2 \mathsf{QH}] \right)+ \gamma\theta^2 m \label{Eqn:SE_FP_quartic_3},
\end{align}
where $m\in\mathbb{R}$, $\chi\in\mathbb{R}$ are two intermediate variables, $\serv{\Lambda} \sim \mu$, and the random variables $\mathsf{Q} = \mathsf{Q}(\sfLambda,m,\chi)$, $\mathsf{H}=\mathsf{H}(\sfLambda,\rho)$ are defined by
\begin{align}
\mathsf{Q} &\bydef \kappa\theta^2 m\sfLambda^2 + \kappa\theta^2\chi\sfLambda
- \frac{\kappa\theta^2}{1-m} \E\left[ m\sfLambda^2\mathsf{H} + \chi\sfLambda \mathsf{H}\right] + \frac{m}{1-m} ,
\label{eq:replica_fp_JY3_3}
\\
\mathsf{H} &\bydef \left( \rho + \theta^2a^2(\gamma+2a^2\kappa)^2 + 1 -\theta\left(\gamma\serv{\Lambda}-\theta\kappa\serv{\Lambda}^2+\kappa\serv{\Lambda}^3\right) \right)^{-1}.
\label{eq:replica_fp_JY3_4}
\end{align}
\end{subequations}
{The proof for the other direction, namely, each solution to the replica fixed point equation is also a solution to the SE equations is similar and thus omitted.}

We will introduce a series of manipulations of the SE equations \eqref{eq:rho-omega-supp} that eventually lead to the replica equations \eqref{Eqn:SE_FP_quartic}, for the \textit{quartic potential} case. Our proof consists of the following steps:
\begin{enumerate}
\item In Lemma \ref{lemma:alt_fp_opt_se}, we rewrite \eqref{eq:rho-omega-supp} as \eqref{Eqn:OAMP_FP2}. Note that unlike the original SE equation \eqref{eq:rho-omega-supp} which only involves the measure $\mu$, the new formulation \eqref{Eqn:OAMP_FP2} involves both $\mu$ and $\nu$. The introduction of the new formulation \eqref{Eqn:OAMP_FP2} is somewhat ad hoc, and is only for the purpose of deriving the replica equations as presented in \cite{barbier2023fundamental}.
\item We recognize that the term in \eqref{Eqn:OAMP_FP2_a} that involves $\nu$ is a resolvent operator for $J(\sfLambda_\nu)$. We will show that the expectation of this term is the limit of $\frac{1}{N}\bm{x}_\star^\UT\left(\rho\bm{I}_N+C_\phi\bm{I}_N-J(\bm{Y})\right)^{-1}\bm{x}_\star$ as $N\to\infty$. In the quartic potential case, the function $J(\cdot)$ is a degree three polynomial; see \eqref{Eqn:J_quartic}. It is possible to invoke the matrix inversion lemma to express the term into a resolvent in terms of $J(\bm{W})$ plus a low-rank perturbation term, and subsequently calculate the asymptotics of each term involved. This step is summarized in Lemma \ref{Lem:aux13}.
\item Using Lemma \ref{Lem:aux13} and some algebraic manipulations, we finally arrive at the  replica equations we aim to prove in Proposition~\ref{Lem:SE2_quartic}. The derivations are postponed to Section \ref{App:quartic_replica_details}.
\end{enumerate}

\subsubsection{Auxiliary results}

To compare with the replica calculations in \cite{barbier2023fundamental}, we rewrite the fixed point equation \eqref{eq:rho-omega-supp} into an equivalent form in the following lemma.

\begin{lemma}[Alternative fixed point equations for SE]\label{lemma:alt_fp_opt_se}
Assume that $\E\Var[\serv{X}_{\star} | \serv{A}] \in (0,1)$. Let $(\omega,\rho)$ be a solution to the state evolution fixed point equations \eqref{eq:rho-omega-supp}. Then, $(\omega,\rho)$ satisfies the following equations:
\begin{subequations}\label{Eqn:OAMP_FP2}
\begin{align}
&\frac{\omega}{1-\omega} = \left(1-\mathbb{E}\left[ \frac{1}{\rho+\phi(\sfLambda)}\right]\right)\cdot \left(\left(\mathbb{E}\left[   \frac{1}{\rho+\phi(\sfLambda)}\right]\right)^{-1}-\left(\mathbb{E}\left[   \frac{1}{\rho+\phi(\sfLambda_\nu)}\right]\right)^{-1}\right),\label{Eqn:OAMP_FP2_a}\\
&\mathbb{E}\left[ \frac{1}{\rho+\phi(\sfLambda)}\right]= \mmse(\omega),\label{Eqn:OAMP_FP2_b}
\end{align}
\end{subequations}
where $\sfLambda\sim\mu$ and $\sfLambda_\nu\sim \nu$. 
\end{lemma}

\begin{proof}
We prove the two equations seperately.
\paragraph*{Proof of \eqref{Eqn:OAMP_FP2_a}} We rewrite the first equation of \eqref{eq:rho-omega-supp} as
\BS
\begin{align}
\frac{\omega}{1-\omega} &= \left(\mathbb{E}\left[\frac{1}{\rho+\phi(\sfLambda)}\right]\right)^{-1}-(1+\rho)\label{Eqn:OAMP_FP3_a}\\
&=\left(1-\mathbb{E}\left[ \frac{1}{\rho+\phi(\sfLambda)}\right]\right)\cdot \left(\left(\mathbb{E}\left[   \frac{1}{\rho+\phi(\sfLambda)}\right]\right)^{-1}-\rho\cdot\left(1-\mathbb{E}\left[ \frac{1}{\rho+\phi(\sfLambda)}\right]\right)^{-1}\right). \label{Eqn:OAMP_FP3_b}
\end{align}
\ES
Comparing \eqref{Eqn:OAMP_FP2_a} and \eqref{Eqn:OAMP_FP3_a}, we see that it suffices to prove the following
\begin{equation}\label{Eqn:opt_g_key_property}
\mathbb{E}\left[\frac{1}{\rho+\phi(\sfLambda_\nu)}\right]=\frac{1}{\rho}\cdot\left(1-\mathbb{E}\left[\frac{1}{\rho+\phi(\sfLambda)}\right]\right),\quad \forall \rho>0.
\end{equation}
To prove \eqref{Eqn:opt_g_key_property}, we note that
\[
\begin{split}
\mathbb{E}\left[\frac{1}{\rho+\phi(\sfLambda_\nu)}\right] &=\int_{\mathbb{R}} \frac{1}{\rho+\phi(\lambda)}\nu(\mathrm{d}\lambda)\\
&= \int_{\mathbb{R}} \frac{1}{\rho+\phi(\lambda)}\nu_{\perp}(\mathrm{d}\lambda)+\int_{\mathbb{R}} \frac{1}{\rho+\phi(\lambda)}\nu_{\parallel}(\mathrm{d}\lambda)\\
&\overset{(a)}{=} \frac{1}{\rho}\cdot\nu_{\perp}(\mathbb{R})+\int_{\mathbb{R}} \frac{1}{\rho+\phi(\lambda)}\nu_{\parallel}(\mathrm{d}\lambda)\\
&= \frac{1}{\rho}\cdot\left(1-\nu_{\parallel}(\mathbb{R})\right)+\int_{\mathbb{R}} \frac{1}{\rho+\phi(\lambda)}\nu_{\parallel}(\mathrm{d}\lambda)\\
&\overset{(b)}{=} \frac{1}{\rho}\cdot\left(1-\int_{\mathbb{R}} \frac{1}{\phi(\lambda)}\mu(\mathrm{d}\lambda)\right)+\int_{\mathbb{R}} \frac{1}{\rho+\phi(\lambda)}\cdot\frac{1}{\phi(\lambda)}\mu(\mathrm{d}\lambda)\\
&=\frac{1}{\rho}\cdot\left(1-\mathbb{E}\left[\frac{1}{\rho+\phi(\sfLambda)}\right]\right),\quad \sfLambda\sim\mu,
\end{split}
\]
where step (a) and (b) are due to items 2 and 3 of Lemma~\ref{lem:RMT}, respectively.

\paragraph*{Proof of \eqref{Eqn:OAMP_FP2_b}} 

From the second equation of \eqref{eq:rho-omega-supp}, we have
\[
\rho =\frac{1}{\dmmse(\omega)} -1\overset{(a)}{=} \frac{1}{\mmse(\omega)}-\frac{1}{1-\omega},
\]
where step (a) follows from Lemma \ref{lem:dmmse-scalar}. Hence,
\begin{equation}\label{Eqn:OAMP_SE_FP_app_temp1}
\left(\rho+\frac{1}{1-\omega}\right)^{-1}=\mmse(\omega).
\end{equation}
Now recall the first equation of \eqref{eq:rho-omega-supp}:
\[
\begin{split}
\omega&= 1 -  \left( \E \left[ \frac{\phi(\serv{\Lambda})}{\phi(\serv{\Lambda}) + {\rho}} \right] \right)^{-1} \cdot \E \left[\frac{1}{\phi(\serv{\Lambda})+{\rho}} \right].
\end{split}
\]
By simple algebra, we have
\begin{equation}\label{Eqn:OAMP_SE_FP_app_temp2}
\left(\rho+\frac{1}{1-\omega}\right)^{-1}=\mathbb{E}\left[\frac{1}{\phi(\sfLambda)+\rho}\right].
\end{equation}
Combining \eqref{Eqn:OAMP_SE_FP_app_temp1} and \eqref{Eqn:OAMP_SE_FP_app_temp2} leads to \eqref{Eqn:OAMP_FP2_b}.

\end{proof}


Note that \eqref{Eqn:OAMP_FP2_a} involves the term $\mathbb{E}\left[ \frac{1}{\rho+\phi(\sfLambda_\nu)}\right]$, which depends on the function $\phi(\cdot)$. To facilitate further analysis, we rewrite this term using the function $\phi_{\text{poly}}(\lambda)\bydef C_\phi - J(\lambda)$ defined in Appendix \ref{App:trace_ensemble_pre}; see Lemma \ref{Lem:phi} and Definition \ref{Def:phi_J}. Lemma \ref{Lem:phi_J_R} below summarizes this result. Its proof is similar to the proof of Proposition \ref{Lem:optimal_OAMP_variant} and omitted.

\begin{lemma}\label{Lem:phi_J_R}
Let $\mu$ be the limiting spectral measure associated with the quartic potential. Let $C_\phi$ and $J(\cdot)$ be defined as in Definition \eqref{Def:phi_J}.
Then, the following holds for any $\rho>0$:
\begin{equation}
\mathbb{E}\left[\frac{1}{\rho+\phi(\sfLambda_\nu)}\right] = \mathbb{E}\left[\frac{1}{\rho+C_\phi-J(\sfLambda_\nu)}\right],\quad \sfLambda_\nu\sim\nu.
\end{equation}
\end{lemma}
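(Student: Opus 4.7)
The plan is to use the Lebesgue decomposition $\nu = \nu_{\parallel} + \nu_{\perp}$ from \lemref{lem:RMT} and show that the two integrands $1/(\rho+\phi(\lambda))$ and $1/(\rho+C_\phi-J(\lambda))$ agree on each piece, using the relationship between $\phi$ and $\phi_{\text{poly}}$ already established in \lemref{Lem:phi}. Observe first that by the definition in \eqref{Eqn:phi_J_defs}, $\phi_{\text{poly}}(\lambda) = C_\phi + \sum_{i=1}^{k-1} C_{\phi,i}\lambda^i = C_\phi - J(\lambda)$, so the claimed identity is equivalent to $\E[1/(\rho+\phi(\sfLambda_\nu))] = \E[1/(\rho + \phi_{\text{poly}}(\sfLambda_\nu))]$.

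For the absolutely continuous part, \lemref{lem:RMT} (item 3) says $\nu_{\parallel}$ has density $\mu(\cdot)/\phi(\cdot)$, hence is supported on $\mathrm{Supp}(\mu)$, and \lemref{Lem:phi} (item 2) gives $\phi(\lambda)=\phi_{\text{poly}}(\lambda)$ for all $\lambda \in \mathrm{Supp}(\mu)$. So the two integrands coincide $\nu_{\parallel}$-a.e., and the contributions of $\nu_{\parallel}$ to both sides agree.

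For the singular part, \lemref{lem:RMT} (item 2) says $\nu_{\perp}$ is concentrated on the set $S = \{\lambda \in \R : \phi(\lambda)=0\}$, on which the LHS integrand is simply $1/\rho$. The key step, which mirrors the argument from the proof of \propref{Lem:optimal_OAMP_variant}, is to show $\phi_{\text{poly}}(\lambda)=0$ on $S$ as well. If $\lambda \in S$, the definition \eqref{eq:phi} forces $\pi\hlb_\mu(\lambda) = 1/\theta$ and $\mu(\lambda)=0$, and by the Sohotsky–Plemelj formula this gives $\lim_{\epsilon \downarrow 0} \stl_\mu(\lambda+\I\epsilon) = 1/\theta$. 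Substituting into the defining algebraic equation of the Stieltjes transform for the trace ensemble, $\stl_\mu^2(z)-V'(z)\stl_\mu(z)+Q(z)=0$, and taking $\epsilon \downarrow 0$, yields $1/\theta^2 - V'(\lambda)/\theta + Q(\lambda) = 0$. Multiplying through by $\theta^2$ gives $\phi_{\text{poly}}(\lambda) \explain{def}{=} 1 - \theta V'(\lambda) + \theta^2 Q(\lambda) = 0$. Thus both integrands equal $1/\rho$ on $S$, so the contributions of $\nu_{\perp}$ to both sides agree.

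Adding the two contributions gives the lemma. I do not expect a genuine obstacle here: all ingredients are already in the paper. The only delicate point is rigorously justifying $\phi_{\text{poly}}=0$ on the support of $\nu_{\perp}$ via the quadratic identity for $\stl_\mu$, since one must be careful that the limit $\lim_{\epsilon \downarrow 0}\stl_\mu(\lambda+\I\epsilon)$ exists and equals $\pi \hlb_\mu(\lambda) - \I\pi\mu(\lambda) = 1/\theta$ precisely at the points of $S$; this follows from the Hölder continuity of $\mu$ assumed in \assumpref{assump:noise}.
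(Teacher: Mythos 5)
Your proof is correct and takes essentially the same approach as the paper's: the paper explicitly omits the proof, noting it is "similar to the proof of Proposition~\ref{Lem:optimal_OAMP_variant}," and your argument reproduces that proof's structure exactly — the Lebesgue decomposition of $\nu$, agreement of $\phi$ and $\phi_{\text{poly}}$ on $\mathrm{Supp}(\mu)$ for the absolutely continuous part, and the Sohotsky--Plemelj plus quadratic-equation-for-$\stl_\mu$ argument to show $\phi_{\text{poly}}$ vanishes on the support of $\nu_\perp$.
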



The following lemma reformulates the term $\mathbb{E}\left[\frac{1}{\rho+C_\phi-J(\sfLambda_\nu)}\right]$, which involves the measure $\sfLambda_\nu\sim\nu$, into a (complicated) expression that only depends on $\sfLambda\sim\mu$. 

\begin{lemma}\label{Lem:aux13}
Consider the quartic potential for which the function $J(\lambda)$ is given in \eqref{Eqn:J_quartic}. Then, the following identity holds for any $\rho>0$:
\begin{align}\label{Eqn:g_hat_quartic_result}
&\mathbb{E}\left[ \frac{1}{\rho+C_\phi-J(\sfLambda_\nu)}\right]= -\frac{1}{(\kappa\theta^2)^2 d_4 + \gamma\theta^2 +\frac{3\kappa\theta^2 d_2 + (\kappa\theta^2)^2 ( 2d_1 d_3-3d_2^2) + (\kappa\theta^2)^3 ( d_2^3 + d_0 d_3^2 - 2d_1 d_2 d_3) - 1}{d_0+\kappa\theta^2 d_1^2-\kappa\theta^2 d_0 d_2}},
\end{align}
where $\sfLambda_\nu\sim\nu$, and $\kappa$ and $\gamma$ are parameters for the quartic potential function and $\theta$ is the parameter for the observation model, and
\begin{equation}\label{Eqn:di_def_app}
d_{i} \bydef \E\left[\frac{\sfLambda^{i}}{ \rho+\phi(\sfLambda)}\right],\quad\forall i\in\mathbb{N}\cup\{0\}.
\end{equation}
\end{lemma}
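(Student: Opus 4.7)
\textbf{Proof proposal for Lemma \ref{Lem:aux13}.} The plan is to interpret the left-hand side as the almost-sure limit of a resolvent-type quadratic form in $\vx_\star$, and then reduce this resolvent of $J(\mY)$ to a resolvent of $J(\mW)$ via the Sherman--Morrison--Woodbury identity, exploiting that $\mY-\mW = \frac{\theta}{\dim}\vx_\star\vx_\star^\top$ has rank one.

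First I would observe that $J(\mY)$ shares eigenvectors with $\mY$, so by the weak convergence of the signal-direction spectral measure $\nu_\dim \Rightarrow \nu$ established in \lemref{lem:RMT}, together with $\|\vx_\star\|^2/\dim \pc 1$,
\begin{equation*}
\E\!\left[\frac{1}{\rho+C_\phi-J(\sfLambda_\nu)}\right] \;=\; \plim_{\dim\to\infty}\frac{1}{\dim}\,\vx_\star^\top\bigl((\rho+C_\phi)\mI_\dim - J(\mY)\bigr)^{-1}\vx_\star.
\end{equation*}
In the quartic case, $J(\lambda)=\theta\gamma\lambda-\theta^2\kappa\lambda^2+\theta\kappa\lambda^3$ from \eqref{Eqn:J_quartic}, so expanding $\mY=\mW+\frac{\theta}{\dim}\vx_\star\vx_\star^\top$ in the monomials $\mY,\mY^2,\mY^3$ yields $J(\mY)=J(\mW)+\Delta$, where $\Delta$ is a symmetric low-rank matrix whose range is contained in $\mathrm{span}\{\vx_\star,\mW\vx_\star,\mW^2\vx_\star\}$. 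Concretely, $\Delta$ admits a factorization $\Delta = \mU \mS \mU^\top$ with $\mU = [\vx_\star,\mW\vx_\star,\mW^2\vx_\star]\in\R^{\dim\times 3}$ and a $3\times 3$ symmetric matrix $\mS$ whose entries are polynomial combinations of $\theta,\kappa,\gamma$ and the normalized quadratic forms $\vx_\star^\top \mW^j \vx_\star/\dim$ for $j\le 2$.

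Next I would apply the Woodbury identity with $\mR \explain{def}{=} \bigl((\rho+C_\phi)\mI_\dim-J(\mW)\bigr)^{-1}$:
\begin{equation*}
\bigl((\rho+C_\phi)\mI_\dim - J(\mY)\bigr)^{-1} \;=\; \mR \;+\; \mR\,\mU\bigl(\mS^{-1}-\mU^\top \mR\,\mU\bigr)^{-1}\mU^\top \mR,
\end{equation*}
and compute $\vx_\star^\top(\cdot)\vx_\star/\dim$ termwise. Since $\mR$ is a continuous function of $\mW$ and $\phi(\lambda)=C_\phi-J(\lambda)$ on $\mathrm{Supp}(\mu)$, the standard concentration of quadratic forms for rotationally invariant matrices (\factref{fact:qf}) gives
\begin{equation*}
\frac{1}{\dim}\bigl(\mW^i\vx_\star\bigr)^\top\mR\bigl(\mW^j\vx_\star\bigr) \;\pc\; \E\!\left[\frac{\sfLambda^{i+j}}{\rho+\phi(\sfLambda)}\right] \;=\; d_{i+j}, \qquad \sfLambda\sim\mu,
\end{equation*}
for $0\le i,j\le 2$, so every entry of $\mU^\top \mR\,\mU$ and of the vector $\mU^\top \mR\,\vx_\star$ converges to an explicit linear combination of $d_0,d_1,d_2,d_3,d_4$. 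Similarly $\vx_\star^\top \mR\,\vx_\star/\dim \pc d_0$ and the entries of $\mS$ stabilize to polynomials in $\E[\sfLambda^j]=\int\lambda^j\mu(\diff\lambda)$. Inverting the $3\times 3$ matrix $\mS^{-1}-\mU^\top \mR\,\mU$ (or equivalently, solving the $3\times 3$ linear system) and simplifying produces the rational expression in \eqref{Eqn:g_hat_quartic_result}.

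The main obstacle will be purely computational: carrying out the $3\times 3$ Woodbury update and the subsequent cofactor expansions without dropping terms, and then recognizing that the resulting polynomial combinations of $d_0,\dots,d_4$ collapse into the specific numerator and denominator appearing in \eqref{Eqn:g_hat_quartic_result}. A useful bookkeeping simplification is to first use the identity $J(\sfLambda)=C_\phi-\phi(\sfLambda)$ on the support of $\mu$ to eliminate factors of $J(\sfLambda)$ in favor of $\phi(\sfLambda)$ whenever they meet the resolvent $1/(\rho+\phi(\sfLambda))$; this reduces products such as $\E[\sfLambda^i J(\sfLambda)/(\rho+\phi(\sfLambda))]$ to sums involving the $d_i$'s and pure moments of $\mu$, which is precisely the structural shape of \eqref{Eqn:g_hat_quartic_result}. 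With this bookkeeping in hand, the rest is straightforward algebra, deferred to Section \ref{App:quartic_replica_details} as indicated.
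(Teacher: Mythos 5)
Your proposal follows essentially the same route as the paper: interpret the LHS as $\plim \frac{1}{\dim}\vx_\star^\top\bigl((\rho+C_\phi)\mI_\dim - J(\mY)\bigr)^{-1}\vx_\star$ via \lemref{lem:RMT}, observe that $J(\mY)-J(\mW)$ is a rank-$\le 3$ perturbation with range in $\mathrm{span}\{\vx_\star,\mW\vx_\star,\mW^2\vx_\star\}$, invoke Sherman--Morrison--Woodbury, and reduce all the resulting scalar quantities to the $d_i$'s and moments of $\mu$ via the concentration result \factref{fact:qf}. The paper writes the perturbation as an asymmetric $\frac{1}{\dim}\mP\mQ^\top$ and uses the Woodbury form with $\bigl(\mI_3-\frac{1}{\dim}\mQ^\top\mA^{-1}\mP\bigr)^{-1}$, whereas you propose the symmetric $\mU\mS\mU^\top$ and the form involving $(\mS^{-1}-\mU^\top\mR\,\mU)^{-1}$. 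These are interchangeable as long as $\mS$ is invertible (which it is for the quartic potential with $\kappa>0$, since the perturbation has full rank $3$); if you want to avoid that proviso, use the variant $(\mA-\mU\mS\mU^\top)^{-1}=\mA^{-1}+\mA^{-1}\mU\mS(\mI-\mU^\top\mA^{-1}\mU\mS)^{-1}\mU^\top\mA^{-1}$, which mirrors the paper's asymmetric version. Your bookkeeping remark, that $J=C_\phi-\phi_{\text{poly}}$ on $\mathrm{Supp}(\mu)$ lets you trade factors of $J(\sfLambda)$ against $\rho+\phi(\sfLambda)$, is exactly the simplification implicit in the paper's algebra and is worth making explicit.
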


\begin{proof}
Recall that the probability measure $\nu$ is the limit of the empirical measure $\nu_N\bydef \frac{1}{\dim} \sum_{i=1}^\dim  \ip{\vu_i(\mY)}{\vx_{\star}}^2 \cdot \delta_{\lambda_i(\mY)} $, where $\{\bm{u}_i(\bm{Y})\}_{i\in[N]}$ and $\{\lambda_i(\bm{Y})\}_{i\in[N]}$ are the eigenvectors and eigenvalues of $\bm{Y}$. By the convergence of $\nu_N$ (Lemma~\ref{lem:RMT}), we have
\begin{align}\label{Eqn:xT_g_x}
 \underset{N\to\infty}{\text{plim}}\  \frac{1}{N} \vx_\star^{\UT} \left(c_\ast\bm{I}_N-J(\bm{Y})\right)^{-1} \vx_\star=\mathbb{E}\left[ \frac{1}{c_\ast-J(\sfLambda_\nu)}\right] .
\end{align}
where $c_\ast\explain{def}{=}\rho+C_\phi$. Recall that $J(\cdot)$ is a degree three polynomial for quartic potential:
\begin{equation}
J(\lambda)=\theta\left(\gamma\lambda-\theta\kappa\lambda^2+\kappa\lambda^3\right).
\end{equation}
Based on the above definition of $J(\cdot)$, we can decompose $J(\mY)$ as $J(\bm{W})$ plus a low-rank perturbation term:
\begin{eqnarray*}
J(\mY)& =& J\left(\frac{\theta\bm{x}_\star\bm{x}_\star^\UT}{N}+\bm{W}\right) \\
&=&\gamma\theta\mW-\kappa\theta^2\mW^2+\kappa\theta\mW^3 + \frac{\gamma\theta^2}{N}\vx_\star\vx_\star^\UT + \frac{1}{N}\kappa\theta^3\vx_\star^\UT\mW\vx_\star\cdot \frac{\bm{x}_\star\bm{x}_\star^\UT}{N}\\
&& + \frac{1}{N}\kappa\theta^2\left(\mW^2\vx_\star\vx_\star^\UT+\vx\vx^\UT\mW^2+\mW\vx_\star\vx_\star^\UT\mW\right) \\
&=&J(\mW) + \frac{1}{N}\vx_\star\vx_\star^\UT\left(\gamma\theta^2\mI_N+\kappa\theta^2\mW\right) + \frac{1}{N}\kappa\theta^2\mW\vx_\star\vx_\star^\UT\mW\\
&& + \frac{1}{N}\kappa\theta^2\mW^2\vx_\star\vx_\star^\UT+ \frac{1}{N}\kappa\theta^3\vx_\star^\UT\mW\vx_\star\cdot \frac{\bm{x}_\star\bm{x}_\star^\UT}{N}  \\
&=&J(\mW) + \frac{1}{N}\mP\mQ^\UT,
\end{eqnarray*}
where
\begin{align}
\mP &\explain{def}{=} \left[\vx_\star,\mW\vx_\star,\mW^2\vx_\star\right]\in\mathbb{R}^{N\times 3}, \label{Eqn:P_matrix_def}\\ 
\mQ &\explain{def}{=} \left[\left(\left(\gamma\theta^2+\kappa\theta^3\frac{\vx_\star^\UT\mW\vx_\star}{N}\right)\bm{I}_N+\kappa\theta^2\mW^2\right)\vx_\star,\kappa\theta^2\mW\vx_\star,\kappa\theta^2\vx_\star\right]\in\mathbb{R}^{N\times 3}. \label{Eqn:Q_matrix_def}
\end{align}
Hence,
\begin{align*}
\left(c_\ast\mI-J(\bm{Y})\right)^{-1} &=\left(c_\ast\mI-J(\mW)-\frac{1}{N}\mP\mQ^\UT\right)^{-1} \\
&\overset{(a)}{=} \left(\bm{A}-\frac{1}{N}\mP\mQ^\UT\right)^{-1} \\
&\overset{(b)}{=}\bm{A}^{-1}+\frac{1}{N}\bm{A}^{-1}\mP\left( \mI_3 -\frac{\mQ^\UT \bm{A}^{-1} \mP }{N}\right)^{-1}\mQ^\UT \bm{A}^{-1},
\end{align*}
where the matrix $\bm{A}$ in step (a) denotes
\begin{equation}\label{Eqn:A_matrix_def}
\bm{A}\explain{def}{=}c_\ast\mI-J(\mW),
\end{equation}
and step (b) is from the Morrison–Woodbury formula. Using the above identity, we obtain the following representation of the LHS of \eqref{Eqn:xT_g_x}:
\begin{align}\label{Eqn:app_replica_temp}
\frac{1}{N} \vx^{\UT}\left(c_\ast\mI-J(\bm{Y})\right)^{-1}  \vx&= \frac{\bm{x}_\star^\UT\bm{A}^{-1}\bm{x}_\star}{N} + 
\frac{1}{N^2} \vx_\star^\UT \bm{A}^{-1}\mP\left( \mI_3 - \frac{\mQ^\UT \bm{A}^{-1} \mP }{N}\right)^{-1}\mQ^\UT \bm{A}^{-1}\vx_\star.
\end{align}
Therefore, to calculate the limit of $\frac{1}{N} \vx^{\UT}\left(c_\ast\mI-J(\bm{Y})\right)^{-1}$ as $N\to\infty$, it suffices to derive the limit of the terms $\frac{1}{N} \bm{x}_\star^\UT\bm{A}^{-1}\bm{x}_\star$, $\frac{1}{N}\vx_\star^\UT \bm{A}^{-1}\mP\in\mathbb{R}^{1\times 3}$, $\frac{1}{N}\mQ^\UT \bm{A}^{-1} \mP\in\mathbb{R}^{3\times 3}$ and $\frac{1}{N}\mQ^\UT \bm{A}^{-1}\vx_\star\in\mathbb{R}^{3\times 1}$, which essentially involves computing the limit of $\frac{1}{N}\vx_\star^\UT\bm{A}^{-1}\mW^{i} \vx_\star$, for $i=1,2$; cf.~\eqref{Eqn:P_matrix_def} and \eqref{Eqn:Q_matrix_def}. For this purpose, we can apply standard results on concentration of quadratic forms of rotationally invariant matrices (see \factref{fact:qf} in \appref{appendix:misc}) to obtain
\begin{equation}\label{Eqn:App_A_inv_W}
\begin{split}
\underset{N\to\infty}{\text{plim}}\  \frac{1}{N}\vx_\star^\UT\bm{A}^{-1}\mW^{i} \vx_\star &=\underset{N\to\infty}{\text{plim}}\  \frac{1}{N}\vx_\star^\UT\left(c_\ast\mI-J(\mW)\right)^{-1}\mW^{i} \vx_\star,\quad \forall i\in\mathbb{N}\cup\{0\}\\
&= \E\left[\frac{\sfLambda^i }{c_\ast-J(\sfLambda)}\right]\\
&\explain{(a)}{=}\E\left[\frac{\sfLambda^i }{\rho+C_\phi-J(\sfLambda)}\right]\\
 &\explain{def}{=}d_i,
\end{split}
\end{equation}
where $\sfLambda\sim\mu$ denotes the limiting eigenvalue distribution of $\bm{W}$, and step (a) is from the definition of $c_\ast\explain{def}{=}\rho+C_\phi$.  Similarly,
\begin{equation}\label{Eqn:App_A_inv_W2}
\begin{split}
\underset{N\to\infty}{\text{plim}}\  \frac{1}{N} \bm{x}_\star^\UT \bm{W}\bm{x}_\star& = \mathbb{E}[\sfLambda]=0.
\end{split}
\end{equation}
Using \eqref{Eqn:App_A_inv_W} and \eqref{Eqn:App_A_inv_W2}, and recalling the definitions of the matrices $\bm{P}$ and $\bm{Q}$ in \eqref{Eqn:P_matrix_def} and \eqref{Eqn:Q_matrix_def}, we arrive at
\begin{equation}\label{Eqn:App_P_Q_temp}
\begin{split}
\underset{N\to\infty}{\text{plim}}\ \frac{1}{N} \vx_\star^\UT \bm{A}^{-1}\mP 
&= [d_0,d_1,d_2], \\
\underset{N\to\infty}{\text{plim}}\  \frac{1}{N} \vx_\star^\UT \bm{A}^{-1}\mQ 
&= [\gamma\theta^2d_0+\kappa\theta^2d_2,\kappa\theta^2d_1,\kappa\theta^2d_2], \\
\underset{N\to\infty}{\text{plim}}\ \frac{1}{N}\mQ^\UT \bm{A}^{-1}\mP 
&=\begin{bmatrix}
\kappa\theta^2 d_2 + \gamma\theta^2 d_0 & \kappa\theta^2 d_3 + \gamma\theta^2 d_2  & \kappa\gamma^2 d_4 + \gamma\theta^2 d_2 \\
\kappa\theta^2 d_1 & \kappa\theta^2 d_2 & \kappa\theta^2 d_3 \\
\kappa\theta^2 d_0 & \kappa\theta^2 d_1 & \kappa\theta^2 d_2
\end{bmatrix}.
\end{split}
\end{equation}
Combining \eqref{Eqn:app_replica_temp}, \eqref{Eqn:App_A_inv_W} and \eqref{Eqn:App_P_Q_temp}, and after lengthy but straightforward calculations, we obtain
\begin{equation}\label{Eqn:resolvent_di_final}
\begin{split}
 &\underset{N\to\infty}{\text{plim}}\  \frac{1}{N} \vx^{\UT}_\star\left(c_\ast\mI-J(\bm{Y})\right)^{-1}\vx_\star\\
 &=-\frac{1}{(\kappa\theta^2)^2 d_4 + \gamma\theta^2 + \frac{3\kappa\theta^2 d_2 + (\kappa\theta^2)^2(2d_1 d_3-3d_2^2) + (\kappa\theta^2)^3(d_2^3+d_0 d_3^2-2d_1 d_2 d_3) - 1}{d_0+\kappa\theta^2 d_1^2 - \kappa\theta^2 d_0 d_2}}.
 \end{split}
\end{equation}
Combining \eqref{Eqn:resolvent_di_final} and \eqref{Eqn:xT_g_x} yields the desired identity in \eqref{Eqn:g_hat_quartic_result}.
\end{proof}

\subsubsection{Proof of Proposition~\ref{Lem:SE2_quartic}}\label{App:quartic_replica_details}

In what follows, we prove that any solution $(\rho,\omega)$ to the state evolution (SE) fixed point equation \eqref{eq:rho-omega-supp}, with $\omega \in (0,1), \rho \in (0,\infty)$, is a solution to the replica fixed point equation of \eqref{Eqn:SE_FP_quartic}. This is essentially achieved by combining Lemma \ref{lemma:alt_fp_opt_se} and Lemma \ref{Lem:aux13}. More specifically, the replica fixed point equations in \eqref{Eqn:SE_FP_quartic}, which we aim to prove, can be obtained by replacing the term $\mathbb{E}\left[ \frac{1}{\rho+\phi(\sfLambda_\nu)}\right]$ in \eqref{Eqn:OAMP_FP2_a} by the alternative representation of $\mathbb{E}\left[ \frac{1}{\rho+C_\phi-J(\sfLambda_\nu)}\right]$ in \eqref{Eqn:g_hat_quartic_result}, which is valid because of Lemma \ref{Lem:phi_J_R}, together with additional algebraic manipulations.

Before we present the detailed proof, we first introduce a few definitions. Let
\begin{align}\label{Eqn:chi_def_app}
\chi &\equiv\chi(\rho)\explain{def}{=} \frac{(1-d_0)(d_1+\kappa\theta^2 d_0 d_3-\kappa\theta^2 d_1 d_2)}{d_0-\kappa\theta^2 d_0 d_2+\kappa\theta^2 d_1^2},
\end{align}
where 
\[
\begin{split}
d_i &\explain{def}{=}\mathbb{E}[\mathsf{H}\sfLambda^i]\quad \forall i\in\mathbb{N}\cup\{0\},\\
\mathsf{H}&\bydef\mathsf{H}(\sfLambda,\rho) = \frac{1}{\rho+\phi_{\text{poly}}(\sfLambda) }= \frac{1}{\rho+C_\phi-J(\sfLambda) }\\
&\explain{\eqref{Eqn:phi_quartic}}{=}\frac{1}{\rho + \theta^2a^2(\gamma+2a^2\kappa)^2 + 1 -\theta\left(\gamma\serv{\Lambda}-\theta\kappa\serv{\Lambda}^2+\kappa\serv{\Lambda}^3\right) }.
\end{split}
\]
The rationale for the above definition of $\chi$ will be clear from subsequent derivations. The notations $\chi(\rho)$ and $ \mathsf{H}(\sfLambda,\rho) $ above highlight the fact that $\chi$ and $\mathsf{H}$ both depend on the variable $\rho$, which is defined to be a solution to the SE fixed point equation. We further define
\begin{align}
m&\equiv m(\omega)\bydef 1-\mmse(\omega),\label{Eqn:m_mmse_app}\\
\mathsf{Q} &\equiv\mathsf{Q}(\sfLambda,\mathsf{H},\rho,\omega)\bydef \kappa\theta^2 m\sfLambda^2 + \kappa\theta^2\chi\sfLambda
- \frac{\kappa\theta^2}{1-m} \E\left[ m\sfLambda^2\mathsf{H} + \chi\sfLambda \mathsf{H}\right] + \frac{m}{1-m} .\label{Eqn:Q_def_app}
\end{align}
Again, the notation $m(\omega)$ and $\mathsf{Q}(\sfLambda,\mathsf{H},\rho,\omega)$ indicate that $m$ depends on $\omega$, and $\mathsf{Q}$ depends on the random variables $\sfLambda$ and $\mathsf{H}$ (which is a function of $\rho$) and the variables $(\rho,\omega)$.

To finish the proof, it remains to verify that the variables $\chi$, $m$, $\mathsf{H}$ and $\mathsf{Q}$ as defined above satisfy the replica fixed point equations \eqref{Eqn:SE_FP_quartic}.

\paragraph*{Proof of \eqref{Eqn:SE_FP_quartic_1}} By the definition of $\mathsf{H}$:
\[
\begin{split}
\mathbb{E}[\mathsf{H}] &\explain{\eqref{eq:replica_fp_JY3_4}}{=} \mathbb{E}\left[\frac{1}{\rho+C_\phi-J(\sfLambda) }\right]\\
&\explain{\eqref{Eqn:phi_phi_poly_inside}}{=}\mathbb{E}\left[\frac{1}{\rho+\phi(\sfLambda)}\right]\\
&\explain{\eqref{Eqn:OAMP_FP2_b}}{=}\mmse(\omega)\\
&\explain{\eqref{Eqn:m_mmse_app}}{=}1-m.
\end{split}
\]
This proves \eqref{Eqn:SE_FP_quartic_1}.

\paragraph*{Proof of \eqref{Eqn:SE_FP_quartic_2}} We first verify equation \eqref{Eqn:SE_FP_quartic_1}: $\chi=\mathbb{E}[\sfLambda\mathsf{QH}]$. To this end, we will substitute the definitions of $\mathsf{Q}$ and $\chi$ into $\mathbb{E}[\sfLambda\mathsf{QH}]$ and then show that it is equal to $\chi$:
\begin{equation}\label{Eqn:E_LQH_app}
\begin{split}
&\mathbb{E}[\sfLambda\mathsf{QH}] \explain{(a)}{=} \E\left[\sfLambda \left(\kappa\theta^2 m\sfLambda^2 + \kappa\theta^2\chi\sfLambda - \frac{\kappa\theta^2}{1-m}\E\left[ m\sfLambda^2\mathsf{H} + \chi\sfLambda \mathsf{H}\right] + \frac{m}{1-m} \right)\mathsf{H}\right] \\
&= \kappa\theta^2 \left( m\E[\sfLambda^3 \mathsf{H}] + \chi\E[\sfLambda^2 \mathsf{H}] - \frac{1}{1-m}\E[\sfLambda \mathsf{H}]\left( m\E[\sfLambda^2 \mathsf{H}] + \chi\E[\sfLambda \mathsf{H}]\right) \right) + \frac{m}{1-m}\E[\sfLambda \mathsf{H}] \\
&\explain{(b)}{=} \kappa\theta^2 \left( md_3+\chi d_2-\frac{1}{1-m}d_1(md_2+\chi d_1) \right) + \frac{m}{1-m}d_1 \\
&\explain{(c)}{=} \kappa\theta^2 \left( (1-d_0)d_3 + \chi d_2-\frac{1}{d_0}(1-d_0)d_1 d_2 + \chi \frac{1}{d_0}d_1^2 \right)+\frac{1-d_0}{d_0}d_1\\
&\explain{(d)}{=} \frac{(1-d_0)(d_1+\kappa\theta^2 d_0 d_3-\kappa\theta^2 d_1 d_2)}{d_0-\kappa\theta^2 d_0 d_2+\kappa\theta^2 d_1^2}\\
&\explain{(e)}{=}\chi,
\end{split}
\end{equation}
where step (a) is from the definition of $\mathsf{Q}$ in \eqref{Eqn:Q_def_app}; step (b) is from the definition of the shorthand $d_i$: $d_i=\mathbb{E}[\sfLambda^i\mathsf{H}]$; step (c) is due to the identity $d_0=1-m$, which further follows from definition $d_0=\mathbb{E}[\mathsf{H}]$ and the equation $\mathbb{E}[\mathsf{H}]=1-m$ (see \eqref{Eqn:SE_FP_quartic_1}); step (d) follows from the definition of $\chi$ in \eqref{Eqn:chi_def_app} and straightforward calculations; step (e) is from the definition of $\chi$ in \eqref{Eqn:chi_def_app}. This completes the proof of \eqref{Eqn:SE_FP_quartic_2}.

\paragraph*{Proof of \eqref{Eqn:SE_FP_quartic_3}} The term $\E[\sfLambda^2 \mathsf{QH}]$ appears on the RHS of \eqref{Eqn:SE_FP_quartic_3}. We first rewrite it as an explicit function of $(d_0,d_1,d_2,d_3)$ and $\chi$:
\begin{equation}\label{Eqn:E_L2QH_app}
\begin{split}
\E[\sfLambda^2 \mathsf{QH}]&=\E\left[\sfLambda^2 \left(\kappa\theta^2 m\sfLambda^2+\kappa\theta^2\chi\sfLambda -\frac{\kappa \theta^2}{1-m}\E\left[ m\sfLambda^2\mathsf{H} + \chi\sfLambda \mathsf{H}\right] + \frac{m}{1-m}\right)\mathsf{H}\right] \\
&\explain{(a)}{=}\kappa\theta^2 m d_4+\kappa\theta^2 d_3\chi - \frac{\kappa\theta^2}{1-m}\left(md_2^2+d_1 d_2\chi\right) + \frac{m}{1-m}d_2 \\
&\explain{(b)}{=}(1-d_0)\left(\kappa\theta^2d_4+\frac{\kappa\theta^2 d_3\chi}{1-d_0} - \frac{\kappa\theta^2}{d_0}\left(d_2^2+\frac{\chi}{1-d_0}d_1 d_2\right) + \frac{1}{d_0}d_2\right),
\end{split}
\end{equation}
where step (a) is from the definition of $\mathsf{Q}$ in \eqref{Eqn:Q_def_app}; step (b) uses the identity $d_0=1-m$, similar to step (c) of \eqref{Eqn:E_LQH_app}. Using \eqref{Eqn:E_L2QH_app}, we can continue to rewrite the RHS of \eqref{Eqn:SE_FP_quartic_3} as a function of $(d_0,d_1,d_2,d_3)$ and $\chi$:
\begin{equation}\label{Eqn:beta_RHS_0}
\begin{split}
&\text{RHS of }\eqref{Eqn:SE_FP_quartic_3}\\
&= \kappa\theta^2 \left(\frac{m}{1-m}\E[\sfLambda^2\mathsf{H}]+\frac{\chi}{1-m}\E[\sfLambda \mathsf{H}]+\E[\sfLambda^2 \mathsf{QH}] \right)+ \gamma\theta^2 m\\
&= \kappa\theta^2 \left(\frac{1}{d_0}(1-d_0)d_2+\frac{1}{d_0}d_1\chi+\E[\sfLambda^2 \mathsf{QH}]  \right)+(1-d_0)\gamma\theta^2 \\
&=(1-d_0)\left((\kappa\theta^2)^2d_4+\gamma\theta^2+\frac{\kappa\theta^2}{d_0(1-d_0)}\left(d_1+\kappa\theta^2(d_0d_3-d_1d_2)\right)\chi+\frac{\kappa\theta^2}{d_0}(2d_2-\kappa\theta^2d_2^2)\right).
\end{split}
\end{equation}
Note that $\chi$ is itself a function of $(d_0,d_1,d_2,d_3)$; see \eqref{Eqn:chi_def_app}. Substituting \eqref{Eqn:chi_def_app} into \eqref{Eqn:beta_RHS_0} eliminates the variable $\chi$. After some algebra we finally obtain
\begin{equation}
\begin{split}
&\text{RHS of }\eqref{Eqn:SE_FP_quartic_3}\\
&\explain{(a)}{=} (1-d_0)\bigg((\kappa\theta^2)^2 d_4 + \gamma\theta^2  \bigg. \\ & \hspace{2cm} \bigg. + \frac{3\kappa\theta^2 d_2 + (\kappa\theta^2)^2 ( 2d_1 d_3-3d_2^2) + (\kappa\theta^2)^3 ( d_2^3 + d_0 d_3^2 - 2d_1 d_2 d_3) - 1}{d_0+\kappa\theta^2 d_1^2-\kappa\theta^2 d_0 d_2}+\frac{1}{d_0}\bigg)\\
&\explain{(b)}{=}(1-d_0)\left(-\left(\mathbb{E}\left[ \frac{1}{\rho+C_\phi-J(\sfLambda_\nu)}\right]\right)^{-1}+\frac{1}{d_0}\right)\\
&\explain{(c)}{=}\left(1-\E\left[\frac{1}{ \rho+\phi(\sfLambda)}\right]\right)\left\{-\left(\mathbb{E}\left[ \frac{1}{\rho+C_\phi-J(\sfLambda_\nu)}\right]\right)^{-1}+\left(\E\left[\frac{1}{ \rho+\phi(\sfLambda)}\right]\right)^{-1}\right\}\\
&\explain{(d)}{=}\frac{\omega}{1-\omega},
\end{split}
\end{equation}
where step (a) is a consequence of \eqref{Eqn:chi_def_app} and \eqref{Eqn:beta_RHS_0}; step (b) is from Lemma \ref{Lem:aux13}; step (c) is from the definition of $d_0$ (see \eqref{Eqn:di_def_app}); and step (d) is due to \eqref{Eqn:OAMP_FP2} (Lemma \ref{lemma:alt_fp_opt_se}). This proves \eqref{Eqn:SE_FP_quartic_3}.

To conclude, we have verified that the stationary point of the state evolution of OAMP $(\rho,\omega)$ satisfies equations \eqref{Eqn:SE_FP_quartic_0}-\eqref{Eqn:SE_FP_quartic_3}. The proof for the other direction, namely, each fixed point of \eqref{Eqn:SE_FP_quartic} such that $\rho>0$ and $\omega\in(0,1)$ also satisfies the SE equations \eqref{eq:rho-omega-supp} is similar and thus omitted.

\section{Numerical Results}\label{App:numerical_additional}

{This appendix provides the details for the numerical results presented in Section~\ref{Sec:simulations}. It also includes additional numerical results not shown in the main paper.

\subsection{Data Processing Details for Fig.~\ref{Fig:Universality2}}

Both the 1000G and Hapmap3 datasets undergo the preprocessing steps, producing a $2054\times 2054$ symmetric matrix for 1000G and a $1397\times 1397$ symmetric matrix for Hapmap3. We denote this dataset by $\bm{Y}_{\text{full}}$ (which is a ${2504\times 100000}$ matrix for the 1000G dataset and a $1397\times 100000$ matrix for the Hapmap3 dataset). The processing steps for both datasets are the same, except for the fact that the dimensions of the two datasets are different. \textit{In what follows, we shall focus on the 1000G dataset only.} 

In our experiments, we randomly select 3000 SNPs and we denote this subsampled dataset $\bm{Y}_{\text{sampled}}\in\mathbb{R}^{2504\times 3000}$ matrix. We generate a symmetric noise matrix by applying the following additional processing steps:
\begin{enumerate}
\item We estimate the ``ground truth'' signal components $\bm{X}\in\mathbb{R}^{2504\times 2504}$ by forming the covariance matrice using all SNPs from the original datasets. Specifically, the ground truth signal component is defined by $\bm{X}\bydef \bm{Y}_{\text{full}}\bm{Y}_{\text{full}}^\UT/100000$.
\item We generate a noise matrix from the covariance matrix of the sub-sampled data with the ground truth signal removed, namely, $\bm{W}_1\bydef\bm{Y}_{\text{sampled}}\bm{Y}_{\text{sampled}}^\UT/3000-\bm{X}$.
\item We then remove the top 8 principal components (PCs) of $\bm{W}_1$ (corresponding to the outlying eigenvalues) and let this matrix be $\bm{W}_2$. The final noise matrix is obtained by and further centering and scaling the matrix: $\bm{W}\bydef  \sqrt{2504/\|\bm{W}_3\|_F^2}\cdot\bm{W}_3$, where $\bm{W}_3 \bydef  (\bm{W}_2 - \frac{\text{Tr}(\bm{W}_2)}{2504}\mI)$. The idea is the keep the eigenvalues of $\bm{W}_2$ unchanged, but standarize the eigenvalues.
\end{enumerate}
The final observation matrix for which the OAMP algorithm is applied to is given by $\bm{Y}=({\theta}/{N}) \cdot \bm{x}_\star\bm{x}_\star^\UT+\bm{W}$, where $\theta=3.5$ and $\bm{W}$ is generated as described above, and $\bm{x}_\star$ is randomly sampled from an i.i.d. prior {$0.1\mathcal{N}(0,10)+0.9\delta_0$}. The histogram of the observation matrix with noise matrix generated from the 1000G dataset is plotted in Fig.~\ref{Fig:hist_G1000}.

 \begin{figure}[htbp]
\begin{center}
\includegraphics[width=0.45\linewidth]{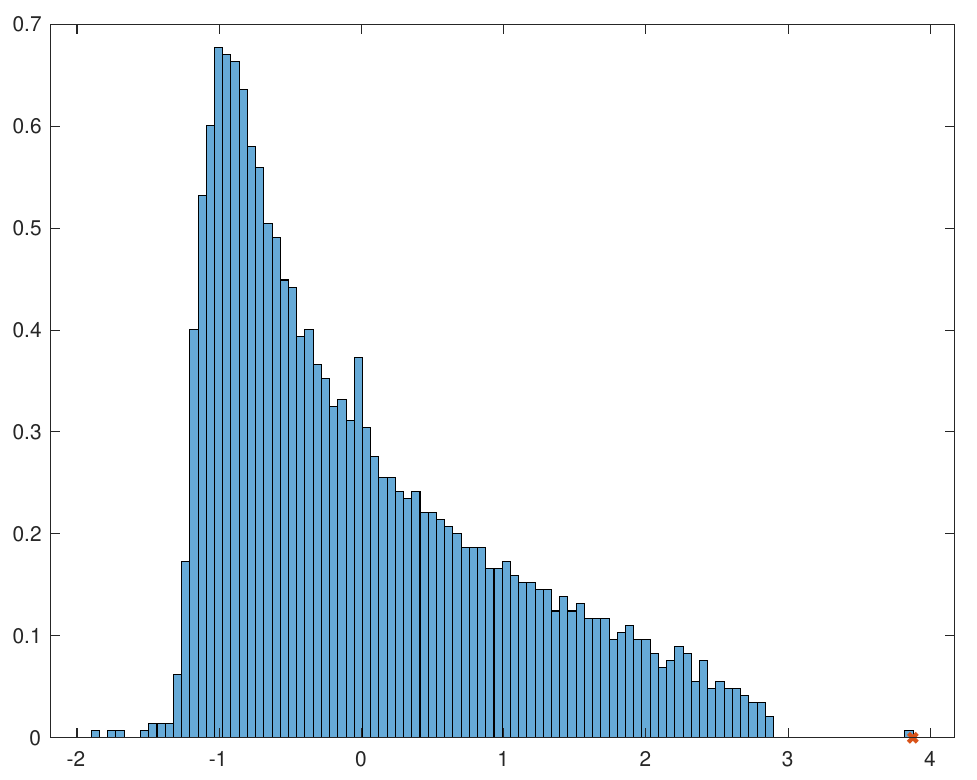}
\end{center}
\caption{Histogram of the eigenvalues of the observation matrix with noise matrix generated from the 1000G dataset.}
\label{Fig:hist_G1000}
\end{figure} 
}

\subsection{Additional Numerical Results}
We now provide some additional numerical results not shown in the main paper. Fig.~\ref{Fig:overlap_vs_snr} compares the following AMP algorithms:
\begin{itemize}
\item \textbf{OAMP:} The optimal OAMP algorithm proposed in the current paper.
\item \textbf{BAMP:} The Bayes-AMP algorithm proposed in \citep{barbier2023fundamental}.
\item \textbf{AMP-AP:} The AMP alternating posteriors algorithm proposed in \citep{barbier2023fundamental}.
\item \textbf{AMP:} The AMP algorithm (for rotationally-invariant model) proposed in \cite{fan2022approximate}. 
\end{itemize}
We remark that both the AMP-AP and AMP algorithms in the above list use a multivariate MMSE signal denoiser (which depends on all previous iterates), while OAMP and BAMP use the single-step MMSE signal denoiser (which only depends on the current iterate). Fig.~\ref{Fig:overlap_vs_snr} also includes the state evolution predictions for the performances of the above AMP algorithms, and the conjectured Bayes-optimal performance calculated via the replica method \citep{barbier2023fundamental}. 

Fig.~\ref{Fig:overlap_vs_snr} shows that the state evolution results for all the AMP algorithms well match the simulations. Moreover, the optimal OAMP algorithm indeed achieves the best performance among the above AMP algorithms. {We notice that the performance of the AMP-AP algorithm is very close to that of OAMP, suggesting that AMP-AP might enjoy similar optimality properties as the OAMP algorithm proposed in our paper. However, showing this seems challenging since the state evolution of AMP-AP is significantly more complicated.}

\begin{figure}[htbp]
\begin{center}
\subfloat{
\includegraphics[width=0.33\linewidth]{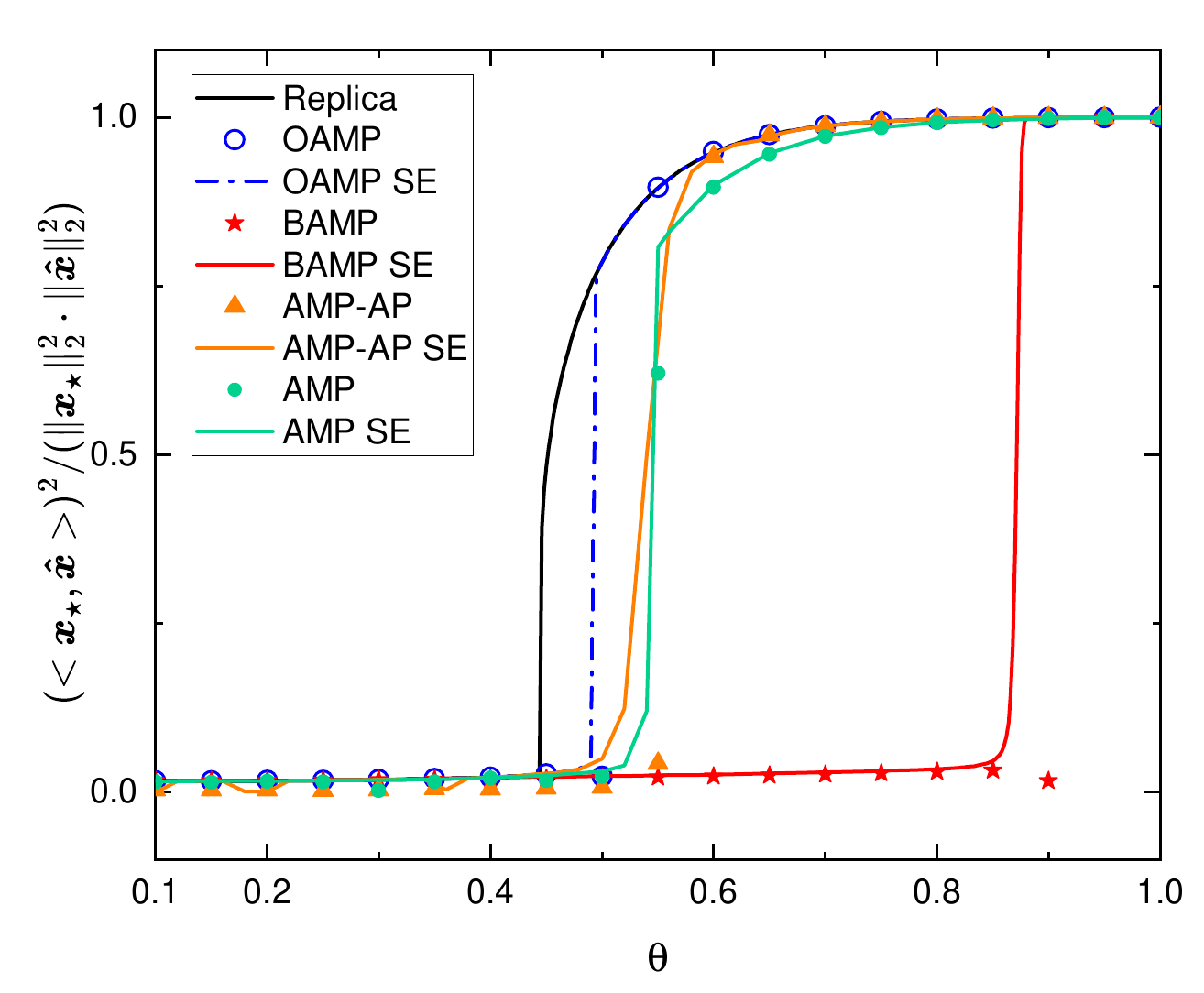}}
\subfloat{
\includegraphics[width=0.33\linewidth]{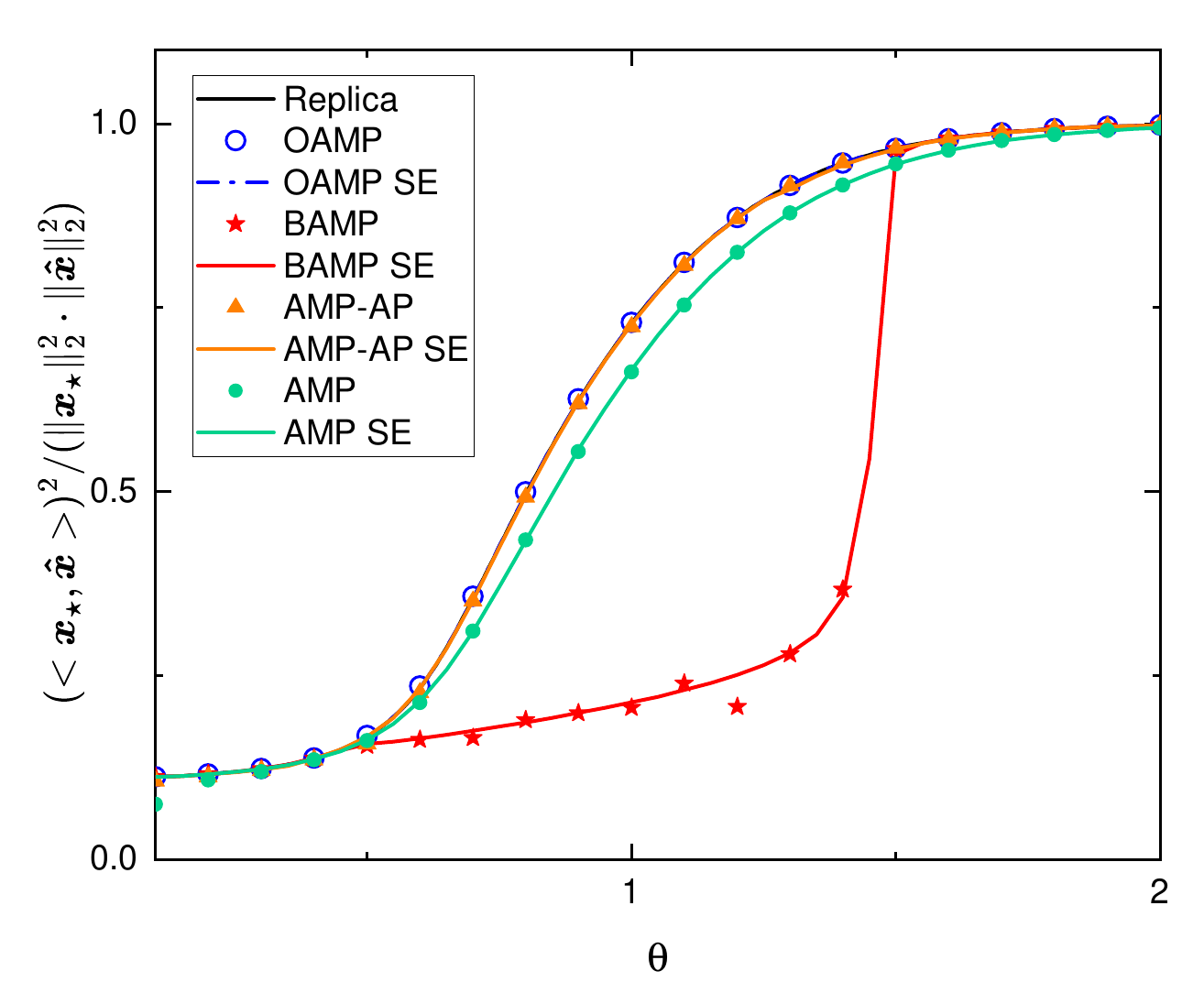}}
\subfloat{
\includegraphics[width=0.33\linewidth]{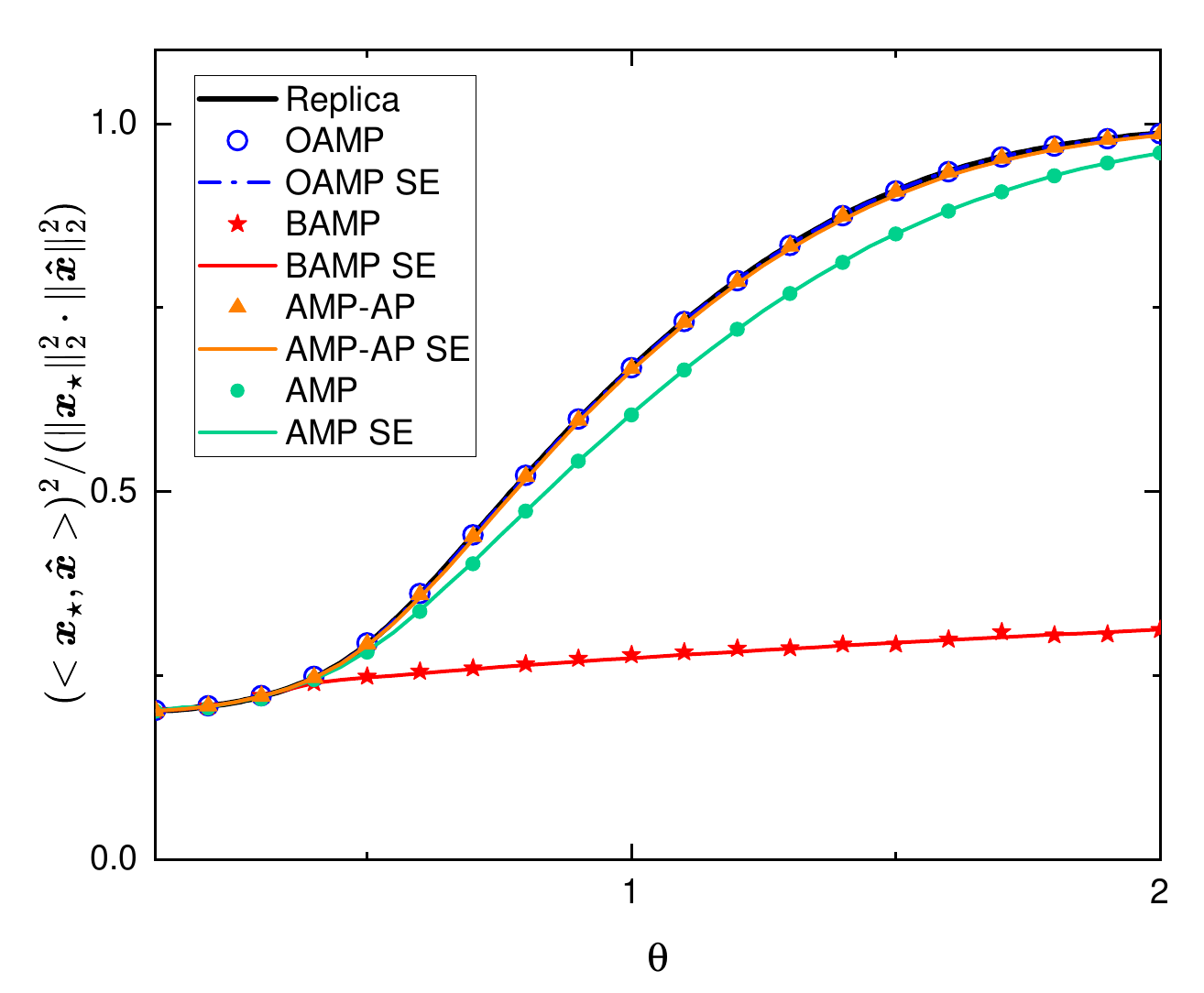} }
\end{center}
\caption{Comparison of the performances of the Bayes-optimal estimator, the optimal OAMP algorithm, the BAMP algorithm and AMP-AP algorithms in \citep{barbier2023fundamental}. The detailed experimental setups are as follows. \textbf{(Left)} Noise spectrum: quartic potential with $\gamma=0$. Signal prior: two points prior with $\epsilon=\frac{1}{8}$. \textbf{(Center)} Noise spectrum: quartic potential with $\gamma=0$. Signal prior: two Points prior with $\epsilon=\frac{1}{3}$. \textbf{(Right)} Noise spectrum: purely sestic potential. Signal prior: two Points prior with $\epsilon=\frac{1}{\sqrt{5}}$. The signal dimension is $2\times 10^{6}$. All algorithms use random initialization.}
\label{Fig:overlap_vs_snr}
\end{figure} 

Fig.~\ref{Fig:MSE_vs_Iteration} compares the dynamics of the AMP algorithms shown in Fig.~\ref{Fig:overlap_vs_snr}. We use the same noise model as before and sample the signal from an i.i.d. two-point prior with $\epsilon=0.5$ and $\theta=1.8$. In the experiments, all the AMP algorithms use random initialization. We see that both OAMP and AMP-AP converge to the conjectured Bayes-optimal performance as predicted by the replica method. In contrast, the AMP algorithm of \citet{fan2022approximate} is strictly inferior than the replica prediction, as has already been observed in \citep{barbier2023fundamental}. We notice that the performance of BAMP degrades after a few iteration. Notice that the performance of BAMP is well predicted by its asymptotic state evolution. Hence, the observed behavior of BAMP is not caused by finite-$\dim$ effects or numerical stability issues and seems to be an inherent limitation of the algorithm.

\begin{figure}[htbp]
\begin{center}
\includegraphics[width=0.55\linewidth]{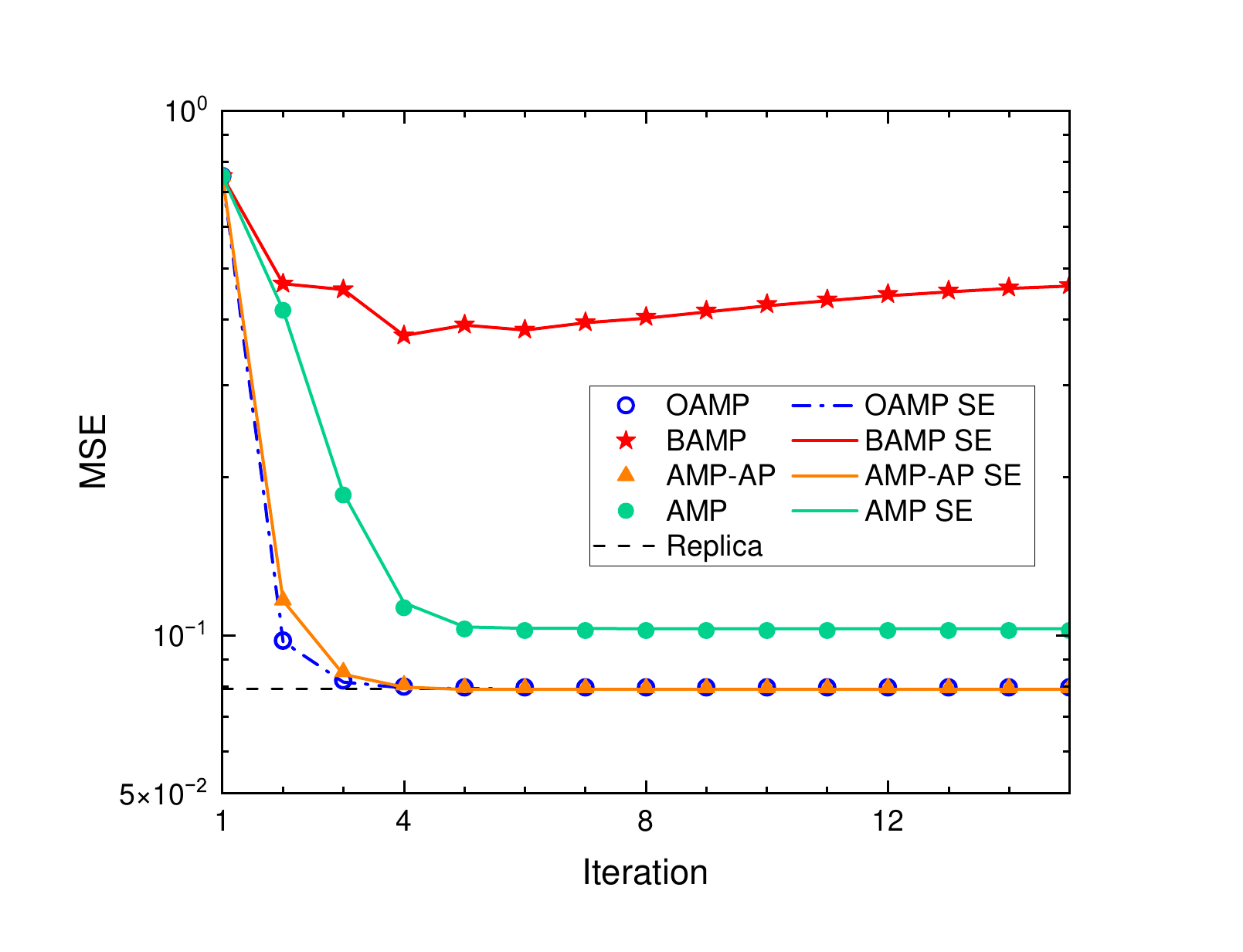}
\end{center}
\caption{MSE performance of various AMP algorithms as a function of the iteration number. The replica conjecture for the Bayes-optimal performance is included for reference.}
\label{Fig:MSE_vs_Iteration}
\end{figure}

\end{document}